\documentclass[12pt,reqno]{amsart}
\usepackage{amsmath}
\usepackage{amsfonts}
\usepackage{amsthm}
\usepackage{amssymb} 
 \usepackage[all,cmtip]{xy}
\usepackage{color}
\usepackage{hyperref}
\usepackage[alphabetic]{amsrefs}
\usepackage{slashed}
\usepackage{textcomp}
\usepackage{latexsym}
\usepackage{scalerel}
\usepackage{adjustbox}
\usepackage[OT2,T1]{fontenc}
\DeclareSymbolFont{cyrletters}{OT2}{wncyr}{m}{n}
\DeclareFontFamily{OT1}{rsfs}{}

\RequirePackage{mathrsfs}
\RequirePackage{newpxtext}
\RequirePackage[euler-digits, euler-hat-accent]{eulervm}
\DeclareFontEncoding{LS1}{}{}
\DeclareFontSubstitution{LS1}{stix2}{m}{n}
\DeclareSymbolFont{CormaExOperators}{LS1}{stix2}{m}{n}
\DeclareSymbolFont{CormaMathbb}{LS1}{stix2bb}{m}{n}
\DeclareSymbolFontAlphabet{\mathbb}{CormaMathbb}
\DeclareMathSymbol{\CormaQED}{\mathord}{CormaExOperators}{"D1}

\usepackage{verbatim}
\usepackage{graphicx}
\usepackage[top=1in,bottom=1in,left=1in,right=1in]{geometry}
\usepackage{nomencl}
\usepackage{tikz}
\usepackage{tikz-3dplot}
\usepackage{tikz-cd}
\tikzcdset{arrow style=tikz,
    diagrams={>={Straight Barb[scale=.8]}},
}

\usepackage{stmaryrd}
\usepackage{mathtools}
\usepackage[nameinlink,capitalize]{cleveref}
\usepackage{autonum}
\usepackage[skins]{tcolorbox}

\allowdisplaybreaks[4]

\makeatletter
\def\@secnumfont{\bfseries}
\makeatother

\Crefname{equation}{}{}
\crefname{equation}{}{}
\Crefname{section}{\S}{\S\S}
\crefname{section}{\S}{\S\S}

\AtBeginDocument{
\newcommand{\symup}{\mathrm}
\newcommand{\symbf}{\mathbf}
\newcommand{\symbb}{\mathbb}

\newcommand{\symcal}{\mathcal}

\newcommand{\symfrak}{\mathfrak}

\newcommand{\bA}{{\symbf{A}}}
\newcommand{\bB}{{\symbf{B}}}
\newcommand{\bC}{{\symbf{C}}}

\newcommand{\bJ}{{\symbf{J}}}

\newcommand{\bFe}{{\symbf{e}}}

\newcommand{\bFv}{{\symbf{v}}}

\newcommand{\bbA}{{\symbb{A}}}

\newcommand{\bbC}{{\symbb{C}}}

\newcommand{\bbF}{{\symbb{F}}}

\newcommand{\bbN}{{\symbb{N}}}

\newcommand{\bbP}{{\symbb{P}}}
\newcommand{\bbQ}{{\symbb{Q}}}
\newcommand{\bbR}{{\symbb{R}}}

\newcommand{\bbT}{{\symbb{T}}}

\newcommand{\bbZ}{{\symbb{Z}}}

\newcommand{\cM}{{\symcal{M}}}

\newcommand{\cO}{{\symcal{O}}}
\newcommand{\cP}{{\symcal{P}}}

\newcommand{\FRf}{{\symfrak{f}}}

\newcommand{\FRo}{{\symfrak{o}}}

\newcommand{\FRI}{{\symfrak{I}}}

\newcommand{\FRR}{{\symfrak{R}}}
\newcommand{\FRS}{{\symfrak{S}}}
\newcommand{\FRT}{{\symfrak{T}}}

\newcommand{\x}{\times}

\newcommand{\longto}{\longrightarrow}

\newcommand{\defeq}{\coloneqq}
\let\eqdef\undefined
\newcommand{\eqdef}{\eqqcolon}
\newcommand{\ul}{\underline}

\newcommand{\notion}{\emph}
\mathchardef\mathhyphen="2D 

\providecommand{\given}{}
\newcommand\SetSymbol[1][]{%
    \nonscript\:#1\vert
    \allowbreak
    \nonscript\:
    \mathopen{}}
\DeclarePairedDelimiterX{\Set}[1]{\{}{\}}{%
    \renewcommand\given{\SetSymbol[\delimsize]}
    #1
}
\DeclarePairedDelimiterX{\Stack}[1]{[}{]}{%
    #1
}

\newcommand{\OneHalf}{{\frac{1}{2}}}

\newcommand{\dd}{\symup{d}} 

\DeclarePairedDelimiterX{\abs}[1]{\lvert}{\rvert}{%
    \ifblank{#1}{\:\cdot\:}{#1}
}
\DeclarePairedDelimiterX{\norm}[1]{\lVert}{\rVert}{%
    \ifblank{#1}{\:\cdot\:}{#1}
}
\DeclarePairedDelimiterX{\lrangle}[1]{\langle}{\rangle}{%
    \ifblank{#1}{\:\cdot\:}{#1}
}
\DeclarePairedDelimiterX{\powser}[1]{[\![}{]\!]}{%
    \ifblank{#1}{\:\cdot\:}{#1}
}
\DeclarePairedDelimiterX{\lauser}[1]{(\!(}{)\!)}{%
    \ifblank{#1}{\:\cdot\:}{#1}
}
\newcommand{\Cc}[1][]{%
    \ifblank{#1}{%
        C_{\symup{c}}
    }{
        C_{\symup{c}}^{#1}
    }
}
\newcommand{\CcInf}{\Cc[\infty]}

\DeclareMathOperator{\Hom}{Hom} 
\DeclareMathOperator{\End}{End} 
\DeclareMathOperator{\Tr}{Tr} 
\newcommand{\id}{\symup{id}}
\newcommand{\Id}{\symup{id}}

\newcommand{\Gm}[1][]{%
    \ifblank{#1}{%
        {\symbb{G}}_{\symup{m}}
    }{
        {\symbb{G}}_{\symup{m},#1}
    }
}

\DeclareMathOperator{\val}{val} 
\newcommand{\Frob}{\sigma} 

\newcommand{\TypeD}{\symup{D}}

\newcommand{\dual}{\check} 
\newcommand{\Spin}{\symup{Spin}} 
\DeclareMathOperator{\Mat}{Mat} 
\DeclarePairedDelimiterX\Pair[2]{\langle}{\rangle}{#1,#2}
\DeclarePairedDelimiterX\IPair[2]{(}{)}{#1,#2}
\DeclareMathOperator{\Diag}{diag} 

\newcommand{\Rt}{\alpha} 
\newcommand{\Wt}{\varpi} 

\newtcbox{\TODO}{enhanced,nobeforeafter,tcbox raise
    base,boxrule=0.4pt,top=-1pt,bottom=-1pt,
    right=.5mm,left=12mm,arc=1pt,boxsep=2pt,before upper={\vphantom{dlg}},
    colframe=red!75!black,coltext=red!50!black,colback=red!5!white,
    overlay={\begin{tcbclipinterior}\fill[red!75!black] (frame.south west)
    rectangle node[text=white,font=\sffamily\bfseries\tiny
    ] {TODO} ([xshift=11.5mm]frame.north west);\end{tcbclipinterior}}}

\robustify{\TODO}

\pdfstringdefDisableCommands{%
    \def\TODO#1{'#1'}%
}

\newtcolorbox{TODObox}{colback=red!5!white,colframe=red!75!black,coltext=red!50!black,fonttitle=\sffamily\bfseries,title=TODO}

\newtcbox{\FIXME}{enhanced,nobeforeafter,tcbox raise
    base,boxrule=0.4pt,top=-1pt,bottom=-1pt,
    right=.5mm,left=12mm,arc=1pt,boxsep=2pt,before upper={\vphantom{dlg}},
    colframe=orange!75!black,coltext=orange!50!black,colback=orange!5!white,
    overlay={\begin{tcbclipinterior}\fill[orange!75!black] (frame.south west)
    rectangle node[text=white,font=\sffamily\bfseries\tiny
    ] {FIXME} ([xshift=11.5mm]frame.north west);\end{tcbclipinterior}}}

\robustify{\FIXME}

\pdfstringdefDisableCommands{%
    \def\FIXME#1{'#1'}%
}

\newtcolorbox{FIXMEbox}{colback=orange!5!white,colframe=orange!75!black,coltext=orange!50!black,fonttitle=\sffamily\bfseries,title=FIXME}

\definecolor{poop-brown}{RGB}{122,89,1}
\newtcbox{\POOP}{enhanced,nobeforeafter,tcbox raise
    base,boxrule=0.4pt,top=-1pt,bottom=-1pt,
    right=.5mm,left=12mm,arc=1pt,boxsep=2pt,before upper={\vphantom{dlg}},
    colframe=poop-brown!75!black,coltext=poop-brown!50!black,colback=poop-brown!5!white,
    overlay={\begin{tcbclipinterior}\fill[poop-brown!75!black] (frame.south west)
    rectangle node[text=white,font=\sffamily\bfseries\tiny
    ] {POOP} ([xshift=11.5mm]frame.north west);\end{tcbclipinterior}}}

\robustify{\POOP}

\pdfstringdefDisableCommands{%
    \def\POOP#1{'#1'}%
}

\newtcolorbox{POOPbox}{breakable,pad at break*=1mm,colback=poop-brown!5!white,colframe=poop-brown!75!black,coltext=poop-brown,fonttitle=\sffamily\bfseries,title=POOP}

\newtcbox{\COMMENT}{enhanced,nobeforeafter,tcbox raise
    base,boxrule=0.4pt,top=-1pt,bottom=-1pt,
    right=.5mm,left=18mm,arc=1pt,boxsep=2pt,before upper={\vphantom{dlg}},
    colframe=teal!75!black,coltext=teal!50!black,colback=teal!5!white,
    overlay={\begin{tcbclipinterior}\fill[teal!75!black] (frame.south west)
    rectangle node[text=white,font=\sffamily\bfseries\tiny
    ] {COMMENT} ([xshift=17.5mm]frame.north west);\end{tcbclipinterior}}}

\robustify{\COMMENT}

\pdfstringdefDisableCommands{%
    \def\COMMENT#1{'#1'}%
}

\newtcolorbox{COMMENTbox}{colback=teal!5!white,colframe=teal!75!black,coltext=teal!50!black,fonttitle=\sffamily\bfseries,title=COMMENT}

\newtcbox{\QUESTION}{enhanced,nobeforeafter,tcbox raise
    base,boxrule=0.4pt,top=-1pt,bottom=-1pt,
    right=.5mm,left=18mm,arc=1pt,boxsep=2pt,before upper={\vphantom{dlg}},
    colframe=yellow!75!black,coltext=yellow!50!black,colback=yellow!5!white,
    overlay={\begin{tcbclipinterior}\fill[yellow!75!black] (frame.south west)
    rectangle node[text=white,font=\sffamily\bfseries\tiny
    ] {QUESTION} ([xshift=17.5mm]frame.north west);\end{tcbclipinterior}}}

\robustify{\QUESTION}

\pdfstringdefDisableCommands{%
    \def\QUESTION#1{'#1'}%
}

\newtcolorbox{QUESTIONbox}{colback=yellow!5!white,colframe=yellow!75!black,coltext=yellow!50!black,fonttitle=\sffamily\bfseries,title=QUESTION}

\newtcbox{\ANSWER}{enhanced,nobeforeafter,tcbox raise
    base,boxrule=0.4pt,top=-1pt,bottom=-1pt,
    right=.5mm,left=15mm,arc=1pt,boxsep=2pt,before upper={\vphantom{dlg}},
    colframe=green!75!black,coltext=green!50!black,colback=green!5!white,
    overlay={\begin{tcbclipinterior}\fill[green!75!black] (frame.south west)
    rectangle node[text=white,font=\sffamily\bfseries\tiny
    ] {ANSWER} ([xshift=14.5mm]frame.north west);\end{tcbclipinterior}}}

\robustify{\ANSWER}

\pdfstringdefDisableCommands{%
    \def\ANSWER#1{'#1'}%
}

\newtcolorbox{ANSWERbox}{colback=green!5!white,colframe=green!75!black,coltext=green!50!black,fonttitle=\sffamily\bfseries,title=ANSWER}

    \newcommand{\Av}{\symup{Av}}
    \DeclarePairedDelimiterX\Contra[1]{\langle}{\rangle}{#1}

    \newcommand{\EN}[1]{\mathbb{#1}}
    \newcommand{\IEN}[1]{\bar{\EN{#1}}}
    \newcommand{\FRTV}{\FRT}
     \newcommand{\FRTE}{\FRT^{\mathrm{or}}}
    \newcommand{\POne}{\mathbb{P}^1}
    
    \newcommand{\bx}{\mathbf{x}}
    \newcommand{\blank}{\,\cdot\,}
    \newcommand{\LGr}{\mathsf{LGr}}
    \newcommand{\Clif}{\mathrm{Cl}}
    
    \newcommand{\sgn}{\mathrm{sgn}}
    \newcommand{\pFq}[2]{\vphantom{F}_#1F_#2}
    \renewcommand{\TypeD}{\mathsf{D}}
    \newcommand{\SU}{\mathrm{SU}}
    \DeclareMathOperator*{\bigboxtimes}{{\scalerel*{\boxtimes}{\bigotimes}}}
    \DeclareMathOperator*{\displaybigboxtimes}{\raisebox{-.125em}{\adjustbox{margin=.125em}{\scalebox{1.25}{\scalerel*{\boxtimes}{\bigotimes}}}}}
}
\DeclareFontEncoding{OT2}{}{}
  
\newcommand{\II}{\mathbf{I}}

\newcommand{\Spinor}{P_\pm}
\newcommand{\RO}{\mathcal{R}}
\newcommand{\SO}{\mathrm{SO}}
\newcommand{\Veven}{S^{\textrm{ev}}}
\newcommand{\Vodd}{S^{\textrm{odd}}}
\newcommand{\Z}{\mathbb{Z}}

\newcommand{\FT}{\mathcal{F}}
\newcommand{\OddFunctions}{\tilde{\TypeD}_6}

\newcommand{\SL}{\mathrm{SL}}

\newcommand{\PrincipalSeries}{\mathcal{X}}

\newcommand{\Schwartz}{\mathcal{S}}

\newcommand{\Pisymbol}{\boldsymbol{\{}\Pi\boldsymbol{\}}}
\newcommand{\PisymbolPrime}{\boldsymbol{\{}\Pi'\boldsymbol{\}}}
\newcommand{\chisymbol}{\boldsymbol{\{}\chi\boldsymbol{\}}}
\newcommand{\chisymbolBig} {\begin{Bmatrix} 
        \chi_{\V{12}} & \chi_{\V{13}} & \chi_{\V{14}} \\ 
        \chi_{\V{34}} & \chi_{\V{24}} & \chi_{\V{23}} 
    \end{Bmatrix}}

\newcommand{\ad}{\mathrm{ad}}

\newcommand{\taunorm}{\tau^{\mathrm{norm}}}

\newcommand{\C}{\mathbb{C}}

\newcommand{\R}{\mathbb{R}}

\newcommand{\PGL}{\mathrm{PGL}}

 \DeclareFontShape{OT1}{rsfs}{n}{it}{<-> rsfs10}{}
\DeclareMathAlphabet{\mathscr}{OT1}{rsfs}{n}{it}
\newcommand{\GL}{\mathrm{GL}}

\newcommand{\tetraV}{\mathbf{V}} 
\newcommand{\tetraE}{\mathbf{E}} 
\newcommand{\tetraO}{\mathbf{O}}
\newcommand{\V}[1]{\mathbf{#1}}

\DeclareMathOperator{\VI}{I^{\tetraV}}
\DeclareMathOperator{\EI}{I^{\tetraE}}

\newcommand{\CT}{\mathrm{T}^*}

\numberwithin{equation}{subsection}
\newtheorem{theorem}[subsubsection]{Theorem}
\newtheorem{proposition}[subsubsection]{Proposition}
\newtheorem{lemma}[subsubsection]{Lemma}

\theoremstyle{definition}
\newtheorem{definition}[subsubsection]{Definition}
\newtheorem{warning}[subsubsection]{Warning}

\newtheorem{question}[subsubsection]{Question}

\theoremstyle{remark}
\newtheorem{remark}[subsubsection]{Remark}
\newtheorem*{claim}{Claim}

\begin{document}  

\title{The Tetrahedral (or $6j$) Symbol}
\author{Akshay Venkatesh and X. Griffin Wang}

\begin{abstract}

    We will attach a scalar invariant to a tetrahedron whose edges are labelled by irreducible
    representations of a ternary orthogonal group $\mathrm{SO}_3$ over a local field.
    This generalizes the \(6j\) symbol whose
    theory was developed by Racah, Wigner, and Regge.
    
    We give several formulas for this invariant, including in terms of hypergeometric-type integrals
    and functions, and show that it admits a symmetry by the the \(23040\)-element   Weyl group of \(\mathrm{Spin}_{12}\).
    We then interpret  these results
    in terms of relative Langlands duality,  where the dual story comes from
      the action of $\mathrm{Spin}_{12}$ on a $16$-dimensional cone of spinors.

\end{abstract}

\maketitle
\setcounter{tocdepth}{1}
\tableofcontents

\newpage 
\section{Introduction}\label{sec:introduction}

The \(6j\) symbol 
$\begin{Bmatrix} 
    j_1 & j_2 &  j_3\\ 
    j_4 &  j_5 &  j_6 
\end{Bmatrix}$
is an invariant attached to a tetrahedron $\mathcal{T}$ with integral side
lengths $j_i$, where the upper and lower rows index opposite edges (as in \Cref{fig:tetrahedron}). 
The tetrahedron does not have to be Euclidean, but for this introduction we will
assume so for the ease of mind (see \Cref{rmk:Euclideanness_unnecessary}).
In truth,  each  $j$  is  indexing  the
 irreducible $(2j+1)$-dimensional representation $V_{j}$ of the Euclidean rotation group
$\SO_3$.\footnote{In fact, the $6j $
symbol is defined for half-integral $j$, which correspond to representations
of the double cover $\SU_2$ of $\SO_3$.  For the purposes of this paper, however,
restricting to integral $j$ gives a simpler presentation of the theory. This is inessential; see
\Cref {sub:symbol_for_SU2} and \Cref{remarkSU} for further discussion.
}
The origin of the $6j$ symbol lies in the work of Racah and Wigner on addition
theorems for (quantum) angular momentum.  The theory was further developed by
Regge, who discovered the extraordinary ``Regge symmetries'' \cite{Re59} of $6j$
symbols, and  from these deduced new geometric symmetries of tetrahedra
\cites{PoRe69,Ro99}.
Since then,  the $6j$ symbol has  resurfaced  
in special function theory, topology and number theory
(see \Cref{sec:Further} for some references). 
We will review the classical definition, and then proceed to discuss in more detail
what we do in this paper. 

\begin{figure}
    \tdplotsetmaincoords{115}{100}
    \begin{tikzpicture}[tdplot_main_coords, scale=2.5]

        \coordinate (A) at (1, 1, 1);
        \coordinate (B) at (1, -1, -1);
        \coordinate (C) at (-1, 1, -1);
        \coordinate (D) at (-1, -1, 1);

        \draw[thick] (A) -- (B);
        \draw[thick] (A) -- (C);
        \draw[thick] (B) -- (D);
        \draw[thick, dashed] (C) -- (D);

        \draw[thick] (A) -- (D);
        \draw[thick] (B) -- (C);

        \path (A) -- (B) node[midway, below right] {$j_1$};

        \path (A) -- (C) node[midway, right] {$j_2$};

        \path (A) -- (D) node[midway, above left] {$j_3$};

        \path (C) -- (D) node[midway, above right] {$j_4$};

        \path (B) -- (D) node[midway, above left] {$j_5$};

        \path (B) -- (C) node[midway, below] {$j_6$};

    \end{tikzpicture}
    \caption{A tetrahedron with labeled edges}
    \label[figure]{fig:tetrahedron}
\end{figure}
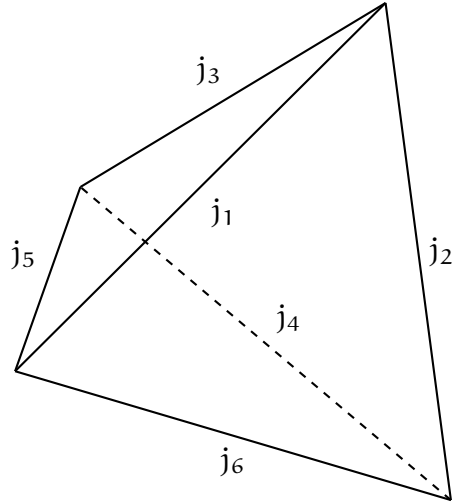
\subsection{The classical definition.}
\label{Intro}

\begin{quote}
   {\em \ldots I hardly ever take up Dr. Frankland's exceedingly valuable
   ``Notes for Chemical Students,'' which are drawn up exclusively on the basis
of Kekul{\'e}'s exquisite conception of valence, without deriving suggestions for
new researches in the theory of algebraical forms.} --- James Joseph Sylvester,
   {\em ``Chemistry and Algebra.''}
\end{quote}

These words of Sylvester relate to a graphical calculus for invariant theory. 
The definition of the 6j symbol  is animated by the same spirit;
were some strange beast, whose only language was the invariant theory of binary forms,
to be   confronted with the idea of a tetrahedron, we think 
it would surely rediscover the definition that follows.

We can realize the irreducible representation $V_j$ of $\SO_3$ of dimension
$2j+1$ by considering the space of homogeneous polynomials of degree $j$ in
three variables $x,y,z$, and restricting them to the sphere $x^2+y^2+z^2=1$. In
what follows, it is more convenient to consider only real polynomials. By
integrating over the sphere we get a rotation-invariant inner product on $V_j$.

Now, a classical theorem of invariant theory asserts that the triple product
\begin{align}
    V_a \otimes V_b \otimes V_c
\end{align}
admits a   nonzero $\mathrm{SO}_3$-invariant
vector if and only if   $a,b$ and $c$ satisfy the triangle inequality, i.e. form
the sides of a Euclidean triangle. When this happens, the invariant vector is
unique up to scaling, and can be explicitly described, with reference to the
model just discussed, as the dual of the functional
\begin{align}
    (v_1, v_2, v_3) \longmapsto \int_{S^2} v_1 v_2 v_3.
\end{align}

If a triangle with integer side lengths, then, indexes an invariant
vector, what can we extract from a tetrahedron $\mathcal{T}$ of integer sides?
When $j,j', j''$ correspond to
the side lengths of a face of $\mathcal{T}$, we can distinguish an invariant
vector inside $V_{j} \otimes V_{j'} \otimes V_{j''}$, which we normalize to have length
$1$; by working inside the real form, this distinguishes the invariant vector up to sign.
Tensoring these vectors together for all four faces,  we arrive at a vector
inside
\begin{align}
    V_{j_1} \otimes V_{j_1} \otimes V_{j_2} \otimes V_{j_2} \otimes \dots V_{j_6} \otimes V_{j_6}
\end{align}
and contract, using the inner product on each $V_e$. We arrive
at a real-valued invariant; this is, up to sign, 
the classical $6j$ symbol. 

\begin{remark}
    \label[remark]{rmk:Euclideanness_unnecessary}
    Observe that this definition makes sense in slightly more generality: it does
    not require that the various $j$s form the lengths of a Euclidean tetrahedron,
    only that the triangle inequalities are satisfied for all faces.  
    This implies that the $j$s are the side lengths of a tetrahedron that is either Euclidean, flat, 
    or Minkowskian.    Minkowskian means that we can find a tetrahedron  in $\mathbb{R}^3$ such
    that the metric   $(dx)^2+(dy)^2-(dz)^2$ is positive definite along each face, 
    and the side lengths  for this same metric
    are equal to the $j$s.  
\end{remark}

\subsection{What we do in this paper, and why.}
Our goal is to set up and study the definitions above in a broader context.

\subsubsection{What?}
First of all, 
we will allow $\SO_3$ to mean the automorphisms
of {\em any} nondegenerate ternary quadratic form;
thus, for example, we allow also $x^2+y^2-z^2$,
which results in a noncompact group $\mathrm{SO}_{2,1}(\mathbb{R})$.\footnote{
This group may be more familiar
in its isomorphic realization the group $ \mathrm{PGL}_2(\mathbb{R})$
of projective linear transformations of the plane.}
The result of this substitution is that the $j$-parameters now can vary continuously;
informally, the input  may be either a Euclidean or a Lorentzian tetrahedron. 

Secondly, and perhaps more disorienting to the 
reader familiar with the classical definition, 
  we will allow the real numbers to be replaced by any local field $F$,
  for example, the complex numbers or the $p$-adic numbers. 
Informally speaking, this further enlarges the domain of permissible $j$-parameters.

 We propose to rename the symbol, in this context
at least, the ``tetrahedral symbol,'' which seems more evocative than the
traditional name,   and, at least, does no further  injustice to the pioneers.

\subsubsection{Why?}\label{Why} It seems
to us that, in this more general context, the theory both becomes richer in its own right, and 
also acquires interesting new connections to other areas of mathematics.   Thus, for example:

\begin{itemize}
\item[(a)]  As alluded to above, the $6j$ symbol possesses an unexpectedly
large symmetry group --- a group of order $144$, isomorphic to $\FRS_4 \times
\FRS_3$ (where \(\FRS_n\) is the symmetric group of degree \(n\)).
In the general context, the tetrahedral symbol will  acquire  symmetry
under a much larger group of order $23040$, which is isomorphic to the   the Weyl group
of $\mathrm{Spin}_{12}$.  

\item[(b)] The tetrahedral symbol in the general context possesses a variety
of  novel and beautiful integral representations, some of which seem to us
much simpler
  than any integral representation for the original $6j$ symbol.

\item[(c)]  When $F$ is a local field,  and the representations in question are unramified,
it becomes possible to evaluate {\em explicitly} the tetrahedral symbol in terms
of the geometry of a certain remarkable spinor cone on which the group $\mathrm{Spin}_{12}$ acts.
In this way, we see the spin group itself, rather than only the Weyl group
that was manifested in (a).

\item[(d)] The setting in which we develop the theory --- namely,
the representation of real and $p$-adic groups --- is also the setting
of the theory of automorphic forms and the Langlands program. 
As we will see, the tetrahedral symbol 
has {\em already} played an interesting unacknowledged role in the former theory
--- as the 
kernel underlying certain ``spectral reciprocity'' formulas; and we will
offer various proposals concerning its broader role in the Langlands program.
\end{itemize}

\subsubsection{Connection to existing work}
The idea of generalizing the $6j$ symbol to the cases of $F=\bbR$ or $F=\bbC$ 
is   not a new one. Indeed, 
the analogues of $6j$ symbols have been studied for the groups $\mathrm{SL}_2(\mathbb{R})$
and $\mathrm{SL}_2(\mathbb{C})$ by several authors, both in the context of special function theory, and
of mathematical physics. Among other things, these works define versions of the $6j$ symbols and give a number
of formulas of hypergeometric type, closely related to our
\Cref{sec:hypergeometric_formulas_for_the_tetrahedral_symbol} in the case $F=\bbR$ or $\bbC$.

 We discuss these papers   in a little more detail in \Cref{sub:symbol_for_SU2}.
 Broadly speaking, the main point of overlap is point (b) from \Cref{Why}. 
However, our approach to the theory also has a somewhat different emphasis,
in that we have sought to give a presentation separating 
abstract aspects from computational aspects.  Thus our definition of the symbol 
is somewhat different to prior work; it uses
no explicit formulas and is
manifestly invariant by tetrahedral symmetries. This simplicity comes at a price
--- more effort is needed to get to explicit formulas.

\subsection{A summary of the paper} 

To try to
bring out the beauty of the subject matter, we have to some extent separated
statements from proofs; in the first part of the paper, the reader will find
statements of the theorems, but some proofs are only sketched, with details
given in the second part.  
We summarize briefly the contents of this first part. 

\begin{itemize}
    \item In \Cref{sec:representation_theoretic_definitions}  we give a more
        precise version of the discussion above, and explain how to extend it to
        the case of general $\mathrm{SO}_3$. 
           In this general context, the $V_i$s become infinite-dimensional;
        nonetheless, there is a simple rearrangement of the definition that
        avoids analytic difficulty. 

        \item In \Cref{D6Tetrahedron} we set up various notation that connect
        tetrahedral geometry to the geometry of the root system $\TypeD_6$
        and the associated group $\mathrm{Spin}_{12}$.
   
    \item In \Cref{sec:MainTheorems} we formulate the main theorems:
    \begin{itemize}
        \item  \Cref{thm:W_D6_symmetry} proves that the tetrahedral symbol, for principal series,
        enjoys a $W(\TypeD_6)$-symmetry;
        \item \Cref{thm:main_duality_theorem} 
        evaluates the tetrahedral symbol, in the unramified case, in terms of the
        geometry of a spinor cone. 
    \end{itemize} 
    \item In   \Cref{sec:integral_formulas_for_tetra_symbol} we give several
        formulas of geometric nature for the tetrahedral symbol, in particular
        \Cref{prop:WignerIntegral} as an integral of characters,
        \Cref{prop:VertexIntegral} as an integral of spherical functions, and
        \Cref{prop:edge_integral} as an integral over moduli of six points on
        $\mathbb{P}^1$. We regard these formulas as having  intrinsic interest,
        besides their usage to prove the theorems of \Cref{sec:MainTheorems};
        the same comment goes for the next section too. 
    \item In \Cref{sec:hypergeometric_formulas_for_the_tetrahedral_symbol} we
        give hypergeometric formulas for the tetrahedral symbol, in particular
        \Cref{thm:6j=Hypergeometric}. In the case $F=\R$ we will express the
        result as a sum of $\pFq{4}{3}$ hypergeometric series evaluated at $1$.
    \item In \Cref{TetrahedralAsInterface} we explain how the study of the tetrahedral symbol,
        and in particular our theorem computing it in terms of a spinor cone,
        fits into  the story of relative Langlands duality. 
    \item In \Cref{sec:Further} we rather briefly discuss a number of
        interesting topics: the unitary integral transform defined by the
        tetrahedral symbol and its role in number theory; difference equations;
        and corresponding questions in geometric representation theory.
\end{itemize}

\subsection{Acknowledgements}
The first-named author (A.V.) would like to thank Andre Reznikov, for two
decades of inspiration and friendship, which included many conversations around
the present subject matter; in particular, it was Reznikov's encouragement that
led us to really look carefully at the definition of the $6j$ symbol.

The second-named author (X.G.W.) would like to thank Minh-Tam Trinh for
many spontaneous discussions; and his relentless pursuit of
creativity in math has always been inspirational throughout the years.

Both of us thank Danii Rudenko for interesting discussions during his visit to IAS. 
We would also like to thank Tulio Regge for inspiration.

\section{Review of harmonic analysis on a local field} \label{GammaReview}

\emph{The reader should skip this section and refer to it as needed.}

Recall that a \notion{local field} is a field that is equipped with a
multiplicative \emph{absolute value} $\abs{\blank}$, where we require
$\abs{\blank}$ to satisfy the triangle inequality and induce a locally compact
topology.\footnote{As we recall below, the topology in fact determines a
canonical absolute value; usually, one therefore thinks of the topology as part
of the datum of a local field, but not the absolute value.} Such a field is
isomorphic to either the real numbers, the complex numbers, a finite extension
of the $p$-adic numbers, or a finite extension of the field of Laurent series
over a finite field.

To facilitate the discussions throughout the paper, it is necessary to introduce
some common notations from harmonic analysis of groups defined over a local
field. 
 We mainly focus on the case where the group is the multiplicative group \(\Gm\).

\subsection{Characters, Haar measures, and absolute values}
\label{Characters And Measures}
Let \(F\) be a local field, and if \(F\) is nonarchimedean we let \(\cO\) be
its valuation ring with uniformizer \(\varpi\) and residue field \(k=\bbF_q\).
We fix, once and for all, a nontrivial additive character
\begin{align}
    \Psi=\Psi_F\colon F\longto \bbC^\x,
\end{align}
as well as an additive Haar measure \(\dd\mu\) on \(F\), such that the Fourier
transform \(\FT\) on \(F\) with respect to \(\Psi\) and \(\dd\mu\) is
involutive: in other words, \((\FT^2f)(x)=f(-x)\). In this paper, we will make
the following choices for \(\dd\mu\):
\begin{enumerate}
    \item when \(F\) is nonarchimedean, \(\cO\) has measure \(1\);
    \item when \(F=\bbR\), the unit interval \([0,1]\) has measure \(1\);
    \item when \(F=\bbC\), the unit square \([0,1]\x [0,i]\) has measure \(1\).
\end{enumerate}
We also normalize the absolute value \(\abs{\blank}\) on \(F\) 
to be the factor by which dilation scales the additive Haar measure, or more
explicitly:
\begin{enumerate}
    \item when \(F\) is nonarchimedean, \(\abs{\varpi}=q^{-1}\);
    \item when \(F=\bbR\), \(\abs{x}=\sgn(x)x\);
    \item when \(F=\bbC\), \(\abs{z}=z\bar{z}\).
\end{enumerate}
Then \(\dd\mu/\abs{\blank}\) is a multiplicative Haar measure on \(F^\x\).

Accordingly, the character \(\Psi\) is as follows:
\begin{enumerate}
    \item when \(F\) is \(\bbQ_p\), the choice of uniformizer \(\varpi\) induces
        a group isomorphism \(F/\cO\cong \mu_{p^\infty}\) (the \(p\)-power roots
        of unity in \(\bbC^\x\)), and we let \(\Psi(x)=x\bmod \cO\);
    \item when \(F\) is a finite extension of \(\bbQ_p\), we have
        \(\Psi(x)=\Psi_{\bbQ_p}(\Tr_{F/\bbQ_p}(\varpi^{-d}x))\), where the
        \(\cO\)-module \(\varpi^{-d}\cO\) is precisely the set of elements \(y\)
        such that \(\Tr_{F/\bbQ_p}(y\cO)\subset \bbZ_p\);
    \item when \(F=\bbF_q\lauser{\varpi}\), \(\Psi\) is the composition of
        projecting to the coefficient of \(\varpi^{-1}\), and an isomorphism
        \(\bbF_q\cong \mu_q\subset \bbC^\x\);
    \item when \(F=\bbR\), \(\Psi(x)=e^{-2\pi i x}\);
    \item when \(F=\bbC\), \(\Psi(z)=e^{-\pi i(z+\bar{z})}=e^{-2\pi i\Re(z)}\).
\end{enumerate}
Note that with these choices, \(\Psi\) is \emph{unramified} when \(F\) is
nonarchimedean, namely \(\Psi(x)=1\) if and only if \(x\in\cO\).

Following the conventions of number theory, we shall call a
continuous homomorphism \(F^\x\to \bbC^\x\) a \notion{quasi-character} (instead
of a character per conventions of group theory) of
\(F^\x\). A unitary quasi-character (i.e., its image lands in the unit circle)
is called a \notion{character} of \(F^\x\). 
The choice of measure on $F^{\times}$ induces also
a measure on the group of characters of $F^{\x}$
endowed with its natural locally compact topology, in such 
a way that the Fourier inversion formula holds.

We make the following notational definitions:
\begin{definition}
    Given a quasi-character \(\chi\colon F^\x\to \bbC^\x\) of \(F^\x\) and
    \(s\in\bbC\), we define its \emph{\(s\)-twist} to be the quasi-character
    \begin{align}
        \chi_s\defeq \chi\abs{\blank}^s.
    \end{align}
    To avoid ambiguity, we adopt the convention that \(\chi_s^{n}\defeq
    (\chi_{s})^{n}=(\chi^n)_{ns}\) for any integer \(n\). Namely, \emph{raising to
    a power always has lower priority than \(s\)-twisting.}
    We also define the shorthands \(\chi_+\defeq \chi_{\OneHalf}\),
    \(\chi_-\defeq \chi_{-\OneHalf}\), \(\chi_+^{-1}\defeq
    (\chi_+)^{-1}=(\chi^{-1})_-\), \(\chi_{++}\defeq (\chi_+)_+\), and so on.
\end{definition}

\subsection{\(\gamma\), \(L\), and \(\epsilon\)-factors}

\label{sub:intro_review_three_factors}
Let \(\chi\) be a quasi-character of \(F^\x\). There are three meromorphic
functions on \(\bbC\) attached to \(\chi\): the \notion{\(\gamma\)-factor},
the \notion{\(L\)-function} (or \notion{\(L\)-factor}),  and the
\notion{\(\epsilon\)-factor}, which will be used frequently throughout this
paper. The most important of the three, for us, will be the $\gamma$-factor.

These will be denoted by  $\gamma(s, \chi)$, $L(s, \chi)$ and $\epsilon(s,
\chi)$ respectively. They are all compatible with twisting, in the sense that
 \begin{align}
    \gamma(s+t,\chi)&=\gamma(s,\chi_t)=\gamma(0,\chi_{s+t}),\\
    L(s+t,\chi)&=L(s,\chi_t)=L(0,\chi_{s+t}),\\
    \epsilon(s+t,\chi)&=\epsilon(s,\chi_t)=\epsilon(0,\chi_{s+t}),
\end{align}
and we will denote their values at $s=0$
by $\gamma(\chi)$, $L(\chi)$, $\epsilon(\chi)$, \emph{when defined},
that is to say, when $s=0$ is not a pole of the meromorphic function. 
When \(\chi=\abs{\blank}^s\), we also use the shorthand \(\gamma(s)\defeq
\gamma(\abs{\blank}^s)\), and similarly for \(L(s)\), and \(\epsilon(s)\).

\subsubsection{The $\gamma$-factor as the Fourier transform of a quasi-character}
the $\gamma$-factor $\gamma(s, \chi)$
tells us what the Fourier transform of a quasi-character is. 
Its value $\gamma(\chi)=\gamma(0, \chi)$ at $s=0$ is characterized by the following equality:
\begin{align}
    \label{Tate2gamma}
    \FT(\chi^{-1}) = \gamma(\chi)  \chi_{-1}.
\end{align}
\textit{A priori}, the left hand side is a distribution; the assertion is that, when $\gamma(s, \chi)$
does not have a pole at $s=0$, the left-hand side
is represented by the function on the right-hand side.
Homogeneity arguments already imply that \(\FT(\chi^{-1})\) and \(\chi_{-1}\)
are multiples of one another, 
so the only question has to do with the scalar, and that is what $\gamma$ tells us.
\begin{remark}
    The \(\gamma\)-factor is often characterized in number theory by means of
    the following equality which is essentially a restatement of
    \Cref{Tate2gamma}:
    \begin{equation}
        \label{Tate}
        \int_F \check{\Phi}(x)  \chi^{-1}(x)\abs{x}^{1-s}  \frac{\dd x}{\abs{x}}
        = \gamma(s, \chi) \int_F \Phi(x) \chi(x) \abs{x}^{s} \frac{\dd x}{\abs{x}},
    \end{equation}
    for $\Phi$ a Schwartz-Bruhat function on $F$ (this means a Schwartz function
    for $F$ archimedean, and a locally constant function of compact support
    otherwise), and \(\check{\Phi}\defeq\FT(\Phi)\).
\end{remark}

\subsubsection{Evaluation of the $\gamma$-factor} \label{Gamma Evaluation}
 
It is not difficult to directly evaluate $\gamma(s)$.
For example, take the case $F=\mathbb{R}$;
one readily computes that, writing $\II=2\pi i$ and $\bar{\II}=-2\pi i$,
\begin{equation} \label{gammaReval}
    \gamma(s) = \frac{1}{(\II^{-s}+\bar{\II}^{-s}) \Gamma(s)}
    = (\II^{s-1}+\bar{\II}^{s-1})\Gamma(1-s).
\end{equation}

However, there is a more elegant
way to rewrite this as a ratio of two $\Gamma$-funtions that
reflects better the involutive property of Fourier transform, and also
generalizes well to local fields because \(\Gamma\)-functions are just special
cases of \(L\)-functions.

The \notion{\(L\)-function} or \notion{\(L\)-factor} attached to \(\chi\) is
defined as:
\begin{align}
    \label{eq:Ldef}
    L(s, \chi) \defeq
    \begin{cases}
        \frac{1}{1 - \chi(\varpi)q^{-s}} & \text{if $F$ is nonarchimedean and $\chi$ is unramified}, \\
        1 & \text{if $F$ is nonarchimedean and $\chi$ is ramified}, \\
        \pi^{-\frac{s+t+c}{2}}\Gamma\left(\frac{s+t+c}{2}\right)
          & \text{if $F = \mathbb{R}$ and $\chi(x) = \abs{x}^t\sgn(x)^c$ where $c \in \Set{0,1}$}, \\
        2(2\pi)^{-(s+t)}\Gamma(s+t) & \text{if $F = \mathbb{C}$ and $\chi(z) = \abs{z}^{t}$}.
    \end{cases}
\end{align}
Then one always has
\begin{align} \label{gammaab}
    \gamma(s, \chi) = \frac{L(1-s, \chi^{-1})}{L(s, \chi)} \times (ab^s)
\end{align}
for unique $a,b \in \mathbb{C}$; we write
$\epsilon(s, \chi) = a b^s$ for this $a$
and $b$.

Said differently, 
  \(\epsilon\)-factor is defined in terms of the \(L\)-factor and
the \(\gamma\)-factor by means of
\begin{align} \label{epsilon_and_gamma_and_L}
    \epsilon(s,\chi)\defeq\gamma(s,\chi)\frac{L(s,\chi)}{L(1-s,\chi^{-1})}.
\end{align}
In the nonarchimedean case, we have (assuming $\Psi$ is unramified as we have chosen),
\begin{equation} \label{epsilon unramified case}
    \epsilon(s, \chi) =  a \cdot q^{-f (s-\OneHalf)},
\end{equation}
where $a$ has absolute value $1$ and \(f\in\bbN\) is the exponent of the
conductor of \(\chi\). The reader can refer to \cite[(3.2)]{Ta79} for
more details.

\subsubsection{Other equalities for the $\gamma$-factor}
There are a variety of equivalent forms of \Cref{Tate2gamma}
that we record for reference. 
 Replacing \(\chi\) by \(\chi_s\), we have
\begin{align} \label{Tate3gamma}
    \gamma(s,\chi)= \FT(\chi_{s}^{-1})(1)
    =\int_F\chi^{-1}(x)\abs{x}^{-s}\Psi(x)\dd x.
\end{align}

Applying Fourier transform again to \eqref{Tate2gamma} and using the involutive
property, we have
\begin{align} \label{CheckChiEquation}
    \check{\chi}\defeq\FT(\chi)=\bigl(\chi(-1)\gamma(1,\chi)\chi_1\bigr)^{-1}.
\end{align}
which can be written symmetrically as
   $ \FT(\chi_-)=\chi(-1)\gamma(\OneHalf,\chi)^{-1}\chi_+^{-1}$.
If we apply $\FT$ again to 
 \eqref{Tate2gamma} we find that
\begin{align} \label{GammaSymmetry}
\gamma(0, \chi) \gamma(1, \chi^{-1}) =  \chi(-1),
\end{align}
and so also $\gamma(\OneHalf,\chi) \gamma(\OneHalf,\chi^{-1})=\chi(-1)$,
and moreover combining this with \eqref{Tate3gamma} we arrive at
\begin{align}
    \label{Tate4gamma}
    \gamma(s)^{-1} = \int_{F} \abs{x}^{s-1} \Psi(x) \dd x.
\end{align}

\subsubsection{Multisets of characters} \label{MultisetNotation}
Lastly, we use the following notational conventions for \emph{multi-sets} of
characters \(X, Y\): we let \(X^{-1}\) (resp.~\(X_s\), \(X_+\), \(X_-\), etc.)
to be the multi-set consisting of characters \(\chi^{-1}\) (resp.~\(\chi_s\),
\(\chi_+\), \(\chi_-\), etc.) for \(\chi\in X\), and
\begin{align}
    X\otimes Y\defeq \Set{xy\given x\in X, y\in Y}.
\end{align}
Similar to the single character case, \(X_s^{-1}\) means \((X_s)^{-1}\).
If \(X=\Set{\chi}\) is a singleton, we also use \(\chi\otimes Y\defeq X\otimes Y\).
In addition, we let
\begin{align}
    \gamma(s,X)\defeq \prod_{\chi\in X}\gamma(s,\chi),
\end{align}
and similarly for the \(L\)-factor or the \(\epsilon\)-factor.

For general groups other than \(\Gm\), the definition of \(\gamma\), \(L\), and
\(\epsilon\)-factors are more complicated and we only need them for a
small portion of the paper. For this reason we will postpone the discussion
until \Cref{sub:review_of_local_Langlands_correspondence}.

\subsection{Adjointness and isometric properties of the Fourier transform} \label{sec:Fadjoint}
To avoid any sign confusions we write these out. 
For $\Phi_i$  Schwartz--Bruhat functions on $F$, we have (writing simply $\check{\Phi}$
for the Fourier transform of $\Phi$)
\begin{align}
    \int_F \Phi_1 \check{\Phi}_2 = \int_{F} \check{\Phi}_1 \Phi_2,
\end{align}
as it follows directly from the definition. If we replace above $\Phi_2$
by its Fourier transform, we arrive at
\begin{align}
    \int_{F} \Phi_1(x) \Phi_2(-x) = \int_{F} \check{\Phi}_1 \check{\Phi}_2.
\end{align}
Finally, replacing $\Phi_2$ by $\overline{\Phi_2(-x)}$, we get
\begin{align}
    \int_{F} \Phi_1 \overline{\Phi}_2
    = \int_{F} \check{\Phi}_1 \overline{\check{\Phi}_2}.
\end{align}

\subsection{Review of integration on projective spaces}
\label{ProjectiveIntegrationReview}
We will several times have occasions to integrate densities over projective
spaces, and we now set up relevant notations.

Let $W$ be a $k$-dimensional vector space over $F$.
We say a complex-valued function $\varphi$  on $W-\Set{0}$ is $(-k)$-homogeneous
if $\varphi(tw) = \abs{t}^{-k} \varphi(w)$ for nonzero $t \in F^{\times}$.
There is, up to scaling, a unique
$\GL(W)$-invariant functional on such functions, denoted by
\begin{align}
    \varphi \longmapsto \int_{\bbP W} \varphi 
\end{align}
which we regard as ``integration over $\bbP W$''.
It can be normalized by the following requirement, once we pick Haar measures on $W$ and $F^{\times}$:
for a Schwartz function $\Phi$ on $W$ itself, 
the function
$\bar\Phi(w) = \int_{F^{\times}} \abs{t}^k \Phi(tx) $
is $(-k)$-homogeneous, and we require
\begin{align}
    \int_{\bbP W} \bar\Phi = \int_{W} \Phi.
\end{align}

Now, fix coordinates $W \simeq F^n$, and suppose the Haar measures
on both $W$ and $F^{\times}$ are induced from a Haar meausre on $F$. 
Then one readily verifies
\begin{align} \label{Pnexplicit}
    \int_{\bbP^{n-1}} \varphi
    = \int_{F^{n-1}} \varphi(1, x_2, \ldots, x_n) \dd x_2 \cdots \dd x_n.
\end{align}

\part{Definitions and statements}

\section{Definition of the tetrahedral symbol}\label{sec:representation_theoretic_definitions}

In this section, we define the tetrahedral symbol.  The first two subsections,
\Cref{subsec:Tetra} and  \Cref{subsec:Group}, set up notational preliminaries
about tetrahedra and $\SO_3$ respectively. In  \Cref{Tetrahedraldatum} we
describe the datum defining the tetrahedral symbol, and the actual definition is
given in \Cref{sub:special definition} (under some mild simplifying
conditions) and \Cref{sub:general definition} (in general).  

\subsection{Tetrahedra} \label{subsec:Tetra}
Consider a tetrahedron, with the following labeling: we label the four vertices
by bold face numbers \(\V1,\V2,\V3,\V4\),  the six edges by unordered pairs of vertices, and the
twelve oriented edges by ordered pairs of distinct vertices; we denote these
sets by \(\tetraV, \tetraE, \tetraO\) respectively. For two vertices
\(i,j\in\tetraV\), we will use \(ij\) to denote either the associated oriented
edge (from \(i\) to \(j\)) or
the unoriented one and the context will make it clear which version we are
referring to.  
There are natural maps
\begin{align}
    \tetraO &\longto \tetraE,\quad ij \mapsto ij\\
    \tetraO &\longto \tetraV,\quad ij \mapsto i
\end{align}
which, respectively, assign to an oriented edge  either the underlying unoriented edge
or its source vertex.
For each \(i\in\tetraV\), we let \(\tetraO_i\subset\tetraO\) be
the oriented edges with source \(i\), that is the preimage of $i$
under the second map above.

\subsection{Group-theoretic setup}\label{subsec:Group}
Let
\begin{align}
    \RO = \SO_3(F)
\end{align}
be the special orthogonal group of a nondegenerate ternary quadratic form over the local field \(F\).
Note that we allow an arbitrary form, not only a split one;
therefore, in the case of $F=\bbR$, the group
$\RO$ is either compact $\SO_3$ or $\PGL_2(\bbR)$,
and more generally $\RO$ is either $\PGL_2(F)$
or the projective group of units in a quaternion algebra over $F$.
 Now define:
\begin{align} \label{eq: group basic notation}
    G = \RO^{\tetraO},\quad
    D = \RO^{\tetraE}, \quad
    H =\RO^{\tetraV}.
\end{align}
Here we regard $D, H$ as subgroups of $G$,
by means of the maps 
$\tetraO \to \tetraE$ and $\tetraO \to \tetraV$.
We may visualize  elements of $G \simeq \RO^{12}$ as
$12$-tuples of elements of $\RO$ thus:
\begin{align}
    \begin{bmatrix}
        g_{\V{12}} & g_{\V{13}} & g_{\V{14}} & g_{\V{23}} & g_{\V{24}} & g_{\V{34}} \\
        g_{\V{21}} & g_{\V{31}} & g_{\V{41}} & g_{\V{32}} & g_{\V{42}} & g_{\V{43}}
    \end{bmatrix},
\end{align}
and then $D \simeq \RO^6$ and $H \simeq \RO^4$ correspond to subgroups:
\begin{align}
    D = \begin{bmatrix}
        a  & b & c & d  & e  & f \\
        a  & b & c & d  & e  & f 
    \end{bmatrix},\quad
    H = \begin{bmatrix}
        x  & x & x & y & y & z \\
        y & z & w & z & w & w
    \end{bmatrix}.
\end{align}

We fix a  Haar measure on \(\RO\) and so also on \(G, D, H\), etc as follows: if
\(\RO\) is compact, then we let its volume be \(1\); 
  in the nonarchimedean  split case we normalize the Haar measure so that \(\SO_3(\cO)\) has volume
\(1\).
For the remaining case,  when \(\RO=\PGL_2(\bbR)\) or \(\PGL_2(\bbC)\), we first
fix an algebraic volume form on $\PGL_2$, identifying it by
means of the map $g \mapsto (g \cdot 0, g \cdot 1, g \cdot \infty)$ with
$(\mathbb{P}^1)^3$ minus diagonals; on this space, we consider the unique algebraic volume
form  $\omega$ whose restriction to $(\mathbb{A}^1)^3$ minus diagonals is
\begin{equation}
    \label{omegaA1def}
    \omega \defeq \frac{ \dd x_1 \wedge \dd x_2 \wedge \dd x_3}
    {(x_1-x_2)(x_2-x_3)(x_3-x_1)}.
\end{equation}
One readily verifies that this is $\RO$-invariant.
Then $\abs{\omega}$ defines an volume form on the $F$-points of $\RO$.

Note that this construction works in the case of $F$ nonarchimedean too, but it
gives a {\em different} measure: it would assign to the $\cO$-points the volume
$(1-q^{-2})$.
Sometimes we will need to use both the Haar measure and \eqref{omegaA1def} for
nonarchimedean \(F\), and so to emphasize their distinction, we will also use
\(\dd^{\bbP}g\) to denote the measure induced by \eqref{omegaA1def}.
For convenience, we define
\begin{align}
    \label{eqn:Haar_vs_P13_measures}
    \nu^{\bbP}\defeq\frac{\dd^\bbP g}{\dd g}= \begin{cases}
        (1-q^{-2}) & \RO=\PGL_2(F), F\text{ nonarchimedean},\\
        1 & \text{otherwise}.
    \end{cases}
\end{align}

\subsection{The tetrahedral datum} \label{Tetrahedraldatum}

We denote by $\Pi$
an assignment of {\em an irreducible smooth representation of $\RO$
to each unoriented edge of the tetrahedron.}\footnote{
    The word ``smooth'', in the nonarchimedean case,
    means that each vector has open stabilizer. In the archimedean case,
    it connotes that the underlying vector space for $\RO$
    has a Fr{\'e}chet topology such that the map $g \mapsto g \cdot v$
is smooth. Subtle issues of topology, however, will be almost irrelevant for us.}
The representation will, of course, matter only up to isomorphism.
To this we will attach an invariant $\Pisymbol$ which is
a complex number defined up to sign. The matter of fixing the sign is an interesting
one, which we return to at various points, in particular 
\Cref{SignRemark1} and part (2) of \Cref{thm:W_D6_symmetry}.

Denoting the assignment $\Pi$ by $e \mapsto \pi_e$, we
will also refer to $\pi_{ij}$
for an oriented edge $ij \in \tetraO$ by means of the natural map
$\tetraO \to \tetraE$,
and taking the external tensor product of all $\pi_{ij} $ produces an
irreducible representation of \(G\), denoted simply by \(\Pi_G\).

Now, each $\pi_e$ admits an invariant {\em symmetric} self-pairing
\begin{align}
    \IPair{-}{-} \colon \pi_e \times \pi_e \longto \bbC.
\end{align}
Such a pairing always exists (\cite[Theorems~2.18, 5.11, 6.2]{JaLa70});
making clever use of  multiplicity one subgroups, Dipendra Prasad
proved  that it is also always symmetric
(\cite[Corollary~2]{Prasad99b} and \cite[Proposition~2]{Prasad99}).\footnote{
    The latter reference covers only the {\em generic} representations; but 
    by  \cite[Theorems~2.13, 5.13, 6.3]{JaLa70}
    all irreducible representations are generic except for the finite-dimensional ones.
    The statement can be checked by hand in that remaining case, although we will not use it.
} 
Moreover,  any two  such pairings
are equivalent under a rescaling of the underlying space of $\pi_e$;
and by Schur's lemma, the automorphisms of the pair 
\((\pi_{e},\IPair{-}{-})\) reduce to multiplication by \(\pm 1\).\footnote{While
    this seems \textit{ad hoc}, this is a special case of a
construction that works for any split reductive group, see discussion of duality
in \cite{BZSV24}.} We will frequently refer
to such a pairing as a \emph{rigidification}, because it reduces
the automorphism group of $\pi$ from $\mathbb{C}^{\times}$ to $\pm 1$.

Fix such a self-pairing for each $\pi_e$, which then induces a pairing between
\(\pi_{ij}\) and \(\pi_{ji}\); these induce a $D$-invariant linear
\notion{contraction map}
\begin{align}
    \Contra{-}\colon \Pi_G\longto \bbC,
\end{align}
which, concretely, is given by
\begin{align}
    \bigotimes_{ij \in \tetraO} v_{ij}
    \in \Pi_G \longmapsto  \prod_{i<j}  \IPair{v_{ij}}{v_{ji}}.
\end{align}

We will also denote this by $\Lambda^D$ to emphasize its $D$-invariance.

\subsection{Definition of the tetrahedral symbol when $\Pi_G$ is tempered}
\label{sub:special definition}

\subsubsection{Definition in the compact case}\label{sub:definition_in_the_compact_case}

We begin with the definition of the tetrahedral symbol in ``the compact case'',
that is to say, when $\RO$ is compact (cf. \Cref{Intro}). In this
case all the representations $\pi_{ij}$ are finite dimensional. 
We put \(\pi_{\tetraO_i} \defeq \bigboxtimes_{ij\in\tetraO_i}\pi_{ij}\), a
representation of \(\RO^{\tetraO_i} \simeq \RO^3\). For example,
\begin{align}
    \pi_{\tetraO_\V1}
    = \pi_{\V{12}}\boxtimes\pi_{\V{13}}\boxtimes\pi_{\V{14}}.
\end{align}
Then \(\Pi_G=\bigboxtimes_{i\in\tetraV}\pi_{\tetraO_i}\),
and the pairings on each $\pi_e$
induce also a self-pairing on each $\pi_{\tetraO_i}$.
It will be very convenient to make use of the following:

\begin{lemma} \label[lemma]{PrasadLemma}
    There exists a real structure $\pi_e^{\R}$
    on which  
    $\IPair{-}{-}$ defines a real inner product.
\end{lemma}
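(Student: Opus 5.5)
We are in the compact case: $\RO$ is compact and $\pi_e$ is a finite-dimensional irreducible representation carrying a nondegenerate invariant symmetric bilinear form $\IPair{-}{-}$. The plan is to produce a real form from a Hermitian inner product combined with the bilinear form. There are three steps.

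First, since $\RO$ is compact, averaging any Hermitian inner product over $\RO$ with the Haar measure of volume $1$ gives an $\RO$-invariant positive-definite Hermitian inner product $h$ on $\pi_e$. Define the map $J\colon \pi_e\to\pi_e$ by the requirement
\begin{align}
    \IPair{v}{w} = h(v, Jw) \quad \text{for all } v \in \pi_e,
\end{align}
with $h$ linear in the first variable. Because $\IPair{-}{-}$ is bilinear and nondegenerate, $J$ is a well-defined bijection; it is conjugate-linear and commutes with $\RO$, since both forms are invariant.

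Second, I will show that $J^2$ is a positive scalar. The map $J^2$ is complex-linear and $\RO$-equivariant, so by Schur's lemma $J^2 = c$ for some $c\in\bbC$. Symmetry of $\IPair{-}{-}$ gives $h(v,Jw)=h(w,Jv)$. Using this, one checks that $J$ is "self-adjoint" in the antilinear sense. Then
\begin{align}
    h(J^2 v, v) = \overline{h(Jv, Jv)} > 0 \quad \text{for } v \neq 0,
\end{align}
where the precise placement of the conjugation depends on the convention chosen, but in any convention the quantity is real and positive. Since $h(J^2 v, v) = c\,h(v,v)$, it follows that $c>0$.

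Third, rescale $J$ by $c^{-1/2}$ to get a conjugate-linear, $\RO$-commuting involution $\sigma$. Its fixed locus $\pi_e^{\R}$ is a real form of $\pi_e$ that is stable under $\RO$. For $v,w\in\pi_e^{\R}$ we have $\IPair{v}{w}=c^{1/2}h(v,w)$. This value is real, because $h(v,w)=h(w,v)$ (symmetry of $\IPair{-}{-}$ restricted to the real form), and $h(w,v)=\overline{h(v,w)}$. It is also positive definite when $v=w\neq0$. Hence $\IPair{-}{-}$ restricts to a real inner product on $\pi_e^{\R}$, as the lemma asserts.

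The main obstacle is the sign of $c$ in the second step. This is exactly where the symmetry of the pairing, due to Prasad, enters: for a symplectic (quaternionic-type) pairing one would instead get $J^2<0$ and no real structure would exist. The bookkeeping of which argument of $h$ is conjugated must therefore be done carefully.
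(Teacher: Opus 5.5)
Your proof is correct and follows essentially the same route as the paper's: the paper also writes the symmetric pairing as $\langle x, Cy\rangle$ for an antilinear equivariant $C$ built from a unitary inner product, uses symmetry with $x=Cy$ to show $C^2$ is positive, and takes the fixed points of $C$ as the real structure. Your version adds two steps the paper leaves implicit: the rescaling by $c^{-1/2}$ that makes $C$ an honest involution, and the check that the pairing restricted to the fixed locus is real-valued and positive definite.
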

\begin{proof}
    This is well-known, but we write out the argument
    for later reference. Fix a unitary inner product on $\pi_e$,
    which we denote by $\Pair{x}{y}$.
    Necessarily $\IPair{x}{y} = \Pair{x}{C y}$ 
    for some complex-antilinear
    $C: \pi_e \rightarrow \pi_e$ which commutes with the group action, and one readily
    sees that $C^2=\pm 1$. Symmetry of the pairing implies
    that $\Pair{x}{Cy}=\Pair{y}{Cx}$;
    taking $x=Cy$ we deduce that $C^2$ is positive, thus must be $+1$.
    Then the fixed points of $C$ gives the desired real structure, since
    $x= \frac{x+Cx}{2} + i \frac{x-Cx}{2i}$.
\end{proof}

We suppose that each \(\pi_{\tetraO_i}\) admits a nonzero \(\RO\)-invariant
vector \(v_i\); otherwise we will define $\Pisymbol \defeq 0$.\footnote{It would be  more proper to regard the symbol as \emph{undefined} in
those cases. We adopt this convention, however,  in order to avoid having to repeatedly say ``or
$\Pisymbol$ is undefined'' in various statements; the
same convention is followed in the theory of $6j$ symbols.}
Then the self-pairing $\IPair{v_i}{v_i}$ is nonzero, because the self-pairing is
positive definite on a real structure for $\pi_{\tetraO_i}$ and a suitable
multiple of $v_i$ is real for this structure. We therefore normalize \(v_i\) so
that \(\IPair{v_i}{v_i}=1\) and let
\begin{equation}
    \label{eqn:def_of_invariant_vector_V}
    V = v_{\V1} \otimes v_{\V2} \otimes v_{\V3} \otimes v_{\V4} \in \Pi_G,
\end{equation} 
which, having fixed pairings, is uniquely specified up to a sign. 
Contract \(V\) to obtain what we shall call the \notion{tetrahedral
symbol}
\begin{equation}
    \label{eqn:unnormalized_6j_symbol_def}
    \Pisymbol \defeq \Contra{V} \in\bbC.
\end{equation}
 This depends on our choice of pairings only up to an overall $\pm$ sign. 

In the classical case when \(\RO\) is the compact real \(\mathrm{SO}_3\)
and each \(\pi_{ij}\) is indexed by its highest weight, this is up to sign the standard
definition of the classical \(6j\) symbol;
this will follow from our computations in \Cref{sub:vertex_formula}.

\subsubsection{Definition in the tempered case}\label{sub:definition_in_the_non_compact_case}
We now drop the assumption that $\RO$ is compact,
but assume that each $\pi_e$ is {\em tempered}.
Recall that an irreducible representation of a group over a local
field is called \emph{tempered} if it is ``weakly
contained'' in the regular representation $L^2(\RO)$; for details
on what this means, see \cite{CowlingHaagerupHowe}.
As we discuss further in \Cref{DefinitionNontempered}, tempered reprsentations
can be considered, roughly speaking, as a ``real form''
of a complex variety parameterizing all representations.   
In the compact case, all irreducible representations are tempered.

Every tempered representation admits an $\RO$-invariant inner product;
in particular, by inspecting the proof, we see that Lemma \ref{PrasadLemma} continues to apply:
there exists a real structure $\pi_e^{\R}$  and a $\RO$-invariant
inner product upon it.

In place of \(D\) and \(H\)-invariant \emph{vectors} in \(\Pi_G\), we will use
\(D\) and \(H\)-invariant \emph{functionals}. 
The space of $D$-invariant functionals is always exactly
one-dimensional, spanned by $\Lambda^D$
defined as before; the space of $H$-invariant functionals
is zero or one (see \cite{PrasadCompositio}).
Although unnecessary for our immediate purposes, the two possibilities
are distinguished by the  signs of various $\epsilon$-factors, as explained
in the just-quoted work. 
 
If a nonzero $H$-invariant functional exists, there are 
  two natural ways to  construct it. 
First of all, we can normalize one, up to sign, by the following rule:
\begin{equation}
    \label{eqn:def_of_Lambda_H}
    \Lambda^H(v_1) \Lambda^H(v_2) = \int_{H}  \IPair{h v_1}{v_2}\dd h,\quad v_1,v_2\in\Pi_G.
\end{equation}
\emph{Note the pairing that is used here, and indeed everywhere unless
explicitly stated otherwise, is the self-duality pairing, not an inner product.}

The integral is absolutely convergent on account of the
assumption that $\Pi_G$ is tempered. In this case,
$\Lambda^H$ is nonzero if and only if the space of $H$-invariant functionals
is nonzero, as is proven in a more general context in \cite{SV17}.
On the other hand, we can start with 
\(\Lambda^D\) and just \emph{average it to be \(H\)-invariant}:
\begin{align}  \label{eqn:LambdaHprimedef}
    (\Lambda^H)' \colon v \longmapsto  \int_{H \cap D \backslash H}
    \Lambda^D(hv)\dd h.
\end{align}
This is also absolutely convergent under the assumption of temperedness: 
see \Cref{Hprimedefconvergence}.
 By the multiplicity-one property, \(\Lambda^H\) and \((\Lambda^H)'\) are
proportional to one another; we define the   tetrahedral symbol \(\Pisymbol\) to be the proportionality
factor:
\begin{equation}
    \label{cPi2}
    (\Lambda^H)' = \Pisymbol \Lambda^H.
\end{equation} 

This agrees with the definition given in the compact case. Indeed, define $V$ as in
\eqref{eqn:def_of_invariant_vector_V}; it is straightforward to show that
\(\Lambda^H=\IPair{V}{-}\) satisfies \eqref{eqn:def_of_Lambda_H}.
Thus, on the one hand,
\(\Lambda^H(V) = 1\) by definition, and on the other hand, \((\Lambda^H)'(V)=\Contra{V}\),
which coincides with \eqref{eqn:unnormalized_6j_symbol_def}.

\begin{warning}
    The definition just given is \emph{dual to the definition given in
    \Cref{Intro}}. Relative to that discussion, we have swapped the role of
    vertices versus faces; or to put it differently, by swapping the upper row with
    the lower row in the classical \(6j\) notation.
\end{warning}

\subsection{Definition  of the tetrahedral symbol in the general case}
\label{sub:general definition}

\subsubsection{Principal series and the classification of tempered representations}
\label{DefinitionNontempered}
 
We can  extend the definition beyond tempered representations by a process of
analytic continuation. This discussion is only relevant in the case when $\RO$
is noncompact, which we shall therefore assume.

We must first recall the notion of principal series representation
and  the classification of tempered representations.
Principal series are a twisted
version of functions on the projecive line $\mathbb{P}^1_F$. 
Namely, let $\chi$ be a quasi-character of $F^{\times}$,
and consider the space $\pi_{\chi}$ of functions 
on $F^2-\{0\}$ that are  
  homogeneous of degree $(\chi^2)_{-1} = \chi_{-}^2 = (\chi_{-})^2$, that is, a function
on $F^2 - \Set{0}$ satisfying
\begin{align}
    f (\lambda \cdot z) = \chi^2(\lambda) \abs{\lambda}^{-1} f(z).
\end{align}
Then $\RO$, identified with $\PGL_2(F)$, 
acts on such functions by means of 
\begin{align}
    g \cdot f\colon z \longmapsto f(z\tilde{g})\chi^{-1}_{-}(\det\tilde{g})
    =f(z\tilde{g})(\chi^{-1})_{\OneHalf}(\det\tilde{g}),
\end{align} 
where \(\tilde{g}\in\GL_2(F)\) is an arbitrary lift of \(g\), the choice of
which has no effect on the action.
This construction yields 
an association
\begin{align}
    \text{quasi-characters \(\chi\) of $F^{\times}$} \longrightarrow
    \text{smooth representations $\pi_{\chi}$ of $\RO$}.
\end{align}
The resulting representation $\pi_{\chi}$ is called a \notion{principal series representation}.
It is not irreducible in general, but it is if \(\chi\) is away from certain
discrete subset of all quasi-characters.
Moreover, $\pi_{\chi}$ and $\pi_{\chi^{-1}}$ have the same semi-simplification.
For $\chi$ a character, that is to say, a unitary quasi-character, in
particular, $\pi_{\chi}$ is always irreducible, tempered, and
$\pi_{\chi} \simeq \pi_{\chi^{-1}}$.

With this setup, a tempered irreducible representation of $\RO$ is either:
\begin{enumerate}
    \item of the form $\pi_{\chi}$, where $\chi$ is a character  uniquely determined
        up to the substitution $\chi\leftrightarrow \chi^{-1}$; or
    \item isomorphic to a direct summand of $L^2(\RO)$; these
        form a countable set  of irreducible representations called the \notion{discrete series}.
\end{enumerate}

\subsubsection{Definition  of the tetrahedral symbol in the general case} \label{Definition-general}
We continue to suppose that $\RO$ is noncompact. 
Let  \(\cP_0\) be the set of isomorphism classes of irreducible tempered
representations. Because of the classification above, we can think of \(\cP_0\) as a subset  
of points of a complex analytic variety \(\cP\):
\begin{align} \label{Pdef}
    \cP = \Set*{\text{discrete series}}
    \coprod \Set*{\text{quasi-characters of }F^{\x}\text{ up to inversion}}.
\end{align}
In fact, $\cP_0$ is a subset of the real points $\cP(\mathbb{R})$
for a natural real structure on $\cP$. What is more important
for us is that an analytic function on $\cP$ that vanishes
on $\cP_0$ is identically vanishing; therefore, there is at most one way
to extend a function from $\cP_0$ to a meromorphic function on $\cP$.
The tetrahedral symbol thus extends, after squaring to remove sign ambiguity:

\begin{proposition}
    \label[proposition]{prop:AC}
    The function $\Pisymbol^2$ extends to a meromorphic
    function on $\mathcal{P}^\tetraE$.
\end{proposition}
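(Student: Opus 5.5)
We need to show that $\Pisymbol^2$, defined on $\cP_0^{\tetraE}$ via \eqref{cPi2}, is the restriction of a meromorphic function on $\cP^{\tetraE}$. Since $\cP$ is a disjoint union of points (discrete series) and copies of the space of quasi-characters modulo inversion, it suffices to treat each component. The discrete-series coordinates are frozen, so only the principal-series coordinates vary analytically. The plan is to write $\Pisymbol^2$ as a ratio of two quantities, each analytic in the parameters $\chi_e$ and each symmetric under $\chi_e\leftrightarrow\chi_e^{-1}$.

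The first step is to evaluate \eqref{cPi2} on a suitable test vector $v$. Squaring \eqref{cPi2} and applying \eqref{eqn:def_of_Lambda_H} gives
\begin{align}
    \Pisymbol^2 \int_H \IPair{hv}{v}\,\dd h = (\Lambda^H)'(v)^2 .
\end{align}
This is an identity of two bilinear-type expressions in $v$, so the sign ambiguity disappears. I would realize every principal series $\pi_{\chi_e}$ on the fixed space of functions on $\mathbb{P}^1_F$, using the compact picture obtained by restricting to $K$-orbits, and choose $v=\bigotimes v_{ij}$ from a family of vectors independent of $\chi$, for example $K$-finite or $K$-fixed vectors. The self-pairing on $\pi_\chi$ also has to be made analytic in $\chi$. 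It cannot be the naive pairing between $\pi_\chi$ and $\pi_{\chi^{-1}}$; instead it is that pairing composed with the standard intertwining operator $M(\chi)\colon\pi_\chi\to\pi_{\chi^{-1}}$, which is meromorphic in $\chi$. After normalizing by the appropriate $\gamma$-factors from \Cref{GammaReview} so that it is symmetric and depends only on the class of $\chi$ modulo inversion, the pairing is meromorphic and rigidifies $\pi_\chi$ up to sign for generic $\chi$.

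The second step handles both sides. The left factor $\int_H \IPair{hv}{v}\,\dd h$ is an integral of products of matrix coefficients of principal series, and $(\Lambda^H)'(v)=\int_{H\cap D\backslash H}\Lambda^D(hv)\,\dd h$ is an integral of products of pairings. On $\cP_0$ both converge absolutely. To continue them, I would unfold each into an integral over configurations of points on $\mathbb{P}^1$ whose integrand is a product of powers $\chi_e^{\pm}(\text{cross-ratio-type quantities})$. These are Tate/Igusa-type zeta integrals in the parameters $\chi_e$, and standard results give their meromorphic continuation, with poles controlled by $\gamma$- and $L$-factors. Equivalently, one could appeal to the general theorem, as in Bernstein's analytic continuation of invariant functionals, that integrals of products of matrix coefficients along a family of induced representations continue meromorphically. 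Dividing then gives $\Pisymbol^2$ as a ratio of meromorphic functions on $\cP_0^{\tetraE}$.

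Finally, uniqueness of the extension follows from the remark preceding the proposition: $\cP_0$ is a totally real, Zariski-dense subset of each component. The extension is independent of the choice of $v$, because the identity above holds for every $v$ on $\cP_0$. One must also check that some $v$ makes the denominator not identically zero. This follows on any component where the $H$-invariant functional is nonzero at a tempered point; on the other components $\Pisymbol\equiv0$ there, so the zero extension works.

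The main obstacle I expect is the analytic continuation of $\int_H\IPair{hv}{v}\,\dd h$. Convergence uses temperedness in an essential way, and it is an integral over the noncompact group $H\simeq\RO^4$ whose integrand is only known to decay like Harish-Chandra's $\Xi$ function. My first attempt would be to reduce it via asymptotics of matrix coefficients: write each coefficient as a sum of two terms $\chi_{\pm}$-homogeneous on the torus plus a rapidly decaying remainder, use the Cartan decomposition, and continue term-by-term as Mellin integrals. The fallback is the explicit vertex or edge integrals over six points of $\mathbb{P}^1$ developed later in the paper.
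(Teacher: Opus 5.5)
Your reduction matches the paper's. On the tempered locus $\Pisymbol^2\int_H\IPair{hv}{v}\,\dd h=(\Lambda^H)'(v)^2$, any meromorphic family of symmetric pairings will do, and your handling of uniqueness and of the identically-zero components is fine.

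The gap is in the step that carries all the content: the meromorphic continuation of $(\Lambda^H)'(v)=\int_{(H\cap D)\backslash H}\Lambda^D(hv)\,\dd h$. After normalizing $g_{\V1}=1$, the integrand is $\prod_{ij\in\tetraE}\IPair{g_j^{-1}g_iv_{ij}}{v_{ji}}$ on $\RO^3$. These are six matrix coefficients whose arguments degenerate in correlated ways, because the $KAK$-coordinates of $g_j^{-1}g_i$ are not determined by those of $g_i$ and $g_j$. This is why even the unramified computation in \Cref{sub:the_computation_based_on_bruhat_tits_tree} needs a case analysis over configurations of four points on the tree.

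Your proposed tools do not close this step:
\begin{itemize}
\item \emph{Cartan decomposition.} This handles single-variable integrals. The integral $\int_H\IPair{hv}{v}\,\dd h$ factors as $\prod_i\int_{\RO}\prod_{j\neq i}\IPair{gv_{ij}}{v_{ij}}\,\dd g$, so the integral you flagged as the main obstacle is the easy one. The method does not reach the coupled integral.
\item \emph{Unfolding into $\bbP^1$-configurations.} This presupposes that every edge is a principal series, so it says nothing on components containing, say, a supercuspidal edge. Even on principal-series components, naive unfolding with general test vectors leaves the test functions on a noncompact domain. Reducing it to a pure arrangement integral $\int_{\bbP X}\ul\chi$ is exactly the fibration argument of \Cref{sec:edge_formula_for_principal_series}. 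So your edge-formula fallback settles the all-principal-series components, but nothing more.
\item \emph{Bernstein's principle.} It gives rationality of the unique solution of a polynomial family of linear equations. $H$-invariance determines $\Lambda^H$ and $(\Lambda^H)'$ only up to precisely the scalars whose continuation is in question, so it does not apply as stated.
\end{itemize}

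The missing idea is a joint asymptotic expansion of the pulled-back integrand near the boundary of a normal-crossings compactification of the domain, valid uniformly over every component of $\cP^{\tetraE}$. The paper obtains it in three steps:
\begin{itemize}
\item Matrix coefficients of holomorphic families over $\cP$, discrete series included, are controlled on $\PGL_2\subset\bbP(\Mat_2)$, with exponents read off from Jacquet modules via Casselman's pairing (\Cref{ControlledMatrixCoefficients}).
\item Controlledness survives restriction to the closed subvarieties $H\subset G$ and $(H\cap D)\backslash H\hookrightarrow\RO^{\tetraE}$, via an embedded resolution of their closures.
\item \Cref{lem:abstract_AC} then gives meromorphic continuation by an Igusa-type local computation, using integration by parts in the archimedean case. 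Absolute convergence on $\cP_0$ ensures that the relevant exponent polynomial does not vanish identically.
\end{itemize}
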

The proof is given in  \Cref{sub:pf_of_AC}.
It is based on studying the  
the asymptotic behavior of the integrand in \eqref{eqn:LambdaHprimedef} that enters into the definition of $\Pisymbol$,
which in all cases is very simple; for example in the nonarchimedean case
it is a geometric progression, in suitable coordinates.

\begin{remark}
    Recall that $\Pisymbol$ is nonzero only when $\Pi_G$ admits an $H$-invariant
    functional. Whether this is so   depends only on the component of
    $\cP^{\tetraE}$ to which $\Pi$ belongs. Consequently, the function
    $\Pisymbol$ is simply identically zero on some components of
    $\cP^{\tetraE}$; and  these can be identified by means of $\epsilon$-factors
    using the results of Prasad, see \Cref{EpsilonFactors}.
\end{remark}

\begin{remark}
    One way to interpret the proposition as giving an extension of  
    tetrahedral symbol from tempered representations to all irreducible ones.
    Indeed, there is an identification of sets 
    \begin{equation}
        \label{cPid}
        \cP \simeq \Set{\text{isomorphism classes of irreducible representations of $\RO$}}
    \end{equation}
    defined as follows: we associate to $\chi$ as above $\pi_{\chi}$ if it is
    irreducible, and, otherwise, the unique finite-dimensional subquotient of
    $\pi_{\chi}$ (\cite[Theorems~3.3, 5.11, 6.2]{JaLa70}).
    It is likely that the function $\Pisymbol$ thus extended
    coincides, even for nontempered representations, with a function defined
    by means of suitably regularizing the integrals appearing in
    \Cref{sub:definition_in_the_non_compact_case}. However, we do not examine this
    in the current paper, and the identification \eqref{cPid} will play no 
    further role.  
\end{remark}

\subsection{The sign ambiguity}\label{SignRemark1}

Note that, even when $\Pisymbol$ is defined, it is defined only up to a
sign.  The reader can, at a first reading, ignore all remarks that pertain to this sign, 
and still grasp most of the   content of the paper. 
But,  as we now discuss, the sign is subtle and interesting.

Unlike the classical situation, \emph{this sign ambiguity is essential}:
the meromorphic function $\Pisymbol^2$ described  in the above proposition does
not in general admit a meromorphic square root. Of course, we could redefine
the symbol to be $\Pisymbol^2$ instead of $\Pisymbol$ but we prefer not to do so for
two reasons: first of all, it is $\Pisymbol$ and not its square that corresponds
to the $6j$ symbol; and secondly, the choice of sign is actually very
interesting.

As a general convention, when we prove a formula of the form
\begin{align}
    \Pisymbol = \cdots,
\end{align}
we always regard the equality as being up to sign.
However, in all  important such instances, and particularly in
\Crefrange{sec:MainTheorems}{sec:hypergeometric_formulas_for_the_tetrahedral_symbol},
{\em the formula in fact will give more:} 
it gives a mechanism to resolve the sign ambiguity,   in the sense that we will
produce an explicit meromorphic function $\gamma$ which belongs to the same
square class as $\Pisymbol^2$ ---   that is to say, $\gamma$
differs from $\Pisymbol^2$ by the square of a meromorphic function, and
and therefore $\sqrt{\gamma} \cdot \Pisymbol$ can be
globally defined, not only up to sign.
Similarly, when we describe the symmetry properties of $\Pisymbol$,
we will describe precisely all the signs (part (c) of \Cref{thm:W_D6_symmetry}),
although the reader may prefer to ignore this at a first reading (part (a) of the
same theorem).

All our work elucidating signs comes, however, at a price: one
must make a choice of orientation of edges; and 
in order to obtain the nicest formulas we even need to use
a slightly strange one, see the diagram \eqref{eqn:chosen_oriented_tetrahedron}.

\section{$\TypeD_6$ and the tetrahedron} \label{D6Tetrahedron}

It has been observed by various authors, and perhaps brought into the greatest clarity
  by Rudenko \cite{Rudenko22}, that the geometry of tetrahedron
is related to the root system $\TypeD_6$. As we will
soon see, the tetrahedral symbol reflects the geometry of the associated group,
and not just its root system.

In the present section, we will set up various notation that will
make this connection clearer.  For us, the connection
between the tetrahedron and $\TypeD_6$ will be  ``carried'' by
a homomorphism of rank $6$ free abelian groups
$\OddFunctions \hookrightarrow \TypeD_6$  
to be defined in  \Cref{sub:D6_lattice}.
In \Cref{GroupTheory} we promote this injection of free abelian
groups to a homomorphism of compact Lie groups,
and in \Cref{GroupTheory2} we will examine how the Weyl group $W(\TypeD_6)$
interacts with the symmetries of the tetrahedron.

\subsection{The lattice $\TypeD_6$ and its tetrahedral avatar $\OddFunctions$}
\label{sub:D6_lattice}

The \notion{coroot lattice of type $\TypeD_n$}
will be understood to be the sublattice
of $\Z^{2n}$ consisting of elements of the form
\begin{align} \label{DnDef}
    (x_1, \ldots, x_n, -x_n, \ldots, -x_1),\quad \sum_{i=1}^n x_i\in 2\bbZ.
\end{align}
Here, and in what follows, a \notion{lattice} is simply a free abelian group of
finite rank.
It is equipped with the reflection
group \(W(\TypeD_n) = \Set{\pm 1}^{n-1}\rtimes\FRS_n\)  
obtained by permutations of the $x_i$
and changes of an \emph{even}  number of signs.
It also contains a distinguished $W(\TypeD_n)$-invariant
set of \emph{coroots}, namely, all those vectors $\alpha$
that satisfy $(\alpha,\alpha)=2$ for the standard Euclidean inner product, and the
reflections through their orthogonal hyperplanes
generate $W$.

Following \cite[Plate IV]{Bo02}, we identify \(\TypeD_n\) with the coweight
lattice for the simply-connected group \(\Spin_{2n}\) by projecting to the
first \(n\) coordinates. Here $\Spin_{2n}$ means
the universal (two-fold) cover of
$\SO_{2n}(\bbC)$; it is a complex semisimple group.  The reader who prefers compact Lie groups
can equally well work with its maximal compact subgroup,
which is similarly described as the the universal (two-fold) cover of the compact group $\SO_{2n}(\bbR)$.
The fact that \(\pi_1(\SO_{2n}(\bbC))=\pi_1(\SO_{2n}(\bbR))=\bbZ/2\) for
\(n>1\) may be found in \cite[Theorems~13.6 and 24.1]{Bu13}.

To relate $\TypeD_6$ to the tetrahedron, we consider  the 
 set of odd integral-valued functions on oriented edges
\begin{align} \label{OddFunctionsDef}
    \OddFunctions \defeq \Set{f\colon\tetraO\to\bbZ\given f(ij)=-f(ji)},
\end{align}
where we say that a function, with domain the
oriented edges of the tetrahedron, is {\em odd}
if inverting an edge negates (or inverts, where appropriate) the value of the function.
As a convention,
we will denote such a function $f$ by means of the $2 \times 3$ matrix
of values as follows:
\begin{equation}
    \label{eqn:definitive_2by3_matrix}
    \begin{bmatrix}
        f(\V{12}) & f(\V{13}) & f(\V{14}) \\ 
        f(\V{34}) & f(\V{24}) & f(\V{23})
    \end{bmatrix}.
\end{equation}

Then $\OddFunctions$
is a lattice of rank $6$, and there is an injection
\begin{align}
    \label{OddFunctions} 
    \OddFunctions &\longto \TypeD_6
\end{align}
which sends a function $f$ to the element of $\TypeD_6$
whose  twelve coordinates are {\em sums of values of $f$ on opposite edges.}
There are three pairs of opposite edges, but each comes with four possible orientations; thus we get twelve such sums in all,
symmetric under negation.
For definiteness, we take the first six coordinates of the element
of $\TypeD_6$
to be 
\begin{align} \label{TheIsogeny}
    f(\V 1\V 2)\pm f(\V 3 \V 4),\quad
    f(\V 1\V 3)\pm f(\V 2 \V 4),\quad
    f(\V 1\V 4)\pm f(\V 2 \V 3),
\end{align}
in the given order; 
the last six coordinates are uniquely determined by the first six.

\subsection{The associated isogeny  $\tau$ of compact, or reductive, groups} \label{GroupTheory}   The isogeny of lattices has a more group-theoretic manifestation
which will play an important role in
\Cref{TetrahedralAsInterface}. Namely,
there is a commutative diagram
\begin{equation} \label{taudef2}
    \begin{tikzcd}
        \OddFunctions \otimes \Gm \ar[r]\ar[d] & \TypeD_6 \otimes
        \Gm \ar[d]\\
        \SL_2^{\tetraE} \ar[r, "\tau"] & \Spin_{12}
    \end{tikzcd}
\end{equation}
where the top row is an isogeny of tori induced from \eqref{OddFunctions},
and the bottom row is a homomorphism of reductive groups in which these tori are maximal.

Here, $\Gm$ denotes the multiplicative group,  i.e. the algebraic
group whose complex points are simply $\bbC^{\times}$,
and
the notation $(\blank)\otimes\Gm$ simply means that we replace integer
variables in $(\blank)$ by $\Gm$-valued ones; thus, for example,
$\OddFunctions \otimes \Gm$ can be considered as functions from
oriented edges to $\Gm$ satisfying $f(ij) f(ji)=1.$ Also, as mentioned
earlier, the reader who prefers compact groups to reductive algebraic groups may
harmlessly replace $\Gm$ by the unit circle, $\SL_2$ by $\mathrm{SU}_2$, and
$\Spin_{12}$ by the compact group of the same name.

To construct $\tau$,  let \(\bbC^2_{ij}\) be a two-dimensional vector space attached to
edge \(ij\) with standard basis \(\Set{\bFe_{ij},\bFe_{ji}}\).
Introduce  a copy of $\SL_2$ indexed by $ij$, namely, the unimodular automorphisms
of  \(\bbC^2_{ij}\); its cocharacter group is then identified
with pairs of integers $a_{ij}, a_{ji}$ satisfying $a_{ij}+a_{ji}=0$.
Taking the product over edges $\tetraE$
we arrive at a model for $SL_2^{\tetraE}$
whose maximal torus is canonically  identified with $\OddFunctions \otimes
\Gm$.
Now, for each pair of opposite edges $\Set{ij, kl}$,
taking tensor product of the defining representations 
induces a homomorphism\footnote{Here, to be precise,
    \(\SO_4\) is the \emph{split} form attached to the symmetric  pairing
    in which
    \begin{align}
        \Pair{\bFe_{ij}\otimes \bFe_{kl}}{\bFe_{ji}\otimes \bFe_{lk}} = 1,
        \Pair{\bFe_{ij}\otimes \bFe_{lk}}{ \bFe_{ji}\otimes \bFe_{kl}} = -1,
    \end{align}
and all other pairings zero. }
\begin{align}
    \SL_2^{ij}\x\SL_2^{kl}\longto\SO_4\subset \GL(\bbC^2_{ij}\boxtimes\bbC^2_{kl}),
\end{align}

Applying this construction to the three sets of opposite pairs ---
using  the same order that has been used in \eqref{TheIsogeny}, that is: \(\Set{\V1\V2,\V3\V4}\),
\(\Set{\V1\V3,\V2\V4}\), \(\Set{\V1\V4,\V2\V3}\) ---
we arrive at a map $\SL_2^{\tetraE} \rightarrow \SO_{12}$,
which lifts uniquely to the desired map $\tau: \SL_2^{\tetraE} \rightarrow \Spin_{12}$
of \eqref{taudef2}. Note that $\tau$ is not an
embedding: it has a finite kernel \(\Set{\pm 1}^2\).

\subsection{$W(\TypeD_6)$ and the symmetries of the tetrahedron} \label{GroupTheory2}

By means of  \eqref{TheIsogeny} we understand $W(\TypeD_6)$ to act
on $\OddFunctions \otimes \mathbb{R}$. 
This action interacts richly with the geometry of the tetrahedron. For example
the constraint on dihedral angles is $W(\TypeD_6)$-invariant, see
\Cref{PVol}.

We shall now 
label several important subgroups, which together generate  $W(\TypeD_6)$:
the orientation reversals,   tetrahedral symmetries (which come in two versions: the evident ones, and ones that use an orientation),  and the Regge symmetries:

\begin{itemize}
    \item   The group $\FRI  \simeq (\Z/2)^6$
        of orientation reversals: 
        for each $ij \in \tetraO$ there exists
        a unique element of $W(\TypeD_6)$
        which acts on   
        $\OddFunctions \otimes \mathbb{R}$  by ``reversing the orientation of $ij$,''
        i.e. negating (or inverting, when appropriate) the value of any function
        on $ij$.\footnote{These {\em should not be confused} with the
            elements of $W(\TypeD_6)$ that switch the signs of some coordinates
        \(x_i\). The latter is a group isomorphic to $(\Z/2)^5$.} Such elements 
        generate an elementary abelian subgroup  $\FRI \subset W(\TypeD_6)$ 
        of order $2^6$.

    \item  The group \(\FRTV\simeq\FRS_4\) of tetrahedral symmetries. These
        arise from physical symmetries of the tetrahedron, acting by permuting
        vertices in the evident fashion; for
        example,  \((12)\in\FRS_4\) will \emph{negate} the value of
        \(f\in\OddFunctions \otimes \bbR\) at \(\V{12}\).

    \item The \emph{subset} 
        \(\FRR=\Set{r_{\EN{1}\EN{4}},r_{\EN{2}\EN{5}},r_{\EN{3}\EN{6}}}\)
        of \emph{Regge symmetries}, indexed by pairs of opposite edges.
        These  ``exotic'' symmetries  were written down by
        Regge; they do not form a group,
        {\em and they depend on choice of an orientation}, that is to say,
        a  splitting $\tetraE \rightarrow \tetraO$.
        \begin{itemize}
            \item To elegantly resolve an important sign ambiguity down the line
                (see \Cref{SignRemark1,thm:W_D6_symmetry}),  we will choose not
                the ``dictionary'' order, i.e. coming from the order on the
                natural numbers $\V{1} < \V{2} < \V{3} < \V{4}$, but rather the
                ``vortex''orientation\footnote{There are multiple choices
                possible, but they are not completely random and the
                ``admissible'' choices all lead to the same end.} defined by the
                following diagram  \eqref{eqn:chosen_oriented_tetrahedron},  where
                we also use blackboard bold numbers \(\EN{1}\), etc. to denote each
                oriented edge.\footnote{ We do apologize in advance for using
                different font shapes maybe a little excessively.}
                \begin{equation}
                    \label{eqn:chosen_oriented_tetrahedron}
                    \begin{tikzcd}
                        && \V1
                        \ar[llddd,"\EN{1}"']\ar[dd,"\EN{3}"] && \\
                        && \phantom{x} &&\\
                        && \V4  &&\\
                        \V2\ar[rrrr,"\EN{6}"]\ar[rru,"\EN{5}"] && &&
                        \V3\ar[llu,"\EN{4}"']\ar[lluuu,"\EN{2}"']
                    \end{tikzcd}
                \end{equation}
            \item Note that the orientation is almost according to the
                dictionary order in \(\tetraV\), with one exception: the edge
                \(\V{31}\) is favored over \(\V{13}\), making the outer triangle
                an oriented cycle rather than a simplex.
            \item For short we write \(r_{\EN{1}\EN{4}}\)
                for the element associated to $\Set{\V{12}, \V{34}}$, which is defined as follows:
                for $f \in \OddFunctions \otimes \bbR$, let $a,b,c,d$ be its
                values at   $\V{31}, \V{14}, \V{23}, \V{24}$, i.e. the remaining
                four edges, oriented according to the chosen orientation. Then
                \(r_{\EN{1}\EN{4}}(f)\) has the same values at $\V{12}$ and
                $\V{34}$, but $a,b,c,d$ are modified according to
                \begin{align}
                    a^* = s-a,\quad b^*=s-b,\quad  c^*=s-c,\quad  d^*=s-d,
                \end{align}
                where $s \defeq \frac{a+b+c+d}{2}$ is the ``semi-perimeter.'' 
                We proceed similarly for \(r_{\EN{2}\EN{5}}\) and \(r_{\EN{3}\EN{6}}\).
        \end{itemize}

    \item For technical use only: the group $\FRTE \simeq \FRS_4$ of oriented tetrahedral symmetries.
        These also arise from physical symmetries of the tetrahedron, but
        \emph{taking account of orientation.}   
        We identify $\OddFunctions$ with
        functions on the set $\tetraE$ using the vortex orientation
        of  \eqref{eqn:chosen_oriented_tetrahedron}, and then \(\FRTE\) acts
        in the natural way on $\tetraE$. Thus, for example, the transposition
        $(12) \in \FRS_4$ acting on $f \in \OddFunctions$ leaves its value at
        $\V{12}$ unchanged. We call \(\FRTE\) the
        \notion{oriented tetrahedral group}.
 .

\end{itemize}

To avoid potential confusion when compared with notations such as
\eqref{eqn:definitive_2by3_matrix}, we emphasize again that

\begin{quote}
    \emph{the vortex orientation 
    is to be used only 
    when we consider the
    symmetries \(\FRTE\) and \(\FRR\), and   in particular when considering sign issues
    associated with them. By default, in all other instances unless specified otherwise, we use
    the ordering orientation. }
\end{quote}

We now discuss some of the group theory of how
these various groups interact. For the moment,
the reader can skip this discussion, and refer back to it as necessary:

\begin{lemma}
    \label[lemma]{lem:some_subgroups_of_WD6}
    Together with the group of orientation reversals \(\FRI\), either 
    the group of tetrahedral symmetries $\FRTV$ or the group of oriented tetrahedral symmetries \(\FRTE\) generate
    the same subgroup of \(W(\TypeD_6)\); this subgroup is isomorphic to $(\Z/2)^{6} \rtimes \FRS_4$.
\end{lemma}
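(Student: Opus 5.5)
Throughout, I view everything inside the orthogonal group of $\OddFunctions\otimes\bbR$, identified via \eqref{TheIsogeny} with the standard representation $\TypeD_6\otimes\bbR\cong\bbR^6$. The idea is that $\FRI$, $\FRTV$ and $\FRTE$ all act on $\OddFunctions\otimes\bbR$ by \emph{signed permutations of the edge coordinates} $f(e)$. So all three lie in the hyperoctahedral group $B_6=\Set{\pm1}^6\rtimes\FRS_6$ of signed permutations of the six edge values, and the proof reduces to a comparison inside $B_6$.

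The first step is to compute these signed permutations.
\begin{itemize}
\item $\FRI$ is exactly the diagonal subgroup $\Set{\pm1}^{\tetraE}\cong(\Z/2)^6$, i.e.\ all sign changes of the edge coordinates. The statement of the lemma presupposes $\FRI\subset W(\TypeD_6)$, and this follows from \eqref{TheIsogeny}: negating $f(\V{12})$ swaps the coordinates $f(\V{12})+f(\V{34})$ and $f(\V{12})-f(\V{34})$ up to sign. Concretely, it maps $(x_1,x_2)\mapsto(-x_2,-x_1)$, the reflection in the coroot $e_1+e_2$.
\item An element $\sigma\in\FRTV$ acts by $f\mapsto f\circ\sigma^{-1}$ on oriented edges. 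On the edge coordinates (with the ordering orientation) this is the permutation $\bar\sigma$ of $\tetraE$ induced by $\sigma$, together with a sign $-1$ on each edge whose orientation $\sigma$ reverses.
\item An element of $\FRTE$ acts by the same permutation $\bar\sigma$ of $\tetraE$, but with signs measured relative to the vortex orientation.
\end{itemize}

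The second step is the generation claim. Write $\pi\colon B_6\to\FRS_6$ for the map forgetting signs. For $\sigma\in\FRS_4$, the images in $\FRTV$ and $\FRTE$ have the same image $\bar\sigma$ under $\pi$. Hence each differs from the other by an element of $\ker\pi=\FRI$. It follows that
\[
\FRI\cdot\FRTV=\pi^{-1}(\bar{\FRS}_4)=\FRI\cdot\FRTE,
\]
where $\bar{\FRS}_4\subset\FRS_6$ is the image of $\FRS_4$ acting on edges. Both products are subgroups because $\FRI$ is normal in $B_6$. Since $\FRTV,\FRTE\subset W(\TypeD_6)$ by their definition as subgroups of $W(\TypeD_6)$ (\Cref{GroupTheory2}), this common subgroup lies in $W(\TypeD_6)$.

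The last step is the structure of this subgroup.
\begin{itemize}
\item The action of $\FRS_4$ on the six edges is faithful, so $\bar{\FRS}_4\cong\FRS_4$.
\item We have the exact sequence $1\to\FRI\to\pi^{-1}(\bar{\FRS}_4)\to\FRS_4\to1$.
\item The subgroup $\FRTE$ is a complement: it is a subgroup mapping isomorphically onto $\bar{\FRS}_4$, and it meets $\FRI$ trivially. Its elements are \emph{pure} permutations of the edge coordinates once these are read in the vortex orientation.
\end{itemize}
Therefore the subgroup is $\FRI\rtimes\FRTE\cong(\Z/2)^6\rtimes\FRS_4$, and its order is $2^6\cdot24=1536$.

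The only real obstacle is bookkeeping: checking that the abstract descriptions of $\FRTV$ and $\FRTE$ really are the signed permutations described above. The potential pitfall is the change of basis between the ordering orientation and the vortex orientation, which flips only the coordinate at $\V{13}$. This is harmless here, because it is conjugation by an element of $\FRI$, and such a conjugation preserves $\pi^{-1}(\bar{\FRS}_4)$.
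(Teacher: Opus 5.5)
Your argument is correct. The paper states this lemma without proof and treats it as evident, so there is no competing argument to compare against; your proof supplies the verification the paper leaves implicit.

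The choice that makes it short is to work with signed permutations of the six edge coordinates rather than the $\TypeD_6$ coordinates (the $y^{\pm}$ in the proof of \Cref{lem:generators_of_WD6}). In the edge basis, $\FRI$ is exactly the kernel of the forgetful map to $\FRS_6$. In the $y^\pm$ basis, by contrast, $\FRI$ maps onto a $(\Z/2)^3$ of transpositions $y_i^+\leftrightarrow y_i^-$, and separating kernel from image would take more bookkeeping. Your remark that switching between the ordering and vortex orientations amounts to conjugation by an element of $\FRI$ is also right.

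The one input you take on trust is that these groups lie in $W(\TypeD_6)$. This is part of the paper's setup. Your identity $\FRI\FRTV=\FRI\FRTE$ reduces it to checking $\FRTV$ alone, and that can be done on the generators $(12),(23),(34)$ in the coordinates of \eqref{TheIsogeny}:
\begin{itemize}
\item $(12)$ acts by $(x_1,\dots,x_6)\mapsto(-x_2,-x_1,x_5,-x_6,x_3,-x_4)$, which has four sign changes;
\item $(23)$ and $(34)$ act by pure permutations of the $x_i$.
\end{itemize}
Each is therefore a signed permutation with an even number of sign changes, i.e. an element of $W(\TypeD_6)$.
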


\begin{lemma}
    \label[lemma]{lem:Regge_group_in_a_narrow_sense}
    Suppose we put the oriented edges in the following matrix:
    \begin{align}
        \begin{bmatrix}
            \V{12} & \V{31} & \V{14}\\
            \V{34} & \V{24} & \V{23}
        \end{bmatrix}
        =\begin{bmatrix}
            \EN{1} & \EN{2} & \EN{3}\\
            \EN{4} & \EN{5} & \EN{6}
        \end{bmatrix},
    \end{align}
    and let
    \begin{enumerate}
        \item \(r_i\) be the Regge symmetry that fixes the \(i\)-th column (so that
            \(r_1=r_{\EN{1}\EN{4}}\), and so on);
        \item \(h_i\in\FRTE\) fixes the \(i\)-th
            column and swaps the two rows in the other columns;
        \item \(v_i\in\FRTE\) fixes the \(i\)-th column and swap
            the two columns other than the \(i\)-th.
    \end{enumerate}
    Then the group generated by \(\FRTE\) and \(\FRR\) is isomorphic to
    \(\FRS_3\x \FRS_4\), with \(\FRS_3\) generated by \(\Set{h_ir_i=r_ih_i}\),
    and the commuting \(\FRS_4\) generated by \(\Set{v_ir_i=r_iv_i}\).
\end{lemma}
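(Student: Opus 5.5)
We identify $\OddFunctions\otimes\bbR$ with $\bbR^6$ using the vortex orientation and the matrix $\begin{bmatrix}\EN1&\EN2&\EN3\\ \EN4&\EN5&\EN6\end{bmatrix}$. In these coordinates $\FRTE$ acts by permutations of the six entries and each Regge symmetry $r_i$ acts linearly. All computations then reduce to explicit $6\times 6$ matrices. The plan is to find a change of coordinates in which $\FRS_3$ and $\FRS_4$ act on visibly separate sets of variables, and to read off the structure from there.

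\textbf{Coordinates.} For each column $i$ write $p_i$ and $q_i$ for the top and bottom entries. Set $u_i=p_i+q_i$ and $w_i=p_i-q_i$. In these variables $h_i$ negates $w_j,w_k$ for $j,k\neq i$, fixes all $u$'s, and fixes $w_i$. Likewise $v_i$ swaps the other two columns, and hence swaps $(u_j,w_j)\leftrightarrow(u_k,w_k)$.

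Next I would compute $r_1$ concretely. For $f$ with values $a,b$ on column $2$ (top $a=f(\V{31})$, bottom $d=f(\V{24})$) and $b,c$ on column $3$, the map fixes $p_1,q_1$ and sends $x\mapsto s-x$ on the remaining four entries. A short calculation should show that $r_1$ fixes $u_1,w_1$ and acts on $\{u_2,u_3,w_2,w_3\}$ in a simple way. Concretely, $u_2=a+d$ and $u_3=b+c$ are swapped, while $w_2=a-d$ and $w_3=b-c$ are each negated. I would then compare this with $h_1$ and $v_1$. Here $h_1$ fixes the $u$'s and negates $w_2,w_3$, while $v_1$ swaps $(u_2,w_2)\leftrightarrow(u_3,w_3)$. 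One checks that $r_1$ commutes with both. Then $h_1r_1$ swaps $u_2\leftrightarrow u_3$ and fixes all $w$'s, and $v_1r_1$ fixes all $u$'s and acts on the $w$'s by $w_2\mapsto -w_3$, $w_3\mapsto -w_2$. The precise signs depend on the vortex orientation, and getting them right is the step I expect to be the main obstacle. The orientation was chosen exactly so that these come out cleanly, so if the first attempt gives mixed terms I would redo the bookkeeping of which endpoint is the source for $\V{31}$.

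\textbf{Identifying the two factors.} The elements $h_ir_i$ act only on $(u_1,u_2,u_3)$, as the transpositions of $\FRS_3$, so they generate a copy of $\FRS_3$. The elements $v_ir_i$ act only on $(w_1,w_2,w_3)$ by signed transpositions. Each has determinant $-1$ on the $w$-space and together they generate the group of signed permutations with an even number of sign changes, which is $(\Z/2)^2\rtimes\FRS_3\cong\FRS_4$, namely the rotation group of the cube. The two subgroups act on complementary coordinate spaces, so they commute and intersect trivially. Hence they generate $\FRS_3\times\FRS_4$.

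\textbf{Equality with the full group.} It remains to show that $\langle\FRTE,\FRR\rangle$ equals this product. One inclusion is immediate, since $r_i$, $h_i$ and $v_i$ all lie in the full group. For the other, note that $r_i=(h_ir_i)\,h_i$ and that $\FRTE$ is generated by the $h_i$ and $v_i$ (these already give the Klein four-group and the column permutations). It therefore suffices to show that $h_i$ lies in $\FRS_3\times\FRS_4$. Now $h_i$ fixes all the $u$'s and acts on the $w$'s by an even sign change, so it is an element of the $\FRS_4$ factor. Equivalently one can write $h_i=(h_ir_i)(v_ir_i)\,v_i$ using commutativity, but the direct check suffices. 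Finally, $r_i=h_i\cdot(h_ir_i)$ lies in the product, which completes the argument.
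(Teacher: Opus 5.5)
Your proposal is correct and is essentially the paper's proof. The paper uses the same coordinates $y_i^{\pm}$ (your $u_i$, $w_i$) and computes $r_1$, $h_1$, $v_1$ exactly as you do. It then notes that $h_1r_1$ and $v_1r_1$ are reflections acting only on the $y^{+}$ and $y^{-}$ coordinates respectively, and leaves the rest as the easy exercise you carry out.

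Two small touch-ups:
\begin{enumerate}
\item The even-sign-change signed permutations of $(w_1,w_2,w_3)$ are not the rotation group of the cube. They form the full symmetry group of a regular tetrahedron inscribed in the cube; indeed they contain the reflections $v_ir_i$, which you yourself note have determinant $-1$. Both groups are isomorphic to $\FRS_4$, so nothing in your argument changes.
\item To place all of $\FRTE$ inside the product, you should also record $v_i=(v_ir_i)r_i$.
\end{enumerate}
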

\begin{proof}
    Relabel coordinates $x_1, \dots, x_6$ in
    $\TypeD_6$ (cf. \eqref{DnDef}) as $y_1^{\pm}, y_{2}^{\pm}, y_3^{\pm}$, so
    that $y_1^{\pm}$ correspond to $\V{12}\pm\V{34}$, \(y_2^\pm\) correspond to
    \(\V{31}\pm\V{24}\), and so on.
    Then $r_1,h_1, v_1$  act as follows:
     $r_1$ swaps $y_2^+$ with $y_3^+$ and negates $y_2^-, y_3^-$;  
     $h_1$ negates $y_2^{-}$ and $y_3^{-}$, and $v_1$ swaps $y_2^{\pm}$
     with $y_3^{\pm}$.
    Then $v_1 r_1$ (resp. $h_1 r_1$) fixes four of the coordinates and
    acts as a reflection in the remaining two:
    \begin{align}
        \begin{bmatrix}
            y_1^+ & y_2^+ & y_3^+ \\ y_1^- & y_2^- & y_3^-
        \end{bmatrix} 
        \longmapsto
        \begin{bmatrix}
            y_1^+ & y_2^+ & y_3^+ \\ y_1^- & -y_3^- & -y_2^-
        \end{bmatrix}
        \text{ resp. }
        \begin{bmatrix}
            y_1^+ & y_3^+ & y_2^+ \\ y_1^- & y_2^- & y_3^-
        \end{bmatrix}.
    \end{align}
    The action of the other $v_ir_i$s and $h_ir_i$s is similar. 
    The result is now an easy exercise.
\end{proof}

 \begin{lemma} 
    \label[lemma]{lem:generators_of_WD6}
    Any of the following collections generate all of $W(\TypeD_6)$:
    \begin{enumerate}
        \item the group $\FRI$ and subset \(\FRR\);
        \item the groups \(\FRI\), \(\FRTE\), and any single element in
            \(\FRR\);
        \item the groups \(\FRI\), \(\FRTV\), and any single element in
            \(\FRR\).
    \end{enumerate}
\end{lemma}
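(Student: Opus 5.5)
The plan is to work in the coordinates of the proof of \Cref{lem:Regge_group_in_a_narrow_sense}: relabel $x_1,\dots,x_6$ as $y_1^\pm,y_2^\pm,y_3^\pm$, where $y_1^\pm$ corresponds to $\V{12}\pm\V{34}$, and so on. Let $K$ be the subgroup generated by a given collection. Then $K\subseteq W(\TypeD_6)$ automatically, and it suffices to show that $K$ contains the reflection $s_\alpha$ for every coroot $\alpha=\pm e_a\pm e_b$, $a\neq b$. The basic tool is the identity $w s_\alpha w^{-1}=s_{w\alpha}$. So once $K$ contains some reflections, the set $\Phi_K$ of roots $\alpha$ with $s_\alpha\in K$ is stable under $K$. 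The goal is to show $\Phi_K$ is all $60$ roots.

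Step 1: identify $\FRI$. Reversing $\V{12}$ sends $y_1^+=f(\V{12})+f(\V{34})$ to $-y_1^-$ and $y_1^-$ to $-y_1^+$, fixing the other coordinates. This is the reflection in $y_1^++y_1^-$. Reversing $\V{34}$ gives the reflection in $y_1^+-y_1^-$, and similarly for the other two pairs of opposite edges. Hence $\FRI=W(\TypeD_2)^3$ is generated by the six reflections $s_{y_i^+\pm y_i^-}$, and $\Phi_K\supseteq\{\pm y_i^+\pm y_i^-\}$.

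Step 2, for case (1): conjugate by the Regge symmetries. By the formula in \Cref{lem:Regge_group_in_a_narrow_sense}, $r_1$ swaps $y_2^+\leftrightarrow y_3^+$ and negates $y_2^-,y_3^-$. It therefore carries $y_2^+\pm y_2^-$ to $y_3^+\mp y_2^-$, so the mixed roots $\pm y_3^+\pm y_2^-$ lie in $\Phi_K$; likewise $r_2$ and $r_3$ produce mixed roots linking the other pairs of blocks.

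Next I close up under $K$. Conjugating by the block reflections of $\FRI$ interchanges $y_i^+$ and $y_i^-$ up to sign, so from $y_3^+\pm y_2^-$ one gets all $\pm y_3^{\epsilon}\pm y_2^{\delta}$. Doing the same with the other Regge symmetries yields all $\pm e_a\pm e_b$ across different blocks; within a block the roots are already in $\Phi_K$ by Step 1. Then $\Phi_K$ is everything and $K=W(\TypeD_6)$. A slicker alternative, which I would use as a sanity check, is that the roots already obtained form a connected Dynkin-type graph spanning all six coordinates. Since $\Phi_K$ is a closed subsystem stable under its own Weyl group, it must then be all of $\TypeD_6$, because $W(\TypeD_6)$ acts transitively on its roots.

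Step 3, cases (2) and (3). By \Cref{lem:some_subgroups_of_WD6}, $\langle\FRI,\FRTE\rangle=\langle\FRI,\FRTV\rangle$, so (3) reduces to (2). For (2), I will show that conjugating the given Regge symmetry by elements of $\FRTE$ produces the other two Regge symmetries modulo $\FRI$, which reduces (2) to (1). Concretely, $\FRTE$ permutes the three pairs of opposite edges transitively. The point to check is that the conjugate $v\,r_{\EN{1}\EN{4}}v^{-1}$ equals $r_{\EN{2}\EN{5}}$, or at least agrees with it up to composing with an orientation reversal. This is the step I expect to be the main obstacle, since the Regge symmetries depend on the vortex orientation and a naive permutation might move it to a non-equivalent orientation.

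Failing an exact match at that step, I would bypass the conjugation question. By the computation in \Cref{lem:Regge_group_in_a_narrow_sense}, $v_1$, $h_1$ and $r_1$ lie in $K$, so $h_1r_1\in K$ is itself the reflection in $y_2^+-y_3^+$. Combining this reflection with $\FRI$, and with the permutations of blocks induced by $\FRTE$, one generates mixed roots between all pairs of blocks, and the argument of Step 2 then applies directly.
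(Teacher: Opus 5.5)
Your proof is correct, but for case (1) it takes a different route from the paper. The paper forgets signs via $W(\TypeD_6)\to\FRS_6$. There $\FRI$ maps onto the transpositions $(y_i^+\,y_i^-)$ and $\FRR$ onto transpositions of the $y^+$'s, so conjugation shows that $\langle\FRI,\FRR\rangle$ surjects onto $\FRS_6$. The kernel is then an $\FRS_6$-invariant subgroup of $(\Z/2)^6_0$ of order at least $2^6/2^3=8$, hence all of it.

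You instead keep track of signs and close up the set of roots $\alpha$ with $s_\alpha\in K$:
\begin{itemize}
\item $\FRI$ gives the twelve roots $\pm y_i^+\pm y_i^-$;
\item conjugating these by $r_1,r_2,r_3$ gives $\pm y_i^+\pm y_j^-$ for all $i\neq j$;
\item conjugating those by $\FRI$ gives all $48$ cross-block roots.
\end{itemize}
Your version costs more sign bookkeeping, but it uses only that $W(\TypeD_6)$ is generated by its reflections and that $ws_\alpha w^{-1}=s_{w\alpha}$. The paper's version is shorter, but it relies on the structure $W(\TypeD_6)=(\Z/2)^6_0\rtimes\FRS_6$ and on knowing the $\FRS_6$-invariant subgroups of $(\Z/2)^6_0$.

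Two small remarks. First, the ``slicker alternative'' is not justified as written. Transitivity of $W(\TypeD_6)$ on its roots says nothing about the merely $K$-stable set $\Phi_K$; what you would need is that $\TypeD_6$ has no proper irreducible rank-$6$ root subsystem. Your explicit enumeration makes this unnecessary anyway.

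Second, the step you flagged as the main obstacle in case (2) is immediate, and it is exactly what the paper uses. $\FRTE$ and $\FRR$ are both defined through the same vortex identification of $\OddFunctions\otimes\bbR$ with $\bbR^{\tetraE}$. The recipe for $r_{ee'}$ keeps the values on the opposite pair $\{e,e'\}$ and replaces each other value $a$ by $s-a$, which is symmetric in the remaining four edges. Hence $v\,r_{ee'}v^{-1}=r_{v(e)v(e')}$ exactly for $v\in\FRTE$, with no orientation-reversal correction, and $\FRTE$ acts transitively on opposite pairs. Your fallback via the reflection $h_ir_i\in K$ is also valid, and it avoids producing the other Regge symmetries at all.
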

\begin{proof}
    We use the notation in the proof of
    \Cref{lem:Regge_group_in_a_narrow_sense}.
    Considering the permutation action on the $y^\pm$s and ignoring signs gives
    a map $        W(\TypeD_6) \longrightarrow  \FRS_6$. We will first of all describe the images of $\FRI, \FRR, \FRTE$
    under this map:
    \begin{itemize}
        \item Orientation reversal symmetries   have image in $\FRS_6$
            given by a transposition that switches $y_i^+$ with $y_i^-$
            for one value of $i$.  
            Consequently, the image of of $\FRI$ in $\FRS_6$
            is isomorphic to $(\Z/2)^3$.

        \item  Regge symmetries in \(\FRR\) induce transpositions on 
        the $y^+$s while fixing all $y^-$ coordinates.  

        \item  Oriented tetrahedral symmetries in \(\FRTE\): these 
        stabilize the $y^+$s and $y^-$s and act on them according
        to the {\em same} permutation of  the index set $\Set{1,2,3}$.
    \end{itemize}

    Conjugating orientation reversal by Regge symmetries, we can produce any
    transposition swapping  any one of  $\{y_1^+, y_2^+, y_3^+\}$ with any one of
    $\{y_1^-, y_2^-, y_3^-\}$, and then further conjugating by orientation symmetries,
    we can produce all transpositions.  So, the   group generated by $ \FRI$
    and $\FRR$ surjects onto $\FRS_6$,
    with kernel a permutation-invariant subgroup of $(\Z/2)^6_0$
    (the subscript means elements of sum zero). The order of this kernel is at
    least $8$, since $\FRI$ has order $2^6$ but its image in $\FRS_6$ has order
    $2^3$; therefore, this kernel is all of $(\Z/2)^6_0$.

    Clearly \(\FRR\) is contained in the group generated by a single Regge
    symmetry and \(\FRTE\), so the second claim follows. The third claim
    then follows from \Cref{lem:some_subgroups_of_WD6}.
\end{proof}

\begin{remark}  \label[remark]{PVol}
    Here are two manifestations of $W(\TypeD_6)$ in the geometry of Euclidean tetrahedra. 

    Firstly, the volume function is $W(\TypeD_6)$-invariant:
    consider a Euclidean tetrahedron with edge lengths $\ell_{ij}$.
    It is natural to consider the element of the Lie algebra
    of $\SL_2^{\tetraE}$ whose diagonal entries, in the $ij$-th coordinates, are
    $\pm \ell_{ij}$.
    Its image under $\tau$ is then an element of the Lie algebra
    $\mathfrak{spin}_{12}$.
    Now, the squared volume of the tetrahedron is a polynomial in $\ell_{ij}^2$;
    in fact, {\em this volume-square comes from a conjugacy-invariant polynomial on
    $\mathfrak{spin}_{12}$.} We first learned of this from an answer of A. Goucher on the website Math Overflow:
    \cite{GouMO}.
 
    Secondly, the constraint on dihedral angles is $W(\TypeD_6)$-invariant:
    for such a tetrahedron, choose an odd function $\theta_{ij}$ on oriented
    edges so that $\abs{\theta_{ij}}$ gives the $ij$ dihedral angle and put
    \begin{align}
        \mathbf{x} = \text{image of $e^{i \theta_{ij}} \in
            \OddFunctions \otimes \mathbb{C}^{\times}$ inside $\TypeD_6 \otimes
        \mathbb{C}^{\times}$}.
    \end{align}
    Then \cite[\S~3, (2)]{Poonen2020} says precisely that the constraint to
    form the dihedral angles of a Euclidean tetrahedron takes the form $P \equiv 0$
    where $P$ is a {\em $W(\TypeD_6)$-invariant}  regular function on the
    torus $\TypeD_6 \otimes \mathbb{C}^{\times}$. In fact, $\TypeD_6 \otimes
    \mathbb{C}^{\times}$ is the maximal torus of the  group
    $\mathrm{Spin}_{12}$, and in this way we can regard $\mathbf{x}$ as an
    element of the compact group of this type. With this identification,  the
    polynomial $P$ is given by a linear combination of the characters of the
    trivial, half-spin, adjoint, and symmetric square representations. 
\end{remark}

\subsection{The ``vertex'' and ``face'' half-spin representations}
\label{sub:halfspin}

The spin representation of $\Spin_{12}$, and 
its relation to the geometry of the tetrahedron through the prior discussion,
will be particularly relevant for us later on, and we set up notation now.

Recall that, given an element of \(\Spin_{2n}\) whose image in
\(\SO_{2n}\) has standard eigenvalues \(\lambda_1^{\pm}, \ldots,
\lambda_n^{\pm}\), its eigenvalues in the
two half-spin representations are collectively given by
\begin{align}
    \prod_{i=1}^n \lambda_i^{\pm \OneHalf}.
\end{align}
One half-spin involves those eigenvalues with an
\emph{even} number of \(-1\) signs, and the other half-spin involves the
remaining eigenvalues, i.e., those with an \emph{odd} number of \(-1\) signs.

In the case of $\Spin_{12}$, both the half-spin representations
are of dimension $32$. 
Each half-spin gives rise to a set of weights --- a collection of elements in
the dual of $\TypeD_6$;
and so also, by means of
$\OddFunctions \rightarrow \TypeD_6$,  to a set of $32$ weights in the dual of $\OddFunctions$.
For one of the half-spins, which we call $S$, this set of weights consists of all functionals 
$$f \mapsto \pm f(ij) \pm f(ik) \pm f(il),$$ that is, where
we sum $f$ over three edges that meet at a vertex.
For the other half-spin, the set of weights is similarly defined but
now involving three edges that span a face.   We will be interested in the first, or ``vertex'',
half-spin representation $S$. Note that:

\begin{itemize}
    \item The pullback of $S$ via $\tau: \SL_2^{\tetraE} \rightarrow \Spin_{12}$
        is given by the direct sum of four copies of $\mathbb{C}^2 \boxtimes
        \mathbb{C}^2 \boxtimes \mathbb{C}^2$,
        where we tensor together the standard representations for each triple of edges that meet in a common vertex. 

    \item  If we choose 
        $\bx \in \OddFunctions \otimes \Gm$, specified by a set of coordinates
        $x_{ij}$ satisfying $x_{ij}x_{ji}=1$,
        the eigenvalues of the corresponding toral elements on $S$ are
        the $32$ products
        \begin{align} \label{32}
            x_{ij}^\pm x_{ik}^\pm x_{il}^\pm,
        \end{align}
        where \(\Set{ij,ik,il}=\tetraO_i\) are edges sharing one vertex \(i\).
\end{itemize}

There is an important splitting 
into a direct sum of Lagrangian subspaces
\begin{align} \label{eqn:S_into_Splus_Sminus}
    S=S^+\oplus S^-,
\end{align}
stable under the diagonal torus of
$\SL_2^{\tetraE}$.
Namely, we take $S^+$ to contain  all eigenspaces arising from eigenvalues
\eqref{32} where there are at least two $+$ signs amongst the exponents;
similarly, we define $S^-$ to correspond to eigenvalues where there are at least
two negative signs amongst the exponents.

\section{The main theorems} \label{sec:MainTheorems}
 
In this section, we formulate the two main results of the present paper. The
first (\Cref{thm:W_D6_symmetry}) shows that $\Pisymbol$   enjoy a
$W(\TypeD_6)$-symmetry in the principal series case. The second
(\Cref{thm:main_duality_theorem}) gives an explicit evaluation when
$\Pi$ is additionally unramified. Both theorems give explicit
ways, within the contexts to which they apply, of resolving the sign ambiguity of $\Pisymbol$.

\subsection{Weyl symmetry for principal series}
\label{sub:the_principal_series_case}
Let us  assume $\RO$ is split and consider now the tetrahedral symbol on
principal series representations.
As we will see,  in this situation, we will be able to resolve
the sign ambiguity and also will find a symmetry by the Weyl group of $\TypeD_6$.

We will first fix a ``nice'' parameterization of
such representations by character-valued \emph{odd}
functions on oriented edges:
\begin{align} \label{PSdef}
    \PrincipalSeries \defeq  
    \Set{\chi_{ij}  \in \widehat{F^\x}\given
    \chi_{ij}\chi_{ji}=1},
\end{align}
where \(\widehat{F^\x}\) denotes the set of characters (i.e., unitary
quasi-characters) of \(F^\x\).  As observed
after \eqref{taudef2}, the space $\PrincipalSeries$
can be identified with the
\emph{tensor product of abelian groups}:
\begin{align} \label{CharacterOddFunction}
    \OddFunctions\otimes\widehat{F^\x}
    &\stackrel{\sim}{\longto}\PrincipalSeries,\\
    f\otimes\lambda &\longmapsto \chi_{ij}=\lambda^{f(ij)}
\end{align} 

For such $\chi_{ij}$, let \(\pi_{ij}\) be the
principal series attached to \(\chi_{ij}\), as has been defined in
\Cref{DefinitionNontempered}; note that the isomorphism class of
\(\pi_{ij}\) does not depend on the order of \(i\) and \(j\).
By analogy with the standard notation for classical \(6j\) symbols, we shall write
\begin{equation}
    \begin{Bmatrix} 
        \chi_{\V{12}} & \chi_{\V{13}} & \chi_{\V{14}} \\ 
        \chi_{\V{34}} & \chi_{\V{24}} & \chi_{\V{23}} 
    \end{Bmatrix}=\chisymbol
\end{equation}
for $\Pisymbol$ in this case. It turns out that $\Pisymbol$ is not identically
zero on any components where all $\pi$ are principal series. By \Cref{prop:AC},
the square of this symbol
extends from $\PrincipalSeries$ to a meromorphic function on the
``complexified'' space
$\PrincipalSeries_{\mathbb{C}}$, defined analogously to $\PrincipalSeries$
but dropping the unitarity requirement. 

In order to make better sense of this, we will need to consider the   map
\begin{equation} \label{InducedMap}
    \OddFunctions \otimes \widehat{F^{\times}} 
    \longto \TypeD_6 \otimes \widehat{F^{\times}}.
\end{equation}
induced by  $\OddFunctions \rightarrow \TypeD_6$ from
\eqref{TheIsogeny}.
The right hand side can be considered as collections of characters $\psi_1,
\ldots, \psi_6$, {\em together with  a chosen square root of the product $\psi_1\cdots
\psi_6$}; the morphism \eqref{InducedMap} sends $\chi_{ij}$ to the $\psi$s given
by $\chi_{\V{12}}\chi_{\V{34}}^{\pm}, \chi_{\V{13}} \chi_{\V{24}}^{\pm},
\chi_{\V{14}} \chi_{\V{23}}^{\pm}$ together with the square root of their
product given by $\chi_{\V{12}} \chi_{\V{13}}\chi_{\V{14}}$. In particular, all ``vertex''
products $\chi_{ij}^{\pm} \chi_{ik}^{\pm} \chi_{il}^{\pm}$, as well as all ``face''
products of the form $\chi_{ij}^{\pm} \chi_{jk}^{\pm} \chi_{ki}^{\pm}$, depend
only on the image of $\chi_{ij}$ under \eqref{InducedMap}.

To efficiently state our main results, we introduce the following set of $32$ characters:
\begin{align} \label{LSdef}
    \Sigma = \Set*{\chi_{ij}^{\pm } \chi_{ik}^{\pm } \chi_{il}^{\pm }\,\big|\, i\in\tetraV}.
\end{align}
where for each of the four $i \in \tetraV$, we allow all possible choices of the three signs.
This $\Sigma$ is visibly an avatar of the weights of the half-spin representation  
introduced in \Cref{sub:halfspin};
the connection will be made even clearer in \Cref{sub:review_of_local_Langlands_correspondence}.
We divide $\Sigma = \Sigma^+ \coprod \Sigma^-$,
where \(\Sigma^+\subset \Sigma\) contains all terms that involve either
three \(+1\)'s in the exponents, or two \(+1\)'s and one \(-1\), whereas
\(\Sigma^-=\Sigma-\Sigma^+\).
Write $\gamma(s,\Sigma), \gamma(s, \Sigma^+), \gamma(s, \Sigma^-)$
for the corresponding $\gamma$-functions, where we follow the notation of
\Cref{MultisetNotation} 
and define $\gamma(s, -)$ for a multiset to be the product of the constituent $\gamma$-functions; 
we have
\begin{align} \label{gammaS}
    \gamma(s, \Sigma) = \gamma(s, \Sigma^+)\gamma(s, \Sigma^-).
\end{align} 

The following is our first main theorem:
\begin{theorem}
    \label[theorem]{thm:W_D6_symmetry}
    We have the following results:
    \begin{enumerate}
        \item The value of $\chisymbol^2$ depends only on  
              three-fold products
               $\chi_{ij}^{\pm} \chi_{ik}^{\pm} \chi_{il}^{\pm}$ and in particular 
            the rule $\chi \mapsto \chisymbol^2$ factors through the map
            \begin{align}
                \label{eqn:Principal_to_type_D}
                \PrincipalSeries\simeq\OddFunctions\otimes\widehat{F^{\x}}
                \longto \TypeD_6\otimes\widehat{F^{\x}},
            \end{align} 
            in such a way that the extended function on the right hand side is
            invariant by the action of \(W=W(\TypeD_6)\)
            (cf.~\Cref{sub:D6_lattice} for the \(W\)-action).
        \item More precisely, the rule
            \begin{align} \label{eq:POE}
                \chi \longmapsto
                \begin{Bmatrix} 
                    \chi_{\V{12}} & \chi_{\V{13}} & \chi_{\V{14}} \\ 
                    \chi_{\V{34}} & \chi_{\V{24}} & \chi_{\V{23}}  
                \end{Bmatrix}\sqrt{\gamma\Bigl(\OneHalf,\Sigma^-\Bigr)}
            \end{align}
            can be globally defined on \(\TypeD_6\otimes\widehat{F^{\x}}\):
            there exists a meromorphic function $I$ defined on $
            \TypeD_6\otimes\widehat{F^{\x}}$ that agrees with the right-hand
            side for suitable choice of sign of $\sqrt{\gamma}$. This function
            can be chosen to
            depend only on products of the form \(\chi_{ij} \chi_{ik}
            \chi_{il}^{-1}\) and  satisfy the following $W$-equivariance:
            \begin{align}
                \label{eqn:the_cocycle_with_sign_formula}
                \frac{I(w^{-1}\chi)}{I(\chi)}
                =  \iota(w, \chi) \gamma\Bigl(\OneHalf,\Sigma^+\cap w(\Sigma^-)\Bigr),
            \end{align}
            where $\iota(w, \chi) \in \pm 1$ is characterized
            by the fact that the right hand side of \eqref{eqn:the_cocycle_with_sign_formula} is a $1$-cocycle
             of \(W\) valued in the multiplicative group of nonzero
            meromorphic functions on \(\TypeD_6\otimes\widehat{F^{\x}}\),
            and the following facts,  using notations as in \Cref{GroupTheory2}:
            \begin{enumerate}
                \item when \(w\in \FRTV\) is a tetrahedral symmetry or \(w\in\FRR\) is a Regge symmetry,
                    \(\iota(w,\chi)=1\);
                \item when \(w\) is the element \(s_{ij}\) that exchanges
                    \(\chi_{ij}\) with \(\chi_{ji}\),
                    \(\iota(w,\chi)=\chi_{ik}\chi_{il}\chi_{jk}\chi_{jl}(-1)\),
                    where \(\Set{i,j,k,l}=\tetraV\).
            \end{enumerate}
    \end{enumerate}
\end{theorem}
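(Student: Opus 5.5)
The plan is to use the generation result \Cref{lem:generators_of_WD6}(3): $W(\TypeD_6)$ is generated by $\FRI$, $\FRTV$, and a single Regge symmetry. It therefore suffices to establish the equivariance \eqref{eqn:the_cocycle_with_sign_formula} for these three kinds of generators, together with the factorization through \eqref{eqn:Principal_to_type_D}. Once each generator is known to transform $\chisymbol\sqrt{\gamma(\OneHalf,\Sigma^-)}$ by the factor $\iota(w,\chi)\gamma(\OneHalf,\Sigma^+\cap w(\Sigma^-))$, the following holds: the right-hand side is formally a coboundary-type expression, $\gamma(\OneHalf,\Sigma^-)/\gamma(\OneHalf, w\Sigma^-)$ up to the relation $\gamma(\OneHalf,\chi)\gamma(\OneHalf,\chi^{-1})=\chi(-1)$ from \eqref{GammaSymmetry}. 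So the cocycle property holds up to signs, and the signs are pinned down by the listed values. Squaring then gives part (1).

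First, the tetrahedral symmetries $\FRTV$. These act by permuting vertices, and the definition \eqref{cPi2} is manifestly invariant under relabelling, since $G,D,H$ and the functionals $\Lambda^D,\Lambda^H$ are permuted. The set $\Sigma^-$ is also permuted, so $\iota=1$ there, up to checking that orientation conventions do not introduce signs.

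Second, the orientation reversals $s_{ij}$. Here the point is that $\pi_{\chi}\simeq\pi_{\chi^{-1}}$ for unitary $\chi$, but the rigidified pairings and the explicit formula are not symmetric. I would use an explicit integral formula in the principal series model, obtained from \eqref{eqn:LambdaHprimedef} and \eqref{eqn:def_of_Lambda_H} by realizing $\Lambda^H$ through integrals over $(\bbP^1)$-configurations as in \Cref{ProjectiveIntegrationReview}. Replacing $\chi_{ij}$ by $\chi_{ij}^{-1}$ corresponds to composing with the standard intertwining operator, which in the $\bbP^1$ model is a Fourier-type transform of a homogeneous kernel. Its effect on the normalized functionals is computed by \eqref{Tate2gamma}. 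I expect the ratio of $\Lambda^H$-normalizations to produce exactly the $\gamma$-factors of the four vertex characters at $i$ and $j$ that flip from $\Sigma^-$ to $\Sigma^+$, namely those of the form $\chi_{ij}^{\mp}\chi_{ik}^{\pm}\chi_{il}^{\pm}$. The sign $\chi_{ik}\chi_{il}\chi_{jk}\chi_{jl}(-1)$ arises from \eqref{GammaSymmetry} and from $\chi(-1)$ factors in \eqref{CheckChiEquation}.

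Third, and hardest, a single Regge symmetry $r_{\EN1\EN4}$. It is not a geometric symmetry, so I would first derive an integral representation in which it becomes visible. Concretely, I would express the symbol as a $\bbP^1$-integral over six points (edge integral) or as a Mellin–Barnes/hypergeometric expression in the characters. I would then show that the integrand, after a Fourier transform in suitable variables (the local analogue of the Whipple/Bailey transformation for balanced ${}_4F_3$), changes by the substitution $a\mapsto s-a$ on the four non-fixed edges, up to a ratio of $\gamma$-factors. The main obstacle is to compute this ratio exactly, including sign, and match it with $\gamma(\OneHalf,\Sigma^+\cap r(\Sigma^-))$. I would attempt this via repeated use of \eqref{Tate} in the form of a local Barnes-type lemma, namely the integral of a product of four $\gamma$'s against a character. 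The remaining task is bookkeeping showing that $\iota(r,\chi)=1$ for the vortex orientation \eqref{eqn:chosen_oriented_tetrahedron}. Finally, meromorphic continuation from unitary $\chi$ follows from \Cref{prop:AC}. The dependence only on $\chi_{ij}\chi_{ik}\chi_{il}^{-1}$ then follows because these products determine the image in $\TypeD_6\otimes\widehat{F^\x}$.
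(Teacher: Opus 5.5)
Your architecture matches the paper's: check the transformation law on $\FRI$, $\FRTV$ and one Regge symmetry, propagate it by the cocycle property, and get the Regge step from a Fourier transform of an integral representation. Proving (2) on generators first and squaring to get (1) is a legitimate reordering.

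The gap is that you never fix the function $I$, and several steps depend on that choice. The signs $\iota(w,\chi)$ are statements about one specific global meromorphic function, whereas $\Pisymbol$, $\Lambda^H$ and $\Lambda^D$ are each defined only up to sign at every point. Arguing through \eqref{cPi2} or through ``the normalized functionals'' therefore gives the generator ratios only up to sign. It cannot yield $\iota=1$ on $\FRTV$, nor the sign $\chi_{ik}\chi_{il}\chi_{jk}\chi_{jl}(-1)$ for $s_{ij}$. For $s_{ij}$ the magnitude is automatic, since no isomorphism class changes, so the sign is the entire content.

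Your closing sentence on factorization also runs backwards. That the products $\chi_{ij}\chi_{ik}\chi_{il}^{-1}$ determine the image in $\TypeD_6\otimes\widehat{F^{\x}}$ shows only that a function of them factors, not that $\chisymbol^2$ is such a function. Moreover, \eqref{eqn:Principal_to_type_D} is in general neither injective nor surjective: the cokernel of $\OddFunctions\to\TypeD_6$ is $(\Z/2)^2$, and $\widehat{F^{\x}}$ is not $2$-divisible. A Regge symmetry, being built from half-sums, moves points of the image off it. So the $W$-action only makes sense for a function already defined on all of $\TypeD_6\otimes\widehat{F^{\x}}$. \Cref{prop:AC}, which concerns $\Pisymbol^2$ on $\cP^{\tetraE}$, does not supply one.

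The paper settles all of this by taking $I\defeq\nu_{\bbP}^{-2}\EI(\bbP^1_F,\psi)=\nu_{\bbP}^{-1}\int_{\bbP X}\ul\chi$, with $X$ as in \Cref{thm:6j=Hypergeometric} (see also \Cref{prop:edge_integral}). Its integrand involves only the eight characters $\chi^l_{i-}$ of \eqref{ChiTupleDef}, which gives the descent and the extension at once. Then $w\mapsto I(w^{-1}\chi)/I(\chi)$ is an honest cocycle whose values on generators are identities between concrete integrals. For $s_{12}$ the paper reads the sign off the Mellin transform \eqref{eqn:hypergeometric_Mellin_final}, where the flip changes only the constant prefactor. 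Your intertwining-operator computation could replace this only if done with the unnormalized functionals $\phi^D,\phi^H$.

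For the Regge step, you leave the mechanism at ``a Fourier transform in suitable variables'' or a local Barnes lemma. The concrete identity that does the work is the Grassmannian Fourier duality of \Cref{prop:Fourier_duality}: for half-dimensional $X\subset F^n$, $\int_{\bbP X}\ul\chi=\prod_i\chi_i(-1)\int_{\bbP X^\perp}\ul{\check\chi}$. For $X=(x-y,y-z,z-w,w-x,x,y,z,w)\subset F^8$ one finds $X^\perp=\{(a,b,c,d,d-a,a-b,b-c,c-d)\}$, which has the same shape. After swapping $x$ and $z$ the right side is again an edge integral, and the eight $\gamma$-factors coming from the $\check\chi_i$ are exactly $\gamma(\OneHalf,\Sigma^+\cap w(\Sigma^-))$ with trivial sign (see \eqref{eqn:Jsymbol_after_Fourier_duality} and \Cref{prop:inv_Regge_symmetry}). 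The absolute convergence on both sides, needed to apply the duality, comes from \Cref{sub:pf_of_edge_integral_convergence}.

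What this produces, however, is not $a\mapsto s-a$ but the inv-Regge element $w=r_{\EN{2}\EN{5}}w_0$: the Regge symmetry composed with inversion of every character. So to conclude $\iota(r,\chi)=1$ you also need $\iota(w_0,\chi)=1$. The paper gets this by computing $\iota$ on the triple flips $s_{ij}s_{ik}s_{il}$ and $s_{jk}s_{kl}s_{lj}$ from the edge-flip signs and writing $w_0$ as their product. It then transports the result to $r_{\EN{1}\EN{4}}$ and $r_{\EN{3}\EN{6}}$ by conjugation in $\FRTV$.
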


A proof is sketched in \Cref{SketchProof}. The proof proper appears
in \Cref{thmD6proofsection}.

\begin{remark}
    \begin{enumerate}
        \item We note that the extension of \(\chisymbol\) to
            \(\TypeD_6\otimes\widehat{F^{\x}}\) is no longer the tetrahedral
            symbol for \(\RO\), because \eqref{eqn:Principal_to_type_D} is not
            surjective. It would be interesting to interpret the extension in
            terms of tetrahedral symbols for a spin group rather than $\RO$,
            i.e., in the noncompact case, \(\SL_2\) rather than $\PGL_2$, cf.
            \Cref{sub:symbol_for_SU2}.
 
        \item It is not a formality that a choice of signs for \(\iota(w,\chi)\)
            exists. It is equivalent to the following fact: the $2$-cocycle on
            $W$ defined by
            \begin{align}
                (w,w') \longmapsto
                \biggl(\chi\mapsto \prod_{\substack{\psi \in \Sigma^+,
                w^{-1}\psi \in \Sigma^-\\ (ww')^{-1}\psi\in \Sigma^+}} \psi(-1)\biggr)
            \end{align}
            is cohomologically trivial.
        \item  We will define the function $I$ using a certain hypergeometric
            formula for the tetrahedral symbol (see
            \Cref{sub:edge_formula,sec:hypergeometric_formulas_for_the_tetrahedral_symbol}).
        \item The statement above does not \emph{directly} cover Regge's
            original symmetry of $6j$ symbols, which pertains to
            the case of $\RO$ \emph{compact}. We outline our
            expectations about this in \Cref{ClassicalRegge}.
    \end{enumerate}
\end{remark}

\subsection{Evaluation of $\Pisymbol$ in the unramified case} \label{Subsection:Unramified}
Suppose now that $F$ is nonarchimedean, and let \(\bbT\subset\bbC^\x\) be the unit
circle. There is an embedding of  groups
\begin{align}
    \bbT &\longto \widehat{F^{\times}}\\
    z &\longmapsto (x\mapsto z^{\val_F(x)}),
\end{align}
which, in words,  sends a complex number $z$ of absolute value $1$
to the character of $F^{\times}$ that sends a uniformizer to $z$.
The image of this homomorphism gives the unitary unramified characters
of $F^{\times}$, i.e., those that are trivial on the maximal
compact subgroup $\cO^{\times} \subset F^{\times}$.
The isomorphism \eqref{CharacterOddFunction}
identifies the subspace $\PrincipalSeries_0\subset \PrincipalSeries$ consisting of unramified
unitary characters with the tensor product
\begin{align} \label{PS0def}
    \PrincipalSeries_0 \simeq \OddFunctions \otimes \bbT,
\end{align}
and by \Cref{thm:W_D6_symmetry}, the function $\chisymbol$ (up to signs)
descends to a $W(\TypeD_6)$-invariant function on
\begin{align}
    \TypeD_6 \otimes \bbT \simeq  \text{the maximal torus of compact
    }\Spin_{12}(\bbR),
\end{align}
where the identification was that discussed after \Cref{DnDef}.

Now, the $W(\TypeD_6)$-invariant function on \(\TypeD_6\otimes\bbT\)
are precisely those arising by restriction of
class functions (i.e., 
  conjugacy-invariant continuous functions) on $\Spin_{12}(\bbR)$.
It is therefore, we hope, irresistible to ask
for a description of this class function in terms of the
representation theory of $\Spin_{12}(\bbR)$. Since the representation theory of
the compact group \(\Spin_{12}(\bbR)\) is equivalent to the theory of
\emph{algebraic} representations of its complexification (simply denoted as
\(\Spin_{12}\)), we will, from now on, switch to the latter to better align
ourselves with the more general discussions in
\Cref{TetrahedralAsInterface,sub:review_of_local_Langlands_correspondence}.

As discussed in \Cref{sub:halfspin},
there is a $32$-dimensional half-spin representation \(S\) of $\Spin_{12}$
whose weights pull back under $\OddFunctions \to \TypeD_6$
to the  $32$ linear functionals
\begin{align}
    \{ \pm f(ij)  \pm f(ik) \pm f(il) \}  
\end{align}
where \(\Set{ij,ik,il}=\tetraO_i\) are edges sharing starting vertex \(i\).   
It contains a distinguished $16$-dimensional
\emph{cone  of pure spinors}  $P \subset S$ (for details see \Cref{sub:the_dual_side})
viewed as a complex subvariety of \(S\). Its ring of regular functions
\(\bbC[P]\) then affords a weighted \(\bbC^\x=\Gm(\bbC)\)-action induced by the
scaling on \(S\).

\begin{theorem}
    \label[theorem]{thm:main_duality_theorem}
    Suppose $F$ is nonarchimedean and $\chi\in\PrincipalSeries_0$
    with image $\Frob\in \TypeD_6 \otimes \bbT$. Then
    \begin{align}
        \label{eqn:main_duality}
        \chisymbolBig  = \frac{ (1-q^{-2})^{3}}{\sqrt{L(\OneHalf,\Sigma)}}
        \Tr\bigl(q^{-\OneHalf}\Frob, \bbC[P]\bigr),
    \end{align}
    where \(q^{-\OneHalf}\) acts on \(\bbC[P]\) through the weighted action of
    \(\bbC^\x\), and $\Sigma$ is as in \eqref{LSdef}.
\end{theorem}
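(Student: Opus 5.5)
We reduce both sides to explicit rational functions on the torus $\TypeD_6\otimes\bbT$ and compare them. The strategy has three steps.

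\emph{Step 1: the spectral side.} On the left, we start from the definition \eqref{cPi2} with unramified data. We take $v$ to be the tensor product $\phi^\circ$ of spherical vectors in $\Pi_G$, normalized so that $\IPair{\phi_e^\circ}{\phi_e^\circ}=1$. Then $\Lambda^H(\phi^\circ)^2=\int_H \IPair{h\phi^\circ}{\phi^\circ}\,\dd h$ factors over the four vertices. Each factor is a triple-product integral $\int_{\RO}\prod_{j}\Xi_{\chi_{ij}}(g)\,\dd g$ of Macdonald spherical functions. By Ichino's formula in the unramified case, this integral is
\begin{align}
    (1-q^{-2})^{2}\,\frac{L(\OneHalf,\chi_{ij}^\pm\chi_{ik}^\pm\chi_{il}^\pm)}{\prod_{e\ni i}L(1,\mathrm{ad}\,\pi_e)}.
\end{align}
Taking the product over the vertices gives $L(\OneHalf,\Sigma)$ in the numerator and each $L(1,\mathrm{ad}\,\pi_e)$ twice in the denominator. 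We then have to compute $(\Lambda^H)'(\phi^\circ)=\int_{H\cap D\backslash H}\Lambda^D(h\phi^\circ)\,\dd h$. In the form of \Cref{prop:VertexIntegral}, this is an integral over $\RO^4/\RO$ of a product of six spherical functions $\Xi_{\chi_{ij}}(x_i^{-1}x_j)$. Equivalently, it is a sum over quadruples of vertices of the Bruhat--Tits tree modulo $\PGL_2(F)$, weighted by the six Macdonald functions of the pairwise distances.

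\emph{Step 2: evaluating the tree sum.} We parametrize a quadruple of vertices, up to the group action, by the combinatorial type of its spanning subtree. This subtree is either an ``H-shape'' with two branch points, in one of three pairings, or it degenerates. We record the type by leg lengths $a_1,\dots,a_4$ and the length $b$ of the middle segment. The pairwise distances are then linear in these lengths, and the number of quadruples of a given type is an explicit power of $q$, corrected at boundary cases. Writing each Macdonald function as $\frac{q^{-n/2}}{1+q^{-1}}\bigl(c(\alpha)\alpha^{n}+c(\alpha^{-1})\alpha^{-n}\bigr)$ turns the sum into a finite combination of geometric series in the $x_{ij}^{\pm1}q^{-1/2}$. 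Summing these series produces an explicit rational function $R(\Frob)$. We then divide by the square root of the vertex product from Step 1. The adjoint $L$-factors should cancel against the $c$-functions; this is the mechanism behind \Cref{thm:W_D6_symmetry}(1), and we use that theorem to check that the result is $W$-invariant.

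\emph{Step 3: the dual side and the comparison.} On the right, $P$ is the cone over the orthogonal Grassmannian $\mathrm{OGr}^+(6,12)=\Spin_{12}/P_6$, embedded in $\mathbb{P}S$. Since $P$ is normal, $\bbC[P]=\bigoplus_n V(n\varpi_6)^\vee$ with the scaling weight equal to $n$. Hence
\begin{align}
    \Tr(q^{-\OneHalf}\Frob,\bbC[P])=\sum_{n\ge0}q^{-n/2}\chi_{n\varpi}(\Frob),
\end{align}
and by Weyl's character formula, or by Hilbert-series localization over the $32$ torus-fixed points of $\mathrm{OGr}$, this equals
\begin{align}
    \sum_{w\in W/W_{P}} \frac{1}{\bigl(1-q^{-1/2}w\varpi(\Frob)\bigr)\prod_{\beta\in w\Phi_u}\bigl(1-\beta(\Frob)\bigr)}.
\end{align}
We match this term by term with the output of Step 2. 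The denominators $1-q^{-1/2}\psi$, for the $32$ weights $\psi$ of $S$, should correspond to the geometric series in the tree sum, and $L(\OneHalf,\Sigma)^{-1/2}$ combines with $\sqrt{L(\OneHalf,\Sigma)}$ from Step 1. The factor $(1-q^{-2})^3$ comes from $\nu^{\bbP}$ and the measure normalizations on $H$ versus $H\cap D$.

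The main obstacle is Step 2: organizing the tree sum so that it visibly equals a $W$-symmetric fixed-point sum. As a shortcut we would first try the following. Because both sides are $W(\TypeD_6)$-invariant rational functions (the left side by \Cref{thm:W_D6_symmetry}), it suffices to compare their poles and residues along the hyperplanes $q^{-1/2}\psi(\Frob)=1$ together with the behavior at infinity. Alternatively, one can specialize to a generic one-parameter family, such as $\Frob$ near the Satake parameter of the trivial representation, and conclude by rationality.
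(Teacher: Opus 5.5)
Your route is the paper's. It uses the vertex formula (Ichino–Ikeda for $\Lambda^H$ on the spherical vector, and the Bruhat–Tits tree sum for $\VI$ over the three H-shaped patterns), Borel–Weil plus Weyl's character formula on the dual side, and then, instead of a term-by-term match (which the paper also abandons), a comparison of simple poles and residues together with boundedness along generic one-parameter subgroups. Two steps need repair.

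\textbf{The pole reduction is incomplete.} The claim that it suffices to compare poles and residues along $q^{-1/2}\psi(\Frob)=1$ is not justified, because the automorphic side has further candidate poles.
\begin{itemize}
\item The middle segment of each H-shaped tree produces factors $\bigl(1-q^{-1}x_{13}^{\pm1}x_{14}^{\pm1}x_{23}^{\pm1}x_{24}^{\pm1}\bigr)^{-1}$, and their analogues for the other two pairings.
\item $L(1,\mathrm{ad})$ contributes $\bigl(1-q^{-1}x_{ij}^{\pm2}\bigr)^{-1}$.
\end{itemize}
Together these are exactly the hyperplanes $q^{-1}\Frob^{\alpha}=1$ for the $60$ roots $\alpha$ of $\TypeD_6$. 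This family is $W$-stable, so $W$-invariance cannot dispose of it. The cancellation is also not termwise: $c(x)$ carries only $1-q^{-1}x^{-2}$, and $c(x^{-1})$ only $1-q^{-1}x^{2}$. You must prove these residues vanish; the paper does this by computer in \Cref{prop:poles_of_VI}.

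$W$-invariance does save work. Since $W$ is transitive on roots and on the $32$ half-spin weights, it suffices to check one root hyperplane and one half-spin residue. For the signed function $\sqrt{L(\tfrac12,\Sigma)}\,\chisymbol$, however, this invariance requires part (2) of \Cref{thm:W_D6_symmetry} (with $\iota\equiv1$ for unramified $\chi$), not part (1). Drop your fallback of specializing near the trivial representation. The point $x_{ij}=q^{1/2}$ lies on the polar divisor, and one curve cannot determine a rational function of six variables.

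\textbf{The constant.} With $\mathrm{vol}(K)=1$, Ichino's unramified integral is
$\int_{\RO}\Xi_1\Xi_2\Xi_3=\zeta_F(2)^2\,L(\tfrac12,\pi_1\otimes\pi_2\otimes\pi_3)/\prod_i L(1,\pi_i,\mathrm{ad})$, where $\zeta_F(2)=(1-q^{-2})^{-1}$. This is the opposite sign of exponent from yours. You can check it with Macdonald's formula: the residue at $q^{-1/2}x_1x_2x_3=1$ of $1+\sum_{n\ge1}(1+q^{-1})q^n\,\Xi_1\Xi_2\Xi_3(n)$ matches only with $\zeta_F(2)^2$.

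Nor does $\nu^{\bbP}$ enter the vertex-formula route at all. The power of $1-q^{-2}$ therefore cannot be read off; it has to come out of the comparison. The structure makes this possible. Along generic one-parameter subgroups, $L(1,\mathrm{ad})\to0$ while $\VI$ stays bounded, and the trace also tends to $0$. Hence the residues alone determine the identity, constant included.

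A four-fold iterated residue can be done by hand. Take it at
$q^{-1/2}x_{12}x_{13}x_{14}=q^{-1/2}x_{12}x_{23}x_{24}=q^{-1/2}x_{13}x_{23}x_{34}=q^{-1/2}x_{14}x_{24}x_{34}=1$. Only trees with four nonzero legs survive there, and the computation confirms the paper's reduced identity $(1-q^{-2})^5L(1,\mathrm{ad})\VI=\Tr(q^{-1/2}\Frob,\bbC[P])$.

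Then track $\Lambda^H(v)$ carefully. Four vertices give $\zeta_F(2)^8$ in $\Lambda^H(v)^2$, hence $\zeta_F(2)^4$ in $\Lambda^H(v)$. By contrast, \eqref{edge equation 2} records $L(2)^8$. The exponent $3$ in \eqref{eqn:main_duality} arises as $8-5$; with $4$ in place of $8$ one lands on $(1-q^{-2})^{-1}$ instead. So do not reverse-engineer $(1-q^{-2})^3$. That normalization needs an independent check.
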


A proof is sketched in \Cref{SketchProof} and detailed  in
\Cref{sec:computations_for_unramified_principal_series}.

In words, up to normalizing factors (see \Cref{section:geometric_to_numerical}
for a geometric point of view concerning the factor \((1-q^{-2})^3\)), the tetrahedral symbol is
\emph{the weighted character of the Frobenius action on the algebraic function
ring of the spinor cone.} Why are the half-spin representation and the spinor
cone involved? Relative Langlands duality offers at least a context in which to
understand this, explaining, for example, why the spinor cone is actually
Lagrangian inside the half-spin; see \Cref{TetrahedralAsInterface} for
discussion.

\subsection{A sketch of the proofs of \Cref{thm:W_D6_symmetry,thm:main_duality_theorem}}
\label{SketchProof}
In \Cref{sub:edge_formula}, we will write down an explicit integral formula (the
``edge formula'') for $\chisymbol$;
this formula is proved in 
\Cref{sec:edge_formula_for_principal_series}  by explicit computation with principal series models,
and converted to a more concrete form in
   \eqref{eqn:hypergeometric_edge_integral}.
    This formula makes clear
that \(\chisymbol^2\) depends
only on characters of the form \(\chi_{ij}^\pm\chi_{ik}^\pm\chi_{il}^\pm\). 
  This
proves that \(\chisymbol^2\) extends over
\eqref{eqn:Principal_to_type_D}. 

For the \(W(\TypeD_6)\)-symmetry, we construct
explicit symmetries of \(\chisymbol\) that generate the whole Weyl group,
corresponding to the generators discussed in \Cref{GroupTheory2}.
 The least trivial ones are the Regge symmetries, which come
from applying a general Fourier duality result for hypergeometric-type
integrals; see Proposition \ref{prop:Fourier_duality}
for a succinct abstract formulation of that duality.
This proves the symmetry up to signs. The signs can then be pinned
down using the Mellin transform of the hypergeometric integral
from \Cref{sub:Mellin_of_tetrahedral_symbols}.
See \Cref{sec:proof_of_Weyl_symmetry} for the details.

Now consider \Cref{thm:main_duality_theorem}.
The right-hand side of \eqref{eqn:main_duality} can be computed using the Weyl
character formula after we decompose \(\bbC[P]\) into irreducible
\(\Spin_{12}\)-representations. To analyze the left-hand side
we rely on another integral formula (the
``vertex formula'')\footnote{In principle we might  try to use the hypergeometric formula as well, but 
the vertex formula seems to be easier to work with for this purpose. } for
\(\chisymbol\) that we will prove in \Cref{sub:spherical_vertex_formula};
it leads to a computation involving the Bruhat--Tits tree of
\(\PGL_2(F)\). A direct comparison of both sides is thus conceptually possible
yet seems a bit tedious to do even with a computer. We will instead compare both
sides in these steps:
\begin{enumerate}
    \item Showing that both sides are meromorphic with the same set of simple
        poles;
    \item Assisted by a computer, showing that their residues agree (which is
        significantly easier than comparing the whole expressions);
    \item Showing that the difference of both sides is a bounded function with
        value \(0\) at a single input, so it must be identically \(0\).
\end{enumerate}
For details, see \Cref{sec:computations_for_unramified_principal_series}. 

\section{Integral formulas for the tetrahedral symbol}\label{sec:integral_formulas_for_tetra_symbol}
In this section we will give a class of integral formulas for the
tetrahedral symbol.  They are based on the interaction between tetrahedral combinatorics
and the geometry of  certain \(\RO\)-spaces. 

They come in two classes, which can
be seen as dual to one another: a \notion{vertex integral} 
which takes as input an $\RO$-space $X$ and a collection
of $\RO$-invariant functions \(\varphi_{ij}: X^2 \rightarrow \bbC\) indexed by
(unoriented) edges, and an \notion{edge integral} which takes
as input a $\RO$-space $Y$ and a collection of $\RO$-invariant functions
\(\psi_i: Y^3 \rightarrow \bbC\) indexed by
vertices. These are given, respectively, by
\begin{align} \label{VertexEdgeDefinition}
    \VI(X,\varphi)\defeq \int_{\RO\backslash X^{\tetraV}} \prod_{ij \in \tetraE}  \varphi_{ij}(x_i, x_j),
    \text{ and }
    \EI(Y,\psi)\defeq\int_{\RO\backslash Y^{\tetraE}} \prod_{i \in \tetraV} \psi_{i}(y_{ij}, y_{ik}, y_{il}).
\end{align}

In both forms, the group \(\RO\) acts on \(X^{\tetraV}\) or \(Y^{\tetraE}\)
diagonally, and the integrands are \(\RO\)-invariant.
The symmetry between the two constructions is a little clearer if we describe
the situation in words: To each $X$-labelling of vertices, i.e. ``tetrahedron in
$X$,'' we can attach a number, namely, the product over edges $e$, of the values
of $\varphi_e$ on vertices incident with $e$. We obtain $\VI$ by integrating
this number over the space of $X$-labellings of vertices. For $\EI$ we just
switch  $X$ with $Y$, vertices with edges, and $\varphi$ with $\psi$.

\subsection{Vertex formula for $\RO$ compact}\label{sub:vertex_formula}

The following vertex formula, valid for \(\RO\) compact, was already given by
Wigner \cite[(27)]{wigner1965matrices}.

\begin{proposition}[Wigner]\label[proposition]{prop:WignerIntegral}
    Suppose that $\RO$ is compact.
    Then $\Pisymbol^2$
    is the vertex integral $\VI(X, \varphi)$ associated to $X=\RO$
    and \(\varphi_{ij}(g_i,g_j)\) the character of $\pi_{ij}$
    evaluated at $g_j^{-1}g_i$:
    \begin{align} \label{Wigner}
        \Pisymbol^2
        = \int_{\RO^{\tetraV}} \prod_{ij \in \tetraE}  \Tr\circ\pi_{ij}(g_j^{-1}g_i).
    \end{align}
\end{proposition}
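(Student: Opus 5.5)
We work directly with the compact-case definition $\Pisymbol = \Contra{V}$, where $V = v_{\V1}\otimes v_{\V2}\otimes v_{\V3}\otimes v_{\V4}$ and each $v_i$ is a unit invariant vector in $\pi_{\tetraO_i}$. We write the orthogonal projection onto $\RO$-invariants by averaging. Since $\RO$ is compact, each $\pi_{\tetraO_i}$ has at most a one-dimensional space of invariants (multiplicity one for triple products of $\SO_3$). Hence the projector
\begin{align}
    P_i = \int_{\RO} \pi_{\tetraO_i}(g)\,\dd g
\end{align}
has rank at most one. Using the real structure of \Cref{PrasadLemma}, in which $\IPair{-}{-}$ is a real inner product and $v_i$ may be taken real, we get $P_i = v_i \IPair{v_i}{-}$. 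If there are no invariants, then $P_i=0$, and both sides of \eqref{Wigner} vanish.

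The key step is to compute $\Contra{V}^2 = \Contra{V}\Contra{V}$ as the contraction of $V\otimes V$ against a suitable tensor. Concretely, $\Contra{V}^2$ is the trace of an operator assembled from the rank-one operators $P_{\V1}\otimes\cdots\otimes P_{\V4}$ acting on $\Pi_G$, composed with the "edge swap" operator. Let $\sigma\colon \Pi_G\to\Pi_G$ identify $\pi_{ij}$ with $\pi_{ji}$ via the pairings, so that $\Contra{x} = \IPair{\Lambda}{x}$ for the vector $\Lambda = \bigotimes_{i<j}\sum_k e_k\otimes e_k$, built from orthonormal real bases $e_k$ of each $\pi_e^{\R}$. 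Then
\begin{align}
    \Contra{V}^2 = \IPair{\Lambda}{V}\IPair{V}{\Lambda} = \IPair{\Lambda}{(P_{\V1}\otimes\cdots\otimes P_{\V4})\Lambda}.
\end{align}
Inserting the averaging formula for each $P_i$ gives
\begin{align}
    \Contra{V}^2 = \int_{\RO^{\tetraV}} \IPair{\Lambda}{\textstyle\bigotimes_i \pi_{\tetraO_i}(g_i)\,\Lambda}\,\dd g .
\end{align}
The integrand factorizes over edges. For the edge $ij$, the element $g_i$ acts on the $\pi_{ij}$ slot and $g_j$ on the $\pi_{ji}$ slot, and the factor is
\begin{align}
    \sum_{k,l}\IPair{e_k}{\pi(g_i)e_l}\IPair{e_k}{\pi(g_j)e_l} = \Tr\bigl(\pi(g_j)^{T}\pi(g_i)\bigr) = \Tr \pi_{ij}(g_j^{-1}g_i).
\end{align}
Here we use that $\pi$ is orthogonal for the real inner product, so $\pi(g)^T=\pi(g^{-1})$. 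Since the character is a class function, the factor does not depend on which endpoint is labeled $i$. This yields \eqref{Wigner}, with the Haar measure of total volume $1$.

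The main point requiring care is the identity $P_i = v_i\IPair{v_i}{-}$, i.e.\ that averaging is the orthogonal projection with respect to the \emph{bilinear} pairing rather than a Hermitian one. This follows from working in the real structure, where the bilinear and Hermitian forms coincide and $v_i$ is real up to a scalar; the normalization $\IPair{v_i}{v_i}=1$ then fixes the scalar up to sign. The remaining steps are bookkeeping of tensor slots, which we need to track to confirm that the two copies of $\pi_{ij}$ are paired against $g_i$ and $g_j$ respectively.
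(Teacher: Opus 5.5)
Your proof is correct and is essentially the paper's argument. Your identity $P_{\V1}\otimes\cdots\otimes P_{\V4}=V\IPair{V}{-}$ is the paper's $\IPair{V}{v}\IPair{V}{w}=\int_H\IPair{hv}{w}$, which the paper justifies by multiplicity one of $H\times H$-invariant functionals, specialized to $v=w=\delta$ with your $\Lambda$ playing the role of $\delta$; and your edge computation $\Tr(\pi(g_j)^T\pi(g_i))=\Tr\pi(g_j^{-1}g_i)$ is exactly \Cref{lem:tracepair}.

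One small correction: independence of how the endpoints are labeled comes from the symmetry of $\sum_{k,l}\IPair{e_k}{\pi(g_i)e_l}\IPair{e_k}{\pi(g_j)e_l}$ under $g_i\leftrightarrow g_j$ (i.e.\ $\Tr(B^{-1}A)=\Tr(A^{-1}B)$). It does not follow from the character being a class function, since $g_j^{-1}g_i$ and $g_i^{-1}g_j$ are inverse to each other, not conjugate.
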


\begin{proof}
    We follow the notation of \Cref{sub:definition_in_the_compact_case}.
    Let $\delta\in \Pi_G$ represent the contraction mapping, so that
    $\IPair{v}{\delta} = \Contra{v}$
    for all $v \in V$. By definition, $\Pisymbol = \IPair{V}{\delta}$.
    Now, for any vector $v, w$ we have
    \begin{align}
        \IPair{V}{v}\IPair{V}{w} = \int_{h \in H}\IPair{hv}{w},
    \end{align}
    for both sides determine $H \times H$-invariant
    functionals on $\Pi_G \otimes \Pi_G$
    with the same value $1$ at $V \otimes V$.
    Substitute $v=w=\delta$ to find:
    \begin{align}
        \Pisymbol^2 = \int_{H}\IPair{h\delta}{\delta}.
    \end{align}
    However, \Cref{lem:tracepair} below implies that for
    $h = (g_i)_{i \in \tetraV}$, we have
    \begin{align}
        \IPair{h\delta}{\delta} = \prod_{ij \in \tetraE} \Tr\circ\pi_{ij}(g_j^{-1}g_i),
    \end{align}
    which readily implies the desired formula.
\end{proof}

\begin{lemma}
    \label[lemma]{lem:tracepair}
    Suppose $W$ is a finite-dimensional complex vector space
    equipped with a nondegenerate symmetric pairing $\IPair{-}{-}$.
    Let $\delta \in W \otimes W$ represent the pairing, so that
    $\IPair{\delta}{v \otimes w} = \IPair{v}{w}$.
    Then
    \begin{equation}
        \IPair{(A\otimes B) \delta}{\delta} = \Tr(B^{-1}A)=\Tr(A^{-1}B)
    \end{equation}
    for any endomorphisms $A,B$ of $W$ that preserve
    the pairing.
\end{lemma}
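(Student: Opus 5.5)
We prove this by direct computation in a basis adapted to the pairing. Choose a basis $e_1,\dots,e_n$ of $W$ and let $e^1,\dots,e^n$ be the dual basis with respect to $\IPair{-}{-}$, so that $\IPair{e_a}{e^b}=\delta_{ab}$; this exists because the pairing is nondegenerate. The element representing the pairing is then
\begin{align}
    \delta=\sum_a e_a\otimes e^a .
\end{align}
Indeed, the pairing on $W\otimes W$ is the tensor product pairing, so
\begin{align}
    \IPair{\delta}{v\otimes w}=\sum_a \IPair{e_a}{v}\IPair{e^a}{w},
\end{align}
and this equals $\IPair{v}{w}$ after expanding $v$ in the dual basis. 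Symmetry of the pairing also gives $\delta=\sum_a e^a\otimes e_a$.

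Next we compute the pairing of $(A\otimes B)\delta$ against $\delta$:
\begin{align}
    \IPair{(A\otimes B)\delta}{\delta}
    =\sum_{a}\IPair{(A\otimes B)(e_a\otimes e^a)}{\delta}
    =\sum_a \IPair{Ae_a}{Be^a}.
\end{align}
The second equality is the defining property of $\delta$ applied to $v=Ae_a$ and $w=Be^a$. Since $B$ preserves the pairing, it is invertible and satisfies $\IPair{Bx}{By}=\IPair{x}{y}$. Hence
\begin{align}
    \IPair{Ae_a}{Be^a}=\IPair{B^{-1}Ae_a}{e^a}.
\end{align}
The sum $\sum_a\IPair{Ce_a}{e^a}$ is the trace of $C$, because it is the sum of the diagonal coefficients of $C$ in the basis $e_a$. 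This gives $\Tr(B^{-1}A)$.

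For the second equality, run the same argument using invariance under $A$ instead, which gives $\sum_a\IPair{e_a}{A^{-1}Be^a}$. By symmetry of the pairing this equals
\begin{align}
    \sum_a\IPair{A^{-1}Be^a}{e_a}=\Tr(A^{-1}B),
\end{align}
where now the trace is computed in the basis $e^a$ with dual basis $e_a$. (Alternatively, $\Tr(B^{-1}A)=\Tr((A^{-1}B)^{*})$ for the adjoint with respect to the pairing, since for an isometry $g$ we have $g^*=g^{-1}$, and the adjoint has the same trace.)

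The only point requiring care is the bookkeeping of which tensor factor pairs with which, and checking that the symmetry of the pairing makes the choice of ordering in $\delta$ irrelevant. Beyond that, this lemma has no real obstacle.
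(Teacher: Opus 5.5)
Your proof is correct and follows the same route as the paper. Both arguments expand $\delta$ in a basis, use the defining property of $\delta$ to reduce to $\sum_a \IPair{Ae_a}{Be^a}$, move $B$ across by invariance, and read off the trace. The only cosmetic difference is that the paper takes an orthonormal basis (available over $\mathbb{C}$), so that $\delta=\sum_k e_k\otimes e_k$, whereas your dual-basis version does not need one and works verbatim over any field.
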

\begin{proof}
    Fix an orthonormal basis $e_k$ 
    with respect to the self-pairing $\IPair{-}{-}$.
    Then  $\delta =\sum_k e_k \otimes e_k$ and
    \begin{align}
        \IPair{(A\otimes B) \delta}{\delta}
        &= \sum_{k}\IPair{A e_k\otimes Be_k}{\delta}\\
        &= \sum_k\IPair{Ae_k}{Be_k}\\
        &= \sum_k\IPair{B^{-1}Ae_k}{e_k}\\
        &= \Tr(B^{-1}A)
    \end{align}
    as claimed, and similarly for \(\Tr(A^{-1}B)\).
\end{proof}

\subsection{Vertex formula for $F$ nonarchimedean and $\pi$ unramified}
\label{sub:spherical_vertex_formula}

Suppose now that $\RO$ is noncompact and the $\pi_{ij}$ are
\emph{unramified} principal series (see \Cref{Subsection:Unramified}) induced
from \(\chi_{ij}\).
In this case, each $\pi_{ij}$ admits a one-dimensional space of vectors
invariant under the maximal compact subgroup
\begin{align}
    K \subset \RO.
\end{align}
Explicitly, in the non-archimedean case, with reference to an isomorphism $\RO
\simeq \PGL_2(F)$, we have $K \simeq \PGL_2(\cO)$.  We will henceforth
assume that $F$ is nonarchimedean (but see remark
after \eqref{edge equation 2} for the archimedean case). 

We shall describe a class of vertex formulas, valid for such representations.
Take \(X\) to be \(\RO/K\) where \(K\) is as above; 
this $X$  can be naturally given the structure of the set of vertices
of an infinite $q+1$-regular tree, where $q$ is the residue characteristic.
Fix \(v_{ij} \in \pi_{ij}\)
a nonzero  \(K\)-fixed vector normalized so that \(\IPair{v_{ij}}{v_{ij}} =1\);
this is possible because, again, the symmetric pairing is positive definite on a
real structure. Therefore $v_{ij}$ is uniquely specified up to sign. Now define
$\varphi_{ij} = X \x X \longto \bbC$ by the rule
\begin{align}
    \varphi_{ij}(g_i K, g_j K) =  \IPair{g_i v_{ij}}{g_j v_{ij}}.
\end{align} 
This is the ``spherical function for \(\pi_{ij}\)''.

\begin{proposition}
    \label[proposition]{prop:VertexIntegral}
    With the notations above, we have (up to sign)
    \begin{align}
        \label{eqn:vertex_formula}
        \begin{Bmatrix} 
            \chi_{\V{12}} & \chi_{\V{13}} & \chi_{\V{14}} \\ 
            \chi_{\V{34}} & \chi_{\V{24}} & \chi_{\V{23}} 
        \end{Bmatrix} &= 
        \frac{L(1,\mathrm{ad})}{L(2)^8\sqrt{L(\frac{1}{2},\Sigma)}}
        \cdot\VI(X,\varphi) \\
        & \left( =  \frac{L(1,\mathrm{ad})}{L(2)^8\sqrt{L(\frac{1}{2},\Sigma)}} \cdot
        \int_{g_i \in \RO^{\tetraV}}  \prod_{ij \in \tetraE}
        \IPair{g_i v_{ij}}{g_j v_{ij}}\right),
    \end{align}
    where, on the right-hand side, we take counting measure on the discrete set $X$, and
    put on $\RO$ the measure for which the volume of $K$ equals $1$, \(\Sigma\) is as
    in \Cref{LSdef}, and 
    \begin{align} \label{def:adjoint L}
        L(1,  \mathrm{ad}) = \prod_{ij \in \tetraE} 
        L(1) L(1, \chi_{ij}^2) L(1, \chi_{ij}^{-2}),
    \end{align}
    and recall the
    convention that $L(s) $ is the value of the $L$-function for the trivial
    character at $s$.
\end{proposition}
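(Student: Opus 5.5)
The plan is to imitate the proof of \Cref{prop:WignerIntegral}, evaluating both sides of the defining relation \eqref{cPi2} on the test vector
\begin{align}
    v \defeq \bigotimes_{ij\in\tetraO} v_{ij} \in \Pi_G,
\end{align}
where each $v_{ij}$ is the normalized $K$-fixed vector attached to the underlying unoriented edge.

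First I would take $\chi\in\PrincipalSeries_0$ unitary unramified, so that each $\pi_{ij}$ is tempered and the definitions of \Cref{sub:definition_in_the_non_compact_case} apply directly. Both sides of \eqref{eqn:vertex_formula} are meromorphic in $\chi$, the left side by \Cref{prop:AC}. So any identity proved on this real locus persists up to sign.

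The numerator is handled first. Because $H\cap D$ is the diagonal copy of $\RO$, we have
\begin{align}
    (\Lambda^H)'(v) = \int_{\RO\backslash\RO^{\tetraV}} \prod_{ij\in\tetraE}\IPair{g_iv_{ij}}{g_jv_{ij}}\,\dd g .
\end{align}
The integrand is right $K^{\tetraV}$-invariant, and $\operatorname{vol}(K)=1$. Unfolding therefore turns this into the integral over $\RO\backslash X^{\tetraV}$ with counting measure, which is $\VI(X,\varphi)$. The one point needing care is that the diagonal $\RO$-action on $X^{\tetraV}$ has compact stabilizers. I would check that the quotient measures match the convention implicit in \eqref{VertexEdgeDefinition}, or else absorb the discrepancy into the constant.

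Next comes the denominator. Since $H=\RO^{\tetraV}$ and the pairing on $\Pi_G$ factors vertex by vertex, \eqref{eqn:def_of_Lambda_H} factors as
\begin{align}
    \Lambda^H(v)^2=\prod_{i\in\tetraV}\int_{\RO}\prod_{ij\in\tetraO_i}\IPair{gv_{ij}}{v_{ij}}\,\dd g .
\end{align}
Each factor is the local unramified triple-product integral of three spherical functions on $\PGL_2(F)$. I would evaluate it using the known unramified computation, either Ichino--Ikeda's or a direct Macdonald-formula calculation on the Bruhat--Tits tree. It should take the form
\begin{align}
    c\cdot\frac{L(\OneHalf,\Sigma_i)}{\prod_{ij\in\tetraO_i}L(1,\mathrm{ad}\,\pi_{ij})},
\end{align}
where $\Sigma_i\subset\Sigma$ is the set of the $8$ characters $\chi_{ij}^\pm\chi_{ik}^\pm\chi_{il}^\pm$ at vertex $i$, and $c$ is a power of $L(2)$ depending on the measure normalization.

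Multiplying over the four vertices, each edge appears twice. The product is therefore $c^4L(\OneHalf,\Sigma)/L(1,\mathrm{ad})^2$, with $L(1,\mathrm{ad})$ as in \eqref{def:adjoint L}. Taking the square root and dividing gives
\begin{align}
    \Pisymbol=\frac{(\Lambda^H)'(v)}{\Lambda^H(v)}=\frac{L(1,\mathrm{ad})}{c^2\sqrt{L(\OneHalf,\Sigma)}}\,\VI(X,\varphi)
\end{align}
up to sign, which is the shape of \eqref{eqn:vertex_formula}.

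The main obstacle will be the exact constant in the triple-product integral. It must include the $L(2)$ powers, and comparing with the statement requires $c^2=L(2)^8$, so that $c=L(2)^4$ per vertex. The naive Ichino--Ikeda constant looks smaller, so I expect to have to track carefully both the Haar measure with $\operatorname{vol}(K)=1$ and the quotient measure in the unfolding of the second step. I would first verify the constant in the degenerate case where all $\chi_{ij}$ tend to $\abs{\blank}^{\pm\OneHalf}$ and the spherical functions become $1$. If that is inconclusive, I would compute the triple integral directly by summing over spheres in the tree, using Macdonald's formula for $\varphi_{ij}$.
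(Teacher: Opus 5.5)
Your route is exactly the paper's. It uses the same test vector of unit spherical vectors and the same unfolding of $(\Lambda^H)'(v)$ to $\VI(X,\varphi)$. The paper then simply asserts, citing Ichino--Ikeda, that $\Lambda^H(v)=L(2)^8\sqrt{L(\OneHalf,\Sigma)}/L(1,\mathrm{ad})$. The step you flag as the main obstacle is a genuine gap, but the resolution you propose cannot work, because nothing is missing from the bookkeeping. With $\mathrm{vol}(K)=1$ on every copy of $\RO$, the quotient $\RO\backslash\RO^{\tetraV}\simeq\RO^3$ unfolds to counting measure on $X^3$ with $\V1$ at the root. That is precisely how the paper evaluates $\VI$, so $(\Lambda^H)'(v)=\VI(X,\varphi)$ holds on the nose. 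The per-vertex triple integral is then exactly
\[
\int_{\RO}\prod_{ij\in\tetraO_i}\IPair{gv_{ij}}{v_{ij}}\,\dd g
= L(2)^2\,\frac{L(\OneHalf,\Sigma_i)}{\prod_{ij\in\tetraO_i}L(1,\mathrm{Ad}\,\pi_{ij})},
\]
so $c=L(2)^2$, which is Ichino--Ikeda's factor $\Delta_{\mathrm{SO}_4}=\zeta(2)^2$. You can confirm this with the tree sum you mention. Take all $\chi_{ij}$ trivial and $q=4$. Then
\[
1+(1+q^{-1})\sum_{n\ge1}q^{-n/2}\bigl(1+n\tfrac{q-1}{q+1}\bigr)^3=21.87,
\]
while $(1-q^{-1/2})^{-8}(1-q^{-1})^{9}=\tfrac12(\tfrac32)^9$. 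The ratio of the two is exactly $(1-q^{-2})^{-2}$. Your suggested check $\chi_{ij}\to\abs{\blank}^{\pm\OneHalf}$ would not settle this: it is a $0\cdot\infty$ limit, since the triple integral diverges while $L(1,\mathrm{Ad})^{-1}$ vanishes.

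Consequently $\Lambda^H(v)^2=L(2)^8L(\OneHalf,\Sigma)/L(1,\mathrm{ad})^2$, and so $\Lambda^H(v)=L(2)^4\sqrt{L(\OneHalf,\Sigma)}/L(1,\mathrm{ad})$. Carried out correctly, your argument therefore proves the formula with $L(2)^4$ in the denominator, not the stated $L(2)^8$. The paper's value for $\Lambda^H(v)$ is the constant of $\Lambda^H(v)^2$ with the square root not taken on the $L(2)$ factor. The $L(2)^8$ in the statement thus appears to be a slip, not a factor you can recover by more careful measure tracking, so do not try to manufacture the missing $L(2)^4$. Note also that \Cref{thm:main_duality_theorem} is deduced from this proposition, so its power of $(1-q^{-2})$ shifts by the same $L(2)^4$.
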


\begin{remark} \label[remark]{SignUnramifiedCase}
    In words, \Cref{prop:VertexIntegral} asserts that (up to the normalizing factors) we obtain
    the tetrahedral symbol by integrating over ``moduli of tetrahedra in $X$''
    the product of spherical functions labeled by the edges. Note that
    it implies that there is a coherent sign choice for the ``renormalized
    tetrahedral symbol'' \(L(\OneHalf,\Sigma)^{\OneHalf}\chisymbol\) when the $\pi$s vary
    through unramified representations;  our theorem \Cref{thm:main_duality_theorem} 
    computing the tetrahedral symbol in terms of a spinor cone is an explicit
    description of this sign choice.  
\end{remark}

\begin{proof}[Proof of \Cref{prop:VertexIntegral}]
    Again we follow the notation of \Cref{sub:definition_in_the_compact_case}.
    Let
    \begin{align}
        v = \bigotimes_{ij \in \tetraE} (v_{ij} \otimes v_{ij}) \in \Pi_G.
    \end{align}
    We shall compute separately \(\Lambda^H(v)\) and \((\Lambda^H)'(v)\). By
    definition,
    \begin{align}
        (\Lambda^H)'(v) = \int_{\RO \backslash H} \IPair{g_iv_{ij}}{g_jv_{ji}} =\int_{\RO\backslash X^{\tetraV}} \prod_{ij\in\tetraE} \varphi_{ij}(x_i, x_j)
        =\VI(\RO/K,\varphi).
    \end{align}

    On the other hand, \(\Lambda^H(v)\) must be evaluated by hand.
    It is known (a special case of \cite[Theorem 1.2]{II10}, or an easy if rather tedious
    computation in the case at hand) that
    \begin{align} \label{edge equation 2}
        \Lambda^H(v)
        =  L(2)^8 \frac{\sqrt{L(\OneHalf,\Sigma)}}{L(1,\mathrm{ad})}.
    \end{align}

    Comparing this with the definition \((\Lambda^H)' = \Pisymbol \Lambda^H\),
    we arrive at the claimed formula.
\end{proof}

The same reasoning also works in the case of $F$ archimedean, but we do not know
a reference for the computation \eqref{edge equation 2} there.

\subsection{Edge formula for principal series}\label{sub:edge_formula}

The following edge formula will be applicable to the case of the \emph{split} \(\RO\) and
the \(\pi_{ij}\) principal series induced by character \(\chi_{ij}\), for which we retain the notations from
\Cref{sub:the_principal_series_case}. 

We take \(Y=\bbP_F^1\). The \(\psi_i\) will
not be functions   on $Y \times Y
\times Y$ itself, but rather sections
of certain line bundles --- they will be functions on $F^2 \times F^2 \times
F^2$, with certain degrees of homogeneity in each factor.   Their {\em product} will define a \((-2)^\tetraE\)-homogeneous function on
\((F^2)^{\tetraE} \), i.e. a {\em density} --- this can be integrated over $Y^{\tetraE} \simeq
(\bbP_F^1)^{\tetraE}$ as explained in \Cref{ProjectiveIntegrationReview}.
 
The functions \(\psi_i\) are defined (as sections of suitable bundles)  on that open subset of
\(Y^{\tetraE}\) where adjacent edges are assigned distinct labels (in \(Y\)), 
in the following way:
 \begin{align}
    \psi_i = \psi_i^j \psi_i^k \psi_i^l,
\end{align}
where \( \Set{i,j,k,l}=\tetraV\) and
\begin{align}
    \psi_i^l \defeq \chi_{i-}^l\bigl(y_{ij}\wedge y_{ik}\bigr)
    = (\chi_i^l \abs{\blank}^{-\OneHalf}) \bigl(y_{ij}\wedge y_{ik}\bigr),
    \text{ and }
    \chi_i^l\defeq\frac{\chi_{ij}\chi_{ik}}{\chi_{il}}.
\end{align}
Here we write $x \wedge y$, where $x, y \in F^2$, for the determinant of the $2 \times 2$
matrix with $x$ as first row and $y$ as second row. This introduces a sign
ambiguity in the definition of \(\psi_i^l\) due to the order of \(j\) and
\(k\). There is a ``good'' way to make such a choice;
we will suppress this for now, but for details
we refer the reader to
\Cref{lem:edge_integrand_with_order_convention}.

\begin{proposition}
    \label[proposition]{prop:edge_integral}
    With the notations above, we have
    \begin{align}
        \Pisymbol \sqrt{\gamma\Bigl(\frac{1}{2}, \Sigma^-\Bigr)}
        &= \nu_\bbP^{-2} \EI(\bbP_F^1,\psi) \\
        & = \nu_\bbP^{-2}
        \int_{\RO \backslash (\bbP^1)^{\tetraE}} \prod_{il \in \tetraO}
        \left(\frac{\chi_{ij}\chi_{ik}}{\chi_{il}}\abs{\blank}^{-\OneHalf}\right)
        \bigl(y_{ij}\wedge y_{ik}\bigr),
        \label{eqn:edge_integral_formula}
    \end{align}
    where \(\nu_\bbP\) is as in \eqref{eqn:Haar_vs_P13_measures}.  The integrals
    are \emph{absolutely convergent} for $\chi$ unitary.
    (Note that the second expression is simply the explication of the notation;
    in it,  the $y$s are coordinates on $(\bbP^1)^{\tetraE}$, and
    \(\Set{i,j,k,l}=\tetraV\).)
\end{proposition}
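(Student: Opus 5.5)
The plan is to compute both functionals $\Lambda^H$ and $(\Lambda^H)'$ of \eqref{cPi2} explicitly in the induced (line-bundle on $\bbP^1$) models of \Cref{DefinitionNontempered}. The edge integral should then appear as the formal contraction of four vertex trilinear kernels along the six edges.

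\textbf{Step 1: models and contraction.} For each oriented edge $ij$, realize $\pi_{ij}$ as $\pi_{\chi_{ji}}$, i.e. on functions of homogeneity $(\chi_{ij}^{-1})_-^{2}$ on $F^2-\Set{0}$; then $\pi_{ji}$ is realized as $\pi_{\chi_{ij}}$. The two models are identified via the standard intertwining operator
\begin{align}
    M_\chi f(y)=\int_{\bbP^1}f(x)\,\chi_-^{-2}(x\wedge y),
\end{align}
which is the kernel of the Fourier-type transform whose normalization is a Tate $\gamma$-factor $\gamma(\OneHalf,\chi^{\pm2})$ (cf.\ \eqref{Tate2gamma}). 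The symmetric self-pairing on $\pi_e$ is then the natural pairing
\begin{align}
    \IPair{f}{g}=\int_{\bbP^1}f\,g,
\end{align}
which makes sense because the homogeneity degrees add up to $-2$, composed with this identification. Hence the contraction $\Lambda^D$ is the pairing $\int_{\bbP^1} f_{ij} f_{ji}$ on each edge, up to an explicit product of $\gamma$-factors coming from $M$.

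\textbf{Step 2: the vertex trilinear forms.} On $\pi_{\tetraO_i}$, the $\RO$-invariant trilinear form is given by the kernel
\begin{align}
    \ell_i(f_{ij}\otimes f_{ik}\otimes f_{il})=\int_{(\bbP^1)^3} f_{ij}(y_{ij})\,f_{ik}(y_{ik})\,f_{il}(y_{il})\,\psi_i(y_{ij},y_{ik},y_{il}).
\end{align}
Solving the homogeneity constraints forces the exponents to be exactly $\chi_{i-}^l=\chi_{ij}\chi_{ik}\chi_{il}^{-1}\abs{\blank}^{-\OneHalf}$ on $y_{ij}\wedge y_{ik}$, and cyclically; so $\psi_i$ is this kernel. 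Then $\Lambda^H=c\,\bigotimes_i \ell_i$, and the constant is fixed by \eqref{eqn:def_of_Lambda_H}:
\begin{align}
    c^2\,\ell_i(v)\,\ell_i(w)=\int_{\RO}\IPair{gv}{w}\,\dd g .
\end{align}
This is a local triple-product (Ichino-type) period for principal series. I would compute it by unfolding the $\RO$-integral against the kernels: fix three points of $\bbP^1$ via the measure \eqref{omegaA1def}, which is where $\nu_\bbP$ enters, and reduce to one-variable Tate integrals. The expected output is $\prod\gamma(\OneHalf,\cdot)$ over the eight characters $\chi_{ij}^{\pm}\chi_{ik}^{\pm}\chi_{il}^{\pm}$ at vertex $i$, up to squares; after combining with the $M$-normalizations from Step 1, only $\gamma(\OneHalf,\Sigma^-)$ should survive in $\Pisymbol^2$.

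\textbf{Step 3: the averaged functional.} Compute $(\Lambda^H)'(v)=\int_{H\cap D\backslash H}\Lambda^D(hv)$ and compare it with $\Lambda^H(v)$ on factorizable test vectors. The cleanest route is to note that $(\Lambda^H)'$ is $H$-invariant, hence equals $\Pisymbol\,c\bigotimes_i\ell_i$, and to evaluate both sides on vectors approximating delta functions in each $\bbP^1$-variable. Formally, $\bigotimes_i\ell_i$ applied to the distribution representing $\Lambda^D$ sets $y_{ij}=y_{ji}$ and yields
\begin{align}
    \int_{(\bbP^1)^{\tetraE}}\prod_i\psi_i ,
\end{align}
while the $H\cap D\backslash H\cong\RO^{\tetraV}/\RO$ averaging accounts for the quotient by the diagonal $\RO$. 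This gives the stated integral, with the measure comparison producing $\nu_\bbP^{-2}$.

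\textbf{Convergence and main obstacle.} Absolute convergence for unitary $\chi$ follows because every factor has real exponent $-\OneHalf$ along each collision $y_{ij}=y_{ik}$, which is locally integrable. On the quotient by $\RO$, fix three of the edge labels to $0,1,\infty$ and check the remaining singularities and the behaviour at infinity by power counting. I expect the main obstacle to be Step 2 together with the sign bookkeeping: obtaining the exact constant in the triple-product normalization and showing that the accumulated $\gamma$-factors collapse precisely to $\gamma(\OneHalf,\Sigma^-)$, rather than to some other Lagrangian half of $\Sigma$, with signs controlled by the orientation choices of \Cref{lem:edge_integrand_with_order_convention}. Making the delta-function evaluation in Step 3 rigorous, for instance via Schwartz approximants and dominated convergence using the absolute convergence just established, is a secondary technical point.
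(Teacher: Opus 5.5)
The architecture of your proposal is the paper's: induced models on $\bbP^1$, $\Lambda^D$ as diagonal integration up to $\gamma$-factors, normalized vertex trilinear forms, and evaluation of $(\Lambda^H)'$ on vectors concentrated at a point $p$ of the open $H$-orbit. But the formula you assert in Step 3 is wrong, and that step is the heart of the proof.

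Evaluating $(\Lambda^H)'$ on $f$ concentrated at $p$ gives an integral over the fiber $\{(z,h): z_{ij}h_i=p_{ij},\ z_{ij}h_j=p_{ji}\}$, modulo the diagonal $\RO$. The integrand is the line-bundle multipliers of $h$ times the Jacobian of $A\colon(\bbP^1)^{\tetraE}\times H\to(\bbP^1)^{\tetraO}$. Divide by $\bigotimes_i\ell_i(f)$, which is $\psi(p)$ paired with $f(p)$, and use the $H$-invariance of $\psi=\prod_i\psi_i$. The multipliers divided by $\psi(p)$ become $\psi(z)^{-1}$ in the trivialization along the diagonal. So the ratio is a fiber integral of the \emph{reciprocal} of the vertex kernels. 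This is forced by scaling: $(\Lambda^H)'/\bigotimes_i\ell_i$ scales like $\psi^{-1}$, whereas your contraction scales like $\psi$. The Jacobian then turns $\prod_i\psi_i^{-1}$ into the vertex kernels with every character inverted.

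You can check this against the paper. For $f_{ij}\in\pi_{\chi_{ij}}$, the vertex kernel \eqref{tauf} carries $\chi_{il}\chi_{ij}^{-1}\chi_{ik}^{-1}$ on $z_{ij}\wedge z_{ik}$, while the edge integrand carries $\chi_{ij}\chi_{ik}\chi_{il}^{-1}$. The explicit fiber computation is \Cref{lem:edge_integrand_with_order_convention}: solve $\lambda_{ij}z_{ij}\tilde g_i=\tilde p_{ij}$ and use $\det\tilde g_i^{-1}=\pm\lambda_{ij}^2(z_{ij}\wedge z_{ik})(z_{il}\wedge z_{ij})/(z_{ik}\wedge z_{il})$. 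In your contraction language, the error is this: the vector representing $\Lambda^D$ under the self-duality of $\Pi_G$ is $I^{-1}\delta_Z$, not the diagonal delta $\delta_Z$, which lives in the contragredient model. Pushing the kernels through the intertwiners inverts every character, by \Cref{lem:Fourier_of_certain_form_for_trilinear}.

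This is exactly what shows up as the ``wrong Lagrangian half'' you flagged as your main obstacle. In your model $\pi_{ij}=\pi_{\chi_{ij}^{-1}}$, the vertex kernel is literally the integrand of the proposition. \Cref{lem:trilinear_lemma} with characters inverted puts $\gamma(\OneHalf,\Sigma^+)$, not $\gamma(\OneHalf,\Sigma^-)$, into $\Lambda^H$; the $\gamma(1,\chi_{ij}^{\pm 2})$ factors cancel against those in $\Lambda^D$. A correct Step 3 in your model therefore yields the proposition for $\chi^{-1}$, which is equivalent since $\pi_\chi\simeq\pi_{\chi^{-1}}$. Your Step 3 instead pairs the integrand for $\chi$ with $\gamma(\OneHalf,\Sigma^+)$. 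That identity is false in general, because $\gamma(\OneHalf,\Sigma^+)\gamma(\OneHalf,\Sigma^-)=\pm 1$. So no amount of $\gamma$-bookkeeping repairs it; you need the fiber computation.

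Separately, the exponent $-\OneHalf$ at pairwise collisions does not by itself give convergence. At triple and quadruple collisions the exponents add. What leaves $\Omega$ with at most simple poles on the compactified moduli space is the tetrahedral combinatorics: at most $3$ adjacencies among $3$ colliding edges, and at most $5$ among $4$.
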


An outline of the proof towards this formula will be sketched shortly in
\Cref{sub:outlinepage}, and the details will be contained in 
\Cref{sec:edge_formula_for_principal_series}.

Although \eqref{eqn:edge_integral_formula} is  compact, it is a little
opaque, so we offer two different reinterpretations
of it. The first will be given now, and the second in
\Cref{sec:hypergeometric_formulas_for_the_tetrahedral_symbol}. This first interpretation will also help us see
that the formula is absolutely convergent when $\psi$ is unitary. 
 
\subsubsection{Interpretation of \eqref{eqn:edge_integral_formula} in terms of moduli of six points on the projective line}
\label{sec:Moduli6Points}

Let us consider the space $\mathbf{M}^{\circ}$ of configurations of six points
on $\mathbb{P}^1$ indexed by $\tetraE$ where  the points indexed by  incident
edges are distinct (thus, the points indexed by opposite edges may collide).
$\PGL_2$ acts freely on this space, and upon taking the quotient we arrive at a
moduli space 
\begin{align}
    \mathcal{M}^{\circ} = \mathbf{M}^{\circ}/\PGL_2,
\end{align} 
which more concretely amounts to configurations of three points with certain distinctness properties, 
see \eqref{cMExplicit} below.

We first define a section $\Omega$ of the square of the canonical sheaf $\omega^2$
on $\mathcal{M}^{\circ}$. First of all,
consider the formula
 \begin{align} \label{OmegaMMdef}
    \Omega_{\mathbf{M}} =   \frac{\left( \bigwedge_e \dd z_e
    \right)^2}{\prod_{e\neq e'} (z_e \wedge z_{e'})},
\end{align}
where \(e,e'\) range over pairs of edges sharing a vertex and \(\dd z_e\) is
defined to be \(x\dd y-y\dd x\) if \(z_e\) has homogeneous coordinates
\([x,y]\); the overall sign depends
on the ordering in the numerator and denominator, but this will not matter for us. 
It defines a section of the square $\omega^2$ of the canonical bundle
on $\mathbf{M}^{\circ}$. 
The form $\Omega_{\mathbf{M}}$ is $\PGL_2$-invariant, and we can ``divide
it'' by the square of the fixed algebraic volume form on $\PGL_2$ (see \eqref{omegaA1def})
to arrive at a corresponding section $\Omega$ of the square of the canonical sheaf
on $\cM=\mathbf{M}/\PGL_2$ itself.

\begin{remark} \label[remark]{MExplicitRemark}
    For later use we explicate both $\cM^{\circ}$ and $\Omega$.
    We may identify
    \begin{equation} \label{cMExplicit}
        \cM^{\circ} \simeq \{ (x,y,z) \in \bbP^1: x \neq y, y \neq z, 
        x \notin \{1, \infty\}, y \notin \{\infty, 0\}, z \notin \{0,1\} \}
    \end{equation}
    
    To do so, we use the $\PGL_2$-action to identify 
    $\cM^{\circ}$ with the subset of $\mathbf{M}^{\circ}$ where $z_{\V{14}}=0,
    z_{\V{13}}=1, z_{\V{23}}=\infty$, and take $x=z_{\V{12}}, y=z_{\V{24}},
    z=z_{\V{34}}$ for the remaining coordinates. Upon comparing
    \eqref{OmegaMMdef} with \eqref{omegaA1def}, we deduce that the form $\Omega$ is given
    by
    \begin{align}\label{OmegaExplicit}
        \Omega = \frac{(\dd x \wedge \dd y \wedge \dd z)^2}
        {xyz (x-y) (y-z) (1-z)(1-x)}.
    \end{align}
\end{remark}

Next, we define a morphism
\begin{align} \label{mu inner def}
    \mu\colon\mathcal{M}^{\circ} \rightarrow \OddFunctions \otimes \Gm
    \simeq \Gm^6,
\end{align}
by requiring that, for $ij \in \tetraO$, the $ij$ coordinate of $\mu$ is given by the following rule:
move $z_{ij}$ to $\infty$ by means of a projective transformation, and then set
\begin{equation} \label{mudef}
    \mu_{ij} = \frac{z_{jk}\wedge z_{jl}}{z_{ik}\wedge z_{il}},
\end{equation}
where \(\Set{i,j,k,l}=\tetraV\).
Again, there are choices of ordering
here; we make them so that $\mu_{ij} \mu_{ji}=1$. This is not unique, but
different choices only affect the $\mu_{ij}$s by a sign.

Now $\abs{\Omega}^{\OneHalf}$ makes sense as a a volume form  on $\mathcal{M}^{\circ}(F)$
for $F$ a local field, so long as we fix a Haar measure on $F$.\footnote{Indeed, for
each point $z_0 \in \mathcal{M}^{\circ}(F)$ we choose an analytic isomorphism
$\Phi:U\simeq U_{z_0}$ from an open subset $U \subset F^n$ onto an analytic
neighbourhood $U_{z_0}$ of $z_0$ in $\mathcal{M}^{\circ}(F)$; analytic means that it is
given by convergent $F$-valued power series. The pullback of $\Omega$ by means
of this form has the shape $A(x_1, \dots, x_n) (\dd x_1 \wedge\dots \wedge
\dd x_n)^2$ for an analytic function $A$.   The integral of $f$ over the analytic
neighbourhood $U_{z_0}$ is then declared to be the integral of $(f \circ \Phi)
\times \abs{A}^{\OneHalf}$ against Haar measure on $U$.}
This permits us to state the following:

\begin{proposition} \label[proposition]{SixPointsModuli}
    We have an equality, up to sign, 
    \begin{align}
        \label{eqn:edge_integral_formula_M06}
        \EI(\bbP_F^1,\psi)=\nu_\bbP\int_{\cM^{\circ}(F)}\prod_{i<j}\chi_{ij}(\mu_{ij}(z)) \cdot
        \abs{\Omega}^{\OneHalf}
    \end{align}
    where \(\nu_\bbP\) is as in \eqref{eqn:Haar_vs_P13_measures}, $\mu$ is as in
    \eqref{mudef} and
    \(\mu_{ij}\) is the coordinate of \(\mu\) corresponding to
    \(ij\in\tetraE\). 
\end{proposition}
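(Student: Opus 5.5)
The plan is a direct change of variables: use the $\PGL_2(F)$-action to fix three of the six points, and check that the integrand and the measure turn into the expressions on the right-hand side.

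\textbf{Step 1: slice the quotient.} Following \Cref{MExplicitRemark}, the map $(g,(x,y,z))\mapsto g\cdot(z_{\V{12}}=x,z_{\V{13}}=1,z_{\V{14}}=0,z_{\V{23}}=\infty,z_{\V{24}}=y,z_{\V{34}}=z)$ identifies an open dense subset of $\mathbf{M}^\circ(F)$ with $\RO\times\cM^\circ(F)$. The definition of $\int_{\RO\backslash(\bbP^1)^{\tetraE}}$ is that of \Cref{ProjectiveIntegrationReview} composed with the quotient by $\RO$ equipped with its Haar measure. Under this slice, a density on $(\bbP^1)^{\tetraE}$ of the form $\abs{\Omega_{\mathbf M}}^{1/2}$ decomposes as $\dd^{\bbP}g\times\abs{\Omega}^{1/2}$. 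This holds by the very construction of $\Omega$: we divided $\Omega_{\mathbf M}$ by the square of the volume form \eqref{omegaA1def}, and $\abs{\omega}$ is exactly $\dd^\bbP g$. Converting $\dd^\bbP g$ to the Haar measure $\dd g$ used in $\EI$ produces the factor $\nu_\bbP$ from \eqref{eqn:Haar_vs_P13_measures}. The one check needed here is that, in affine charts, the projective measure $\dd z_e=x\,\dd y-y\,\dd x$ matches the normalization \eqref{Pnexplicit}; it does, since at $x=1$ it is just $\dd y$.

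\textbf{Step 2: rewrite the integrand.} We must show that the $(-2)^{\tetraE}$-homogeneous function $\prod_{il}\chi^l_{i-}(y_{ij}\wedge y_{ik})$ equals $\prod_{i<j}\chi_{ij}(\mu_{ij})\cdot\abs{\prod_{e\sim e'}(z_e\wedge z_{e'})}^{-1/2}$, up to sign.

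First separate the absolute values. The factor $\abs{\cdot}^{-1/2}$ is applied once to each ordered pair of adjacent edges at a vertex. Each adjacent pair $\{e,e'\}$ shares exactly one vertex $i$, and the three pairs at $i$ each appear once among the $\psi_i^l$. Hence these factors give exactly $\abs{\prod(z_e\wedge z_{e'})}^{-1/2}$, which matches the denominator of $\abs{\Omega_{\mathbf M}}^{1/2}$.

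Next, for the unitary part, collect the exponent of each $\chi_{ij}$. The character $\chi_{ij}$ occurs with exponent $+1$ in $\psi_i^k$ and $\psi_i^l$, and with exponent $-1$ in $\psi_i^j$. It occurs the same way at vertex $j$, where $\chi_{ji}=\chi_{ij}^{-1}$. Altogether $\chi_{ij}$ is evaluated on
\[
\frac{(z_{ij}\wedge z_{il})(z_{ij}\wedge z_{ik})}{z_{ik}\wedge z_{il}}\cdot\frac{z_{jk}\wedge z_{jl}}{(z_{ij}\wedge z_{jl})(z_{ij}\wedge z_{jk})}.
\]
This is invariant under $\GL_2$ and of degree $0$ in every variable. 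To compare with \eqref{mudef}, move $z_{ij}$ to $\infty=[1,0]$; then $z_{ij}\wedge w$ is the second coordinate of $w$. Normalizing the remaining points to be affine makes the first fraction $1/(z_{ik}-z_{il})$ and the second $(z_{jk}-z_{jl})$, so the whole expression is $\mu_{ij}$ up to sign.

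\textbf{Step 3: conclude.} Insert Steps 1 and 2 into the integral over the slice. This gives \eqref{eqn:edge_integral_formula_M06} up to the sign conventions that the statement allows. The main obstacle I expect is Step 1's bookkeeping: verifying that "integrate a $(-2)$-homogeneous function over $\bbP^1$, then quotient by Haar measure on $\RO$" coincides with $\dd^\bbP g\otimes\abs{\Omega}^{1/2}$ with no stray constants. I would check this by computing the Jacobian of the orbit map in the explicit coordinates of \eqref{OmegaExplicit} against \eqref{omegaA1def}.
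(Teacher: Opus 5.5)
Your proposal is correct and is essentially the paper's own argument. The paper also regroups the twelve factors $\psi_i^l$ so that each $\chi_{ij}$ is evaluated on the $\GL_2$-invariant, degree-zero expression $\frac{(z_{ij}\wedge z_{ik})(z_{ij}\wedge z_{il})(z_{jk}\wedge z_{jl})}{(z_{ij}\wedge z_{jk})(z_{ij}\wedge z_{jl})(z_{ik}\wedge z_{il})}$, which becomes \eqref{mudef} after moving $z_{ij}$ to $\infty$, and it attributes $\nu_\bbP$ to using Haar measure rather than $\dd^\bbP g$ on $\RO$. One small slip: the factors $\abs{\cdot}^{-1/2}$ are attached to \emph{unordered} adjacent pairs (twelve of them, one per $\psi_i^l$), which is what the rest of your argument actually uses; with ordered pairs each $z_e$ would have homogeneity $-4$ rather than the required $-2$.
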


\begin{proof}
    This is simply a rewriting of the second line of
    \eqref{eqn:edge_integral_formula}.
    In carrying out this verification,
    it is convenient to use  a more intrinsic formulation of \eqref{mudef}:
    if \(z_{ij}\)   are homogeneous
    coordinates of \(\bbP^1\) corresponding to \(ij\), then
    \begin{align}
        \mu_{ij}=\frac{  (z_{ij} \wedge z_{ik}) (z_{ij} \wedge z_{il})  (z_{jk} \wedge z_{jl})}
        {(z_{ij} \wedge z_{jk}) (z_{ij} \wedge z_{jl}) (z_{ik} \wedge z_{il})}
    \end{align}
    where \(\Set{i,j,k,l}=\tetraV\);
    we recover \Cref{mudef} by moving $z_{ij}$ to $\infty$. Yet again, the sign of
    these wedges  above is of no concern to us. Finally, the factor \(\nu_\bbP\)
    comes from the fact that in \(\EI(\bbP_F^1,\psi)\) we used the Haar measure
    on \(\RO\) rather than the measure induced by \((\bbP^1)^3\) to
    define the measure on \(\RO\backslash(\bbP^1)^\tetraE\).
\end{proof}

With this intepretation at hand, we will prove the convergence claim of
\Cref{prop:edge_integral} in \Cref{sub:pf_of_edge_integral_convergence}.

\subsection{Outline of the argument for \eqref{eqn:edge_integral_formula}}
\label{sub:outlinepage}
The proof requires us to be careful about isomorphisms between different models
for the same representation, in a way that will seem  excessively pedantic. However this is the price
we pay for having a very short definition of the symbol:  complexity gets transferred
to the set-up phase of computations.

For each $ij \in \tetraO$, let $\pi_{ij}$ be the induced representation from
$\chi_{ij}$; it is realized in the space of sections of a line bundle on
$\mathbb{P}^1$, as described in detail in \Cref{sub:pseries}. Note that $\pi_{ij} \neq \pi_{ji}$, but they are isomorphic; in
fact, by definition of the principal series model, the fact that $\chi_{ij}
\chi_{ji}=1$ will give rise to a natural pairing
\begin{align}
    \pi_{ij} \times \pi_{ji} \longrightarrow \C.
\end{align}

Fix a splitting $\tetraE \to \tetraO$, that is to say, a choice of orientation
of each edge. For each such chosen orientation $e=ij \in \tetraO$, we fix  a
symmetric pairing on $\pi_{ij}$ and an isomorphism $I_e\colon \pi_{ij} \to
\pi_{ji}$, and we transport the symmetric pairing from $\pi_{ij}$ to $\pi_{ji}$ by means of the fixed isomorphism $I_e$.

Put 
\begin{gather}
    \Pi_G = \displaybigboxtimes_{ij\in\tetraO} \pi_{ij}=
    \displaybigboxtimes_{ij \in \tetraE} (\pi_{ij} \boxtimes \pi_{ji}),\\
    \Pi'_G = \displaybigboxtimes_{ij \in \tetraE}  (\pi_{ij} \boxtimes \pi_{ij}),
\end{gather}
where we use the splitting to choose an ordering in the second definition. 
Both $\Pi_G,\Pi'_G$ are endowed with symmetric self-pairings
arising from those on $\pi_{ij}$.
The representations $\Pi'_G$ and $\Pi_G$, together with these pairings,
are isomorphic
by means of the chosen isomorphisms $I_{e}$:
\begin{align} \label{IDEF}
    I=\otimes I_e\colon \bigl(\Pi_G',\IPair{-}{-}\bigr) \stackrel{\sim}{\longto} \bigl(\Pi_G,\IPair{-}{-}\bigr).
\end{align}

Our definition of the tetrahedral symbol is expressed in terms of $\Pi_G'$.
But unfortunately the contraction functional does not look nice in the
natural induced model for $\Pi_G'$.
This problem is remedied by switching to $\Pi_G$ instead:
it is realized as the space of sections of a line bundle on 
$(\bbP^1)^{\tetraO}$; and  a $D$-invariant
functional $\phi^D$ on $\Pi_G$ given simply by integration over
the ``diagonal'' $(\bbP^1)^{\tetraE}$, i.e. the {\em closed} $D$-orbit,
whereas an $H$-invariant functional $\phi^H$
on $\Pi_G$ is given by integration over the {\em open} $H$-orbit on $(\bbP^1)^{\tetraO}$.
After computing the constant of proportionality relating $I^* \phi^H$ and $\Lambda^H$,
as well as that relating $I^* \phi^D$ and $\Lambda^D$,
we are reducing to answering the question: when we average
$\phi^D$ over $H$, which multiple of $\phi^H$ do we get?

When we average  $\phi^D$ over $H$, we in effect are pushing forward by means of
the map
\begin{align}
    H \times (\bbP_F^1)^{\tetraE} \longrightarrow (\bbP_F^1)^{\tetraO}.
\end{align}
From this analysis, it follows that $\Pisymbol$ is an integral over the fibers
of this map. Now, the projection map from a general fiber onto
$(\bbP_F^1)^{\tetraE}$ is readily verified to be a birational morphism. This gives us an
expression for $\Pisymbol$ as an integral over \((\bbP_F^1)^{\tetraE}\). After
we work out the details in \Cref{sec:edge_formula_for_principal_series} we get
\eqref{eqn:edge_integral_formula}.

\section{Hypergeometric formulas for the tetrahedral symbol}
\label{sec:hypergeometric_formulas_for_the_tetrahedral_symbol}

In this section, we state some results relating the tetrahedral symbol to
special function theory. It generalizes the
well-knwon picture about the classical \(6j\) symbol; namely, there is an
explicit formula expressing the \(6j\) symbol as a generalized
hypergeometric series of type \(\pFq{4}{3}\), evaluated at the special point
\(z=1\).

Here we will focus on the case where \(\RO\) is split and \(\pi_{ij}\) are
irreducible principal series. We will first describe integral formulas for the
tetrahedral symbol of hypergeometric type (\Cref{thm:6j=Hypergeometric}), and in
the case
$F=\R$ we will simplify the resulting formula to a sum of $\pFq{4}{3}$
hypergeometric series evaluated at the special point $z=1$.

Compared to previous sections, this one focuses more on the explicit
formulas themselves and may be of independent interest.
The reader will also find some (loosely) related references in
\Cref{sub:symbol_for_SU2}.

\subsection{Hypergeometric functions on Grassmannians} \label{sec:HypergeometricIntro}

Now let \(k < n\) be integers. Let \(\ul\chi=(\chi_1, \ldots, \chi_n)\) be \(n\) characters of \(F^{\x}\)
with the property that
\begin{align}
    \prod_i \chi_i = \abs{\blank}^{-k}.
\end{align}
Let \(X \subset F^n\) be a \(k\)-dimensional subspace.
Then the integral
\begin{align} \label{IXchidef}
    \int_{\mathbb{P}X} \ul \chi := \int_{\mathbb{P}X} \prod_i \chi_i(x_i).
\end{align}
makes sense at least formally, as the function is \(-k\) homogeneous
and can be integrated over the projective space \(\mathbb{P}X\),
according to the general procedure of
\Cref{ProjectiveIntegrationReview}.
Such functions for \(F=\bbR\) were studied by Aomoto, Gelfand and others.
They can be considered as generalizations of Gauss's hypergeometric
function, which corresponds to the case when \(k=2, n=4\), and
\(X\) is spanned by the rows of the matrix
\begin{align}
    \begin{bmatrix}
        1 & 0 & 1 & 1\\
        0 & 1 & 1 & t
    \end{bmatrix}
\end{align}
where \(t\in\bbR\) is a fixed parameter.

\subsection{The hypergeometric formula for the tetrahedral symbol}
\label{sub:hypergeometric_for_tetra_symbol}
Now let \(X\) be the subspace of \(F^8\) (so that \(k=4,n=8\)) of the form
\begin{align}
    X = (x-y,y-z,z-w,w-x, x,y,z,w) \subset F^8,
\end{align}
and  define an $8$-tuple of quasicharacters
\begin{align} \label{ChiTupleDef}
    \ul\chi = 
    \left(
        \chi_{\V2-}^{\V3},
        \chi_{\V4-}^{\V1},
        \chi_{\V3-}^{\V2},
        \chi_{\V1-}^{\V4},
        \chi_{\V1-}^{\V3},
        \chi_{\V4-}^{\V3},
        \chi_{\V4-}^{\V2},
        \chi_{\V1-}^{\V2}
    \right),
\end{align}
where we use notation similar to that of \Cref{sub:edge_formula}:
\begin{align}
    \chi_{i}^l \defeq \frac{\chi_{ij}\chi_{ik}}{\chi_{il}}.
\end{align}
and recall the subscript ``$-$'' means that we multiply by $\abs{\blank}^{-\OneHalf}$.

\begin{theorem} \label[theorem]{thm:6j=Hypergeometric}
    Let $\chi\in \PrincipalSeries_0$, with the notations of \eqref{PS0def}, and
    let \(X\) and \(\ul{\chi}\) be as above.
    Then  the tetrahedral symbol  associated to $\chi$
    can be expressed as a hypergeometric integral,
    where we integrate $\ul{\chi}$ over the projectivization of $X$:
    \begin{align}
        \label{eqn:edge_integral_hypergeometric} 
        \chisymbolBig \cdot \sqrt{\gamma\Bigl(\OneHalf, \Sigma^-\Bigr)}  = 
        \nu_\bbP^{-1}\int_{\mathbb{P}X} \ul \chi,
    \end{align}
    and both sides are absolutely convergent for $\chi \in \PrincipalSeries_0$.
\end{theorem}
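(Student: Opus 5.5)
The plan is to deduce the theorem from the edge formula, \Cref{prop:edge_integral}, in its explicit form \eqref{eqn:edge_integral_formula}. That formula already identifies the left-hand side of \eqref{eqn:edge_integral_hypergeometric} with $\nu_\bbP^{-2}$ times an integral over $\RO\backslash(\bbP^1)^{\tetraE}$. The task is therefore to perform the quotient by $\RO\simeq\PGL_2(F)$ explicitly and to match what remains with $\int_{\bbP X}\ul\chi$.

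\textbf{Step 1: gauge fixing.} First I use the $\PGL_2$-action to fix three of the six points, exactly as in \Cref{MExplicitRemark}. Rather than $0,1,\infty$, I will choose the normalization adapted to the subspace $X$.

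\begin{itemize}
\item Put one point at $\infty$, say the point of an edge, so that all wedges with it become $1$ up to sign.
\item Write the remaining five points in affine coordinates.
\item Use translations and scalings to remove two further degrees of freedom.
\end{itemize}

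Passing from the $\RO$-quotient to the slice introduces the factor comparing Haar measure with $\dd^\bbP g$. This accounts for one factor of $\nu_\bbP$, turning $\nu_\bbP^{-2}$ into $\nu_\bbP^{-1}$, in the same way as in \Cref{SixPointsModuli}.

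\textbf{Step 2: identifying the eight factors.} The resulting integrand is a product of twelve factors of the form $\chi^l_{i-}(y_{ij}\wedge y_{ik})$, one for each $il\in\tetraO$. A face-vertex pair contributes wedges of incident edges. After gauge fixing, four of these factors become trivial because they involve the point at $\infty$. The remaining eight wedge differences should become
$$x-y,\quad y-z,\quad z-w,\quad w-x,\quad x,\quad y,\quad z,\quad w,$$
after a linear change of variables on the three-dimensional affine slice. This slice is reinterpreted as the affine chart of $\bbP X\simeq\bbP^3$, so the homogeneous integral over $\bbP X$ is recovered via \eqref{Pnexplicit}.

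I then need to check three things.

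\begin{itemize}
\item The assignment of characters matches \eqref{ChiTupleDef}, which is a combinatorial bookkeeping exercise.
\item The total homogeneity is $\abs{\blank}^{-4}$. This follows because the product of the eight characters is $\abs{\blank}^{-4}$ times a product of $\chi_{ij}^{\pm1}$ that cancels.
\item The Jacobian of the change of variables is a unit and introduces no further factors.
\end{itemize}

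A possible difficulty is that the four \emph{trivialized} factors could instead carry nonzero powers of $\abs{\blank}$ that must be absorbed by the homogeneity. I would verify this by tracking degrees: each $y_e$ has total degree $-2$ across the three vertex factors that involve it.

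\textbf{Step 3: signs and convergence.} The sign ambiguities in the wedges affect characters only through values $\chi(-1)$. These are absorbed into the up-to-sign convention, or fixed by \Cref{lem:edge_integrand_with_order_convention}. Absolute convergence for unitary $\chi$ transfers directly from \Cref{prop:edge_integral}, because the change of variables is a measure-preserving bijection of open dense sets.

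\textbf{Main obstacle.} I expect the main obstacle to be Step 2: finding the right gauge and the right linear substitution so that exactly the cyclic differences $x-y,\dots,w-x$ and the coordinates $x,y,z,w$ appear, and verifying that the character labels come out precisely as in \eqref{ChiTupleDef}. The likely good choice is to send a point of one edge to $\infty$ and to use the four vertices' remaining structure, namely the four-cycle of edges avoiding a pair of opposite edges, to produce the cyclic differences. I would test this on the edge $\V{31}$ of the vortex orientation first.
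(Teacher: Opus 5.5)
Your plan is correct and is essentially the paper's proof. The paper also gauge-fixes as in \Cref{MExplicitRemark}, placing one edge's point at $\infty$, the opposite edge's point at $0$ and a third point at $1$ (this is the dehomogenization $w=1$ of $\bbP X$), and then matches the eight surviving factors; the only cosmetic difference is that it routes through \Cref{SixPointsModuli} and checks the arguments against the $\mu_{ij}$, whereas you read the $\chi^l_{i-}$ directly off the edge integrand. One bookkeeping correction: to land exactly on \eqref{ChiTupleDef} you must take $z_{\V{23}}=\infty$, $z_{\V{14}}=0$, $z_{\V{13}}=w=1$ and $x=z_{\V{12}}$, $y=z_{\V{24}}$, $z=z_{\V{34}}$, because the four ``coordinate'' characters $\chi^{\V3}_{\V1},\chi^{\V3}_{\V4},\chi^{\V2}_{\V4},\chi^{\V2}_{\V1}$ live at the endpoints of the edge sent to $0$; sending $\V{31}$ to $\infty$ would only give a tetrahedrally relabelled tuple.
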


\begin{proof}[Proof of \Cref{thm:6j=Hypergeometric} using \Cref{SixPointsModuli}]   
    The right-hand side of \eqref{eqn:edge_integral_hypergeometric}, aside from
    the factor \(\nu_\bbP^{-1}\), is the following integral:
    \begin{align}
        \int_{[x,y,z,w]\in\bbP^3(F)}
        \chi_{\V2-}^{\V3}(x-y)
        \chi_{\V4-}^{\V1}(y-z)
        \chi_{\V3-}^{\V2}(z-w)
        \chi_{\V1-}^{\V4}(w-x)
        \chi_{\V1-}^{\V3}(x)
        \chi_{\V4-}^{\V3}(y)
        \chi_{\V4-}^{\V2}(z)
        \chi_{\V1-}^{\V2}(w).
    \end{align}
    which, more explicitly,  turns into the following, upon de-homogenization and expansion:
    \begin{align}
        & \mathbin{\phantom{=}}\int_{F^3}
        \chi_{\V2-}^{\V3}(x-y)\chi_{\V4-}^{\V1}(y-z) \chi^{\V2}_{\V3-}(z-1)
        \chi_{\V1-}^{\V4}(1-x) \chi_{\V1-}^{\V3}(x) \chi^{\V3}_{\V4-}(y)\chi^{\V2}_{\V4-}(z)
        \dd x\dd y\dd z \\
        &= \int_{F^3}
        \chi_{\V{12}}\left(\frac{x(1-x)}{x-y}\right)
        \chi_{\V{13}}\left(\frac{1-x}{x(z-1)}\right)
        \chi_{\V{14}}\left(\frac{x(y-z)}{(1-x)yz}\right)
        \chi_{\V{23}}\left(\frac{z-1}{x-y}\right)
        \chi_{\V{24}}\left(\frac{(x-y)z}{y(y-z)}\right)\\
        &\qquad\qquad\chi_{\V{34}}\left(\frac{y(z-1)}{z(y-z)}\right)
        \abs*{xyz(x-y)(y-z)(z-1)(1-x)}^{-\OneHalf}\dd x\dd y\dd z.
        \label{eqn:hypergeometric_edge_integral}
    \end{align}
    This amounts to the explicit content of \Cref{SixPointsModuli}, once we use
    the explicit coordinates provided by \Cref{MExplicitRemark}, drawn as below:
    \begin{equation}
        \label{eqn:the_oriented_tetrahedron_with_dices_in_partI}
        \begin{tikzcd}
            && \V1
            \ar[llddd,"x"']\ar[dd,"0"]
            && \\
            && \phantom{x} &&\\
            && \V4  &&\\
            \V2\ar[rrrr,"\infty"]\ar[rru,"y"] && &&
            \V3\ar[llu,"z"']\ar[lluuu,"w=1"']
        \end{tikzcd}
    \end{equation}
    Examining the labelling from this remark,   we  readily verify that the
    arguments of the $\chi_{ij}$ are just the $\mu_{ij}$ from
    \Cref{SixPointsModuli}.
\end{proof}

\subsection{The tetrahedral symbol as a convolution of $\gamma$-functions}

The following striking formula can be derived, after some manipulation, from
\Cref{thm:6j=Hypergeometric}, and uses the same notation (we also remind the
reader of the notations in \Cref{MultisetNotation}):

\begin{proposition} \label[proposition]{PROPAB}
    Abridge $\rho_{ij} = \Set{\chi_{ij}, \chi_{ji}}$.
    Define four-element
    subsets $A, B$ of characters as, respectively, the union of $\chi_{\V{41}}
    \otimes\rho_{\V{12}}$ and  $\chi_{\V{23}}\otimes\rho_{\V{43}}$, and the  union of
    $\rho_{\V{13}}$ and $\chi_{\V{23}} \chi_{\V{41}}\otimes\rho_{\V{24}}$. Then
    \begin{align}
        \label{eqn:hypergeometric_formula_A_and_B} 
        \chisymbolBig
        =\frac{\nu_\bbP^{-1}\int_{\mu}
        \gamma(\frac{1}{2}+\epsilon, A\otimes\mu)
        \gamma(1-\epsilon, B^{-1}\otimes\mu^{-1}) \dd\mu}
        {\sqrt{\gamma(\OneHalf, A\otimes B^{-1})}},
    \end{align}
    where \(\mu\) ranges  over characters of \(F^\x\),
    whereas $\epsilon$ is any complex number with real part between $0$ and
    $\OneHalf$.
\end{proposition}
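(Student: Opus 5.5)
We start from \Cref{thm:6j=Hypergeometric}: it writes the left side times $\sqrt{\gamma(\OneHalf,\Sigma^-)}$ as $\nu_\bbP^{-1}\int_{\bbP X}\ul\chi$, with $X=\{(x-y,y-z,z-w,w-x,x,y,z,w)\}$. The goal is to turn this four-variable projective integral into a Mellin–Barnes type integral over a single character $\mu$. To do this we split the eight linear forms into two groups, each carrying a Fourier-transform structure.

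The first step is to choose coordinates. Dehomogenize so that the integral is over $F^3$, and change variables so that the integrand factors as a product $P_A\cdot P_B$. Here $P_A$ should be a "beta-type" product in one set of variables, corresponding to the characters in $A$, and $P_B$ the analogous product for $B$. Concretely, we expect one coordinate $t$ (a ratio such as $x/w$ after suitable translations) to mediate between the two groups. Integrating out the remaining variables should then give an expression of the form $\int_{F^\times} f_A(t)f_B(t)\,\dd^\times t$, where:
\begin{itemize}
\item $f_A$ is a one-dimensional integral of the form $\int \alpha_1(u)\alpha_2(1-u)\alpha_3(u-t)\cdots$, built from the characters $\chi_{\V{41}}\rho_{\V{12}}$ and $\chi_{\V{23}}\rho_{\V{43}}$;
\item $f_B$ is the analogous integral built from $\rho_{\V{13}}$ and $\chi_{\V{23}}\chi_{\V{41}}\rho_{\V{24}}$.
\end{itemize}
The way $A$ and $B$ are defined (pairs $\rho_{ij}$ twisted by a common character) suggests that each $f$ is a "Gauss hypergeometric" function of $t$ whose Mellin transform is a product of four $\gamma$-factors.

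The second step is to compute these Mellin transforms. We would use \eqref{Tate3gamma} and the Fourier/Tate identity \eqref{Tate2gamma} in the following way: write each one-variable integral as a convolution of homogeneous functions, so that its Mellin transform in $t$, evaluated at $\mu_{s}$, factors as a product of $\gamma$-values, via the multiplicative convolution theorem. The target shape is
\[
\widetilde{f_A}(\mu)=c_A\,\gamma(\tfrac12+\epsilon,A\otimes\mu),\qquad
\widetilde{f_B}(\mu^{-1})=c_B\,\gamma(1-\epsilon,B^{-1}\otimes\mu^{-1}).
\]
Applying Mellin–Plancherel on $F^\times$, with the contour shifted to real part $\epsilon$, then gives the numerator of \eqref{eqn:hypergeometric_formula_A_and_B}. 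The condition $0<\Re\epsilon<\OneHalf$ is exactly the strip where both Mellin transforms converge for unitary $\chi$; we will verify this by checking local exponents at $0,1,\infty$.

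The last step is to match constants. The scalars $c_A c_B$ produced by the $\gamma$-factor identities (for example \eqref{GammaSymmetry}) must combine with $\sqrt{\gamma(\OneHalf,\Sigma^-)}$ to give $\sqrt{\gamma(\OneHalf,A\otimes B^{-1})}^{-1}$ up to sign. One checks that $A\otimes B^{-1}$ is a $16$-element multiset of three-fold products $\chi_{ij}^\pm\chi_{ik}^\pm\chi_{il}^\pm$, and that the surplus $\gamma$'s pair off via $\gamma(\OneHalf,\psi)\gamma(\OneHalf,\psi^{-1})=\psi(-1)$.

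The main obstacle is the first step: finding the change of variables that separates the eight linear forms into two Gauss-type blocks matched with $A$ and $B$. As a guide we would use the tetrahedral picture \eqref{eqn:the_oriented_tetrahedron_with_dices_in_partI}, where the edges $\V{13}$ and $\V{24}$ appear in $B$, and parametrize by the cross-ratio linking them. The bookkeeping of the $\gamma$-normalizations and signs is tedious but routine.
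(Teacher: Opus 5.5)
\textbf{There is a genuine gap in your first two steps.} Your mechanism is the right one: Mellin transforms of beta-type integrals are products of $\gamma$-factors, followed by Mellin inversion or Parseval on $F^\times$. But the decomposition you aim for cannot exist, and that is exactly the step you leave open.

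\emph{Why your target is impossible.} Take a block $f(t)=\int_F\alpha_1(u)\alpha_2(1-u)\alpha_3(u-t)\,\dd u$. By \Cref{alphabetaMellin} and \Cref{lem:basic_properties_of_Mellin}, its Mellin transform has the shape $\gamma(a\mu_s)\gamma(a'\mu_s)/\gamma(b\mu_s)\gamma(b'\mu_s)$. Rewriting the denominator with $\gamma(s,\chi)\gamma(1-s,\chi^{-1})=\chi(-1)$ leaves two factors depending on $\mu$ and two on $\mu^{-1}$. Twisting $f$ by a character of $t$, rescaling $t$, or inverting $t$ preserves this $2+2$ split. So no Gauss-type block can have transform $c_A\,\gamma(\tfrac12+\epsilon,A\otimes\mu)$ with all four factors in $\mu$. (Such a transform belongs to a four-fold multiplicative convolution of functions like $a^{-1}(x)|x|^{\cdots}\Psi(x)$, a Bessel-type function, not a beta integral.) In particular, no block carries exactly $\chi_{\V{41}}\otimes\rho_{\V{12}}$ and $\chi_{\V{23}}\otimes\rho_{\V{43}}$. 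Your example of a mediating variable, $x/w$, also fails to separate: with $w=1$, fixing $x$ leaves $y$ and $z$ coupled through $y-z$.

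\emph{What works.} Fix the ratio of two \emph{opposite} entries of the $4$-cycle, say $y$ with $w=1$. In the coordinates of \Cref{MExplicitRemark} this is $z_{\V{24}}$, so your closing hint about $\V{13}$ and $\V{24}$ was pointing the right way. The forms $x,1-x,x-y$ and $z,z-1,y-z$ then decouple, and with the paper's $\alpha_i$ and $f_{123}$,
\[ I=\int_F f_{123}(y)\,g(y)\,\alpha_{5-}(y)\,\dd y,\qquad g(y)=\int_F\alpha_{6-}(z)\,\alpha_{7-}(z-1)\,\alpha_{4-}(y-z)\,\dd z. \]
Apply Mellin--Parseval on $F^\times$, convert the denominators, and substitute $\mu\mapsto\alpha_{4567}\chi_{\V{13}}\mu$. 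This gives the numerator of \eqref{eqn:hypergeometric_formula_A_and_B} with constant $[\gamma(\alpha_{2+})\gamma(\alpha_{3+})\gamma(\alpha_{4+})\gamma(\alpha_{7+})]^{-1}$. However, each block mixes $A$ and $B$:
\begin{itemize}
\item the $x$-block contributes $\chi_{\V{41}}\otimes\rho_{\V{12}}$ together with $\chi_{\V{31}}$ and $\chi_{\V{41}}\chi_{\V{23}}\chi_{\V{42}}$;
\item the $z$-block contributes $\chi_{\V{23}}\otimes\rho_{\V{43}}$ together with $\chi_{\V{13}}$ and $\chi_{\V{41}}\chi_{\V{23}}\chi_{\V{24}}$.
\end{itemize}

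Once repaired this way, your argument is essentially the paper's proof. The paper instead convolves three times to obtain $f_{1234567}$ with transform \eqref{eqn:hypergeometric_Mellin_final}, evaluates at $1$ by Mellin inversion, and twists $A',B'$ by $\chi_{\V{13}}$.

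On constants, your pairing-off via $\gamma(\tfrac12,\psi)\gamma(\tfrac12,\psi^{-1})=\psi(-1)$ is the right mechanism. Note, though, that what emerges is $\sqrt{\gamma(\tfrac12,\Sigma^*)}$, not $\sqrt{\gamma(\tfrac12,\Sigma^-)}$. Here $\Sigma^*=A\otimes B^{-1}$ is $\Sigma^-$ with the entries $\alpha_2^{-1},\alpha_3^{-1},\alpha_4^{-1},\alpha_7^{-1}$ replaced by their inverses.
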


The proof will be given in \Cref{TetrahedralAsGammaConvolution}.
It would be interesting to give a proof of \Cref{thm:main_duality_theorem}
using this proposition as a starting point. In fact,
the right-hand side can be seen to reflect some of the geometry of the spinor cone,
a point we hope to return to in future work.  

\subsection{The case $F=\R$: expression in terms of special value of $\pFq{4}{3}$}
\label{sub:expression_in_4F3_statement}
Using Mellin transform, we can relate the tetrahedral symbol to
generalized hypergeometric series. For simplicity, we assume
\begin{align}
    \chi_{\V{12}}=\abs{\blank}^{J_{\EN{1}}},\quad
    \chi_{\V{31}}=\abs{\blank}^{J_{\EN{2}}},\quad
    \chi_{\V{14}}=\abs{\blank}^{J_{\EN{3}}},\\
    \chi_{\V{34}}=\abs{\blank}^{J_{\EN{4}}},\quad
    \chi_{\V{24}}=\abs{\blank}^{J_{\EN{5}}},\quad
    \chi_{\V{23}}=\abs{\blank}^{J_{\EN{6}}},
\end{align}
where \(J_{\EN{1}},\ldots,J_{\EN{6}}\in\bbC\). Here, we choose \(\chi_{\V{31}}\)
instead of \(\chi_{\V{13}}\) only because in
\eqref{eqn:chosen_oriented_tetrahedron} the blackbold \(\EN{2}\) is the label of
\(\V{31}\); the orientations themselves have no impact here.
The general case can be derived
with the same method, but we only give explicit statement about this special
case. We use shorthands such as
\(J_{\IEN{1}\EN{2}\EN{3}}=-J_{\EN{1}}+J_{\EN{2}}+J_{\EN{3}}\),
\(J_{\EN{2}\EN{3}\EN{5}\IEN{6}}=J_{\EN{2}}+J_{\EN{3}}+J_{\EN{5}}-J_{\EN{6}}\),
and so on.

Then in \Cref{sub:Mellin_of_tetrahedral_symbols} we will show that we can
express the tetrahedral symbol as a sum of \(\pFq{4}{3}s\) \emph{evaluated at
$1$}, in two different ways:
\begin{align}
    \Pisymbol\sqrt{L\Bigl(\OneHalf,\Sigma\Bigr)}
    &=-\frac{L(\OneHalf,\Sigma_1^*)}
        {\gamma(J_{\IEN{2}\IEN{3}\IEN{5}\EN{6}})\gamma(J_{\IEN{2}\IEN{3}\EN{5}\EN{6}})\gamma(J_{\IEN{2}\IEN{2}})}
        \pFq{4}{3}\!\left(\!
            \begin{array}{c}
                J_{\IEN{1}\EN{2}\EN{3}}+\OneHalf,
                J_{\EN{1}\EN{2}\EN{3}}+\OneHalf,
                J_{\EN{2}\EN{4}\IEN{6}}+\OneHalf,
                J_{\EN{2}\IEN{4}\IEN{6}}+\OneHalf
                \\
                J_{\EN{2}\EN{3}\EN{5}\IEN{6}}+1,
                J_{\EN{2}\EN{3}\IEN{5}\IEN{6}}+1,
                J_{\EN{2}\EN{2}}+1
            \end{array}\!\Big|\; 1\,
        \right)\\
    &\mathbin{\phantom{=}}-\frac{L(\OneHalf,\Sigma_2^*)}
        {\gamma(J_{\EN{2}\EN{3}\EN{5}\IEN{6})}\gamma(J_{\EN{5}\EN{5}})\gamma(J_{\IEN{2}\EN{3}\EN{5}\IEN{6}})}
        \pFq{4}{3}\!\left(\!
            \begin{array}{c}
                J_{\IEN{1}\IEN{5}\EN{6}}+\OneHalf,
                J_{\EN{1}\IEN{5}\EN{6}}+\OneHalf,
                J_{\IEN{3}\EN{4}\IEN{5}}+\OneHalf,
                J_{\IEN{3}\IEN{4}\IEN{5}}+\OneHalf
                \\
                J_{\IEN{2}\IEN{3}\IEN{5}\EN{6}}+1,
                J_{\IEN{5}\IEN{5}}+1,
                J_{\EN{2}\IEN{3}\IEN{5}\EN{6}}+1
            \end{array}\!\Big|\; 1\,
        \right)\\
    &\mathbin{\phantom{=}}-\frac{L(\OneHalf,\Sigma_3^*)}
        {\gamma(J_{\EN{2}\EN{3}\IEN{5}\IEN{6}})\gamma(J_{\IEN{5}\IEN{5}})\gamma(J_{\IEN{2}\EN{3}\IEN{5}\IEN{6}})}
    \pFq{4}{3}\!\left(\!
            \begin{array}{c}
                J_{\IEN{1}\EN{5}\EN{6}}+\OneHalf,
                J_{\EN{1}\EN{5}\EN{6}}+\OneHalf,
                J_{\IEN{3}\EN{4}\EN{5}}+\OneHalf,
                J_{\IEN{3}\IEN{4}\EN{5}}+\OneHalf
                \\
                J_{\IEN{2}\IEN{3}\EN{5}\EN{6}}+1,
                J_{\EN{5}\EN{5}}+1,
                J_{\EN{2}\IEN{3}\EN{5}\EN{6}}+1
            \end{array}\!\Big|\; 1\,
        \right)\\
    &\mathbin{\phantom{=}}-\frac{L(\OneHalf,\Sigma_4^*)}
        {\gamma(J_{\EN{2}\EN{2}})\gamma(J_{\EN{2}\IEN{3}\IEN{5}\EN{6}})\gamma(J_{\EN{2}\IEN{3}\EN{5}\EN{6}})}
    \pFq{4}{3}\!\left(\!
            \begin{array}{c}
                J_{\IEN{1}\IEN{2}\EN{3}}+\OneHalf,
                J_{\EN{1}\IEN{2}\EN{3}}+\OneHalf,
                J_{\IEN{2}\EN{4}\IEN{6}}+\OneHalf,
                J_{\IEN{2}\IEN{4}\IEN{6}}+\OneHalf
                \\
                J_{\IEN{2}\IEN{2}}+1,
                J_{\IEN{2}\EN{3}\EN{5}\IEN{6}}+1,
                J_{\IEN{2}\EN{3}\IEN{5}\IEN{6}}+1
            \end{array}\!\Big|\; 1\,
        \right),
\end{align}
and
\begin{align}
    \Pisymbol\sqrt{L\Bigl(\OneHalf,\Sigma\Bigr)}
    &=+\frac{L(\OneHalf,\Sigma_{1'}^*)}
        {\gamma(J_{\EN{1}\EN{1}})\gamma(J_{\EN{1}\IEN{3}\EN{4}\EN{6}})\gamma(J_{\EN{1}\IEN{3}\IEN{4}\EN{6}})}
        \pFq{4}{3}\!\left(\!
            \begin{array}{c}
                J_{\IEN{1}\EN{2}\EN{3}}+\OneHalf,
                J_{\IEN{1}\IEN{5}\EN{6}}+\OneHalf,
                J_{\IEN{1}\EN{5}\EN{6}}+\OneHalf,
                J_{\IEN{1}\IEN{2}\EN{3}}+\OneHalf
                \\
                J_{\IEN{1}\IEN{1}}+1,
                J_{\IEN{1}\EN{3}\IEN{4}\IEN{6}}+1,
                J_{\IEN{1}\EN{3}\EN{4}\IEN{6}}+1
            \end{array}\!\Big|\; 1\,
        \right)\\
    &\mathbin{\phantom{=}}+\frac{L(\OneHalf,\Sigma_{2'}^*)}
        {\gamma(J_{\IEN{1}\IEN{1}}\gamma(J_{\IEN{1}\IEN{3}\EN{4}\EN{6}})\gamma(J_{\IEN{1}\IEN{3}\IEN{4}\EN{6}})}
        \pFq{4}{3}\!\left(\!
            \begin{array}{c}
                J_{\EN{1}\EN{2}\EN{3}}+\OneHalf,
                J_{\EN{1}\IEN{5}\EN{6}}+\OneHalf,
                J_{\EN{1}\EN{5}\EN{6}}+\OneHalf,
                J_{\EN{1}\IEN{2}\EN{3}}+\OneHalf
                \\
                J_{\EN{1}\EN{1}}+1,
                J_{\EN{1}\EN{3}\IEN{4}\IEN{6}}+1,
                J_{\EN{1}\EN{3}\EN{4}\IEN{6}}+1
            \end{array}\!\Big|\; 1\,
        \right)\\
    &\mathbin{\phantom{=}}+\frac{L(\OneHalf,\Sigma_{3'}^*)}
        {\gamma(J_{\IEN{1}\EN{3}\IEN{4}\EN{6}})\gamma(J_{\EN{1}\EN{3}\IEN{4}\EN{6}})\gamma(J_{\IEN{4}\IEN{4}})}
    \pFq{4}{3}\!\left(\!
            \begin{array}{c}
                J_{\EN{2}\EN{4}\IEN{6}}+\OneHalf,
                J_{\IEN{3}\EN{4}\IEN{5}}+\OneHalf,
                J_{\IEN{3}\EN{4}\EN{5}}+\OneHalf,
                J_{\IEN{2}\EN{4}\IEN{6}}+\OneHalf
                \\
                J_{\EN{1}\IEN{3}\EN{4}\IEN{6}}+1,
                J_{\IEN{1}\IEN{3}\EN{4}\IEN{6}}+1,
                J_{\EN{4}\EN{4}}+1
            \end{array}\!\Big|\; 1\,
        \right)\\
    &\mathbin{\phantom{=}}+\frac{L(\OneHalf,\Sigma_{4'}^*)}
        {\gamma(J_{\IEN{1}\EN{3}\EN{4}\EN{6}})\gamma(J_{\EN{1}\EN{3}\EN{4}\EN{6}})\gamma(J_{\EN{4}\EN{4}})}
    \pFq{4}{3}\!\left(\!
            \begin{array}{c}
                J_{\EN{2}\IEN{4}\IEN{6}}+\OneHalf,
                J_{\IEN{3}\IEN{4}\IEN{5}}+\OneHalf,
                J_{\IEN{3}\IEN{4}\EN{5}}+\OneHalf,
                J_{\IEN{2}\IEN{4}\IEN{6}}+\OneHalf
                \\
                J_{\EN{1}\IEN{3}\IEN{4}\IEN{6}}+1,
                J_{\IEN{1}\IEN{3}\IEN{4}\IEN{6}}+1,
                J_{\IEN{4}\IEN{4}}+1
            \end{array}\!\Big|\; 1\,
        \right).
\end{align}
Here \(\Sigma_1^*,\ldots,\Sigma_4^*\) and \(\Sigma_{1'}^*,\ldots,\Sigma_{4'}^*\) are certain
slightly modified versions of \(\Sigma^-\) (details in
\Cref{sub:Mellin_of_tetrahedral_symbols}). For generic
\(J_{\EN{1}},\ldots,J_{\EN{6}}\), the eight \(\pFq{4}{3}\)s on
both right-hand sides turn out to be absolutely convergent at \(1\) and so the
expressions are well-defined.
See also \Cref{sub:symbol_for_SU2} for a discussion of
related results in the literature.

In particular, our results imply  that \emph{there exists a
\(4\)-term combination of \(\pFq{4}{3}\)s whose value at \(1\) satisfies
\(W(\TypeD_6)\)-symmetry.} We have not been able to find
this statement in the literature; it fits in a broader patterm
of extra symmetry enjoyed by evaluations of generalized hypergeometric series,
cf.~\cite{FGS11}.
In this  connection, we would like to point out that the Thomae symmetry of $\pFq{3}{2}$
hypergeometric series is, in the same fashion,  related to the Regge symmetries
of $3j$ symbols. 

We also verified the said \(W(\TypeD_6)\)-symmetry numerically. For the reader's
entertainment, we give approximate values of
\(\Pisymbol\sqrt{L(\OneHalf,\Sigma)}\) in two randomly chosen cases (note that
\(\Sigma\)s below depend on the \(J\)s):
\begin{gather}
    \sqrt{L\Bigl(\OneHalf,\Sigma\Bigr)}
    \begin{Bmatrix}
        0.713446\cdot i&
        0.172136\cdot i&
        0.036550\cdot i\\
        0.933153\cdot i&
        0.382368\cdot i&
        0.223157\cdot i
    \end{Bmatrix}\\
    \approx
    1730.348536844976745554895072481538288183,
    \\
    \sqrt{L\Bigl(\OneHalf,\Sigma\Bigr)}
    \begin{Bmatrix}
        0.953991 +
        0.458649\cdot i&
        0.284370 +
        0.590858\cdot i&
        0.777124 +
        0.667484\cdot i\\
        0.036922 +
        0.102931\cdot i&
        0.542879 +
        0.223050\cdot i&
        0.395720 +
        0.897602\cdot i
    \end{Bmatrix}\\
    \approx
    -70.0141698970658774227811653621105061501\\
    {}-
    344.8019718022410817955734451589062806451 \cdot i.
\end{gather}

\section{Langlands duality; the tetrahedral symbol as an interface}
\label{TetrahedralAsInterface}

We now sketch a picture that is
Langlands dual to the tetrahedral symbol. 
It does not shed much light on the proofs, and it may seem
as if we are simply translating into an arcane tongue. 
But it was instrumental to the discovery of our results; it brings
out the role of the {\em group} $\mathrm{Spin}_{12}$
rather than merely its maximal torus and Weyl group; and, 
  although we do not pursue it here, this language suggests natural
avenues of   generalization. 
 
\subsection{Tetrahedral symbol via a correspondence}
First of all, let us explain how the definition of the tetrahedral symbol
is a special case of an invariant attached to
a correspondence between spherical varieties. 
Recall the notation of \eqref{eq: group basic notation}:
\begin{align}
    G = \RO^{\tetraO},\quad
    D = \RO^{\tetraE}, \quad H =\RO^{\tetraV},
\end{align}
so that $G \simeq \RO^{12}, D \simeq \RO^{6}, H \simeq \RO^{4}$.
Let \(X, Y\) be the corresponding homogeneous spaces:
\begin{align}
    X = D \backslash G,\quad Y = H \backslash G.
\end{align}
Let \(C^\infty(X)\) (resp.~\(C^\infty(Y)\)) be the space of smooth
\(\bbC\)-valued functions on \(X\) (resp.~\(Y\)). We also regard elements in
\(C^\infty(X)\) as functions on \(G\) that are \(D\)-invariant on the left, and
similarly for \(C^\infty(Y)\).
With $\Pi_G$ as defined in \Cref{Tetrahedraldatum},
we consider the diagram of \(G\)-representations
\begin{equation}\label{JSymbolDiagram}
    \begin{tikzcd}
        & C^{\infty}(X) \ar[dd,"\Av", dashed]\\
        \Pi_G \ar[ru, "\Lambda^X"] \ar[rd, "\Lambda^Y",swap] & \\
            & C^{\infty}(Y)
    \end{tikzcd}
\end{equation}
where the respective morphisms \(\Lambda^X\) and \(\Lambda^Y\) of \(\Pi_G\) into
\(C^{\infty}(X)\) and \(C^{\infty}(Y)\)
are ``distinguished'' embeddings,
which is to say that their values at the identity coset
are given by the normalized functionals $\Lambda^D$ and $\Lambda^H$
that were specified earlier;
  the vertical
map is an averaging integral
with Haar measure is fixed as in the beginning of the paper:
\begin{align}
    \Av(f)(x)=\int_{(D\cap H)\backslash H}f(hx)\dd h.
\end{align}
We draw the vertical map with a dashed arrow because
the above integral is certainly not convergent for
an arbitrary function in $C^{\infty}(X)$.
We shall \emph{assume}
that \(\Av\) converges absolutely on the image of \(\Pi_G\) (which turns out to
be true for tempered $\Pi$).
We can reinterpret 
 the tetrahedral symbol \(\Pisymbol\) 
as \emph{describing the scalar by which this diagram fails to commute}. 
 
\subsection{Interface and its dual}
\label{sub:interfaces_and_dual}
The  key ingredients in the above definition are two spaces $X,Y$
and the correspondence
\begin{align}
    X \longleftarrow Z \longrightarrow Y
\end{align}
where $Z \defeq (D \cap H) \backslash G$; the averaging
operator amounts to pulling-back and pushing-forward along this diagram. 
Now an important theme of relative Langlands duality is that, in order
to achieve symmetry between automorphic and Galois sides, it is convenient
to switch to symplectic geometry, 
i.e. to ``code'' spaces by their cotangent bundles. From this point of view
we should replace $X$ and $Y$ by  $M=\CT X, N=\CT Y$
and replace $Z$ by  the \emph{Lagrangian correspondence}
\begin{equation} \label{LXY}
    L\defeq \mbox{conormal to $Z$} \subset M \times N.
\end{equation}
In the language of \cite{BZSV24}, this defines an \notion{interface} between
the hyperspherical varieties $M$ and $N$, and
it is reasonable to search for a \notion{dual interface}
\begin{equation} \label{LXY2}
    \check{L}  \subset \check{M} \times \check{N},
\end{equation}
where $\check{M}, \check{N}$ are the respective relative Langlands duals for $M,
N$. The paper \cite{BZV} does this for various  naturally arising classes of
$L$; however, in all examples of that paper, both $L$ and $\check{L}$ are
smooth. The tetrahedral symbol offers one of the first interesting
cases where this is not so.

 The dual group of \(G\) is 
\begin{align}
    \dual{G}=\SL_2(\bbC)^{\tetraO} \simeq \SL_2(\bbC)^{12}.
\end{align}
Consider also the dual $\dual{D}= \SL_2^{\tetraE} \simeq \SL_2(\bbC)^6$
which we regard as a subgroup of $\dual{G}$ in the obvious way,
i.e., corresponding to the map $\tetraO\rightarrow \tetraE$
where we forget orientation.
(Note that, in general, an embedding $D \subset G$
does not induce an embedding of dual groups; but
in the current case we just use the obvious embedding.)

According to relative Langlands duality, the Hamiltonian varieties \(M=\CT X\)
and \(N=\CT Y\) are respectively dual to the \(\dual{G}\)-spaces
\begin{align} \label{dualMdualNdef}
    \dual{M}=\CT(\dual{G}/\dual{D}),\quad
    \dual{N}\simeq\bigoplus_{i\in\tetraV}\bbC^2_{ij}\boxtimes\bbC^2_{ik}\boxtimes\bbC^2_{il}.
\end{align}

In fact,  $\dual{N}$
is the pullback  
 of a half-spin 
representation $S$ of $\Spin_{12}$ by means of
the homomorphism 
\begin{align}
    \tau \colon\check{D}=\SL_2^{\tetraE}\longto \Spin_{12}
\end{align}
that was described in \eqref{taudef2};
note that there are {\em two} half-spin representation of $\Spin_{12}$, only
one of which  restricts to $\check{N}$ (see \Cref{sub:halfspin} for
more details).
 Accordingly it makes sense to consider
within $\check{N}$ the cone
$    P \subset S\simeq \dual{N}$
of pure spinors. The reader can refer to \Cref{sub:the_dual_side} for a
description of \(P\) as a subvariety of \(S\).

We are now ready to describe what we think
is the picture in relative Langlands duality that underlies the theory of the tetrahedral symbol.
  
\begin{quote}
{\em Key proposal:} The dual interface to the tetrahedral symbol is the induction of
the cone of pure spinors $P \subset \check{N}$ 
from $\dual{D}$ to $\dual{G}$:
\begin{align} \label{L As Induction}
    \check{L}=\dual{G}\x^{\dual{D}}P.
\end{align}
where the projection from \(\dual{L}\) to \(\dual{M}\)  factors through the zero
section \(\dual{G}/\dual{D}\to\dual{M}\), and the projection to \(\dual{N}\) is given by
\((g,v)\mapsto gv\).
\end{quote}

This picture, for example, ``explains'' \Cref{thm:main_duality_theorem},
in a sense discussed further in  \Cref{functoriality} and \Cref{sheaves}.

\section{Further topics} \label{sec:Further}
What we have proved in this paper gives, we think, a deeper context
for  the Regge symmetry of the classical $6j$ symbols. However,
the classical $6j$ symbols has many other beautiful properties too, 
and it would be interesting to study these from the point of view taken in this paper.
We will give a brief discussion of a few such points here. 

The classical $6j$ symbol plays a distinguished role in the theory of orthogonal
polynomials: it gives the most general class of orthogonal polynomials in the {\em
Askey scheme} of orthogonal-hypergeometric polynomials. See \cite{Jac85,Ko88}.
Classical orthogonal polynomials satisfy both orthogonality relations
and difference equations; and in
\Cref{orthogonality} and \Cref{difference} we discuss, respectively,
the orthogonality and difference equations that hold in the context of this paper.

Secondly, the  tetrahedral symbol has played an important role in analytic
number theory (although apparently this connection has not been explicitly made previously). In the
first-named author's work on the subconvexity problem for $L$-functions, a key
role was played by a certain spectral reciprocity formula relating two sums of
$L$-functions. In this formula, as observed in \cite{MV10}, there is a somewhat
mysterious integral transform. This is, as we shall sketch in \Cref{Ass-ant},
precisely the integral transform associated by the two-variable function
obtained by fixing four of the six inputs to the tetrahedral symbol.

The relationship of the tetrahedral symbol to relative Langlands
duality could be more deeply understood in many ways. 
In \Cref{sub:review_of_local_Langlands_correspondence}
we use the general formulation of Langlands duality to 
discuss the situation beyond the principal series case.
In \Cref{functoriality}
we formulate a conjecture relating the tetrahedral symbol to the representation
theory of $\mathrm{Spin}_{12}$, and in \Cref{sheaves} we discuss associated
questions in geometric representation theory. 

We do not touch on another important role of the $6j$ symbol: namely, its role
in the Turaev--Viro \cite{TV92} invariant of \(3\)-manifolds; it would be interesting to look
at this, too,  from the dual viewpoint. Other interesting
directions include elaborating the analogous story for $3j$ symbols,
which are correspondingly related to the natural cone inside $\wedge^3 \bbC^6$;
or to study the asymptotic analysis of the tetrahedral symbol, in the spirit
of the Ponzano--Regge asymptotics \cite{Ro99}.

As this section is primarily meant to serve as a source of motivation for futher
work, {\em we will not always give full details, particularly around issues of
convergence.}

\subsection{Orthogonality} \label{orthogonality}
Like many classical special functions, the tetrahedral symbol
has remarkably rigid orthogonality properties.

Fix a pair of opposite edges of the tetrahedron: let us
take them to be $\Set{\V{13},\V{24}}$ and assign tempered representations to the
remaining $4$ edges. We can then regard the   tetrahedral symbol as a function of
$\sigma=\pi_{\V{13}}, \tau=\pi_{\V{24}}$,
which we denote by $\Pisymbol_{\sigma \tau}$,
and which we will regard as a function 
\begin{align}
    \cP_0\times \cP_0&\longrightarrow \bbC,\\*
    (\sigma, \tau) &\longmapsto \Pisymbol_{\sigma \tau},
\end{align}
where we recall from \Cref{Definition-general} that $\cP_0$ is the \emph{tempered dual}
of $\RO$, i.e. the set of irreducible tempered representations up to isomorphism.

The support of this function is a direct product, i.e. if $\Pisymbol_{\sigma_0
\tau_0}$ and $\Pisymbol_{\sigma_1 \tau_1}$ are nonzero, then $\Pisymbol$ is not
identically zero on the component of $\cP_0 \times \cP_0$ containing $(\sigma_0,
\tau_1)$. Indeed, the condition for $\Pisymbol$ to be not identically vanishing
on the component containing $(\sigma, \tau)$ is that all four of the
$\RO^3$-representations
\begin{align}
    \pi_{\V{12}} \boxtimes \sigma \boxtimes \pi_{\V{14}},\quad
    \pi_{\V{32}} \boxtimes \sigma \boxtimes \pi_{\V{34}},\quad
    \pi_{\V{21}} \boxtimes \tau\boxtimes \pi_{\V{23}},\quad
    \pi_{\V{41}} \boxtimes \tau \boxtimes \pi_{\V{43}}
\end{align}
admit $\RO$-invariant functionals.

Now this $\cP_0$ has a natural Borel structure; and a choice of Haar measure on $\RO$ determines upon $\cP_0$ a
canonical measure, the Plancherel measure, characterized by the fact that for a
continuous compactly supported function $f$ on $\RO$ we have
\begin{equation}
    \label{Plancherel}
    f(e) = \int_{\tau \in \cP_0} \Tr_{\tau}(f)\dd\tau.
\end{equation}

A collection of $N$ vectors in $\mathbb{R}^N$ are orthonormal
exactly when the matrix with those rows defines
an isometric transformation from $\mathbb{R}^N$ to itself.
In the same way, the following 
statement  can be considered as an orthogonality relation 
for $\Pisymbol_{\sigma\tau}$:

\begin{proposition}\label[proposition]{Orthogonality}
    The function $\Pisymbol_{\sigma \tau}$ is the kernel of a unitary transformation:
    \begin{align}
        L^2(\cP_0^{(a)}) &\longrightarrow L^2(\cP_0^{(b)}),\\*
        f &\longmapsto \left(\sigma\mapsto\int \Pisymbol_{\sigma \tau} f(\tau) \dd\tau\right).
    \end{align}
    and where $\cP_0^{(a)} \times \cP_0^{(b)}$ is the union of all
    components of $\cP_0 \times \cP_0$ on which $\Pisymbol_{\sigma\tau}$
    is not identically zero.
\end{proposition}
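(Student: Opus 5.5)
The plan is to interpret the kernel $\Pisymbol_{\sigma\tau}$ as the matrix coefficient of a change of basis between two spectral decompositions of one Hilbert space, in the spirit of the classical proof that $6j$ symbols are recoupling coefficients. Fix the four tempered representations $\pi_{\V{12}},\pi_{\V{14}},\pi_{\V{32}},\pi_{\V{34}}$ (the pairs $\pi_{\V{21}},\pi_{\V{23}}$ and $\pi_{\V{41}},\pi_{\V{43}}$ are the same representations, since $\pi_{ij}\simeq\pi_{ji}$). Consider the space
\begin{align}
    \mathcal{H} \defeq \Hom_{\RO}\bigl(\pi_{\V{12}}\otimes\pi_{\V{14}}\otimes\pi_{\V{32}}\otimes\pi_{\V{34}},\ \bbC\bigr),
\end{align}
made precise as the $\RO$-coinvariant Hilbert space $L^2$-completion (i.e. $(\pi_{\V{12}}\hat\otimes\cdots\hat\otimes\pi_{\V{34}})_{\RO}$ with the inner product coming from $\int_{\RO}$ of matrix coefficients, which converges for tempered inputs on the relevant open set). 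We will decompose $\mathcal{H}$ in two ways. First, group the factors as $(\pi_{\V{12}}\otimes\pi_{\V{14}})\otimes(\pi_{\V{32}}\otimes\pi_{\V{34}})$. The Plancherel decomposition of each tensor product, using \eqref{Plancherel}, gives
\begin{align}
    \mathcal{H}\simeq\int_{\sigma\in\cP_0}\Hom_\RO(\pi_{\V{12}}\otimes\sigma\otimes\pi_{\V{14}},\bbC)\otimes\Hom_\RO(\pi_{\V{32}}\otimes\sigma\otimes\pi_{\V{34}},\bbC)\dd\sigma .
\end{align}
Each factor is at most one-dimensional, and it carries a normalized generator $\Lambda^{H}$-type functional given by \eqref{eqn:def_of_Lambda_H} for the $\RO$-factors at vertices $\V1$ and $\V3$. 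Second, group the factors as $(\pi_{\V{12}}\otimes\pi_{\V{32}})\otimes(\pi_{\V{14}}\otimes\pi_{\V{34}})$. This gives an analogous direct integral over $\tau$, with vertex functionals at $\V2$ and $\V4$.

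Step one is to check that both identifications are unitary, onto $L^2(\cP_0^{(a)})$ and $L^2(\cP_0^{(b)})$ respectively. The normalization \eqref{eqn:def_of_Lambda_H}, namely $\Lambda^H(v)\Lambda^H(w)=\int\IPair{hv}{w}$, is exactly what makes the Plancherel inversion isometric. The support statement preceding the proposition identifies the components that actually occur. The key computation here is that
\begin{align}
    \int_\RO \IPair{g v_1}{v_2}\IPair{g v_3}{v_4}\dd g=\int_{\cP_0}\Lambda^{\V1}_\sigma\Lambda^{\V3}_\sigma(\dots)\dd\sigma ,
\end{align}
obtained by inserting the Plancherel formula for the matrix coefficient of $\pi_{\V{12}}\otimes\pi_{\V{14}}$. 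Strictly, we must work with the real structures of \Cref{PrasadLemma}, so that the self-pairings $\IPair{-}{-}$ become genuine inner products and unitarity makes sense.

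Step two is to compute the composite change of basis $L^2(\cP_0^{(a)})\to\mathcal{H}\to L^2(\cP_0^{(b)})$. Pairing the $\sigma$-basis element against the $\tau$-basis element amounts to contracting four vertex functionals, with $\sigma$ and $\tau$ glued along the edges $\V{13}$ and $\V{24}$, and then integrating over the diagonal $\RO$-action. This is precisely the definition \eqref{eqn:LambdaHprimedef} of $(\Lambda^H)'$ evaluated against $\Lambda^H$, so by \eqref{cPi2} the pairing equals $\Pisymbol_{\sigma\tau}$. The composite of two unitary maps is unitary, and its kernel against $\dd\tau$ is $\Pisymbol_{\sigma\tau}$. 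This is the assertion of the proposition.

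The main obstacle is analytic. Both the Plancherel decompositions of tensor products of two tempered representations and the identification of the pairing with $\Pisymbol$ involve interchanging integrals that converge only conditionally. The sign ambiguity also requires consistent choices of the functionals measurable in $\sigma$ and $\tau$. I would first establish all identities on a dense subspace of $K$-finite vectors, where the matrix coefficients decay like $\Xi^2$ and Fubini applies after Harish-Chandra's estimates. I would then extend by continuity, following the convergence argument cited for \Cref{Hprimedefconvergence}. In keeping with this section, I would not pursue full rigor there.
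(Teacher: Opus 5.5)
Your route differs from the paper's. The paper fixes $\pi_{\V{12}},\pi_{\V{23}},\pi_{\V{34}}$ and iterates \Cref{PlancherelTriple}. This decomposes $\pi_{\V{12}}\otimes\pi_{\V{23}}\otimes\pi_{\V{34}}$ over $\pi_{\V{14}}$ through the two channels $\V{24}$ and $\V{13}$, giving \eqref{IsometryA} and \eqref{IsometryB}. It then uses $\RO$-equivariance to show that the composite isometry is fibred over $\pi_{\V{14}}$ with a scalar kernel $K^{\pi_{\V{14}}}(\pi_{\V{24}},\pi_{\V{13}})$. Only then does it identify this kernel with $\Pisymbol$, in \Cref{alternateformula3}.

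You instead keep $\pi_{\V{14}}$ fixed and decompose the space $\mathcal{H}$ of invariant functionals on the four-fold product directly. This is the Hilbert space of the paper's ``more precise'' \Cref{LemmaCyclicFour}, so you are proving that statement head-on. That is a reasonable alternative. Tempered matrix coefficients are dominated by $\Xi$, and $\Xi^4$ is integrable on $\RO$, so your inner product on $\mathcal{H}$ converges absolutely for all tempered data; no ``relevant open set'' is needed. You also avoid extracting a measure-zero fibre over a single $\pi_{\V{14}}$. The price is that both of your steps rest on a Schur orthogonality relation in Plancherel form.

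The genuine gap is Step two, which carries the entire content of the proposition and is not ``precisely the definition''. $\Pisymbol$ is the ratio $(\Lambda^H)'/\Lambda^H$ of two functionals on the twelve-fold $\Pi_G$. Your kernel is the $\mathcal{H}$-pairing of $E_\sigma$ with $E_\tau$. These are not vectors of $\mathcal{H}$: they are $\delta$-normalized with respect to Plancherel measure, so the pairing exists only as a distribution in $(\sigma,\tau)$. Likewise, ``$(\Lambda^H)'$ evaluated against $\Lambda^H$'' has no meaning unless $\RO$ is compact, where $\Lambda^H=\IPair{V}{-}$ and your sentence is literally true.

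To close the gap, write $\Pi_G=(\Theta_{\V{13}}\otimes\sigma\otimes\sigma)\otimes(\Theta_{\V{24}}\otimes\tau\otimes\tau)$. Here $\Theta_{\V{13}}$ (resp.\ $\Theta_{\V{24}}$) collects the copies of the four outer edges at $\V1,\V3$ (resp.\ $\V2,\V4$). Let $T_\sigma,T_\tau$ contract the $\sigma\otimes\sigma$ and $\tau\otimes\tau$ factors. Parametrize $\RO\backslash\RO^{\tetraV}$ by $x=g_{\V2}^{-1}g_{\V1}$, $b=g_{\V1}^{-1}g_{\V3}$, $c=g_{\V2}^{-1}g_{\V4}$. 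This gives
\begin{align}
    (\Lambda^H)'(v_{13}\otimes v_{24})=\int_{\RO}\int_{\RO}\bigl\langle T_\sigma\bigl((1,b)v_{13}\bigr),T_\tau\bigl((1,c)v_{24}\bigr)\bigr\rangle_{\mathcal{H}}\,\dd b\,\dd c,
\end{align}
where the $x$-integral is exactly your (bilinear) inner product on $\mathcal{H}$. You then need the un-averaging identity
\begin{align}
    \int_{\RO}\bigl\langle T_\sigma\bigl((1,b)v_{13}\bigr),\theta\bigr\rangle_{\mathcal{H}}\,\dd b=(\Lambda_{\V1}\otimes\Lambda_{\V3})(v_{13})\cdot E_\sigma(\theta).
\end{align}
This is \eqref{eqn:def_of_Lambda_H} for $\RO_{\V1}\times\RO_{\V3}$, applied to the non-normalizable vector $\theta\otimes\sum_e e\otimes e$; it is the analogue of the paper's \eqref{unaverage2}. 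It needs a genuine approximation argument, because the naive bound for the truncated sums over $e$ leaves a product of two $\Xi^2$'s, which is not integrable on $\RO\times\RO$.

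Combining the two displays with \eqref{cPi2} yields $\int_{\RO}E_\sigma\bigl(T_\tau((1,c)v_{24})\bigr)\,\dd c=\Pisymbol_{\sigma\tau}\,(\Lambda_{\V2}\otimes\Lambda_{\V4})(v_{24})$. This identifies the change-of-basis kernel with $\Pisymbol_{\sigma\tau}$ only after two more steps. First, apply the same un-averaging at $\V2,\V4$. Second, smear in $\tau$, integrating a family $v_{24}(\tau)$ against a test function, just as the paper smears with $f,f'$ in \Cref{alternateformula3}. Without these steps, your argument shows that \emph{some} unitary kernel relates the two decompositions, but not that it is $\Pisymbol_{\sigma\tau}$.
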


We sketch a proof in \Cref{OrthoProof}. Actually, we will establish a more precise
statement, which we now explain. Set
\begin{align}
    \Theta \defeq\pi_{\V{12}} \boxtimes \pi_{\V{23}} \boxtimes \pi_{\V{34}} \boxtimes \pi_{\V{41}},
\end{align}
an irreducible representation of $\RO^4$.
A choice of $\sigma=\pi_{\V{13}}$ and $\tau=\pi_{\V{24}}$ determines
$\RO$-invariant functionals
\begin{align} \label{Esdef}
    E_{\V{13}}=E_{\sigma}  \in \Theta^* \text{ and } E_{\V{24}}=E_{\tau}  \in
    \Theta^*
\end{align}
in the following way. 
First fix symmetric self-duality pairings on all $\pi_{ij}$
including $\sigma, \tau$.
Consider first
\begin{align}
    \Theta \boxtimes \sigma \boxtimes \sigma
    \simeq  \underbrace{ (\pi_{\V{32}} \boxtimes \pi_{\V{34}}  \boxtimes \pi_{\V{13}})}_{ 
        \pi_{\tetraO_\V3}} \boxtimes \underbrace{ 
    (\pi_{\V{13}} \boxtimes \pi_{\V{14}} \boxtimes \pi_{\V{12}})}_{\pi_{\tetraO_{\V1}}}
\end{align}
We normalize
functionals $\Lambda_{\V3}$ on the first factor $   \pi_{\tetraO_\V3}$ and $\Lambda_{\V1}$ on the second factor $    \pi_{\tetraO_\V1}$ by means of the same recipe as in
\eqref{eqn:def_of_Lambda_H}. To produce $E_{\sigma}$, contract in the $\sigma$
variables to produce a class in $\Theta^*$, that is to say, sum over $e_i
\otimes e_i\in\sigma\boxtimes\sigma$ where $e_i$ is an orthonormal basis with
respect to the fixed symmetric self-pairing on $\sigma$; it is presumably not difficult   to
verify the convergence of this summation. The construction of $E_{\tau}$ is
precisely parallel using instead
\begin{align}
    \Theta \boxtimes \tau \boxtimes \tau \simeq
      \pi_{\tetraO_\V2} \boxtimes   \pi_{\tetraO_\V4}  
\end{align}
and forming functionals $\Lambda_{\V2}, \Lambda_{\V4}$ on the two factors.

\begin{proposition}
    \label[proposition]{LemmaCyclicFour}
    There is a natural Hilbert structure on $(\Theta^*)^{\RO}$
    with the property that the rules  
    \begin{align}
        f \mapsto \int f(\sigma) E_{\sigma}\dd\sigma,\quad
        g \mapsto \int g(\tau) E_{\tau}\dd\tau
    \end{align}
    extend to isometries
    \begin{align}
        L^2(\cP_0^{(a)}) \mbox{ or } L^2(\cP_0^{(b)})
        \longrightarrow \text{Hilbert completion of $\Theta^*$}.
    \end{align}
    Moreover, the two isometries are intertwined
    by the transformation of \Cref{Orthogonality}.
\end{proposition}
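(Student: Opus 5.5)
The plan is to give $(\Theta^*)^{\RO}$ a Hilbert structure that does not refer to either of the two groupings of the cyclic tensor product $\Theta$. Then I would check separately that each of the families $E_\sigma$ and $E_\tau$ is a "continuous orthonormal basis" for it. The intertwining statement then comes down to identifying the change-of-basis kernel $\langle E_\sigma, E_\tau\rangle$ with $\Pisymbol_{\sigma\tau}$.

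\emph{Step 1: intrinsic norm.} For a smooth vector $w \in \Theta$, consider the matrix-coefficient functional $\ell_w \colon v \mapsto \int_{\RO} \IPair{g v}{w}\,\dd g$, where $\RO$ acts diagonally on $\Theta$. The integrand is a product of four tempered matrix coefficients, so it is dominated by a power of the Harish-Chandra function, and the integral converges absolutely. I would declare
\begin{align}
    \langle \ell_w, \ell_{w'}\rangle \defeq \int_{\RO}\IPair{g w}{\overline{w'}}\,\dd g,
\end{align}
using a real structure as in \Cref{PrasadLemma} to make sense of the complex conjugate. The space $(\Theta^*)^{\RO}$ is then completed for this norm. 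This definition is visibly independent of how the four factors of $\Theta$ are grouped.

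\emph{Step 2: the $\sigma$-decomposition is isometric.} Group $\Theta = (\pi_{\V{12}}\boxtimes\pi_{\V{14}})\boxtimes(\pi_{\V{32}}\boxtimes\pi_{\V{34}})$. Each bracket, restricted to the diagonal $\RO$, has a Plancherel decomposition as a direct integral of tempered $\sigma$ with multiplicity at most one. The multiplicity spaces are spanned by the normalized functionals $\Lambda_{\V1}$ and $\Lambda_{\V3}$, and the rule \eqref{eqn:def_of_Lambda_H} is exactly the statement that these normalizations make the decomposition unitary against the Plancherel measure \eqref{Plancherel}. Insert this decomposition into $\int_{\RO}\IPair{gw}{w}$, with $g$ acting diagonally on both brackets, and apply \eqref{Plancherel} to the $\RO$-integral. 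This gives
\begin{align}
    \int_{\RO}\IPair{gw}{\overline{w}}\,\dd g = \int_{\cP_0^{(a)}} \abs{E_\sigma(w)}^2\,\dd\sigma .
\end{align}
Here $E_\sigma$ arises precisely as the contraction of $\Lambda_{\V3}\otimes\Lambda_{\V1}$ over $\sigma\boxtimes\sigma$. So $w \mapsto (\sigma\mapsto E_\sigma(w))$ is an isometry onto a subspace of $L^2(\cP_0^{(a)})$, and its adjoint is $f\mapsto\int f E_\sigma$. Surjectivity onto $L^2(\cP_0^{(a)})$ follows because, for $w$ a pure tensor, the functions $\sigma\mapsto E_\sigma(w)$ separate points on the support of the Plancherel measure. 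Running the same argument with the grouping $(\pi_{\V{21}}\boxtimes\pi_{\V{23}})\boxtimes(\pi_{\V{41}}\boxtimes\pi_{\V{43}})$ gives the $\tau$-isometry. I expect the main obstacle to lie here: justifying the exchange of the $\RO$-integral with the direct-integral decomposition, and the convergence of the contraction defining $E_\sigma$. I would handle both first on $K$-finite (or smooth compactly supported, in Plancherel terms) vectors and then extend by density.

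\emph{Step 3: the kernel.} Composing the second isometry's adjoint with the first isometry, the transition kernel is $\langle E_\sigma, E_\tau\rangle$. Formally this is the contraction of $\Lambda_{\V1}\otimes\Lambda_{\V2}\otimes\Lambda_{\V3}\otimes\Lambda_{\V4}$ over all six edges. That tensor is $\Lambda^H$ (normalized by \eqref{eqn:def_of_Lambda_H}, since this rule is multiplicative over the four vertex factors), and the contraction is $\Lambda^D$. The pairing of $\Lambda^H$ with $\Lambda^D$, taken in the sense of \eqref{cPi2} (averaging $\Lambda^D$ over $H$), is the proportionality constant $\Pisymbol_{\sigma\tau}$. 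To make this rigorous, I would pair both sides against test vectors $w$: by Step 2, $\int g(\tau)E_\tau(w)\,\dd\tau$ expands in the $\sigma$-basis with coefficient $\int \Pisymbol_{\sigma\tau} g(\tau)\,\dd\tau$, using the defining identity $(\Lambda^H)' = \Pisymbol\Lambda^H$ fibrewise in $(\sigma,\tau)$. Unitarity of the transform in \Cref{Orthogonality} is then immediate, since it is a composition of two isometries onto the same Hilbert space.
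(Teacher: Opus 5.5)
\textbf{Steps 1 and 2.} These are essentially sound, and they reach the two isometries by a more intrinsic route than the paper. You fix $\pi_{\V{41}}$ and define the norm by $\int_{\RO}\IPair{gw}{\overline{w'}}\,\dd g$. You then obtain $\int\abs{E_\sigma(w)}^2\,\dd\sigma$ by pairing the two triple decompositions of \Cref{PlancherelTriple} through Plancherel/Schur orthogonality. The paper instead decomposes $\pi_{\V{12}}\otimes\pi_{\V{23}}\otimes\pi_{\V{34}}$ iteratively, first over $\pi_{\V{24}}$ (or $\pi_{\V{13}}$) and then over $\pi_{\V{14}}$. Its Hilbert structure on the multiplicity space is therefore only defined for almost every $\pi_{\V{14}}$.

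One repair is needed in Step 2. The fact that the functions $\sigma\mapsto E_\sigma(w)$ separate points does not give density of the image in $L^2$. What you need is that the decompositions of $\pi_{\V{12}}\otimes\pi_{\V{14}}$ and $\pi_{\V{32}}\otimes\pi_{\V{34}}$ are \emph{onto} $\int\sigma\,\dd\sigma$ over their supports. Then the $\sigma$-components can be multiplied by arbitrary bounded functions of $\sigma$. Without this, $f\mapsto\int fE_\sigma$ is only a partial isometry, not an isometry.

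\textbf{Step 3 has a genuine gap.} This is where the second assertion (the intertwining by the transform of \Cref{Orthogonality}) actually lives.

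In the noncompact case $E_\sigma$ and $E_\tau$ are generalized vectors. The six-edge contraction of $\Lambda_{\V1}\otimes\Lambda_{\V2}\otimes\Lambda_{\V3}\otimes\Lambda_{\V4}$ that you identify with their inner product is ``$\Lambda^H$ evaluated at the vector representing $\Lambda^D$''. That vector does not exist, and the expression formally equals $\mathrm{vol}(D\cap H)\cdot\Pisymbol$, where $D\cap H\simeq\RO$ has infinite volume.

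Moreover, \eqref{cPi2} is not a pairing of $\Lambda^H$ with $\Lambda^D$. It compares two functionals on genuine vectors of $\Pi_G$ at a single point $(\sigma,\tau)$. By contrast, the kernel of the composite of the $\tau$-adjoint with the $\sigma$-isometry is a priori only a distribution in $(\sigma,\tau)$. Pairing against test vectors $w$ reduces your claim to the recoupling identity $E_\tau(w)=\int\Pisymbol_{\sigma\tau}E_\sigma(w)\,\dd\sigma$. ``Using $(\Lambda^H)'=\Pisymbol\Lambda^H$ fibrewise'' does not prove this identity.

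The missing idea is an \emph{un-averaging} of $\Lambda^D$. You need vectors whose $D\cap H$-average reproduces $\Lambda^D$. Then $\int_{H\cap D\backslash H}\Lambda^D(hv)\,\dd h$ becomes an integral over all of $H$ of a matrix coefficient, which \eqref{eqn:def_of_Lambda_H} converts into an evaluation of $\Lambda^H$.

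The paper supplies exactly this in \Cref{alternateformula3}:
\begin{itemize}
\item it decouples $\pi_{\V{14}}$ and $\pi_{\V{41}}$ into independent Plancherel variables $\pi,\pi'$;
\item it builds $Y\otimes\Delta$ in $\Pi^*_{G,\mathrm{big}}$;
\item it proves \eqref{unaverage2} from Schur orthogonality, so that the delta function at $\pi=\pi'$ replaces the infinite volume of the diagonal.
\end{itemize}

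In your fixed-$\Theta$ setting there is no free $\pi_{\V{14}}$-variable. You would have to run the analogous decoupling in $\sigma$, building test vectors from the adjoints of $\Lambda_{\V1},\Lambda_{\V3}$ on two independent copies $\sigma,\sigma'$, which is the same Schur-orthogonality device you already used in Step 2. You would then have to verify the recoupling identity from \eqref{cPi2}. As written, the intertwining statement is established only formally.
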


\begin{remark}

    It may be helpful to discuss the situation  in a broader context, namely, 
    the {\em strong Gelfand formations} introduced by A.~Reznikov.
    By definition \cite[\S~1.2]{Reznikov}  such a formation is a pair of groups $I \subset G$
    along with two intermediate subgroups $H_1, H_2$, so that $I \subset H_i \subset G$,
    and such that {\em the restriction of irreducible representations
    from $G$ to $H_i$ and from $H_i$ to $I$ is multiplicity-free.}   

    In this situation, one can restrict representations from $G$ to $I$
    in two different ways: by going through $H_1$ or by going through $H_2$.
    One obtains, then, two different bases for the same multiplicity space --- one indexed
    by irreducible representations of $H_1$ and one indexed by irreducible representations of $H_2$.
    Our Proposition above relates these two different bases, in the case when 
  $I=\RO$ and $G=\RO^4$,  taking the $H_i$ to be two different copies of $\RO^2$. 
   It would be interesting to study examples arising from other strong Gelfand formations.

\end{remark}

\subsection{Difference and differential equations} \label{difference}

Specialize to the case $F=\mathbb{R}$; a variant of the following discussion
applies for $F=\mathbb{C}$ too. In these cases, 
like many classical special functions,
the tetrahedral  symbol then satisfies a large system of difference equations:

Quasi-characters  of $F^{\times}$ all have the form $x \mapsto \abs{x}^s$ or $x
\mapsto \sgn(x) \abs{x}^s$, which we parameterize by $(-1, s)$ or $(1,s)$ in
$\Set{\pm 1} \times \bbC$ respectively. Restricted to principal series, then,
the tetrahedral symbol defines a meromorphic function on 
\begin{align}
    \bigl( \Set{\pm 1} \times \bbC \bigr)^6.
\end{align}
 
Fixing a choice of signs, we get simply a function on $\bbC^6$. This function
satisfies a \emph{holonomic system of difference equations}, where holonomic
means, roughly,  that the difference equation has only a finite-dimensional
space of solutions if one imposes suitable growth constraints.  This follows
from \Cref{prop:edge_integral}, or even more conveniently
from its  reformulation in \eqref{eqn:hypergeometric_edge_integral}.

Indeed, as we see from \eqref{eqn:hypergeometric_edge_integral}, what we are
doing is pushing forward  Lebesgue measure from (an open subset of) $\bbR^3$  to
$(\bbR^{\times})^6$, by means of the map $$(x,y,z) \mapsto (x,1-x,x-y,y-z, z,
1-z),$$ and taking a Mellin transform of the resulting measure $\mu$. Difference
equations for that Mellin transform correspond to \emph{differential}
equations for the pushforward measure $\mu$; and  that such differential
equations exist in plenty follow from the fact that holonomic $D$-modules are
stable under direct image. See \cite{Oa18} for a very  explicit discussion.

It would not be  difficult to explicitly write
down these difference equations, for example, starting with some of our
hypergeometric representations. What would be particularly interesting would be to
``index'' the resulting system by the geometry of the spinor cone.

\subsection{The associativity kernel and analytic number theory}
\label{Ass-ant}
The unitary integral transform indicated in \Cref{Orthogonality} has played a
significant role in number theory; we sketch this in a typical example.
We will, for this subsection, freely assume familiarity with the language of automorphic forms. 

Let us fix an anisotropic quadratic form $Q$
over a totally real\footnote{This enables us to avoid some irrelevant analytical
issues because the Ramanujan conjecture is known.} \emph{global} field \(F\); let
$\SO_3$ denote the associated orthogonal group over $F$. For each place \(v\) of \(F\),
we let \(F_v\) be the corresponding local field. Let $S$ be a finite set of places. 

Now let  $\pi_{\V{12}}, \pi_{\V{23}}, \pi_{\V{34}}, \pi_{\V{41}}$ be  \emph{automorphic}
representations of $\SO_3$, all of which are unramified outside the set $S$. 
For each $v \in S$, let $f_v$ be a continuous function of compact support on the tempered dual
of $\SO_3(F_v)$. Then we have the ``associativity'' formula (the
terminology used in \cite[\S~4.5.2]{MV10}):

\begin{equation}
    \label{quadruple}
    \sum_{\pi_{\V{24}}}   \frac{\mathcal{L}_{\V2} \mathcal{L}_{\V4}}{L^{(S)}(1, \pi_{\V{24}}, \ad)}
    \prod_{v\in S} f_v(\pi_{\V{24},v})
    = \sum_{\pi_{\V{13}}} \frac{\mathcal{L}_{\V3}\mathcal{L}_{\V1}}{L^{(S)}(1, \pi_{\V{13}}, \ad)}
    \prod_{v \in S} \check{f}_v(\pi_{\V{13},v}),
\end{equation}
where $f_v \leftrightarrow \check{f}_v$ is the transform of Proposition
\ref{Orthogonality}; the sum is taken over all automorphic representations
$\pi_{\V{24}}$ or $\pi_{\V{13}}$, with local constituents
$\pi_{\V{24},v}$ and $\pi_{\V{13},v}$ that are representations of
$\SO_3(F_v)$, and
\begin{align}
    \mathcal{L}_i =\sqrt{L^{(S)}\Bigl(\OneHalf, \pi_{ij} \boxtimes \pi_{ik} \boxtimes \pi_{il} \Bigr)},
\end{align}
where $\Set{i,j,k,l}=\tetraV$, and $L^{(S)}$ means that we take $L$-factors only outside $S$. The meaning of \eqref{quadruple}
is that there exists a choice of signs for the various square roots such that it is valid (understanding the choice
of sign, from the point of view of $L$-functions, is a very interesting and rather unexplored question).

We sketch a proof of \eqref{quadruple} here, at least for a class of test functions $f_v$. 
 Take factorizable $\psi_i \in \pi_i$ and expand
\begin{align} \label{CC0}
    \int \psi_{\V1} \psi_{\V2} \psi_{\V3} \psi_{\V4}
    = \sum_{\pi_{\V{24}}}\sum_{\psi \in \pi_{\V{24}}} \int\psi_{\V1}\psi_{\V2}\psi\times
    \int\psi\psi_{\V3}\psi_{\V4},
\end{align}
where we sum first over automorphic representations $\pi_{\V{24}}$ and then over
orthogonal bases for (a real structure inside) $\pi_{\V{24}}$; the integrals
are over the  adelic quotient associated to $\SO_3$.
 Using the Ichino--Ikeda formula (see \cite{II10})
we rewrite this as
\begin{align}
    c  \cdot\mathcal{L}_2 \mathcal{L}_4 \cdot \prod_{v \in S}\underbrace{ E_{\V{24}}^{(v)}(\psi_{\V1,v}
    \otimes \psi_{\V2,v} \otimes \psi_{\V3,v} \otimes \psi_{\V4,v})}_{f_v(\pi_{\V{24},v})}
\end{align}
where $c$ is a constant depending on measure normalization, and the other notation is as in \eqref{Esdef}, with a superscript
$(v)$ to remind that we are working with $\SO_3(F_v)$; note that
$E_{\V{24}}^{(v)}$ depends  on $\pi_{\V{24},v}$  and so indeed defines a function
 $f_v$  on the tempered dual of $\SO_3(F_v)$.
 Now, compare this expansion with a similar expansion of \eqref{CC0}  
according to the $\Set{\V{14},\V{23}}$ grouping;
one gets a similar structure, now involving a factor
$\check{f}_v(\pi_{\V{13},v})$ given by
$E_{\V{13}}^{(v)}(\psi_{\V1,v} \otimes \psi_{\V2,v} \otimes \psi_{\V3,v} \otimes
\psi_{\V4,v})$. We conclude
by \Cref{LemmaCyclicFour}.

\subsection{Tetrahedral and \(6j\) symbols for \(\SU_2\)}
\label{sub:symbol_for_SU2}

There is a significant body of prior work related to this paper
concerning the question of defining $6j$ symbols for $\SL_2(\bbR)$ or $\SL_2(\bbC)$.
To our knowledge, all this work
is based on  definitions parallel to that of \Cref{orthogonality}
rather than \Cref{sub:definition_in_the_non_compact_case},
that is to say, realizing
\(6j\) symbols as ``associativity kernels,'' rather than
in a fashion that manifestly has the symmetry of a tetrahedron.
We will briefly review some of this work. 

Let us first note that, although the map $\SL_2 \rightarrow \PGL_2$ is almost an isomorphism,
the representation theory has significant differences, because of the failure
of multiplicity-one: given irreducible representations \(V_1\) and
\(V_2\) of \(\SL_2\) over a local field, the irreducible representations
appearing in the continuous part of \(V_1\otimes V_2\) may have multiplicity
\(2\).     However, there are a number of cases
related to $\SL_2$ where the multiplicity one holds, in full or in part:

\begin{itemize}
    \item  The case of \(\SL_2(\bbC)\). Here we have multiplicity-one in general
        (\cite{Na59}). The $6j$ symbols were first defined by Ismagilov
        \cite{Is06,Is07},  who gives  explicit hypergeometric formulas (as a sum
        of the products of two $\pFq{4}{3}$ hypergeometric series) for
        representations that descend to \(\PGL_2(\bbC)\). Mellin--Barnes
        integrals for bona fide \(\SL_2(\bbC)\)-representations were obtained
        and further studied by Derkachov and Spiridonov
        \cite{DeSp19}.\footnote{ We expect that it would be straightforward to
            check that their formula agrees with the ones in
            \Cref{sec:hypergeometric_formulas_for_the_tetrahedral_symbol}, but
        we have not done so.}
        Finally, relations to elliptic hypergeometric
        functions were studied in \cite{DSS22}.

    \item  The case of $\SL_2(\bbR)$. Here one does not have multiplicity one in
        general, but it remains valid in various situations where some of the
        representations are discrete series.  Such are the situations studied by
        Groenevelt \cite{Gr03,Gr06}, who expressed the relevant $6j$ symbols in
        terms of 
        Wilson functions, which can be written as the value at \(1\) of certain
        \(\pFq{7}{6}\)-function, or an equivalent form of a sum of
        \(\pFq{4}{3}\)-functions. Note that the latter form may be compared with
        our results in \Cref{sub:expression_in_4F3_statement}, and one may ask
        whether a similar consolidation into \(\pFq{7}{6}\)-functions is
        available.
\end{itemize}

For some further discussion of how to address such examples within our
framework, see \Cref{remarkSU}.

Genuine multiplicity two arises 
when tensoring two principal series of \(\SL_2(\bbR)\).
Such cases have been studied by
Derkachov and Ivanov \cite{DeIv23}; 
the definition implicitly depends on the choice
of bases for various multiplicity spaces (cf.  \cite{Iv18}). 
It would be interesting to 
analyze this from the point of view of relative Langlands duality,
which indeed suggests in many instances the existence of preferred
bases for multiplicity spaces.

\subsection{The local Langlands correspondence}
\label{sub:review_of_local_Langlands_correspondence}
The local Langlands correspondence
permits a generalization of our prior notions of $\gamma$, \(L\),
and $\epsilon$-factors, which seem well adapted to the general study of the tetrahedral
symbol.

Let $F$ be a local field.  Attached
to $F$ is a certain modification of the Galois group,
the \notion{Weil--Deligne} group \(W_F\) of $F$ (\cite[\S~8.3.6]{De73}).
All that is important for us now is that 
there is a canonical isomorphism
\begin{align}
    W_F^{\mathrm{ab}} \simeq F^{\times}
\end{align} 
and so characters of $F^{\times}$ can also be considered as characters of $W_F$;
we will do this without comment. Now, given a representation
\begin{align}
    \rho\colon W_F \longrightarrow \GL_n(\C)
\end{align}
we can define a $\gamma$-factor $\gamma(s, \rho)$, the $L$-function $L(s, \rho)$,
and the \(\epsilon\)-factor $\epsilon(s, \rho)$, each meromorphic function of a
complex variable $s$. In the case when $\rho$ is one-dimensional, and therefore
a quasi-character of $W_F^{\mathrm{ab}} \simeq F^{\times}$, they coincide with
the prior definitions from
\Cref{sub:intro_review_three_factors}. They also satisfy a relation analogous to
\eqref{epsilon_and_gamma_and_L}, replacing now $\chi^{-1}$ by the
contragredient.
In the general case, $L(s, \rho)$ always has the form $\prod_{i=1}^{m} L(s,
\chi_i)$ for various characters $\chi_i$, and $\epsilon(s, \rho)$ always has the
form $a\cdot b^s$, but $m$ can be smaller than $n$, and the values of $a$ and $b$ are
in general difficult to determine. 

The significance of the representation theory of $W_F$ comes from the
\notion{local Langlands correspondence}; it asserts that there is a map $\pi
\mapsto \rho_{\pi}$ from  irreducible representations of $\RO$, to
representations
\begin{align} \label{WeilRep}
    \rho\colon W_F \longto \SL_2(\C),
\end{align}
where \(\SL_2(\bbC)\) is the \emph{Langlands dual group} to $\RO$. The
representation \(\rho_\pi\) is also called the \notion{Langlands parameter} of
\(\pi\). For example, if $\pi$ is the principal series attached to a character
$\chi$ of $F^{\times}$, the corresponding representation is given by $\rho =
\chi \oplus \chi^{-1}$ where we identify $\chi$ with a character of $W_F$
through $W_F^{\mathrm{ab}} \simeq F^{\times}$. 
The map that associates to $\pi$
its Langlands parameter is injective.

The tetrahedral datum $\Pi$  of \Cref{Tetrahedraldatum} gives rise to a parameter
$\rho_{\Pi}\colon W_F \to  \SL_2^{\tetraE}$,
and this can be composed with the map \(\tau\colon\SL_2^\tetraE\to\Spin_{12}\)
in \eqref{taudef2} to obtain a map
\begin{align}
    \rho_{\Pi}^{\Spin}\colon W_F \longto \mathrm{Spin}_{12}.
\end{align} 
One aspiration that underlies much of this paper is to
\begin{quote}
    \emph{Express the theory of the tetrahedral symbol $\Pisymbol$
        entirely in terms of $\rho_{\Pi}^{\Spin}$ and the resulting
    action of $W_F$ on the spinor cone.}
\end{quote}
  Our main theorem \Cref{thm:main_duality_theorem} has accomplished this in the
unramified case. We will give now some further examples in this direction.

\subsubsection{Components on which $\Pisymbol$ vanishes identically} \label{EpsilonFactors}
The inclusion of the center $\Set{\pm 1} \hookrightarrow \SL_2$ induces
\begin{align}
    Z\defeq \Set{\pm 1}^\tetraE \longrightarrow \Spin_{12},
\end{align}
whose image commutes with $\rho_{\Pi}^{\Spin}$. By a general construction of
Gross and Prasad, explained in \cite[\S~10]{gross_prasad_1992}, we obtain from
this a character
\begin{align}
    \psi\colon Z \rightarrow \bbC^{\times},
\end{align}
namely, we associate
to each $z \in Z$ the $\epsilon$-factor $\epsilon(\frac{1}{2}, S^{z=-1})$ for
the action of $W_F$, acting via $\rho_{\Pi}^{\Spin}$, on the $(-1)$-eigenspace of $z \in Z$
acting on the half-spin representation.
 
\begin{lemma} \label[lemma]{GenericLemma}
    If a  nontrivial functional $\Lambda^H$ as in
    \Cref{sub:definition_in_the_non_compact_case} exists, $\psi$ is trivial.
    Conversely, if $\psi$ is trivial for a given $\rho: W_F \rightarrow
    \SL_2^{\tetraE}$, then there exists $\Pi_G$ with Langlands parameter $\rho$
    (possibly after replacing $\RO$  by the isometry group of a different quadratic form) for which
    $\Lambda^H$ is nontrivial.
\end{lemma}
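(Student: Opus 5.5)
The plan is to reduce the statement to D.~Prasad's theorem on trilinear forms for $\PGL_2$ and its inner form, applied separately at each vertex. Write $\rho=\rho_\Pi$ and, for $i\in\tetraV$, let $V_i=\bbC^2_{ij}\boxtimes\bbC^2_{ik}\boxtimes\bbC^2_{il}$. This is the $8$-dimensional symplectic representation of $W_F$ obtained by tensoring the three $\SL_2$-parameters at the edges through $i$. Set $\epsilon_i\defeq\epsilon(\OneHalf,V_i)$. Since $V_i$ is symplectic, the $\epsilon$-factor $\epsilon_i$ equals $\pm1$.

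\textbf{Step 1: compute $\psi$.} By \Cref{sub:halfspin}, the pullback of $S$ along $\tau$ is $\bigoplus_{i\in\tetraV}V_i$. An element $z=(z_e)\in Z=\Set{\pm1}^{\tetraE}$ acts on $V_i$ by the scalar $\prod_{e\ni i}z_e$. Hence
\begin{align}
    S^{z=-1}=\bigoplus_{i\in T(z)}V_i,\qquad T(z)\defeq\Set*{i\given \textstyle\prod_{e\ni i}z_e=-1},
\end{align}
and so $\psi(z)=\prod_{i\in T(z)}\epsilon_i$. The map $z\mapsto T(z)$ surjects onto the subsets of $\tetraV$ of even cardinality: the element with $z_e=-1$ only at $e=ij$ has $T(z)=\Set{i,j}$. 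Therefore $\psi$ is trivial if and only if $\epsilon_i\epsilon_j=1$ for all $i,j$, that is, if and only if all four $\epsilon_i$ equal a common sign $\delta$.

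\textbf{Step 2: factor the $H$-invariant functional over vertices.} We have $\Pi_G=\bigboxtimes_i\pi_{\tetraO_i}$ and $H=\RO^{\tetraV}$, with the $i$-th copy of $\RO$ acting diagonally on $\pi_{\tetraO_i}$. Hence
\begin{align}
    \Hom_H(\Pi_G,\bbC)=\bigotimes_i\Hom_{\RO}(\pi_{\tetraO_i},\bbC),
\end{align}
so an $H$-invariant functional is nonzero exactly when every vertex admits a nonzero $\RO$-invariant trilinear form. In the tempered case, the normalized $\Lambda^H$ of \eqref{eqn:def_of_Lambda_H} is nonzero exactly when this space is nonzero, by the result of \cite{SV17} quoted earlier. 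Off the tempered locus we interpret ``exists'' via the Hom-space, as in \cite{PrasadCompositio}.

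\textbf{Step 3: invoke Prasad's dichotomy.} For a triple of irreducible generic representations of $\PGL_2(F)$ with parameters $\rho_1,\rho_2,\rho_3$, Prasad's theorem states the following. An invariant trilinear form exists on the split group if and only if $\epsilon(\OneHalf,\rho_1\otimes\rho_2\otimes\rho_3)=+1$. It exists on the anisotropic form, after Jacquet--Langlands transfer, if and only if this $\epsilon$-factor is $-1$. Moreover, the value $-1$ forces all three representations to be discrete series. For the forward direction, suppose $\Lambda^H\neq0$ for the given $\RO$ with Hasse invariant $\delta(\RO)$. Applying Prasad's theorem at each vertex gives $\epsilon_i=\delta(\RO)$ for every $i$, so $\psi$ is trivial by Step 1.

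For the converse, suppose $\psi$ is trivial, so all $\epsilon_i=\delta$ by Step 1, and let $\RO$ be the form with Hasse invariant $\delta$. If $\delta=+1$, take the split $\RO$ and the representations $\pi_e$ with parameters $\rho_e$. If $\delta=-1$, then every edge lies on some vertex with $\epsilon_i=-1$, so Prasad's theorem shows each $\rho_e$ is a discrete series parameter. Hence each $\rho_e$ transfers to the anisotropic $\RO$ by Jacquet--Langlands. In either case, Prasad's theorem at each vertex supplies nonzero trilinear forms, and Step 2 then gives $\Lambda^H\neq0$.

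The main obstacle is Step 3, because Prasad's theorem must be cited with the correct scope. The original statement covers $p$-adic fields and generic representations; the archimedean cases require the later work of Prasad and Loke; and the finite-dimensional (non-generic) constituents must be checked by hand, as in the footnote near \Cref{PrasadLemma}. I would first verify that the paper's conventions for the additive character $\Psi$ and for $\epsilon$ agree with Prasad's normalization; because $V_i$ is symplectic, $\epsilon_i$ does not depend on the choice of $\Psi$.
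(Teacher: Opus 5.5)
This is essentially the paper's proof. The paper also computes the $(-1)$-eigenspace of the $ij$ copy of $-1\in Z$ to be $V_i\oplus V_j$, deduces that $\psi$ is trivial exactly when $\epsilon(\OneHalf,\rho_{ij}\boxtimes\rho_{ik}\boxtimes\rho_{il})$ is independent of $i\in\tetraV$, and then simply cites Prasad. Your vertex-by-vertex factorization of $H$-invariant functionals, and your observation that a common value $-1$ forces every $\rho_e$ to be a discrete series parameter (hence transferable to the anisotropic $\RO$), make explicit what the paper leaves implicit.

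One caution on your last paragraph: for non-generic $\pi_e$ the converse cannot be rescued by a hand check. Take $\pi_{\V{12}},\pi_{\V{34}}$ trivial and the other four edges generic with $\pi_{\V{13}}\not\simeq\pi_{\V{14}}$. Then every vertex $\epsilon$-factor is $+1$, so $\psi$ is trivial. Yet the only $\Pi_G$ with this parameter has no invariant form at vertex $\V1$, and the anisotropic form has no representation with this parameter at all. So the lemma, like Prasad's theorem and the definition of $\Lambda^H$, should be read for tempered (or at least generic) parameters, and there your argument is complete.
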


\begin{proof}
    To check $\psi$ is trivial, it is enough
    to check that its value on the $ij$
    copy of $-1$ is trivial. The $(-1)$-eigenspace for
    this element is given, as a representation of $\SL_2^\tetraE$, by
    \begin{align}
        \left( \bbC^2_{ij}\boxtimes\bbC^2_{ik}\boxtimes\bbC^2_{il} \right) \oplus 
        \left( \bbC^2_{ji}\boxtimes\bbC^2_{jl}\boxtimes\bbC^2_{jk} \right),
    \end{align} 
    where $\Set{i,j,k,l}=\tetraV$. Therefore, the condition is that
    \begin{align}
        \epsilon\Bigl(\OneHalf, \rho_{ij}\boxtimes\rho_{ik}\boxtimes\rho_{il}\Bigr) =
        \epsilon\Bigl(\OneHalf,\rho_{ji}\boxtimes\rho_{jl}\boxtimes\rho_{jk}\Bigr)
    \end{align}
    and since this holds for each $i,j$ we find that the value of
    $\epsilon(\frac{1}{2}, \rho_{ij} \boxtimes \rho_{ik} \boxtimes \rho_{il})$ is
    independent of $i \in \tetraV$.
    We now apply the beautiful result of Prasad \cite{PrasadCompositio}
    characterizing trilinear invariant functionals.
\end{proof}

\subsubsection{Generalization to $\SL_2$ from the point of view of Langlands parameters}\label{remarkSU}

 Let us  sketch an approach, within our framework, of how to extend
  the tetrahedral symbol to the $\SL_2$ case, and how it fits with the duality formalism. 
The key role is played by the algebraic group
 $$G^* = \SL_2^{\tetraE}/ Z',$$
 where   the subgroup
 $Z' \subset Z=(\pm 1)^{\tetraE}$  of order $8$ within the center
 of $\SL_2^{\tetraE}$ consists of 
 those elements whose product around each face is trivial; equivalently, $Z'$
 is \emph{generated} by elements with precisely three $-1$s around a given
 vertex.

The dual group
$\dual{G}^*$  has a remarkably similar description, but now one quotients
$\SL_2(\C)^{\tetraE}$ now by 
 those elements whose product around each vertex is trivial.
 The morphism $\tau$ does not descend to $\dual{G}^*$,
 {\em but} the action of
$\SL_2(\C)^{\tetraE}$ on the half-spin $S$ does.
What this means is that, given a parameter
\begin{equation}
    \label{Gstarparm}
    W_F \rightarrow \dual{G}^*
\end{equation}
one still define a representation of $W_F$ on $S$,
even though one cannot define in general a $\Spin_{12}$-valued representation of it. 
A parameter \eqref{Gstarparm} gives in particular an  $L$-packet of representations of $\SL_2(F)$
for each edge, {\em such that
  the product of central characters around each vertex is trivial};
  that is precisely the situation in which the classical $6j$ symbol
  for $\SU_2$ is meaningful.

It seems likely that most of the results of this paper would carry over
to this situation, i.e., attach a tetrahedral symbol $\Pisymbol$
to a datum  as in \eqref{Gstarparm}.  The embedding $H \rightarrow G$
that played a crucial role earlier is to be replaced by
$H \rightarrow  G^* \times G^*/Z^*$
where $Z^*$ is the antidagonal copy of the center. 
 We have not verified the details but expect that multiplicity one holds in this context,
 permitting us to carry over our definitions verbatim; and it is likely
 that the same theorems also hold with cosmetic modifications.

\subsubsection{Completing Regge's  original symmetry to a $W(\TypeD_6)$-symmetry}
\label{ClassicalRegge}

The prior discussion has an interesting manifestation related
to the  Regge symmetries as originally envisioned by Regge. 
Restrict now to the case when $F=\bbR$, and let us 
consider the classical $6j$ symbol attached to 
the matrix $J$ of non-negative integers given by
\begin{align}
    J=\begin{bmatrix}
        j_1 & j_2 &  j_3\\ 
        j_4 &  j_5 &  j_6 
    \end{bmatrix}
    \longmapsto
    \begin{Bmatrix}
        j_1 & j_2 &  j_3\\ 
        j_4 &  j_5 &  j_6 
    \end{Bmatrix}\in\bbC,
 \end{align}
i.e., in our language, the tetrahedral symbol attached to the
$2j+1$-dimensional
representations of $\SO_3$. The condition for this to be defined is, with our
convention, that the triangle inequalities associated to all four vertices are
satisfied, e.g. $j_1, j_2, j_3$ are the lengths of a Euclidean triangle, and so
forth.

The Weil group of the real numbers is an extension of $\mathbb{C}^*$
by an element $c$ (for ``conjugation'') satisfying $c^2=-1$, whose action on $\mathbb{C}^*$
is given by conjugation, i.e. $c z c^{-1} = \bar{z}$.
The representation $\rho_j$, as in \eqref{WeilRep}, associated
to $V_{2j+1}$, is given by
\begin{equation} \label{SL2parameter}
    \rho_{j}\colon z=re^{i \theta} \mapsto \begin{bmatrix} e^{i (2j+1) \theta} &
    0 \\ 0 & e^{-i (2j+1) \theta} \end{bmatrix},\quad
    c \mapsto c_{0,j}\defeq\begin{bmatrix} 0 & 1  \\ -1 & 0  \end{bmatrix}.
\end{equation}
Note that $\rho_{j} \simeq \rho_{-j-1}$.

Let us consider $J$ as defining
a class in $\OddFunctions$ (defined in \eqref{OddFunctionsDef}) by sending
each edge with positive orientation to the corresponding value of $2j+1$ (and
the opposite orientation to \(-2j-1\));
using the isogeny $\OddFunctions \rightarrow \TypeD_6$
we shall think of this as a cocharacter $\bJ$ for $\Spin_{12}$.
The representation $\rho_\Pi^{\Spin}$
is then given by
\begin{align} \label{rhoSpinReggeDS}
    r e^{i \theta} \mapsto \bJ(e^{i \theta}),\quad
    c \mapsto \dot{w}_0,
\end{align}
where $\dot{w}_0$ is a representative (induced by \eqref{SL2parameter}) for the
longest element of $W(\TypeD_6)$, which acts by inversion on the torus.
Let us observe that any two such representatives are in fact conjugate under
the maximal torus, and so
the conjugacy class of  \eqref{rhoSpinReggeDS} is independent of this choice.

Although we are no longer in the principal series case,
it remains of interest to examine what symmetries
of the tetrahedral symbol might be induced by $W(\TypeD_6)$.
If we replace $\bJ$  in \eqref{rhoSpinReggeDS} by $w(\bJ)$ for $w \in W(\TypeD_6)$,
the resulting homomorphism  remains conjugate to \eqref{rhoSpinReggeDS}.
The various $w(\bJ)$ correspond to 
various collections  
\begin{align}
    J' \defeq \begin{bmatrix} 
        j_1' & j_2'&  j_3'\\ 
        j_4' &  j_5' &  j_6 '
    \end{bmatrix}.
\end{align}
In general, the $J'$ are only half-integral.  However, they satisfy a parity
constraint:  the sums of $J'$s along triples of edges emanating from a single
vertex are integers. In fact, this parity condition is closed under
\(W(\TypeD_6)\)-symmetry; in other words, we may relax the assumption on \(J\)
to that \(J\) has
half-integral entries satisfying the same parity conditions.

One checks by direct computation that, for generic $J$, there are $15$ possibilities 
for $J'$ modulo \(\FRI\rtimes \FRT\);
here $15$ arises from the index of $\FRI\rtimes\FRT$ inside $W(\TypeD_6)$
(cf.~\Cref{lem:some_subgroups_of_WD6}).
The various $J'$ do not correspond to homomorphisms $W_F \rightarrow \SL_2^{\tetraE}$,
but the parity properties noted above imply that
they {\em do} correspond to homomorphisms $W_F \rightarrow \dual{G}^*$,
with notation as in   \Cref{remarkSU}, and thus
define $L$-packets of
discrete series representations of $G^*(\bbR)$.
The tetrahedral symbol  can then be defined according to the discussion of  \Cref{remarkSU}.\footnote{
    However,   one needs to switch between $G^*(\bbR)$ and its compact form to cover all cases.
    Indeed, the reasoning of   \Cref{GenericLemma} implies the following
    all-or-none property for
    $J'$ as above: either the triangle inequalities
are satisfied for all the vertices, or for none of them. }
In the six cases
(represented by the subgroup \(\FRS_3\) in
\Cref{lem:Regge_group_in_a_narrow_sense}) where all
triangle inequalities are satisfied, we interpret
entries of $J'$ as indexing representations of $\mathrm{SU}_2$,
and use the classical $6j$ symbol;
in the remaining nine where no triangle inequality is satisfied, we interpret them
as indexing holomorphic-antiholomorphic pairs of discrete series  for $\SL_2(\bbR)$. 
With this setup it becomes natural to ask:

\begin{question}
    Are all of  these extended classical \(6j\) symbols for the \(15\) $J'$s ---
     six attached to \(\SU_2\) and nine attached to
    \(\SL_2(\bbR)\) --- actually equal, up to sign
    and a normalizing $\gamma$-factor?
\end{question}

 If true, this means in effect that the original Regge symmetries can indeed be ``completed''
to a $W(\TypeD_6)$ symmetry, even though the Regge group is very much smaller.
Quite possibly this question is accessible through some of our hypergeometric formulas,
or through the prior work of  Groenevelt \cite{Gr03,Gr06},
but we have not examined it.

\subsubsection{Formulas outside the principal series case}
\Cref{PROPAB} can be generalized beyond the principal series case
using the Langlands formalism.  Let us suppose that $\pi_{\V{14}}$
and $\pi_{\V{23}}$ are principal series,  but make no
supposition about the remaining $\pi$s.
In the statement of \Cref{PROPAB}  we can  then simply replace the roles of $\rho_{ij}$
by the Langlands parameter of the representation $\pi_{ij}$.
Then we anticipate that formula \eqref{eqn:hypergeometric_formula_A_and_B}
remains valid; we have an argument for this, but have not verified the details, constants and so forth, and will return to it elsewhere.

\subsection{Functoriality and $\mathrm{Spin}_{12}$} \label{functoriality}

Langlands duality provides a ``lifting''
\begin{equation} \label{Langlands Lifting}
    \text{irreducible representations of $\RO^6$}
    \longrightarrow
    \text{$L$-packets of representation of $\mathrm{PSO}_{12}(F)$}
\end{equation}
associated with the morphism $\SL_2^6 \rightarrow \mathrm{Spin}_{12}$ of dual
groups. Our discussion of duality suggests that \emph{the tetrahedral symbol
factors through the lifting \eqref{Langlands Lifting}}. 

There is a natural function on the right
which pulls back to the tetrahedral symbol, as we now sketch.
Namely, we can use the
  same general setup as
\Cref{TetrahedralAsInterface}:  we consider two
$\mathrm{PSO}_{12}$-spaces $X,Y$ and an averaging intertwiner from $X$ to $Y$,
and compute the scalar by which \eqref{JSymbolDiagram2} below fails to commute:
\begin{equation}\label{JSymbolDiagram2}
    \begin{tikzcd}
        & C^{\infty}(X) \ar[dd,"\Av", dashed]\\
        \Pi \ar[ru, "\Lambda^X"] \ar[rd, "\Lambda^Y",swap] & \\
                                                           & C^{\infty}(Y)
    \end{tikzcd}
\end{equation}
where  $\Lambda^X, \Lambda^Y$ are ``normalized'' embeddings of the
\(\mathrm{PSO}_{12}\)-representation $\Pi$ into $X$ and $Y$.
Our proposal is, for well-chosen $X$ and $Y$, the resulting function
on  (certain) $\mathrm{PSO}_{12}$-representations
agrees with the
tetrahedral symbol after pullback via \eqref{Langlands Lifting}.

Relative Langlands duality suggests natural choices for $X$ and $Y$:
For $X$ we
take the Langlands dual to the $32$-dimensional (hyperspherical!)
half-spin representation \(S\)
of $\mathrm{Spin}_{12}$; it is a generalized Bessel model for
$\mathrm{PSO}_{12}$. For $Y$ we take the Whittaker model for
$\mathrm{PSO}_{12}$, so its Langlands dual is a single point.
Finally, to define the  intertwiner $\mathrm{Av}$,
we average over  over the smallest $G$-orbit $Z$ in $X \times Y$
that supports an invariant distribution.

In the language of \Cref{sub:interfaces_and_dual},
$Z$ defines a Lagrangian
\begin{align}
    L\defeq\text{conormal of $Z$}  \subset M\x N\defeq \CT{X}\x\CT{Y},
\end{align}
whose dual  $
    \check{L} \subset \check{M} \times \check{N}=S$
 we expect to be precisely the cone of pure spinors. There is
no other reasonable candidate for $\check{L}$: the preimage of $0$ under the moment map for the
half-spin representation is already an irreducible Lagrangian.

\subsection{Geometric representation theory}\label{sheaves}
We will now indicate a geometrization of 
\Cref{thm:main_duality_theorem}.
 We  work now over the field
 $\FRf=\overline{\mathbb{F}}_p\lauser{t}$, and write
 $\FRo=\overline{\mathbb{F}}_p\powser{t}$.
We use the same notations $G, D, H, X, Y, Z,
M=\CT{X}, N=\CT{Y}, 
\check{M}, \check{N} $ as from \Cref{TetrahedralAsInterface};
we have a Lagrangian $\check{L} \subset \check{M} \times \check{N}$
``induced from'' the cone of pure spinors $P \subset \check{N}$. 

Now, the geometric conjecture of relative Langlands duality asserts that there are equivalences of 
categories:
\begin{align}
    \text{constructible sheaves on $X_{\FRf}/G_{\FRo}$} &\sim (\mbox{coherent sheaves on $\check{M}/\check{G}$})^{\mathrm{shear}},\\
    \text{constructible sheaves on $Y_{\FRf}/G_{\FRo}$} &\sim (\mbox{coherent sheaves on
    $\check{N}/\check{G}$})^{\mathrm{shear}}.
\end{align}
(We omit technical details about the exact categories, which can be found in
\cite{BZSV24}; also, the superscript ``shear'' means that the category is to be
regraded as in \textit{loc.~cit.}; the details are not presently important.)
Moreover, the equivalence above carries the ``basic sheaf'', the constant sheaf
on $X_{\FRo}$ or $Y_{\FRo}$, to the structure sheaf on the coherent side, and is
compatible, in a natural way, with the Satake equivalence.

The diagram $X \leftarrow Z \rightarrow Y$ gives rise, by pullback followed by
pushforward, to a functor $I_{\mathrm{aut}}$ (for ``automorphic intertwiner'')
from constructible sheaves on $Y_{\FRf}/G_{\FRo}$ to ind-constructible sheaves on
$X_{\FRf}/G_{\FRo}$. (Note that to actually define this requires examination of technical
details that we have not carried out;  the spaces $X_{\FRf}, Y_{\FRf},
Z_{\FRf}$ are not pleasant.) A natural
conjecture is that \emph{this functor is equivalent, with respect to the
equivalences above, to the push-pull $I_{\mathrm{spec}}$ along the diagram}
\begin{align} \label{push-pull-spectral}
    \check{M}/\check{G} \longleftarrow \check{L}/\check{G} \longrightarrow \check{N}/\check{G}.
\end{align}

As we sketch in \Cref{section:geometric_to_numerical}, our 
\Cref{thm:main_duality_theorem},   in the case of nonarchimedean $F$ of
finite residue characteristic, would result from the statement by taking
Frobenius trace.

\part{Proofs}

\section{Convergence and the analytic continuation}
\label{sec:convergence_and_analytic_continuation}

We prove all results related to convergence and analytic continuation in this
section.

\subsection{Proof of \Cref{prop:AC}}
\label{sub:pf_of_AC}
Let $X$ be a smooth algebraic variety over the local field \(F\) and $f_s$ a family of smooth
\(\bbC\)-valued functions
on $X(F)$ depending analytically on the complex parameter $s \in \C^n$,
or, more generally, a parameter $s$ belonging to a complex analytic manifold.

\begin{definition}
    \label[definition]{def:controlled_functions}
    We say $f_s$ is ``a controlled family of functions on $X(F)$ depending holomorphically on $s$,''  if there is a smooth
    compactification $X \rightarrow \bar{X}$, with normal crossing boundary,
    with the following property: 
    
    For each point $x \in \bar{X}(F)$, parameter $s=s_0$ and $N \geq
    0$, there exists:
    \begin{itemize}
        \item an analytic neighbourhood $U_x \subset \bar{X}(F)$, \item   an open neighbourhood $S$ of $s_0$ in the parameter space $s$,
        and analytic functions $a_i(s)$ on this neighbourhood,
        \item local equations  $z_1,
    \ldots, z_r$ for the various divisorial components of the boundary passing
    through $x$
    \end{itemize}
     such that we have an equality
    \begin{align} \label{asymptotic}
        f_s =  \sum_i  h_{i,s} \times \abs{z_1}^{a_1(s)} \cdots
        \abs{z_n}^{a_n(s)}  +  \mathcal{E}_s
    \end{align}
    valid on $U_x \times S$, where $h_{i,s}$ and $\mathcal{E}_s$ are smooth function on
    $U_x \times S$ varying holomorphically in $s$\footnote{ Note that by a ``smooth function varying holomorphically,''
what we mean is that it is a holomorphic function of $s$
with values in the space of smooth functions with its natural topology;
in the real or complex case, this is the topology induced
by requiring uniform convergence of all derivatives on compact sets.
} and the   ``error term'' $\mathcal{E}_s$ is bounded
on $U_x \times S$
by a constant multiple of  $ \abs{z_1 z_2 \cdots z_r}^N$.

Similarly, we say that $h_s$ is a controlled family depending {\em meromorphically}  on $s$ 
if there is an analytic function $P(s)$ such that $P(s) h(s)$ 
satisifes the above definition.   In this case, we simply 
say that $h_s$  is for short \notion{controlled}.
\end{definition}

\begin{lemma}
    If $Y$ is a closed smooth subvariety of $X$ and \(f_s\) is controlled on \(X\)
    then the restriction of $f_s$ to $Y$ is also controlled.
\end{lemma}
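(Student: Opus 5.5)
The plan is to build a compactification of $Y$ adapted to the one for $X$, and then restrict the asymptotic expansion \eqref{asymptotic} to it term by term.

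\emph{Step 1: compactification of $Y$.} Let $\bar X$ be the normal crossings compactification witnessing that $f_s$ is controlled. Take the closure $\bar Y$ of $Y$ in $\bar X$. It need not be smooth, and its boundary need not be normal crossings, so apply embedded resolution of singularities (Hironaka; we are in characteristic zero, or we assume resolution is available) to the pair $(\bar Y, \bar Y\cap \partial\bar X)$. This gives a proper birational map $\beta\colon \tilde Y\to \bar Y$, an isomorphism over $Y$, such that $\tilde Y$ is smooth and $\beta^{-1}(\partial \bar X)$ is a normal crossings divisor. This $\tilde Y$ is the compactification we use for $Y$.

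\emph{Step 2: pull back the expansion.} Fix $y\in \tilde Y(F)$ and set $x=\beta(y)\in\bar X(F)$. Choose $U_x$, $S$, the functions $a_i(s)$, and the local equations $z_1,\dots,z_r$ as in \Cref{def:controlled_functions}. Each $z_j\circ\beta$ vanishes only on the boundary of $\tilde Y$. Near $y$, with local equations $w_1,\dots,w_m$ for the boundary components of $\tilde Y$ through $y$, we can therefore write
\[
z_j\circ\beta = u_j \prod_k w_k^{e_{jk}},
\]
with $e_{jk}\ge 0$ integers and $u_j$ an analytic unit. Two consequences follow.

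First, each main term becomes
\[
h_{i,s}\circ\beta \cdot \prod_j \abs{u_j}^{a_j(s)} \cdot \prod_k \abs{w_k}^{\sum_j e_{jk}a_j(s)}.
\]
Since $u_j$ is a unit, $\abs{u_j}^{a_j(s)}$ is smooth and holomorphic in $s$. In the archimedean case, shrink the neighbourhood so that $u_j$ stays in a small disc around a nonzero value; in the nonarchimedean case $\abs{u_j}$ is locally constant. The new exponents $\sum_j e_{jk}a_j(s)$ are again analytic in $s$.

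Second, the error term satisfies $\abs{\mathcal E_s\circ\beta}\ll \abs{z_1\cdots z_r}^N\circ\beta \ll \abs{w_1\cdots w_m}^{N}$. This uses that every $w_k$ divides some $z_j\circ\beta$, because the boundary of $\tilde Y$ is exactly $\beta^{-1}(\partial\bar X)$, so each $w_k$ has some $e_{jk}\ge1$. Given a target $N$, we apply the hypothesis with the same $N$.

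\emph{Step 3: finish.} By compactness of $\tilde Y(F)$ and the fact that $\beta$ is proper, finitely many such neighbourhoods suffice, although the definition is purely local anyway. For the meromorphic version, the same $P(s)$ works. The \textbf{main obstacle} is Step 1. The issue is in the word ``restriction'': in the degenerate case where $\bar Y$ already meets $\partial\bar X$ transversally, one can simply take $\bar Y$ itself with the $z_j|_{\bar Y}$ as local equations, and no resolution is needed. In general the monomialization of the $z_j$ along $Y$ is exactly what embedded resolution provides. In positive characteristic, resolution is not known in general, and I would either restrict to the cases actually used in the paper, or check that $\bar Y$ is already transverse to $\partial \bar X$ in those applications.
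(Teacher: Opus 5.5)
Your proof is correct and is the paper's argument: take the closure of $Y$ in $\bar X$, pass to an embedded (log) resolution that is an isomorphism over $Y$ and makes the preimage of the boundary a normal crossings divisor (the paper gets this in two steps from Kollár's Theorems~3.36 and~3.35), then note that each pulled-back local equation $z_i$ is a monomial in the new boundary equations $w_k$, with every $w_k$ occurring in some $z_i$, so that both the main terms and the error bound transfer. Your explicit treatment of the unit factors $\abs{u_j}^{a_j(s)}$ and of the error estimate supplies details the paper leaves implicit, and your characteristic-$p$ caveat applies just as much to the paper's proof, which also relies on characteristic-zero resolution.
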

\begin{proof}
    Let $Y^*$ be the closure of $Y$ within $\bar{X}$. The boundary $Z=Y^*-Y$ is
    a closed set, but need not be a normal crossing divisor. By
    \cite[Theorem~3.36]{Ko07}, we may find a resolution of singularity \(Y'\to
    Y^*\) such that the preimage of the singular locus of \(Y^*\) (which is
    contained in \(Z\)) is a union of normal crossing divisors. Then by
    \cite[Theorem~3.35]{Ko07}, we may find a further resolution \(\bar{Y}\to
    Y'\) such that it is an isomorphism over \(Y\) and the preimage of \(Y'-Y\)
    becomes a union of normal crossing divisors. This way we find an embedded
    resolution of singularities, i.e., a map $\pi\colon  \bar{Y} \rightarrow
    Y^*$ with the property that $\pi^{-1}(Z)$ is a union of normal crossing
    divisors.

    Let us fix $\bar{y} \in \bar{Y}(F)$ with image $x \in \bar{X}(F)$;
    we will verify the desired asymptotic conditions hold near $\bar{y}$.
    In that case, the pullback to $Y$ of any local equation $z_i$,
    defining a component of $\bar{X}-X$ through $x$, 
    must have the form $\prod_j w_j^{b_{ij}}$ where the $w_i$ are local equations for
    boundary divisors on $\bar{Y}$ that pass through $\bar{y}$; the exponents
    $b_{ij}$ are non-negative
    and, for any $j$, there  is at least one $i$ with $b_{ij}>0$.
    From this we see that $f_s$ on $Y$ satisfies
    the same condition.
\end{proof}

\begin{lemma}
    \label[lemma]{lem:abstract_AC}
    Suppose \(f_s\) is a controlled family of functions on \(X\), and let
    \(\omega\) be a volume form on \(X\).
    Suppose the integral
    \begin{align}
        I(f_s)\defeq \int_{X(F)} f_s\abs{\omega}
    \end{align}
    converges absolutely for some \(s\). Then \(I(f_s)\) can be meromorphically
    continued to all \(s\).
\end{lemma}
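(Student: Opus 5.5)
The plan is to reduce, by a partition of unity, to local integrals near boundary points of $\bar{X}$, and to continue each local piece separately.

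\emph{Localization.} Let $\bar{X}$ be the normal-crossing compactification furnished by \Cref{def:controlled_functions}. Since $\bar{X}(F)$ is compact, I cover it by finitely many of the neighbourhoods $U_x$ from the definition, and I fix a smooth partition of unity $\{\eta_x\}$ subordinate to this cover (locally constant and compactly supported in the nonarchimedean case). Then $I(f_s)=\sum_x \int_{U_x} \eta_x f_s \abs{\omega}$.

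\emph{Form of the local integrals.} I write $\omega$ in local coordinates $z_1,\dots,z_r,z_{r+1},\dots,z_n$ at $x$, with $z_1,\dots,z_r$ cutting out the boundary. Because $\omega$ is algebraic on $X$, it extends meromorphically to $\bar{X}$, so $\abs{\omega}=\abs{u}\prod_{k\le r}\abs{z_k}^{m_k}\,\dd z$, where $u$ is a nonvanishing analytic unit and the $m_k\in\bbZ$. Substituting \eqref{asymptotic}, each local integral splits into two kinds of term:
\begin{align}
\int h_{i,s}\,\eta_x\,\abs{u}\prod_{k\le r}\abs{z_k}^{a_k(s)+m_k}\,\dd z \quad\text{and}\quad \int \mathcal{E}_s\,\eta_x\,\abs{u}\prod_{k\le r}\abs{z_k}^{m_k}\,\dd z .
\end{align}

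\emph{The error term.} I choose $N$ larger than $\max_k\abs{m_k}+1$. The bound $\abs{\mathcal{E}_s}\le C\abs{z_1\cdots z_r}^N$ then makes the second integral absolutely convergent and holomorphic in $s$ on all of $S$.

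\emph{The main terms.} Each first integral is a multivariable Tate/Igusa-type integral of a smooth compactly supported function against $\prod\abs{z_k}^{b_k}$. I continue it one variable at a time.
\begin{itemize}
\item[$\bullet$] In the nonarchimedean case, I subtract the value at $z_k=0$. Near the boundary the integrand is then locally constant in $z_k$, so the integral equals a geometric series in $q^{-(b_k+1)}$ plus an entire part, which gives a rational function of $q^{-b_k}$.
\item[$\bullet$] In the archimedean case, I use Taylor expansion in $z_k$ to the appropriate order, together with the formula $\int_{\abs{z}\le 1}\abs{z}^{b}z^{\alpha}\bar z^{\beta}\,\dd z=$ (an explicit rational function of $b$). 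This continues the integral past each successive half-plane, with poles only at a discrete set of $b_k$.
\end{itemize}
The result is a meromorphic continuation in $s\in S$, since $b_k=a_k(s)+m_k$ is analytic in $s$. Composition of a meromorphic function of $b$ with analytic $a_k(s)$ stays meromorphic, provided the $a_k$ are not identically on a polar locus. That is where the hypothesis that $I(f_s)$ converges absolutely at some $s$ is used: it guarantees that the continued function agrees with $I(f_s)$ on an open set and is not identically infinite. In the meromorphic-dependence case I first multiply by $P(s)$ and divide out at the end.

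\emph{Globalization.} The local continuations on the various $S$ agree on overlaps by uniqueness of analytic continuation from the convergence region. Since the parameter space is connected, they patch to a global meromorphic function.

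The main obstacle I expect is justifying the termwise continuation of the main terms uniformly in $s$, in particular in the archimedean case. This needs the $h_{i,s}$ to have derivatives bounded uniformly on $U_x\times S$, which is what the holomorphic-in-$s$ smooth-function hypothesis provides. A second point is checking that the region of absolute convergence of the original integral meets each $S$ usefully. I would handle this by connectedness: starting from the given convergent $s$, I would propagate the continuation along chains of overlapping neighbourhoods $S$.
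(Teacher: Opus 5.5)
Your proposal is essentially the paper's proof: a partition of unity reduces everything to Igusa-type local integrals $\int \Phi_s(z)\prod_k \abs{z_k}^{b_k}$, which are continued explicitly variable by variable, and the absolute-convergence hypothesis is used only to ensure the polar factors do not vanish identically once $b_k$ is specialized to the exponents $a_k(s)$ (shifted, in your bookkeeping, by the orders $m_k$ of $\omega$). The differences are cosmetic. In the archimedean case you subtract Taylor polynomials, whereas the paper integrates by parts, trading $\abs{x}^{s}$ for $\abs{x}^{s+2N}$ at the cost of the factor $(s+1)\cdots(s+2N)$; in the nonarchimedean case you subtract the value at $z_k=0$, whereas the paper reduces to indicator functions of products of balls; and you spell out the error term and the patching over parameter neighbourhoods, which the paper leaves implicit.
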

\begin{proof}
    We analyze this by local computation, following Igusa,
    using the asymptotic expansion \eqref{asymptotic}.
    Using a partition of unity reduces our statement to the  meromorphicity of
    \begin{align}
        \int_{F^n} \Phi_s(z) \times \abs{z_1}^{a_1(s)} \cdots \abs{z_n}^{a_n(s)}
    \end{align}
    where $\Phi_s(z)$ is a smooth compactly supported function of $z_1, \dots, z_n \in F^n$
    that varies holomorphically  in $s$, and, by the assumed absolute
    convergence, there exists some value
    $s_0$ of $s$ for which the real part of $a_i(s_0)$ is larger than $-1$.

    If $F$ is nonarchimedean, we reduce to the case when $\Phi_s$
    is the characteristic function of a product of intervals
    $\abs{z_i-z_{i_0}}\leq C$, and we leave the verification in that case to the
    reader. 

    In the archimedean case, we repeatedly integrate by parts
    to replace an integral by one in which all the $a_i(s)$
    have real part larger than zero, and the integral becomes absolutely convergent. For example, in the case $F=\mathbb{R}$ and
    $n=1$ the relevant identity is
    \begin{align}
        \int_{\mathbb{R}}  \frac{d^{2N} \phi}{dx^{2N}} \abs{x}^{s+2N}
        = (s+1) \dots (s+2N) \int  \phi(x)  \abs{x}^s
    \end{align}
    This shows that the integral becomes holomorphic
    upon multiplication by a polynomial of the form $\prod_{i=1}^n
    (a_i(s)+1)(a_i(s)+2) \dots (a_i(s)+2N)$. Our assumption on the $s_0$
    shows that this polynomial does not vanish identically.
\end{proof}

For us, an important example arises as follows. Suppose, to simplify the
dicussion, that $F$ is nonarchimedean, and fix an open compact subgroup $\Omega$
of $\RO$. We need some language to be able to speak of ``holomorphically varying
families of vectors in holomorphic families of representations.'' Let $\cP$
be the complex analytic space described in \eqref{Pdef}. By an abuse of
notation, we will understand a point $\pi \in \cP$ to index
the corresponding representation if $\pi$ is discrete series,
and the associated principal series representation if $\pi$ 
corresponds to a quasi-character of $F^{\times}$.
With this understanding, for each $\pi
\in \cP$, the space of fixed vectors $\pi^{\Omega}$ is finite-dimensional, and the
various $\pi^{\Omega}$ fit together to give a finite-dimensional holomorphic vector
bundle over $\cP$, the ``bundle of $\Omega$-invariants.''  The holomorphic structure is described as follows: 
positive dimensional components of $\cP$ correspond to families of principal
series representations, and the condition for a section of the vector bundle to
be holomorphic is that all its point evaluations at points of $F^2-\{0\}$ be
holomorphic. Similarly, the family of \emph{Jacquet modules} $\bar{\pi}$, i.e.,
the largest $U$-invariant \emph{quotient} of the various $\pi$s, again fit
together to a holomorphic vector bundle over $\cP$, where the holomorphic
structure is determined by requiring that the map $\pi^K \mapsto \bar{\pi}$
induces a holomorphic map of bundles for every $K$.

Having fixed this,  we fix a self-pairing $\IPair{-}{-}_{\pi}$
for each $\pi \in \cP$ that varies meromorphically (i.e.,
it induces a meromorphic self-pairing on the vector bundle
$\pi^K$ for each $K$); for example, such a pairing
will be fixed in \Cref{sec:edge_formula_for_principal_series}.
It induces, by work of Casselman, a self-pairing
$\IPair{-}{-}_{\bar{\pi}}$ on the Jacquet modules $\bar{\pi}$, 
characterized by the following property: 
for any  $v_1, v_2 \in \pi$  we have
\begin{equation}
    \label{Jacquet}
    \IPair{a_x v_1}{v_2}_{\pi} = \IPair{a_x \bar{v}_1}{\bar{v}_2}_{\bar{\pi}}
\end{equation}
for \(a_x=\Diag(x,1)\) whenever $\abs{x}$ is sufficiently small.
In fact, for each open compact subgroup $\Omega$ there exists $\varepsilon(\Omega)$
so that \eqref{Jacquet}
holds whenever  $v_1, v_2 \in \pi^\Omega$ and $\abs{x} < \varepsilon(\Omega)$.
In particular, $\IPair{-}{-}_{\bar{\pi}}$ also varies meromorphically. 
With this setup we can assert:

\begin{lemma}
    \label[lemma]{ControlledMatrixCoefficients}
    Suppose that $\pi \mapsto v_{\pi}$ and $\pi \mapsto w_{\pi}$ are holomorphic
    sections of the bundles of $K$-invariant vectors over $\cP$. Then the
    function $\cP \times \PGL_2(F)\rightarrow \mathbb{C}$, associating to $\pi
    \in \cP$ and $g \in \PGL_2(F)$ the number $(g v_{\pi}, w_{\pi})$, is a
    controlled family of functions on $\PGL_2(F)$ parameterized by the complex
    manifold $\cP$.
\end{lemma}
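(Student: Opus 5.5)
Our plan is to reduce everything to the Cartan decomposition together with Casselman's asymptotic formula \eqref{Jacquet}. We work in the nonarchimedean setting fixed above; $K$ is the given open compact subgroup, and we may shrink it to an open compact subgroup $\Omega\subset K$ that is normal in $\PGL_2(\cO)$. We take as compactification $\bar{X}$ of $\PGL_2$ the wonderful compactification, i.e.\ $\bbP(\Mat_2)\supset \PGL_2$. Its boundary is the smooth divisor of rank-one matrices, and near a boundary point the function $z=\det(g)/\norm{g}^2$ (for a suitable local norm) is a local equation. In Cartan coordinates $g=k_1 a_x k_2$ with $k_i\in\PGL_2(\cO)$ and $\abs{x}\le 1$, a neighbourhood of the boundary corresponds to $\abs{x}$ small, and there $\abs{z}=\abs{x}$ up to a locally constant unit factor.

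First we decompose the matrix coefficient. Choose representatives $k$ of the finite set $\PGL_2(\cO)/\Omega$. Near each boundary point, $g$ lies in a set $k_1\Omega a_x\Omega k_2$ with $k_1,k_2$ ranging over finitely many choices, and
\begin{align}
    (g v_\pi, w_\pi)=(a_x\, k_2 v_\pi,\; k_1^{-1}w_\pi).
\end{align}
Since $k_2v_\pi$ and $k_1^{-1}w_\pi$ are $\Omega$-invariant and depend holomorphically on $\pi$, \eqref{Jacquet} applies for $\abs{x}<\varepsilon(\Omega)$, with $\varepsilon(\Omega)$ uniform in $\pi$. It replaces the matrix coefficient by $(a_x \overline{k_2v_\pi}, \overline{k_1^{-1}w_\pi})_{\bar\pi}$, a matrix coefficient of the torus acting on the Jacquet module. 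On principal-series components $\bar\pi$ is two-dimensional, and $a_x$ acts through the two exponents $\chi_+(x),\chi^{-1}_+(x)$, with the modulus character accounted for. Diagonalizing then gives
\begin{align}
    (gv_\pi,w_\pi)=c_1(\pi,k_1,k_2)\,\chi\abs{\blank}^{\OneHalf}(x)+c_2(\pi,k_1,k_2)\,\chi^{-1}\abs{\blank}^{\OneHalf}(x).
\end{align}
The unitary part $\chi(x)$ of $\chi_+(x)$ depends only on $\val(x)$ and the residue class of the unit part, so on each neighbourhood it is a locally constant (hence smooth) factor times $\abs{x}^{s}$. This is exactly the shape \eqref{asymptotic} with $a_i(s)=\pm s+\OneHalf$ and with error term $\mathcal{E}=0$ near the boundary. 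On discrete-series components (isolated points) the Jacquet module vanishes, so the coefficient is compactly supported and the claim is trivial.

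The main obstacle is holomorphy of $c_1$ and $c_2$ in $\pi$. The diagonalization of the torus action on $\bar\pi$ degenerates where $\chi^2=1$, and the pairing $\IPair{-}{-}_{\bar\pi}$ is only meromorphic. Our remedy is to allow the meromorphic version of the definition: we multiply by an analytic $P(\pi)$ that clears the poles of the pairing and the denominator $\chi(\varpi)-\chi^{-1}(\varpi)$. After this the coefficients are holomorphic, since they are given by holomorphic bundle maps $\pi^\Omega\to\bar\pi$ composed with the meromorphically varying pairing. Alternatively, near $\chi^2=1$ we can rewrite the sum in the divided-difference form $\bigl(\chi_+(x)-\chi^{-1}_+(x)\bigr)/\bigl(\chi(\varpi)-\chi^{-1}(\varpi)\bigr)$, which is holomorphic in the family but introduces $\val(x)\abs{x}^{\OneHalf}$ terms. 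Those terms are only covered by the meromorphic variant of the definition, so we would rely on $P(\pi)$. Finally, on the interior, i.e.\ on compact subsets of $\PGL_2(F)$, the function is locally constant in $g$ and holomorphic in $\pi$, so no expansion is needed there.
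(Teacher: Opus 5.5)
Your proposal follows the paper's proof. The paper also embeds $\PGL_2$ in $\bbP(\Mat_2)$, uses Cartan coordinates $k_1 a_x k_2$ near the boundary, and applies Casselman's relation \eqref{Jacquet} together with the fact that $a_x$ acts on the two-dimensional Jacquet module through $\chi_+$ and $\chi^{-1}_+$; your handling of the locus where the two exponents collide, by clearing poles with an analytic factor as the meromorphic version of the definition allows, makes explicit a point the paper leaves implicit. One correction: not every discrete series has vanishing Jacquet module, since $\mathrm{St}\otimes\eta$ (with $\eta^2=1$) has a one-dimensional Jacquet module on which $a_x$ acts by $\eta(x)\abs{x}$, so its matrix coefficients are not compactly supported, but the same Casselman argument with a single exponent shows they are controlled.
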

\begin{proof}
    We use the embedding $\PGL_2 \to \mathbb{P}(\Mat_2)$. A local
    analytic  coordinate system for a point on the boundary has the form $k_1
    a_x k_2$ where $k_1,k_2$ range through open neighbourhoods of $K$, the
    maximal compact subgroup of $\RO$;  \(a_x=\Diag(x,1)\) as above, and
    $\abs{x} \leq c$. The desired
    properties follow from \eqref{Jacquet} and the following fact: for the
    principal series induced from $\chi$, the Jacquet module $\bar{\pi}$ is
    two-dimensional and the eigenvalues of $a_x$ upon it are given by
    $\chi_{+}$ and $(\chi^{-1})_{+}$.
\end{proof}

\begin{proof}[Proof of \Cref{prop:AC}]
    For each $e \in \tetraE$ choose a holomorphic family of vectors $v_{\pi}^{(e)}$ 
    parameterized by $\pi \in \cP$. Tensoring together, this induces a holomorphic family
    of vectors in the family of representations of $\RO^{\tetraE}$
    parameterized by $\cP^{\tetraE}$.
    
    It is enough to show that
    both the integrals
     $ \int_{H}  \IPair{h v_1}{v_2}\dd h $ of
    \eqref{eqn:def_of_Lambda_H} 
    and
    the integral $   \int_{H \cap D \backslash H}
    \Lambda^D(hv)\dd h$ of
    \eqref{eqn:LambdaHprimedef}, where $v_1, v_2, v$ are chosen
    to be holomorphic families of the type just described, 
    vary meromorphically over $\cP^{\tetraE}$.
 
    The first integral amounts to taking  a family of matrix coefficients on
    $\PGL_2(F)^{\tetraO}$ parameterized by $\cP^\tetraE$, pulling it back to
    $H$, and integrating. The second amounts to taking a family of matrix
    coefficients on $\PGL_2(F)^{\tetraO}$, again parameterized by
    $\cP^{\tetraE}$,  pulling it back to $(D \cap H) \backslash H \simeq
    (\PGL_2(F))^3$, and integrating. In both cases, family of matrix
    coefficients is controlled after pull-back by applying
    \Cref{ControlledMatrixCoefficients} and the desired result thereby follows
    from \Cref{lem:abstract_AC}.
\end{proof}

\subsection{Proof of the absolute convergence in \Cref{prop:edge_integral}}
\label{sub:pf_of_edge_integral_convergence}

We start from the right-hand side of \eqref{eqn:edge_integral_formula_M06}.
For absolute convergence, since all \(\chi_{ij}\) are unitary, it suffices to
assume that they are trivial. Thus it reduces to show that \(\cM\) has finite
volume with respect to the volume form
\begin{align}
    \abs{\Omega}^{\OneHalf}=\abs*{\frac{\bigwedge_e\dd x_e}{\sqrt{\prod_{e\neq e'}x_e\wedge
    x_{e'}}}},
\end{align}
in which \(e,e'\) are unoriented edges that share exactly one vertex.
(Recall that $\Omega$ is a section of the square of the canonical bundle.)
Let us recall from \eqref{OmegaExplicit} that
the statement to be proven  can be put in a concrete form, namely, 
\begin{align}
    \int_{F^3}\frac{1}{\abs{xyz (x-y) (y-z) (1-z)(1-x)}^{\OneHalf}} < \infty.
\end{align}

\emph{First argument via high school calculus.} We only sketch this. Suppose
that $F=\mathbb{R}$ for ease of visualization. We need to choose coordinates
around each potential singularity of the integral and check local convergence.
As an illustration, let us check conergence in $x^2+y^2+z^2 \leq 1$.  We work in
spherical coordinates. We can neglect the terms $(1-x)(1-z)$. Discarding these,
the function is homogeneous of homogeneity degree $-5/2$; and since $\int_0^1
r^{-5/2} r^2 \dd r <\infty$  it suffices then to verify that  $\abs{xyz (x-y)
(y-z)}^{-\OneHalf}$ is integrable on the unit sphere. The singularities of this
function lie along   five great circles, and again it is enough to check local
convergence.  The most problematic singularity is $x=y=0$ where three great
circles meet; but in local coordinates $(u,v)$ near that point, the function
looks like $\abs{u v (u+v)}^{\OneHalf}$, which is verified to be integrable by passing to
polar coordinates.

\emph{ Second argument via algebraic geometry.}
Algebraic geometry offers a more systematic way to 
write the prior type of reasoning, at the cost of requiring rather more background. 
Using the classical theory of moduli stack of marked curves (see \cite{Kn83}),
we can compactify $\cM^{\circ}$ to a projective \emph{variety} $\cM$ (because
we are considering genus \(0\) curves) whose 
boundary
\(\cM-\cM^\circ\) is a union of smooth divisors with normal crossings. 
When this is done, we have
\begin{claim}
    $\Omega$ has at most simple poles along each boundary divisor.
\end{claim}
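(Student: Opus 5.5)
Take the compactification $\cM=\overline{M}_{0,6}$, the Deligne--Mumford--Knudsen moduli space of stable six-pointed genus-$0$ curves, with points labelled by $\tetraE$. Its boundary $\overline{M}_{0,6}-M_{0,6}$ is a simple normal crossing divisor. Its components are the divisors $D_S$, indexed by subsets $S\subset\tetraE$ with $2\le\abs{S}\le 4$, where $D_S=D_{\tetraE-S}$. The space $\cM^{\circ}$ is an open subset of $\overline{M}_{0,6}$ containing $M_{0,6}$. It contains the generic points of the three divisors $D_{\{e,e'\}}$ with $e,e'$ opposite edges, because there opposite points are allowed to collide. Since $\Omega$ is regular away from $\overline{M}_{0,6}-M_{0,6}$, the plan is to compute the vanishing order $\mathrm{ord}_{D_S}\Omega$ for every $S$ and check that it is $\ge -1$.

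First I fix the combinatorics of the denominator in \eqref{OmegaMMdef}. For $\Omega_{\mathbf M}$ to be $\PGL_2$-invariant, it must have total homogeneity $0$ in each point $z_e$. The numerator $(\dd z_e)^2$ has degree $4$ in $z_e$, and $e$ is incident to exactly $4$ other edges. Hence the product runs over \emph{unordered} incident pairs, each taken once, and each $z_e$ appears in exactly four factors.

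For $S$ with $\abs{S}=k\in\{2,3\}$, I will compute the order along $D_S$ by a local chart. Use $\PGL_2$ to keep three points outside $S$ fixed at $0,1,\infty$; for $k=3$ at least three points remain outside $S$. The points in $S$ are then written near the collision as $z_i=a+t w_i$ for $i\in S$, normalized by $w_{i_1}=0$ and $w_{i_2}=1$. Here $t$ is a local equation for $D_S$, and $a,w_{i_3},\dots$ together with the free points outside $S$ are transverse coordinates. In this chart $\bigwedge_{i\in S}\dd z_i=\pm t^{k-2}\,\dd a\wedge\dd t\wedge\bigwedge_{j\ge 3}\dd w_{i_j}$. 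The numerator $(\bigwedge\dd z)^2$ therefore contributes order $2(k-2)$ along $t=0$. The denominator contributes order $m(S)$, the number of incident pairs $\{e,e'\}\subset S$, since only these factors $z_e\wedge z_{e'}$ vanish at $t=0$. Dividing by the fixed volume form on $\PGL_2$ does not affect these orders, so
\begin{align}
    \mathrm{ord}_{D_S}\Omega = 2(k-2)-m(S).
\end{align}
When $k=2$ we have $m(S)\le 1$, so the order is $\ge -1$. It equals $0$ exactly for opposite pairs, consistent with $\Omega$ being regular on $\cM^{\circ}$. When $k=3$, any three edges of a tetrahedron have at most three incident pairs, so $m(S)\le 3$ and the order is $2-m(S)\ge -1$.

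It remains to treat $\abs{S}=4$, but $D_S=D_{\tetraE-S}$ with $\abs{\tetraE-S}=2$. The chart above is valid for any $S$ once the three fixed points are taken outside $S$, so the same computation applies to the complementary pair. Equivalently, $\PGL_2$-invariance lets us regard the collision of $S$ as the separation of its complement. Hence every boundary divisor carries at most a simple pole, which proves the claim.

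The step I expect to be most delicate is justifying the chart: that $(a,t,w)$ together with the remaining points really form local coordinates on $\overline{M}_{0,6}$ near a generic point of $D_S$, with $t$ a reduced local equation. This is the standard description of the boundary from \cite{Kn83}, so I would cite it rather than reprove it. For the subsequent convergence argument one also needs that, at intersections of several $D_S$, the local form is a product of simple-pole factors. That is automatic from the normal crossing structure together with the divisor-by-divisor orders computed above.
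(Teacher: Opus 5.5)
Your proof is correct and is essentially the paper's argument: you compactify to Knudsen's $\overline{M}_{0,6}$, work in the standard chart at a generic point of each boundary divisor, and obtain order $2(\abs{S}-2)-m(S)\ge -1$, which is exactly the paper's count with $k=m(S)$. The only differences are minor. You dispose of the four-point collisions via $D_S=D_{\tetraE-S}$, whereas the paper computes that chart directly and gets $4-k$ with $k\le 5$; this agrees with your value $-m(\tetraE-S)$ because $m(S)=4+m(\tetraE-S)$. You also supply the two-point case that the paper leaves to the reader.
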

The absolute convergence
then follows from the simple fact that the function $\abs{\epsilon}^{-\OneHalf}$
is locally integrable near $\epsilon=0$.

The irreducible components of the divisor \(\cM-\cM^\circ\) can be put in three types:
\begin{enumerate}
    \item \(2\) points (i.e., \(x_e\)s) collide;
    \item \(3\) points collide in a generic fashion;
    \item \(4\) points collide in a generic fashion.
\end{enumerate}

We leave the (easy) first case to the reader, and explain why $\Omega$
has at most a simple pole along all divisors of the other two types. 
It suffices in both cases to choose a coordinate chart $(a,b,\epsilon)$
near any generic point of the divisor, where $\epsilon=0$ defines the divisor,
and compute $\Omega$ in local coordinates.

In the three-point case, let $e,e', e''$ be the three edges that label the three
colliding points;
we take our coordinate chart to be given by
\begin{align}
    x_e=b+\epsilon, x_{e'}=b+a\epsilon, x_{e''}=b,
    \text{ remaining $x$s} = 0,1,\infty,
\end{align}
where $a \neq 1$ and $b \notin \Set{0,1}$.
In the four-point case, we similarly  let $e,e', e'', e'''$ be the four edges
that label the four colliding points, and  we take our coordinate chart to be 
\begin{align}
    x_e=0, x_{e'}=  \epsilon, x_{e''}= a \epsilon,  x_{e'''}= b \epsilon,
    \text{ remaining $x$s} = 1, \infty,
\end{align}
where  $a,b$ must avoid  
the loci \(a=0\), \(a=1\), \(b=0\), \(b=1\) and \(a=b\).
In the three-point (resp. four-point) cases, the expression for $\Omega$ is then
\begin{align}
    \frac{(\epsilon \dd \epsilon \wedge \dd a \wedge \dd b)^2}{\epsilon^k},
    \text{ resp. }
    \frac{(\epsilon^2\dd \epsilon \wedge \dd a \wedge \dd b)^2}{\epsilon^k},
\end{align}
where $k$ is the number of adjacencies amongst the edges $e,e',e''$.
Evidently $k \leq 3$ in the three-point case. In the four point case
we note that not all four of $e,e', e'', e'''$ can be simultaneously adjacent,
and so $k \leq 5$. In both cases the order of pole of $\Omega$ is then at most $1$
as desired.

\subsection{Proof of the absolute convergence of \eqref{eqn:LambdaHprimedef}}
\label{Hprimedefconvergence}

We must verify that the integral of $\Lambda^D(hv)$ over $H \cap D \backslash H$
is absolutely convergent. Now $\Lambda^D(hv)$ is a certain product of matrix
coefficients of tempered representation; by the results of Cowling, Haagerup and
Howe \cite[Theorem 2]{CowlingHaagerupHowe}, these are all majorized by a matrix
coefficient of a suitable tempered principal series representation. Therefore it
suffices to check the absolute convergence when all $\pi_{ij}$ are tempered
principal series, i.e. principal series associated to unitary characters
$\chi_{ij}$; but in that case the integral has been computed explicitly and
proved to be absolutely convergent in \Cref{prop:edge_integral} (actually, the
proof of absolute convergence was was just given now, in
\Cref{sub:pf_of_edge_integral_convergence}).

\section{Computations with principal series}
\label{sec:edge_formula_for_principal_series}

Our goal here is to prove the edge formula for principal series, stated in
\Cref{sub:edge_formula}. We already outlined the general plan of the computation
at that point; the main issue is to be very careful about the normalizations of
various functionals, because it is otherwise rather easy to compute the answer
only up to an unspecified constant.

We follow the notation set up in \Cref{sub:edge_formula}, so that we assign
characters $\chi_{ij}$ of $F^{\x}$ to oriented edges such that $\chi_{ij}
\chi_{ji}=1$, and let $\pi_{ij}$ be the corresponding unitarily induced principal
series representations. Let \(B_{\RO}\subset \RO=\PGL_2\) be the standard
upper-triangular Borel in \(\RO\) and \(B=B_{\RO}^{\tetraO}\) the Borel in
\(G\).

\subsection{The plan of the computation}
Recall that $\Pisymbol$ is defined using a  $D$-functional on $\Pi$, denoted by
$\Lambda^D$ and an $H$-invariant functional denoted by $\Lambda^H$. What we will
do, here, is  to compute with a different normalizations of such  invariant
functionals, which we call $\phi^D, \phi^H$, and then deduce what we want about
$\Lambda^D, \Lambda^H$. Then we compare $\Lambda^H$
with another $H$-invariant functional obtained by averaging $\Lambda^D$:
\begin{equation} \label{LLA}
    (\Lambda^H)' \colon v \longmapsto  \int_{H \cap D \backslash H}
    \Lambda^D(hv)\dd h.
\end{equation}
Due to the technicality of this section, we give an outline of the argument for
the reader's convenience:
\begin{itemize} 
    \item In \Cref{sub:pseries} we set up notations related
        to principal series. 
    \item In \Cref{sub:pseriesB,sub:pseriesT} we carry out computations
        related to normalizing bilinear and trilinear pairings
        on principal series.
    \item In \Cref{sub:phi_D_and_H_def} we  define $\phi^D$ (resp.~$\phi^H$) and 
        write $\Lambda^D$ (resp.~$\Lambda^H$) in terms of it.
    \item in \Cref{sub:averaging}  we compare the $H$-average of $\phi^D$ with
        $\phi^H$.
    \item In \Cref{sub:summary_for_pseries} we  conclude the proof, conditional on a computational lemma,
    which is proved in \Cref{uglylemmaproof}.
\end{itemize}

\subsection{Setup on principal series} \label{sub:pseries}
We will summarize some essential properties and notation related
to principal series for $\RO$. In what follows, we abridge $\chi(\det g)$ to $\chi(g)$. 
We have fixed a local field \(F\), and characters
and measures are fixed as in
\Cref{Characters And Measures}.

There is a very useful way to parameterize vectors by Schwartz functions, namely
there is a natural projection
 $\CcInf(F^2) \rightarrow \pi_{\chi}$
where we force a function \(f\) on $F^2$ to have the desired degree of
homogeneity along central directions, namely, we send
\begin{equation}
    \label{eqn:Phichi}
    f(z) \longmapsto  f_{\chi}(z)
    \defeq \int_{\lambda\in F^\x} \chi^{-2}(\lambda)  \abs{\lambda} f(\lambda z) \dd^{\x}\lambda
    = \int_{\lambda\in F} f(\lambda z) \chi^{-2}(\lambda) \dd\lambda.
\end{equation}

The normalized principal series
$\pi_{\chi}$ is realized in the space of functions
$\Phi\colon \RO \rightarrow \C$ that transform
on the left by means of the character
\begin{align}
    \begin{bmatrix}
        a & b\\
        0 & c
    \end{bmatrix}
    \longmapsto (\chi^{-1})_{+}(a/c)
    =\chi\left(\frac{c}{a}\right)\abs*{\frac{a}{c}}^{\OneHalf}.
\end{align} 
This coincides with the definition given in \Cref{DefinitionNontempered} in
terms of functions on the punctured plane. Indeed, given  $\Phi$ as above,  pull
it back to $F^2-\Set{0}$ by means of $(x,y) \mapsto g_{xy}$, where $g_{xy}$ is
any matrix with bottom row $(x,y)$ and determinant equal to $1$, and then extend
it to \(F^2\) by \(0\). The result $f$ is independent of choice of $g_{xy}$ and
satisfies $f (\lambda \cdot z) = \chi^2(\lambda) \abs{\lambda}^{-1} f(z)$.

Now let us discuss how to rigidify this $\pi_{\chi}$, that is, how to endow it with a self-duality pairing.  First, let us observe that
on the space of $(-2)$-homogeneous functions on $F^2$ there is an action of
$\RO$ whose pullback to \(\GL_2\) is \(g\colon f(z)\mapsto f(zg) \abs{\det g}\),
and an invariant functional given by integrating $f$ on $\mathbb{P}^1_F$ (see
\Cref{ProjectiveIntegrationReview}). Composing this with the product of two
functions gives an invariant pairing $ \pi_{\chi} \otimes \pi_{\chi^{-1}}
\rightarrow \C$, which we denote simply by $(f,g) \mapsto \int_{\mathbb{P}^1}
fg$.

For $z_1,z_2\in F^2$, we denote by $z_1 \wedge z_2 \in F$
the determinant of the $2 \times 2$ matrix whose
rows are respectively $z_1$ and $z_2$. Now put
\begin{align}
    K(z_1, z_2) = \frac{ \chi^{-2}(z_1 \wedge z_2)}{\abs{z_1\wedge z_2}}
\end{align}
and note that $K(z_1, z_2)=K(z_2, z_1)$.
Then for $f \in \pi_{\chi}$ the function
\begin{align}
    I(f)(z) \defeq \int_{\mathbb{P}^1} K(z,z') f(z')
\end{align}
defines an element of $\pi_{\chi^{-1}}$ and the rule $f \mapsto I(f)$
intertwines $\pi_{\chi}$ with $\pi_{\chi^{-1}}$. We also denote \(I\) by
$I_{\chi}$ when we need to be explicit (note that $I_{\chi}$ and $I_{\chi^{-1}}$
are not inverse to each other).  The rule
\begin{equation} \label{rigidification for principal series}
    \Pair{f_1}{f_2}
    \defeq \int_{\mathbb{P}^1} f_1 \cdot I(f_2)
    = \int_{\mathbb{P}^1 \times \mathbb{P}^1} K(z_1,z_2) f_1(z_1) f_2(z_2)
\end{equation}
defines (at least for generic $\chi$) a rigidification of $\pi_{\chi}$.

 \subsection{The bilinear pairing on principal series}
\label{sub:pseriesB}

Continue with notation as above.  We have defined
a rigidification of $\pi_{\chi}$ and $\pi_{\chi^{-1}}$.
There is, up to sign, a unique isomorphism
between these that preserves rigification, that we shall
call the  rigidified isomorphism; it is the essential uniqueness of the
rigidified isomorphisms that make rigidifications so useful!

We can then transport the rigidification,
either on $\pi_{\chi}$ or on $\pi_{\chi^{-1}}$,
by means of a rigidified isomorphism in one factor, 
and get (up to sign) the same pairing
\begin{align} \label{normalizedpairing}
    \pi_{\chi} \times \pi_{\chi^{-1}} \longrightarrow \mathbb{C},
\end{align}
which we will call the ``normalized'' pairing, and 
which we shall  compute to be  
\begin{align}
    (f, g) \longmapsto \sqrt{ \gamma(1,\chi^2) \gamma(1,\chi^{-2})} \int_{\mathbb{P}^1} fg.
\end{align}

Denote by $\Schwartz(F^2)$ the Schwartz space on \(F^2\), namely the space of
smooth rapidly decreasing functions if \(F\) is archimedean, or locally
constant functions with compact support if \(F\) is nonarchimedean.
We normalize the Fourier transform $\FT_2$ on $F^2$ by means of
\begin{align}
    \label{eqn:2D_Fourier_skew_version}
    \FT_2 \Phi(z_1) = \int_{z_2\in F^2} \Psi(z_1 \wedge z_2) \Phi(z_2).
\end{align}
Note that since we use a \emph{skew-symmetric} pairing inside \(\Psi\) instead
of a symmetric one, one has \(\FT_2^2=\Id\) without also negating \(z\)
(cf.~\Cref{sec:Fadjoint}).

\begin{lemma}
    [Intertwiner versus Fourier transform]
    \label[lemma]{lem:IvsFourier}
    The following diagram commutes:
    \begin{equation}
        \begin{tikzcd}
            \Schwartz(F^2) \ar[d, "f\mapsto f_\chi", swap] \ar[rr, "{\FT_2}"] && \Schwartz(F^2) \ar[d, "f\mapsto
            f_{\chi^{-1}}"] \\
            \pi_{\chi} \ar[rr,"{\gamma(1,\chi^2)^{-1} I_\chi}"]  &&\pi_{\chi^{-1}}
        \end{tikzcd}
    \end{equation}
    Moreover, $I_{\chi^{-1}}I_\chi$ is the same as multiplication by
    $\gamma(1, \chi^2)\gamma(1, \chi^{-2})$. 
\end{lemma}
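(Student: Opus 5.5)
The plan is to verify the commutativity pointwise: for $f\in\Schwartz(F^2)$ and $z\in F^2-\Set{0}$ we compute both $(\FT_2 f)_{\chi^{-1}}(z)$ and $I_\chi(f_\chi)(z)$ as integrals over $F^2$ against the kernel $K(z,\cdot)$. We first work in a range of $\chi$ where everything converges absolutely, for instance $\chi=\chi_0\abs{\blank}^{s}$ with $\Re s$ in a suitable strip. We then extend to all $\chi$ by meromorphic continuation, since both sides are meromorphic in $\chi$.

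\emph{Step 1: unfold the intertwiner.} Write $\Phi_z(z')\defeq K(z,z')f(z')$. Since $K(z,\lambda z')=\chi^{-2}(\lambda)\abs{\lambda}^{-1}K(z,z')$, definition \eqref{eqn:Phichi} gives
\begin{align}
    K(z,z')f_\chi(z')=\int_{F^\x}\abs{\lambda}^2\,\Phi_z(\lambda z')\,\dd^\x\lambda .
\end{align}
This is exactly the function $\bar\Phi_z$ of \Cref{ProjectiveIntegrationReview} with $k=2$. The normalization there then gives
\begin{align}
    I_\chi(f_\chi)(z)=\int_{F^2}\frac{\chi^{-2}(z\wedge z')}{\abs{z\wedge z'}}\,f(z')\,\dd z'.
\end{align}

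\emph{Step 2: compute the other side.} By \eqref{eqn:Phichi} for $\chi^{-1}$ and the definition \eqref{eqn:2D_Fourier_skew_version},
\begin{align}
    (\FT_2 f)_{\chi^{-1}}(z)=\int_{\lambda\in F}\chi^{2}(\lambda)\int_{F^2}\Psi(\lambda\, z\wedge z')f(z')\,\dd z'\,\dd\lambda .
\end{align}
Exchange the order of integration; this is justified in the convergent range, or distributionally by Tate's theory \eqref{Tate}. The inner integral is then $\int_F\chi^2(\lambda)\Psi(\lambda t)\,\dd\lambda$ with $t=z\wedge z'$. Substituting $x=\lambda t$ and applying \eqref{Tate3gamma} to the quasi-character $\chi^{-2}$, this equals
\begin{align}
    \chi^{-2}(t)\abs{t}^{-1}\gamma(0,\chi^{-2}) .
\end{align}
By \eqref{GammaSymmetry}, $\gamma(0,\chi^{-2})=\chi^2(-1)\,\gamma(1,\chi^2)^{-1}$. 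Comparing with Step 1 yields the diagram up to the factor $\chi^2(-1)$.

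\emph{Step 3: the sign.} We expect the factor $\chi^2(-1)$ to be the main bookkeeping obstacle. The first thing to try is to track it through the orientation of the wedge, using $\chi^{-2}(z\wedge z')=\chi^2(-1)\chi^{-2}(z'\wedge z)$. We then check which ordering the definition of $K$ and the skew Fourier convention actually produce, so that the sign cancels. Failing that, the sign is harmless for later uses, since all subsequent identities are taken up to sign.

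\emph{Step 4: the second claim.} Apply the diagram twice, once for $\chi$ and once for $\chi^{-1}$, and use $\FT_2^2=\Id$. This gives
\begin{align}
    \gamma(1,\chi^{-2})^{-1}\gamma(1,\chi^{2})^{-1}\,I_{\chi^{-1}}I_\chi f_\chi=f_\chi .
\end{align}
(If a sign $\chi^2(-1)$ survives from Step 3, it appears twice here and squares away.) Since $f\mapsto f_\chi$ is surjective onto $\pi_\chi$, this gives $I_{\chi^{-1}}I_\chi=\gamma(1,\chi^2)\gamma(1,\chi^{-2})$.
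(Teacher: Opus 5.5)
Your proposal is correct and follows the paper's proof essentially step for step. You unfold $I_\chi(f_\chi)$ into an integral over $F^2$ against $K(z,\cdot)$, carry out the $\lambda$-integral first using Tate's formula for the Fourier transform of a quasi-character, and obtain the second claim by applying the diagram for $\chi^{-1}$ to $\FT_2 f$ and using $\FT_2^2=\Id$.

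Your Step 3 is unnecessary: for every quasi-character, $\chi^2(-1)=\chi(-1)^2=\chi(1)=1$. This is exactly how the paper disposes of the sign, so the identity holds exactly and you do not need the ``up to sign'' fallback.
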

\begin{proof}
    We compute
    \begin{align}
        I_\chi(f_{\chi})(z)
        &= \int_{\lambda\in F, z'\in\mathbb{P}^1} \chi^{-2}(\lambda)f(\lambda z')
        \chi_+^{-2}(z \wedge z') \dd z' \dd\lambda\\
        &= \int_{(x,y) \in F^2} f(x,y) \chi_+^{-2}\bigl(z \wedge (x,y)\bigr) \dd x\dd y.
    \end{align}
    On the other hand, we also have
    \begin{equation} \label{eqn:IFourier} 
        \FT_2(f)_{\chi^{-1}}(z)
        = \int_{\lambda\in F, z'\in F^2}  \chi^2(\lambda) \Psi(\lambda z
        \wedge z') f(z') \dd z'\dd\lambda
        = \gamma(1, \chi^{2})^{-1} I_\chi(f_{\chi})(z),
    \end{equation}
    where we first carried out $\lambda$-integral to give
    \(\gamma(1, \chi^{2})^{-1}\chi_+^{-2}(z \wedge z')\).
    Here we used from \eqref{CheckChiEquation} the fact that the Fourier transform
    of a character $\chi$, considered as a function on the additive group, is
    $ (\chi(-1)\gamma(1,\chi)\chi_1)^{-1}$ (and that \(\chi^2(-1)=1\)).
    This proves the  commutativity of the diagram. 

    Moreover,  applying the above equation with $\chi$ replaced by $\chi^{-1}$
    and $f$ by $\FT_2(f)$ we get
    \begin{equation}\label{eqn:I2} 
        \FT_2^2(f)_{\chi}
        = \gamma(1, \chi^{-2})^{-1}  I_{\chi^{-1}}\bigl(\FT_2(f)_{\chi^{-1}}\bigr)
        = \gamma(1, \chi^{-2})^{-1} \gamma(1, \chi^2)^{-1} I_{\chi^{-1}}I_\chi(f_{\chi}),
    \end{equation}
    from which we obtain the second claim.
\end{proof}

\begin{lemma}
    \label[lemma]{lem:twobilinearpairings}
    A \emph{rigidified} isomorphism
    \begin{align}
        \pi_{\chi} \otimes \pi_{\chi^{-1}} \longto \pi_{\chi} \otimes \pi_{\chi}
    \end{align}
    is  given by
    $\gamma(1, \chi^2)^{-\OneHalf} \gamma(1, \chi^{-2})^{-\OneHalf}(\id
    \otimes I_{\chi^{-1}})$.
    Correspondingly, a normalized pairing on $\pi_{\chi} \otimes \pi_{\chi^{-1}}$
    (as in \eqref{normalizedpairing})
    is given by
    \begin{align}
        (f,g) \longmapsto \sqrt{\gamma(1, \chi^2)\gamma(1, \chi^{-2})}
        \int_{\mathbb{P}^1} fg.
    \end{align}
\end{lemma}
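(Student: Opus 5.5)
The plan is to find a scalar multiple of $\id\otimes I_{\chi^{-1}}$ that carries the rigidification of $\pi_\chi\otimes\pi_{\chi^{-1}}$ to that of $\pi_\chi\otimes\pi_\chi$. Since $\pi_\chi$ is irreducible for generic $\chi$, Schur's lemma makes any intertwiner $\pi_{\chi^{-1}}\to\pi_\chi$ a multiple $c\, I_{\chi^{-1}}$ of the standard one. So it suffices to find $c$ with $c\,I_{\chi^{-1}}$ rigidified, namely
\begin{align}
    \Pair{c\,I_{\chi^{-1}} g_1}{c\,I_{\chi^{-1}} g_2}_{\chi} = \Pair{g_1}{g_2}_{\chi^{-1}}, \qquad g_1,g_2\in\pi_{\chi^{-1}},
\end{align}
where $\Pair{-}{-}_\chi$ is the rigidification \eqref{rigidification for principal series} of $\pi_\chi$, and similarly for $\chi^{-1}$.

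To compute the left side, first write $\Pair{f_1}{f_2}_\chi = \int_{\bbP^1} f_1\, I_\chi(f_2)$. Substituting $f_i = I_{\chi^{-1}}g_i$ gives $c^2\int_{\bbP^1} I_{\chi^{-1}}(g_1)\, I_\chi I_{\chi^{-1}}(g_2)$. The second claim of \Cref{lem:IvsFourier}, applied with $\chi$ replaced by $\chi^{-1}$, should give $I_\chi I_{\chi^{-1}} = \gamma(1,\chi^2)\gamma(1,\chi^{-2})$. (Applying the lemma to $\chi^{-1}$ yields $I_\chi I_{\chi^{-1}}$, and the product of $\gamma$-factors is symmetric under $\chi\leftrightarrow\chi^{-1}$.) The left side therefore becomes $c^2\gamma(1,\chi^2)\gamma(1,\chi^{-2})\int I_{\chi^{-1}}(g_1)\, g_2$.

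Next I use the symmetry $K(z_1,z_2)=K(z_2,z_1)$ of the kernel, which gives $\int I_{\chi^{-1}}(g_1)\,g_2=\Pair{g_2}{g_1}_{\chi^{-1}}=\Pair{g_1}{g_2}_{\chi^{-1}}$. Hence $c^2 = (\gamma(1,\chi^2)\gamma(1,\chi^{-2}))^{-1}$, which fixes $c$ up to sign as claimed. The rigidified isomorphism on the tensor product is then $\id\otimes c\,I_{\chi^{-1}}$, because the rigidification of a tensor product is the product of the rigidifications.

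For the normalized pairing, the definition says to pull back the rigidification of $\pi_\chi$ along $\id\otimes c\,I_{\chi^{-1}}$. This gives
\begin{align}
    (f,g)\mapsto \Pair{f}{c\,I_{\chi^{-1}}g}_\chi = c\int_{\bbP^1} f\, I_\chi I_{\chi^{-1}} g = c\,\gamma(1,\chi^2)\gamma(1,\chi^{-2})\int_{\bbP^1} fg = \sqrt{\gamma(1,\chi^2)\gamma(1,\chi^{-2})}\int_{\bbP^1} fg .
\end{align}

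The main obstacle is bookkeeping. I need to make sure the composition order in \Cref{lem:IvsFourier} really produces $I_\chi I_{\chi^{-1}}$ with the same scalar, and that there are no stray $\chi(-1)$ signs from the kernel. Those signs should be harmless since $\chi^2(-1)=1$. Everything holds for generic $\chi$ and extends elsewhere by meromorphic continuation.
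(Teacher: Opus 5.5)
Your proposal is correct and follows the paper's argument. You transport the rigidification through $I_{\chi^{-1}}$, use $I_\chi I_{\chi^{-1}}=\gamma(1,\chi^2)\gamma(1,\chi^{-2})$ from \Cref{lem:IvsFourier}, and use the symmetry of $K$ to recognize the result as $\gamma(1,\chi^2)\gamma(1,\chi^{-2})$ times the rigidification of $\pi_{\chi^{-1}}$. The only difference is cosmetic: you work on the single factor rather than on the full tensor product, and your appeal to Schur's lemma is unnecessary, since exhibiting one rigidified isomorphism is all the statement asks for.
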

\begin{proof}
    Use $v, v'$  to denote for vectors in $\pi_\chi$ and $w, w'$ for vectors in
    $\pi_{\chi^{-1}}$.
    Now, integration against the kernel $K(x,y)$ above, or its analogue for
    $\chi^{-1}$, give respectively
    \begin{align}
        I_\chi&\colon \pi_\chi \longto \pi_{\chi^{-1}},\\
        I_{\chi^{-1}}&\colon \pi_{\chi^{-1}} \longto \pi_\chi.
    \end{align}

    The self-duality structures are given by $\int_{\POne}{v} \cdot I_\chi(v') $ on
    $\pi_\chi$ and similarly $\int_{\POne} {w} \cdot I_{\chi^{-1}}(w') $ on
    $\pi_{\chi^{-1}}$. Consider now
    \begin{align}
        \id \otimes I_{\chi^{-1}}: \pi_{\chi} \otimes \pi_{\chi^{-1}}
        &\longto \pi_{\chi} \otimes \pi_{\chi}\\
        v \otimes w &\longmapsto v \otimes I_{\chi^{-1}}(w).
    \end{align}
    Transporting back the self-duality form
    $\int_{\POne}{v_1}\cdot I_\chi(v_1')\times \int_{\POne}{v_2}\cdot
    I_\chi(v_2')$
    on the right hand side, we get on the left the self-duality form
    \begin{multline}
        \int_{\POne}v\cdot I_\chi(v')\times
        \int_{\POne}I_{\chi^{-1}}(w)\cdot I_\chi I_{\chi^{-1}}(w')\\
        \stackrel{\eqref{eqn:I2}}{=}
        \int_{\POne}v\cdot I_\chi(v')\times \int_{\POne}I_{\chi^{-1}}(w)\cdot w'
        \cdot  \gamma(1, \chi^2) \gamma(1, \chi^{-2}).
    \end{multline}
    Since $K$ is symmetric in its two arguments,  $I_{\chi^{-1}}$ is adjoint to $I_{\chi}$
    with respect to the pairing $\int_{\POne}$, and so 
    we can rewrite the above  as $\gamma(1, \chi^2) \gamma(1,
    \chi^{-2})$
    multiplied by the 
    standard self-duaity pairing on $\pi_{\chi} \otimes \pi_{\chi^{-1}}$. 
    The first claim follows, and the second claim is a consequence of the first. 
\end{proof}

\subsection{Normalizing the trilinear functional} \label{sub:pseriesT}
 Now fix three characters $\chi_1, \chi_2, \chi_3$
 with corresponding principal series representations $\pi_{\chi_i}$, 
and write $H'$ for the diagonal copy of $\RO$ inside $G'=\RO^3$. Put
\begin{align}
    \Pi' = \pi_{\chi_1} \otimes \pi_{\chi_2} \otimes \pi_{\chi_3}.
\end{align}
We also let \(\chi_{123}=\chi_1\chi_2\chi_3\).

We rigidify each $\pi_{\chi_i}$ and so also $\Pi'$ according to the discussion
above. Our interest here will be to compute the normalized (in the sense of
\eqref{eqn:def_of_Lambda_H}) $H'$-invariant functional on the space of $\Pi'$.
The open orbit of $H'$ on \((\bbP^1)^3\) is the locus $O$ where the three
points are distinct, which we denote by \([x_j,y_j]\) for \(j=1,2,3\). We fix a
basepoint $p_0 \in O$ and a lift $\tilde{p}_0 \in (F^2)^3$;
for concreteness we take $p_0=(0,1,\infty)$ and
\begin{align}
    \tilde{p}_0 =\bigl((0,1),(1,1),(1,0)\bigr)\in (F^2)^3.
\end{align}

The $H'$-invariant measure is induced by the
\(3\)-form on $(\mathbb{P}^1)^3$ whose pullback to $(F^2-\Set{0})^3$ is
\begin{align}
    \label{eqn:HaarP13}
   \dd p =  \frac{\prod_{j=1}^3(x_j\dd y_j-y_j\dd x_j)}{(x_1y_2-x_2y_1)(x_2y_3-x_3y_2)(x_3y_1-x_1y_3)}.
\end{align}
The choice of order in the product does not matter, because the measure
induced by a differential form does not depend on sign.
It is easily seen that this formula is independent of the choices of \(x_j\) and
\(y_j\), and is invariant under \(H'\). 
We transport this measure to $H'$
by means of $h \in H' \mapsto p_0 h$. The resulting measure \(\dd^\bbP h\)
is independent of the choice of $p_0$. We remind the reader
that if \(F\) is nonarchimedean, this measure assigns \(H'(\cO)=\RO(\cO)\)
volume \(1-q^{-2}\), hence differs from the normalized Haar measure
(cf.~\Cref{subsec:Group}).
Evidently, the following form on $\Pi'$ is $H'$-invariant:
\begin{equation}
    \label{eqn:taudef}
    \tau(f_1 \otimes f_2 \otimes f_3)
    = \int_{H'} (hf_1 \otimes hf_2 \otimes hf_3)(\tilde{p}_0) \dd^\bbP h
    =
    \int_{H'} \psi(\tilde{h}) f(\tilde{p}_0 \tilde{h}) \dd^\bbP h,
\end{equation}
where we choose some \(\tilde{h}\in\GL_2\) lifting \(h\),
and  $\psi=(\chi_{123}^{-1}\abs{\blank}^{3/2})\circ \det$.

\begin{lemma}
    \label[lemma]{lem:trilinear_lemma}
    With the choice of \(\tilde{p}_0\) above, we have
    \begin{align}
        \taunorm = \alpha^{\OneHalf} \tau,
    \end{align}
    where $\taunorm $ is the  ``normalized''
    trilinear functional on $\Pi'$  defined via $\taunorm(v_1) \taunorm(v_2) =
    \int_{h \in H'} \IPair{hv_1}{v_2}\dd h$, and
    \begin{equation} \label{alphadef}
        \alpha = \nu_\bbP^{-1}
        \prod_{i=1}^3 \gamma(1, \chi_i^2) \cdot
        \gamma\Bigl(\OneHalf, \chi_{\bar{1}\bar{2}\bar{3}}\Bigr)
        \gamma\Bigl(\OneHalf,\chi_{1\bar{2}\bar{3}}\Bigr)
        \gamma\Bigl(\OneHalf,\chi_{\bar{1}2\bar{3}}\Bigr)
        \gamma\Bigl(\OneHalf,\chi_{\bar{1}\bar{2}3}\Bigr).
    \end{equation}
    where we use the shorthand
    $\chi_{1\bar{2}\bar{3}}$ for $\chi_1\chi_2^{-1}\chi_3^{-1}$, and so on,
    and \(\nu_\bbP\) is defined by \eqref{eqn:Haar_vs_P13_measures}.
\end{lemma}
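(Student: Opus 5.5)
The strategy is to evaluate both sides of $\taunorm(v)^2=\int_{H'}\IPair{hv}{v}\dd h$ on a single convenient test vector. Since $\taunorm$ and $\tau$ are proportional, this gives $\alpha=\tau(v)^{-2}\int_{H'}\IPair{hv}{v}\dd h$. The cleanest way to organise this is to unfold everything into integrals over $(F^2)^3$ through the Schwartz parametrisation $f\mapsto f_\chi$ of \eqref{eqn:Phichi}, and then apply \Cref{lem:IvsFourier} to convert the intertwiners hidden in the rigidification into Fourier transforms.

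First step: compute $\tau$ on vectors of the form $f_{i,\chi_i}$ with $f_i\in\Schwartz(F^2)$. Substituting \eqref{eqn:Phichi} into \eqref{eqn:taudef} gives an integral over $H'\times(F^\times)^3$. The map $(h,\lambda_1,\lambda_2,\lambda_3)\mapsto(\lambda_j\tilde p_{0,j}\tilde h)_j$ identifies this, up to the $\GL_2$ versus $\PGL_2$ bookkeeping, with an open dense subset of $(F^2)^3$. The Jacobian is exactly what the form \eqref{eqn:HaarP13} was designed to produce. I therefore expect
\begin{align}
    \tau(f_{1,\chi_1}\otimes f_{2,\chi_2}\otimes f_{3,\chi_3})
    = \int_{(F^2)^3} f_1(z_1)f_2(z_2)f_3(z_3)\,
    \chi^{-1}_{\bar1\bar2 3,-}(z_1\wedge z_2)\,
    \chi^{-1}_{1\bar2\bar3,-}(z_2\wedge z_3)\,
    \chi^{-1}_{\bar1 2\bar3,-}(z_3\wedge z_1)\,\dd z,
\end{align}
with the exponents fixed by homogeneity. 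This is the familiar kernel realisation of the trilinear form.

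Second step: compute $\IPair{hv}{v}$. By \eqref{rigidification for principal series} and \Cref{lem:IvsFourier}, the rigidified pairing on $\pi_{\chi_i}$ is $\gamma(1,\chi_i^2)\int_{\bbP^1}f_{\chi_i}\cdot(\FT_2 f')_{\chi_i^{-1}}$. This is where the factor $\prod_i\gamma(1,\chi_i^2)$ in $\alpha$ comes from. Integrating the matrix coefficient over $H'$ means integrating a product of three such pairings over the diagonal $\RO$. Unfolded, this becomes an integral of $\prod_i f_i(z_i)\,\widehat{f'_i}(w_i)$ against $\chi$-powers of $\det$ over $(F^2)^3\times(F^2)^3$, restricted to the locus where the $z$-frame and the $w$-frame are related by a common $g\in\GL_2$.

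The key point is that integrating out $H'$ factorises this expression. One factor is $\tau$ applied to the $f_i$, and the other is a dual trilinear form applied to the $\FT_2 f'_i$. That dual form is characterised by the $\chi_i^{-1}$, and its kernel is the $\FT_2^{\otimes3}$-transform of the kernel above.

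Third step: identify the dual form with a multiple of $\tau$ applied to the $f'_i$. This requires computing the Fourier transform on $(F^2)^3$ of the product of the three characters of the $z_i\wedge z_j$. After a linear change of variables this factors into three one-dimensional Tate integrals. Each one contributes, via \eqref{Tate2gamma}, a factor $\gamma(\tfrac12,\cdot)$ of one of the characters $\chi_{1\bar2\bar3}$, $\chi_{\bar12\bar3}$, $\chi_{\bar1\bar23}$. The remaining fourth factor $\gamma(\tfrac12,\chi_{\bar1\bar2\bar3})$ should come from the single leftover scaling direction, namely the determinant variable of $\GL_2$.

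This is the step I expect to be the main obstacle. The danger is that the change of variables splits into only three independent Tate integrals and the fourth $\gamma$ gets lost. If the direct computation does not cleanly produce four $\gamma$-factors, my fallback is a simpler route. I would take the $f_i$ to be supported near the open orbit, so that $\tau$ is evaluated trivially, and then evaluate $\int_{H'}\IPair{hv}{v}$ directly in the coordinates $(x,y,z)$ of \Cref{MExplicitRemark}. This reduces everything to a Selberg-type integral of one-dimensional Tate integrals.

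Final step: track the constants. The factor $\nu_\bbP^{-1}$ arises because $\tau$ was defined with $\dd^{\bbP}h$, while $\taunorm$ is defined with the Haar measure $\dd h$. After dividing by $\tau(v)^2$ this leaves exactly $\nu_\bbP^{-1}$. Signs are harmless, since the statement only concerns $\alpha^{1/2}$.
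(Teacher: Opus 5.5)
Your architecture matches the paper's. You write $\tau$ as a kernel integral on $(F^2)^3$. You use the unfolding identity $\tau(f)\tau(g)=\nu_\bbP\int_{H'}\dd x\int_{(\bbP^1)^3}(xf)\cdot g$ for $g$ in the contragredient triple. This is your ``factorisation''; it comes from $\int_{H'}\phi(\tilde{p}_0\tilde{h})\abs{\det\tilde{h}}^3\,\dd^{\bbP}h=\int_{(\bbP^1)^3}\phi$ for $(-2)$-homogeneous $\phi$, which is also where $\nu_\bbP$ enters. Hence $\taunorm(f)\taunorm(g)=\nu_\bbP^{-1}\tau(f)\tau(I(g))$. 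Then \Cref{lem:IvsFourier} supplies $\prod_i\gamma(1,\chi_i^2)$, and Fourier adjointness moves $\FT_2$ onto the kernel.

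There is a small slip: your Step~1 kernel has the characters inverted. Pairing against $f_{i,\chi_i}$, which is $\chi_i^2\abs{\blank}^{-1}$-homogeneous, forces $\chi_{1\bar{2}\bar{3}}\abs{\blank}^{-\OneHalf}$ on $z_2\wedge z_3$, $\chi_{\bar{1}2\bar{3}}\abs{\blank}^{-\OneHalf}$ on $z_3\wedge z_1$, and $\chi_{\bar{1}\bar{2}3}\abs{\blank}^{-\OneHalf}$ on $z_1\wedge z_2$.

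The genuine gap is Step~3. It is the only source of the four $\gamma(\OneHalf,\cdot)$ factors, and you do not carry it out. The mechanism you propose cannot work as stated. The $z_i\wedge z_j$ are quadratic on $(F^2)^3$, so no linear change of variables turns the kernel into a product of characters of independent linear coordinates. There is also no separate ``determinant direction'' integral that would supply a fourth factor. Your fallback only re-expresses the problem as an unevaluated three-dimensional integral, and the coordinates of \Cref{MExplicitRemark} belong to the six-point problem, not this one.

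The missing idea is to Fourier transform one $z_i$ at a time. Write the kernel being transformed as $\alpha_1(z_2\wedge z_3)\alpha_2(z_3\wedge z_1)\alpha_3(z_1\wedge z_2)$. Fix $z_2,z_3$ with $z_2\wedge z_3\neq 0$. Then $z_1\mapsto(z_2\wedge z_1,\,z_3\wedge z_1)$ is linear with Jacobian $\abs{z_2\wedge z_3}$. The identity $(z_2\wedge z_3)z_1=(z_1\wedge z_3)z_2-(z_1\wedge z_2)z_3$ shows that $k_1\wedge z_1$ is linear in these coordinates. So the $z_1$-transform is a product of two Tate integrals. It gives $(\gamma(1,\alpha_2)\gamma(1,\alpha_3))^{-1}$ times a new kernel: $z_2\wedge z_3$ now carries $\alpha_{123}\abs{\blank}$, while $k_1\wedge z_2$ and $k_1\wedge z_3$ carry $\alpha_2^{-1}\abs{\blank}^{-1}$ and $\alpha_3^{-1}\abs{\blank}^{-1}$.

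Repeating in $z_2$ and then in $z_3$ produces six factors: $\gamma(1,\alpha_2)\gamma(1,\alpha_3)$, then $\gamma(1,\alpha_{123}\abs{\blank})\gamma(1,\alpha_2^{-1}\abs{\blank}^{-1})$, then $\gamma(1,\alpha_1)\gamma(1,\alpha_2)$. The pair $\gamma(1,\alpha_2)\gamma(1,\alpha_2^{-1}\abs{\blank}^{-1})=\alpha_2(-1)$ cancels by \eqref{GammaSymmetry}. For the contragredient kernel one has $\alpha_1=\chi_{\bar{1}23}\abs{\blank}^{-\OneHalf}$, $\alpha_2=\chi_{1\bar{2}3}\abs{\blank}^{-\OneHalf}$ and $\alpha_3=\chi_{12\bar{3}}\abs{\blank}^{-\OneHalf}$. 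The four survivors are then $\gamma(\OneHalf,\chi_{\bar{1}23})\gamma(\OneHalf,\chi_{1\bar{2}3})\gamma(\OneHalf,\chi_{12\bar{3}})\gamma(\OneHalf,\chi_{123})$, whose reciprocals are the four factors in $\alpha$ up to sign.

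So the fourth factor arises from the character that the first step deposits on $z_2\wedge z_3$, not from a leftover scaling direction. This is exactly \Cref{lem:Fourier_of_certain_form_for_trilinear}.
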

\begin{proof}
    First, note that
    \begin{align} \label{tauf}
        \tau(f) =\int_{F^3} f_1(x_1,1) f_2(x_2,1) f_3(x_3,1)
        \frac{\chi_{1 \bar{2}\bar{3}}(x_2-x_3) \chi_{\bar{1} 2 \bar{3}}(x_3-x_1)
        \chi_{\bar{1} \bar{2}
    3}(x_1-x_2)}{\abs{(x_2-x_3)(x_3-x_1)(x_1-x_2)}^{\OneHalf}}\dd x_1\dd x_2\dd x_3.
    \end{align}
    To see this we  set
    \begin{align}
        \label{eqn:h_as_x1x2x3}
        \tilde{h} =
        \begin{bmatrix}
             x_3 b & b \\
             x_1 d & d
        \end{bmatrix}
    \end{align}
    with $b=(x_1-x_2)$ and $d=(x_2-x_3)$.
    Note that $h$ maps $p_0$ to $(x_1, x_2, x_3)$; in fact,
    \begin{equation} 
        \tilde{p}_0 \tilde{h} =
        ((x_2-x_3)[x_1,1], -(x_3-x_1)[x_2,1], (x_1-x_2)[x_3,1])\in (\bbP^1)^3.
    \end{equation}
    Note also that 
    $\det{\tilde{h}}=(x_2-x_3)(x_3-x_1)(x_1-x_2)$. Using these coordinates
    for $h$  in \eqref{eqn:taudef}, the claim follows.

     Now, take $f \in \Pi'$ and $g \in \tilde{\Pi}' \defeq\pi_{\chi_1^{-1}} \boxtimes
    \pi_{\chi_2^{-1}} \boxtimes \pi_{\chi_3^{-1}}$. Then we have
    \begin{align} \label{tauftaug}
        \tau(f) \tau(g) &= \int_{h_1, h_2\in H'} \psi(\tilde{h}_1) \psi'(\tilde{h}_2)
        f(\tilde{p}_0 \tilde{h}_1) g(\tilde{p}_0 \tilde{h}_2)\dd^\bbP h_1\dd^\bbP h_2\\
        &= \int_{x, h_2\in H'}   (xf)(\tilde{p}_0 \tilde{h}_2) g(\tilde{p}_0 \tilde{h}_2)
        \abs{\det(\tilde{h}_2)}^3\dd^\bbP x\dd^\bbP h_2,
     \end{align}
    where we substituted $x=\tilde{h}_2^{-1} \tilde{h}_1$,
    $\psi = (\chi_{123}^{-1} \abs{\blank}^{3/2})\circ\det$
    and $\psi' = (\chi_{123} \abs{\blank}^{3/2})\circ\det$.
    The product $\phi \defeq (xf) \cdot g$ is a function of homogeneous degree $-2$
    and for such a function we have
    \begin{align}\label{H'P1}
        \int_{H'} \phi(\tilde{p}_0 \tilde{h}) \abs{\det(\tilde{h})}^3\dd^\bbP h
        = \int_{(\mathbb{P}^1)^3} \phi.
     \end{align}
    To check \eqref{H'P1} we express both as integrals over $F^3$. 
    The right hand side equals $\int_{F^3} \phi(x_1, x_2, x_3)\dd x_1 \dd x_2 \dd x_3$
    by \eqref{Pnexplicit}.
    On the left hand
    side,   we parametrize elements $\tilde{h}$ by means of $(x_1, x_2, x_3)$ as
    in \eqref{eqn:h_as_x1x2x3}. The function $p_0 h\mapsto \phi(\tilde{p}_0
    \tilde{h})\abs{\det(\tilde{h})}^3$, considered as a function on the orbit
    $O$, therefore assigns to $(x_1, x_2, x_3) \in F^3 \subset
    \mathbb{P}^1(F)^3$ the value $\Delta\phi(x_1, x_2, x_3)$, where $\Delta=
    \abs{(x_2-x_3) (x_3-x_1)(x_1-x_2)}$, and therefore its integral against the
    measure $\Delta^{-1}\dd x_1 \dd x_2 \dd x_3$ (cf. \eqref{eqn:HaarP13})
    coincides with the right hand side of \eqref{H'P1}. That concludes the proof
    of \eqref{H'P1}.

    Combining \eqref{tauftaug} with \eqref{H'P1}
    we find that $\tau(f) \tau(g) $ coincides with the integral of the pairing
    \begin{align}
        \int_{x\in H'}\dd^\bbP x\int_{(\POne)^3}(xf)\cdot g
        =\nu_\bbP\int_{x\in H'}\dd x\int_{(\POne)^3}(xf)\cdot g.
    \end{align}
    Now, recall that the normalized pairing on $\pi_{\chi}$ is defined by
    $\Pair{f}{g} = \int_{\POne} f \cdot I(g)$;
    we therefore obtain for $f,g \in \Pi'$ the equality
    \begin{align}
        \taunorm(f)\taunorm(g)= \int_{H'}\dd x\int_{(\POne)^3}{xf} \cdot {I(g)}
        = \tau(f) \tau(I(g)),
    \end{align}
    where $I$ now connotes the product of intertwining operators in each of the three tensor variables.
    It remains now to verify that
    $$ \tau(I(g)) = \alpha \tau(g).$$
    To prove this, we may suppose that \(g=\otimes_i\Phi_{i,\chi_i}\) for some
    $\Phi_i\in\CcInf(F^2)$ (cf.~\eqref{eqn:Phichi}), and then from \eqref{tauf}
    \begin{align}
        \tau( g)
        = \int_{(z_1,z_2,z_3)\in (F^2)^3} \Phi_1(z_1) \Phi_2(z_2) \Phi_3(z_3)
        \frac{\chi_{1\bar{2}\bar{3}}(z_2 \wedge z_3)\chi_{\bar{1}2\bar{3}}(z_3 \wedge
        z_1)\chi_{\bar{1}\bar{2}3}(z_1 \wedge z_2)}{\abs{(z_2 \wedge z_3)(z_3
    \wedge z_1)(z_1 \wedge z_2)}^{\OneHalf}}.
    \end{align}
    Using \Cref{lem:IvsFourier}, we have
    \begin{align}
        \tau(I(g))
        &=
        \prod_{i} \gamma(1, \chi_i^2)\\
        &\mathbin{\phantom{=}}\x
        \int_{(z_1,z_2,z_3)} \FT_2(\Phi_1\otimes\Phi_2\otimes\Phi_3)
        \frac{\chi_{\bar{1}23}(z_2 \wedge z_3)\chi_{1\bar{2}3}(z_3 \wedge
        z_1)\chi_{12\bar{3}}(z_1\wedge z_2)}{\abs{(z_2\wedge z_3)(z_3\wedge z_1)(z_1\wedge z_2)}^{\OneHalf}},
        \label{eqn:tau_I_g_as_Fourier}
    \end{align}
    where we still use \(\FT_2\) to denote the Fourier transform on \((F^2)^3\)
    induced by \(\FT_2\) in \eqref{eqn:2D_Fourier_skew_version}.
    For \(\Phi\in\CcInf((F^2)^3)\) and a tempered
    distribution \(\chi\) on \((F^2)^3\),  adjointness of the Fourier
    transform
    gives
    \begin{align}
        \int_{(F^2)^3}\Phi\cdot \bar{\chi}=\int_{(F^2)^3}\FT_2(\Phi)\cdot
        \overline{\FT_2(\chi)},
    \end{align}
    where \(\bar{\chi}\) means the complex conjugation.

    For additive character
    \(\Psi\), \(\overline{\Psi(x)}=\Psi(x)^{-1}=\Psi(-x)\), we have then
    \(\FT_2(\bar{\chi})(y)=\overline{\FT_2(\chi)}(-y)\). As a result,
    \eqref{eqn:tau_I_g_as_Fourier} is equal to
    \begin{align}
        \prod_{i} \gamma(1, \chi_i^2)
        \int_{(z_1,z_2,z_3)\in(F^2)^3}
        \Phi_1(z_1)\Phi_2(z_2)\Phi_3(z_3)\cdot
        \FT_2(\chi)(-z_1,-z_2,-z_3),
    \end{align}
    where \(\chi\) is the distribution given by
    \begin{align}
        (z_1,z_2,z_3)\longmapsto \frac{\chi_{\bar{1}23}(z_2 \wedge
            z_3)\chi_{1\bar{2}3}(z_3 \wedge
        z_1)\chi_{12\bar{3}}(z_1\wedge z_2)}{\abs{(z_2\wedge z_3)(z_3\wedge
    z_1)(z_1\wedge z_2)}^{\OneHalf}}.
    \end{align}
    The result then follows from
    \Cref{lem:Fourier_of_certain_form_for_trilinear} below.
\end{proof}

\begin{lemma}
    \label[lemma]{lem:Fourier_of_certain_form_for_trilinear}
    Let $\alpha_1, \alpha_2, \alpha_3$ be characters of $F^{\times}$ and
    \(\alpha_{123}=\alpha_1\alpha_2\alpha_3\). Then the distribution on
    $(F^2)^3$ given by $\alpha_1(z_2 \wedge z_3) \alpha_2(z_3 \wedge z_1)
    \alpha_3(z_1 \wedge z_2)$ has Fourier transform
    \begin{align}
        \bigl(\gamma(1, \alpha_{123}\abs{\blank}) \prod_i \gamma(1, \alpha_i)\bigr)^{-1}
        \times
        \frac{\alpha_1^{-1}(z_2 \wedge z_3)\alpha_2^{-1}(z_3 \wedge
        z_1)\alpha_3^{-1}(z_1 \wedge z_2)}{\abs{(z_2 \wedge z_3)(z_3 \wedge z_1)(z_1 \wedge z_2)}}.
    \end{align}
\end{lemma}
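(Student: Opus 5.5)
The strategy is to pin down the Fourier transform by its equivariance up to a scalar, and then compute the scalar by carrying out the Fourier transform in stages, one vector variable at a time.

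Write $T(z_1,z_2,z_3)=\alpha_1(z_2\wedge z_3)\alpha_2(z_3\wedge z_1)\alpha_3(z_1\wedge z_2)$, and let $T^\vee$ denote the displayed candidate without its constant.

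\emph{Step 1: equivariance.} First work with $\alpha_i$ in an open region of quasi-characters where $T$ is locally integrable and tempered, and extend meromorphically at the end. Since $\FT_2$ is built from the $\SL_2$-invariant pairing $z\wedge z'$, it commutes with the diagonal action of $\SL_2(F)$ on $(F^2)^3$. For the scalings $z_i\mapsto t_iz_i$, $t_i\in F^\x$, we have the rule $\FT_2(\Phi(t\,\cdot))=\abs{t}^{-2}(\FT_2\Phi)(t^{-1}\,\cdot)$. The distribution $T$ transforms under $z_1\mapsto t_1z_1$ by $(\alpha_2\alpha_3)(t_1)$, and similarly for $t_2,t_3$. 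Hence $\FT_2 T$ is $\SL_2$-invariant and transforms by $(\alpha_2\alpha_3)^{-1}(t_1)\abs{t_1}^{-2}$ under $z_1\mapsto t_1 z_1$, and similarly for the other variables. These are exactly the transformation laws of $T^\vee$.

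The group $\SL_2\times\Gm^3$ acts on the locus of pairwise independent triples with an open orbit. Indeed, this group has dimension $6=\dim (F^2)^3$, and the stabilizer of a generic point is finite. On this open set the space of distributions with the above equivariance is at most one-dimensional, at least for generic $\alpha$; the finite stabilizer only imposes a consistency condition, which $T^\vee$ visibly satisfies. The complement of the open set is a union of lower-dimensional orbits, and by the usual homogeneity argument no equivariant distribution is supported there for generic $\alpha$. Therefore $\FT_2T=c(\alpha)\,T^\vee$ for a scalar $c(\alpha)$. Both sides are meromorphic in $\alpha$, so the identity extends to all $\alpha$ once $c$ is known.

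\emph{Step 2: the constant.} To determine $c$, I would Fourier transform in $z_3$ first, with $z_1,z_2$ held fixed and linearly independent. The coordinates $u=z_2\wedge z_3$ and $v=z_3\wedge z_1$ are linear coordinates on $F^2$ with Jacobian $\abs{z_1\wedge z_2}^{-1}$. In these coordinates the skew pairing $z_3\wedge w_3$ becomes a nondegenerate bilinear form in $(u,v)$ against the dual coordinates of $w_3$. The $z_3$-transform therefore factors into two one-variable transforms, and \eqref{Tate2gamma} evaluates them: this produces $\gamma(1,\alpha_1)^{-1}\gamma(1,\alpha_2)^{-1}$ together with $\alpha_1^{-1}\abs{\blank}^{-1}$ and $\alpha_2^{-1}\abs{\blank}^{-1}$ of the appropriate wedges of $w_3$ with $z_1,z_2$, and a leftover power of $z_1\wedge z_2$.

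Rather than transforming the remaining variables fully, I would then pair both sides of the identity $\FT_2T=c\,T^\vee$ against a single convenient test function. In the nonarchimedean case a natural choice is the characteristic function of $\cO^6$, or a product of such functions shifted along the open orbit; in the archimedean case one would use a Gaussian, or a Tate-type test function $\Phi$ as in \eqref{Tate}. After the partial transform, the integrand depends on $z_1,z_2$ essentially only through $z_1\wedge z_2$, with total character $\alpha_{123}\abs{\blank}$ once the $\alpha_1,\alpha_2$ factors coming from the $z_3$-step are collected. The remaining integral thus reduces to a Tate integral for $\alpha_{123}\abs{\blank}$, contributing $\gamma(1,\alpha_{123}\abs{\blank})^{-1}$. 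The last factor $\gamma(1,\alpha_3)^{-1}$ should come from the Fourier transform in the direction transverse to the $\SL_2$-orbit of $(z_1,z_2)$.

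\emph{Main obstacle.} I expect the bookkeeping in Step 2 to be the real difficulty: recombining the partially transformed expression into the symmetric form $T^\vee$, and tracking exactly where $\gamma(1,\alpha_3)$ and $\gamma(1,\alpha_{123}\abs{\blank})$ arise, including the signs $\alpha_i(-1)$ coming from \eqref{CheckChiEquation} and the skew pairing. As a consistency check I would verify that the $\alpha_i(-1)$ factors cancel, and that applying the formula twice gives the identity, using \eqref{GammaSymmetry} and $\FT_2^2=\Id$.
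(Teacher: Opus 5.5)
Your proposal has two genuine gaps.

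\textbf{Step 1: the uniqueness claim is false over $F$.} The dimension count only shows that the algebraic group $\SL_2\times\mathbb{G}_{\mathrm{m}}^3$ has a Zariski-open orbit. On $F$-points, the group $\SL_2(F)\times(F^\times)^3$ is \emph{not} transitive on pairwise independent triples. Indeed, the square class of $D=(z_1\wedge z_2)(z_2\wedge z_3)(z_3\wedge z_1)$ is invariant: $\SL_2$ fixes $D$, and $z_i\mapsto t_iz_i$ multiplies it by $t_i^2$. Every class in $F^\times/F^{\times 2}$ occurs, since on $(e_1,e_2,e_1+ce_2)$ one gets $D=c$. So your equivariance argument only gives $\FT T=\sum_\epsilon c_\epsilon\, T^\vee\cdot\mathbf{1}_{\{D\in\epsilon\}}$, with one constant per square class. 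Pairing against a single test function cannot separate these constants.

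This part is repairable. $\FT$ is also equivariant for $\GL_2(F)$ up to a rescaling, and a direct substitution gives $(\FT T)(w_1g,w_2g,w_3g)=\alpha_{123}^{-1}(\det g)\abs{\det g}^{-3}(\FT T)(w)$. This is the same law that $T^\vee$ satisfies. The group $\GL_2(F)\times(F^\times)^3$ is transitive on the open locus, and the combined character is trivial on its stabilizer $\{(sI,s^{-1},s^{-1},s^{-1})\}$. So the space of equivariant distributions there really is one-dimensional.

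\textbf{Step 2: the constant is never actually determined.} This is the more serious gap. Your $z_3$-transform is correct; it is the analogue of the paper's first step. But what it leaves is
$\gamma(1,\alpha_1)^{-1}\gamma(1,\alpha_2)^{-1}(\alpha_1^{-1}\abs{\blank}^{-1})(w_3\wedge z_1)\,(\alpha_2^{-1}\abs{\blank}^{-1})(w_3\wedge z_2)\,(\alpha_{123}\abs{\blank})(z_1\wedge z_2)$.
This still depends on $z_1,z_2$ through their wedges with $w_3$. It does not reduce to a Tate integral in $z_1\wedge z_2$ alone, and attributing $\gamma(1,\alpha_3)^{-1}$ to a ``transverse direction'' is a guess rather than a computation.

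The missing idea is simply to iterate the same change of variables, which is exactly the paper's proof.
\begin{itemize}
\item Transform in $z_2$ using the coordinates $(w_3\wedge z_2,\,z_1\wedge z_2)$. This produces $\gamma(1,\alpha_2^{-1}\abs{\blank}^{-1})^{-1}\gamma(1,\alpha_{123}\abs{\blank})^{-1}$ and leaves, up to sign, $\alpha_2(w_2\wedge z_1)\,\alpha_3(w_3\wedge z_1)\,(\alpha_{123}^{-1}\abs{\blank}^{-2})(w_2\wedge w_3)$.
\item Then transform in $z_1$ using $(w_2\wedge z_1,\,w_3\wedge z_1)$. This produces $\gamma(1,\alpha_2)^{-1}\gamma(1,\alpha_3)^{-1}$ and exactly the shape of $T^\vee$.
\end{itemize}
So $\gamma(1,\alpha_2)$ enters twice. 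One copy is absorbed by $\gamma(1,\alpha_2)\gamma(1,\alpha_2^{-1}\abs{\blank}^{-1})=\alpha_2(-1)$, which is \eqref{GammaSymmetry}, and the leftover factors $\alpha_2(-1)$ cancel. Because this computes $\FT T$ outright rather than just a constant, it also makes your Step 1 and the test-function pairing unnecessary.
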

\begin{proof}
    We will proceed formally, leaving the routine analysis details of dealing
    with distributions (in contrast to functions) to the reader. 
    The basic strategy is simply to carry out the Fourier transform first in the $z_1$ variable,
    then $z_2$, then $z_3$; each of these will be straightforward after a change of coordinates. 
    
    Write \(z_i=(x_i,y_i)\) and \(\dd z_i=\dd x_i\wedge\dd y_i\). Using the
    equalities
    \begin{align}
        \bigl((z_2 \wedge z_3) \dd z_1\bigr)\wedge\dd z_2\wedge\dd z_3
        &= \bigl(\dd(z_2 \wedge z_1)\wedge \dd(z_3 \wedge z_1)\bigr)\wedge\dd
        z_2\wedge\dd z_3,\\
        z_1
        &= \frac{(z_1 \wedge z_3) z_2 - (z_1 \wedge z_2) z_3}{z_2 \wedge z_3},
    \end{align}
    we carry out the first change of coordinates, fixing \(z_2\) and \(z_3\) but replacing
    $z_1$ by
    $$z_1'=(x_1',y_1'), \ \ x_1'=z_2\wedge z_1, y_1'=z_3\wedge z_1.$$

    Perform Fourier transformation with respect to \(z_1\) with \(k_1\) being
    the dual coordinate of \(z_1\), in other words, we compute the integral
    \begin{align}
        &\mathbin{\phantom{=}}\int_{F^2}\Psi(k_1\wedge z_1)\alpha_1(z_2 \wedge z_3) \alpha_2(z_3
        \wedge z_1)\alpha_3(z_1 \wedge z_2)\dd z_1\\
        &=
        \int_{F^2}\Psi\Bigl(\frac{k_1\wedge z_2}{z_2\wedge
        z_3}(-y_1')+\frac{k_1\wedge z_3}{z_2\wedge z_3}x_1'\Bigr)
        \alpha_1(z_2 \wedge z_3) \alpha_2(y_1')\alpha_3(-x_1')\frac{\dd
        z_1'}{\abs{z_2\wedge z_3}}\\
        &=
        \frac{\alpha_1(z_2 \wedge z_3)}{\abs{z_2\wedge z_3}}
        \int_{F}\Psi\Bigl(\frac{k_1\wedge z_2}{z_2\wedge z_3}(-y_1')\Bigr)
        \alpha_2(y_1')\dd y_1'
        \int_F\Psi\Bigl(\frac{k_1\wedge z_3}{z_2\wedge z_3}x_1'\Bigr)
        \alpha_3(-x_1')\dd x_1'\\
        &=\alpha_{23}(-1)\frac{\alpha_1(z_2 \wedge z_3)}{\abs{z_2 \wedge z_3}}
        \FT(\alpha_{3})\Bigl(\frac{k_1 \wedge z_3}{z_2 \wedge z_3}\Bigr)
        \FT(\alpha_{2})\Bigl(\frac{k_1 \wedge z_2}{z_2 \wedge z_3}\Bigr) \\
        &\stackrel{\eqref{CheckChiEquation}}{=}
        \bigl(\gamma(1,\alpha_2)\gamma(1,\alpha_3)\bigr)^{-1}
        \alpha_{\bar{3};-1}(k_1\wedge z_3)
        \alpha_{\bar{2};-1}(k_1\wedge z_2)\alpha_{123;1}(z_2\wedge z_3),
    \end{align}
    where we use a semicolon to separate the indexing subscripts (\(1,2,3, 123\), 
    etc.) from the twisting subscripts (\(\pm 1\)).

    Similarly, we write
    \begin{align}
        z_2 = \frac{(z_2 \wedge z_3)k_1-(z_2 \wedge k_1)z_3}{k_1 \wedge z_3},
    \end{align}
    and transform with respect to \(z_2\).
    We get
    \begin{align}
        &\mathbin{\phantom{=}}\int_{F^2}\Psi(k_2\wedge z_2)\alpha_{\bar{3};-1}(k_1\wedge z_3)
        \alpha_{\bar{2};-1}(k_1\wedge z_2)\alpha_{123;1}(z_2\wedge z_3)\dd z_2\\
        &=
        \frac{\alpha_{\bar{3};-1}(k_1 \wedge z_3)}{\abs{k_1\wedge z_3}}
        \FT(\alpha_{123;1})\Bigl(\frac{k_2 \wedge k_1}{k_1 \wedge z_3}\Bigr)
        \FT(\alpha_{\bar{2};-1})\Bigl(\frac{k_2 \wedge z_3}{k_1 \wedge z_3}\Bigr) \\
        &=
        \alpha_{13}(-1)\bigl(\gamma(1,\alpha_{123;1})\gamma(1,\alpha_{\bar{2};-1})\bigr)^{-1}
        \alpha_{\bar{1}\bar{2}\bar{3};-2}(k_2 \wedge k_1)
        \alpha_2(k_2 \wedge z_3) \alpha_1(k_1 \wedge z_3).
    \end{align}

    Finally, we now write
    \begin{align}
        z_3 = \frac{(z_3\wedge k_2)k_1-(z_3\wedge k_1)k_2}{k_1\wedge k_2},
    \end{align}
    and transform with respect to \(z_3\). We get
    \begin{align}
        &\mathbin{\phantom{=}}\int_{F^2}\Psi(k_3\wedge z_3)
        \alpha_{\bar{1}\bar{2}\bar{3};-2}(k_2 \wedge k_1)
        \alpha_2(k_2 \wedge z_3) \alpha_1(k_1 \wedge z_3)\\
        &=
        \alpha_2(-1)\frac{\alpha_{\bar{1}\bar{2}\bar{3};-2}(k_2\wedge k_1)}{\abs{k_1\wedge
        k_2}}
        \FT(\alpha_2)\Bigl(\frac{k_3\wedge k_1}{k_1\wedge k_2}\Bigr)
        \FT(\alpha_1)\Bigl(\frac{k_3\wedge k_2}{k_1\wedge k_2}\Bigr)\\
        &=\alpha_{23}(-1)\bigl(\gamma(1,\alpha_2)\gamma(1,\alpha_1)\bigr)^{-1}
        \alpha_{\bar{2};-1}(k_3\wedge k_1)\alpha_{\bar{1};-1}(k_3\wedge
        k_2)\alpha_{\bar{3};-1}(k_1\wedge k_2)\\
        &=\alpha_{123}(-1)\bigl(\gamma(1,\alpha_2)\gamma(1,\alpha_1)\bigr)^{-1}
        \alpha_{\bar{1};-1}(k_2\wedge k_3)
        \alpha_{\bar{2};-1}(k_3\wedge k_1)
        \alpha_{\bar{3};-1}(k_1\wedge k_2).
    \end{align}

    Combining three steps together, the Fourier transform on \((F^2)^3\) of the
    distribution $\alpha_1(z_2 \wedge z_3) \alpha_2(z_3 \wedge z_1) \alpha_3(z_1
    \wedge z_2)$ is then
    \begin{align}
        \alpha_{\bar{1};-1}(k_2\wedge k_3)
        \alpha_{\bar{2};-1}(k_3\wedge k_1)
        \alpha_{\bar{3};-1}(k_1\wedge k_2)
    \end{align}
    times
    \begin{align}
        \alpha_{2}(-1)
        \bigl(\gamma(1,\alpha_2)\gamma(1,\alpha_3)\gamma(1,\alpha_{123;1})
            \gamma(1,\alpha_{\bar{2};-1})
        \gamma(1,\alpha_1)\gamma(1,\alpha_2)\bigr)^{-1}.
    \end{align}
    Notice that
    \(\gamma(1,\alpha_2)\gamma(1,\alpha_{\bar{2};-1})=\alpha_2(-1)\), and so we
    see that the Fourier transform of \(\alpha_1(z_2 \wedge z_3) \alpha_2(z_3 \wedge z_1) \alpha_3(z_1 \wedge z_2)\)
    is precisely
    \begin{align}
        \bigl(\gamma(1,\alpha_{123;1})\prod_i\gamma(1,\alpha_i)\bigr)^{-1}
        \alpha_{\bar{1};-1} (k_2 \wedge k_3)
        \alpha_{\bar{2};-1}(k_3 \wedge k_1)
        \alpha_{\bar{3};-1}(k_1 \wedge k_2),
    \end{align}
    as desired.
\end{proof}

\subsection{Definition of $\phi^D, \Lambda^D$ and $\phi^H, \Lambda^H$}
\label{sub:phi_D_and_H_def}

We now follow the notation of \Cref{sub:outlinepage}, and
the reader may find the outline there
helpful before reading this and the next subsections.

We define the naive functional $\phi^D$ on $\Pi$
 so that its value on the
vector \(f=\otimes_{i\neq j}f_{ij}\) 
is given by the formula:
\begin{align}
    \phi^D(f)= 
     \prod_{ij\in\tetraE}\int_{\bbP^1(F)}
         f_{ij}(z_{ij})f_{ji}(z_{ij}) \dd z_{ij},
\end{align}
where \(z_{ij}=(x_{ij},y_{ij})\) is any representative of the homogeneous
coordinate \([x_{ij},y_{ij}]\) on \(\bbP^1\),
and denote $\dd z_{ij}=x_{ij}\dd y_{ij}-y_{ij}\dd x_{ij}$ (note that this
notation is different from the previous section where \(\dd z=\dd x\wedge \dd y\)).
By \Cref{lem:twobilinearpairings} we obtain
the formula for the normalized pairing $\Lambda^D$:
\begin{align}
    \label{eqn:phiD_and_lambdaD}
    \Lambda^D = \phi^D \cdot \prod_{ij \in \tetraO}
    \gamma(1,\chi_{ij}^2)^{\OneHalf}.
\end{align}

Next, we 
construct an $H$-invariant functional $\phi^H$,
annd relate it to the normalized functional $\Lambda^H$.
The
\(H\)-orbits on \(B\backslash G = (\mathbb{P}^1)^{\tetraO}\) can be described by looking at the triads
pointing outwards (or equivalently, inwards) from a given vertex. 
There is a
unique open dense \(\RO\)-orbit 
on the product of three copies of $\bbP^1$ indexed by each triad; we denote
this open orbit corresponding to vertex \(i\) by
\(O_i\). The Haar measure on \(\RO\) induces an invariant measure on \(O_i\), which has been described in \eqref{eqn:HaarP13}.

Fix a base point  $p_0=(p_{ij})_{ij\in\tetraO}$
in \(\prod_i O_i\subset (\bbP^1)^{\tetraO}\).
and a lifting $\tilde{p}_0=(\tilde{p}_{ij}) \in (\mathbb{A}^2)^{\tetraO}$. To
interface with the
computation in \Cref{lem:trilinear_lemma}, we make our choices
 as follows:
\begin{align}
    \tilde{p}_{\V{12}}=\tilde{p}_{\V{21}}=\tilde{p}_{\V{34}}=\tilde{p}_{\V{43}}=(0,1),\\
    \tilde{p}_{\V{13}}=\tilde{p}_{\V{31}}=\tilde{p}_{\V{24}}=\tilde{p}_{\V{42}}=(1,1),\\
    \tilde{p}_{\V{14}}=\tilde{p}_{\V{41}}=\tilde{p}_{\V{23}}=\tilde{p}_{\V{32}}=(1,0).
\end{align}
In words, nonadjacent pair of edges (i.e., edges not sharing any vertex)
regardless of orientation
(e.g. $\V{12},\V{21},\V{34},\V{43}$) are all assigned the
same point; these points are $0,1,\infty$
and their ``standard'' lifts to \(F^2\).

With this choice of \(\tilde{p}_0\), we define  $\phi^H$ as the product
of the previously defined $\tau$-functionals from \eqref{eqn:taudef} on the
various triads of representations; symbolically, 
\begin{align} \label{phiHformula}
    \phi^H(f)=\prod_i\int_{H}f_{ij}(\tilde{p}_{ij} \tilde{g}_i)
    \psi_i(\tilde{g}_i)\dd^\bbP g_i
\end{align}
where $\psi_i = (\chi_{ij} \chi_{ik} \chi_{il})^{-1} \abs{\blank}^{3/2}$.
By \Cref{lem:trilinear_lemma}, we have also
\begin{align}
    \Lambda^H
    &= \nu_\bbP^{-2} \phi^H \x \Biggl( \prod_{ij\in\tetraO} \gamma(1, \chi_{ij}^2) \cdot
         \prod_{i \in \tetraV}
         \gamma\Bigl(\frac{1}{2}, \chi_{ij}^{-1}\chi_{ik}^{-1}\chi_{il}^{-1}\Bigr)
         \gamma\Bigl(\frac{1}{2}, \chi_{ij}\chi_{ik}^{-1}\chi_{il}^{-1}\Bigr)\\
    &\qquad\qquad\qquad\qquad\qquad\gamma\Bigl(\frac{1}{2}, \chi_{ij}^{-1}\chi_{ik}\chi_{il}^{-1}\Bigr)
         \gamma\Bigl(\frac{1}{2}, \chi_{ij}^{-1}\chi_{ik}^{-1}\chi_{il}\Bigr)
     \Biggr)^{\OneHalf}\\
    &= \nu_\bbP^{-2}\phi^H \x \Biggl( \prod_{ij\in\tetraO} \gamma(1, \chi_{ij}^2) \cdot
        \gamma\Bigl(\OneHalf, \Sigma^-\Bigr)
    \Biggr)^{\OneHalf}.
    \label{eqn:phiH_and_lambdaH}
\end{align}

\subsection{Averaging}
\label{sub:averaging}
Again, the reader may refer to the outline in \Cref{sub:outlinepage}.
First, the $H$-average of $\phi^D$, denoted by $(\phi^H)'$, is given by
\begin{align}
    (\phi^H)'(f)
    &=\int_{[g_i]\in \RO\backslash H}
    \Biggl[\prod_{ij\in\tetraE}\int_{\bbP^1}
        \underbrace{
        \chi_{ij-}^{-1}(\tilde{g}_{i})
        \chi_{ij-}^{-1}(\tilde{g}_{j})
    f_{ij}(z_{ij}\tilde{g}_{i})f_{ji}(z_{ij}\tilde{g}_{j})}_{P_{ij}}
    \dd z_{ij}\Biggr] \dd [g_i],
\end{align}
where we have written $\chi_{ij-}^{-1}(\tilde{g})$ as an abbreviation of
\(\chi_{ij-}^{-1}(\det\tilde{g})\);
the function $P_{ij}$ is $(-2)$-homogeneous in coordinate \(z_{ij}\).
In other words $(\phi^H)'$ is  the integral of the $15$-form
\begin{align} \label{omega1formula}
    \omega_1= \frac{\prod_{ij\in\tetraE} P_{ij}\cdot\dd z_{ij}\prod_{i\in\tetraV}\dd g_i}{\dd g},
\end{align}
over the space 
\begin{align}
    \label{eqn:RO_quotient_15_form}
    \RO\backslash \bigl((\bbP^1)^{\tetraE}\x H\bigr),
\end{align}
where \(\RO\) acts on each \(\bbP^1\)-factor by \(h\colon [x,y]\mapsto [x,y]h^{-1}\),
and where  \(\dd g\) corresponds to the Haar measure on the diagonal \(\RO\).

On the other hand the integral \eqref{phiHformula} $\phi^H$ can be rewritten as an integral
of a $12$-form
\begin{align} \label{12form}
    \omega_2 = \prod_{ij\in\tetraO}Q_{ij}\cdot\prod_{i\in\tetraV}\dd^\bbP g_i
    = \nu_\bbP^{4}\prod_{ij\in\tetraO}Q_{ij}\cdot\prod_{i\in\tetraV}\dd g_i
\end{align}
over \(H\), where
$Q_{ij}=f_{ij}(\tilde{p}_{ij}\tilde{g}_i)\chi_{ij-}^{-1}(\tilde{g}_{i})$ and
\(\nu_\bbP\) is as in \eqref{eqn:Haar_vs_P13_measures}.

To relate these, let us first relate more carefully the spaces over which we are integrating.
There is a natural map
\begin{align} \label{omega2formula}
    A\colon (\bbP^1)^{\tetraE}\x H\longto (\bbP^1)^{\tetraO}, \\
     (z_{ij}=z_{ji}, g_i) \longmapsto (z_{ij} g_i).
\end{align}
It is $\RO$-invariant where $\RO$ is acting on $(\bbP^1)^\tetraE\x H$ as in
\eqref{eqn:RO_quotient_15_form}. We examine its restriction
over the open subset $O = \prod_i O_i$, and descend it to the quotient of the domain by $\RO$:
\begin{align}
    \bar{A}\colon\RO\backslash A^{-1}(O)\longto O \simeq H,
\end{align}
where the final identification uses the orbit map for $p_0$, i.e., the inverse of $h \mapsto p_0 h$.
Then $\bar{A}$ is submersive; $(\phi^H)'$ is given by an integral over the domain of $\bar{A}$, whereas
$\phi^H$ is given by an integral over its range:
\begin{equation} \label{phiHforms} (\phi^H)' = \int_{\RO \backslash A^{-1}(O)} \omega_1 \mbox{ and } \phi^H = \int_{H} \omega_2.\end{equation}
Note that the forms $\omega_1, \omega_2$ are not quite algebraic because they involve the $f_{ij}$
that are simply smooth functions. It is more convenient to relate them in a more algebraic version;

\begin{lemma}     \label[lemma]{lem:edge_integrand_with_order_convention}
    Define
    \begin{align}
        \tilde{\Omega} =  \nu_\bbP^{-4} \prod_{ij \in \tetraE} dz_{ij} \cdot \prod_{ij\in\tetraO}
        \chi_{ij-}\bigl((z_{ij}\wedge z_{ik})(z_{il}\wedge z_{ij})(z_{ik}\wedge z_{il})^{-1}\bigr),
    \end{align}
    which we consider  as a differential $6$-form on $(\bbP^1)^\tetraE$, where
    \begin{enumerate}
                \item In the product over $ij \in \tetraV$, we always require \(\Set{i,j,k,l}=\tetraV\), and the ordering of \(k\) and \(l\) is
            \emph{opposite} for any pair of \(\chi_{ij}\) and \(\chi_{ji}\);
        \item we choose for $ij\in\tetraE$ coordinates  \((x_{ij},y_{ij})\) in
            \((\bbA^2)^{\tetraE}\) with $z_{ij}$ the corresponding points in
            $(\bbP^1)^{\tetraE}$, and write
            \begin{align}
                z_{ij}\wedge z_{ik}\defeq \begin{vmatrix}
                    x_{ij} & y_{ij}\\
                    x_{ik} & y_{ik}
                \end{vmatrix},\quad
                \dd z_{ij}=x_{ij}\dd y_{ij}-y_{ij}\dd x_{ij}.
            \end{align}
    \end{enumerate}

    Then we have an equality of differential (regular) $18$-forms on \(A^{-1}(O)\):
    \begin{align}
        \tilde{\omega}_1^{\mathrm{alg}}
        = A^*(\omega_2^{\mathrm{alg}}) \wedge \tilde{\Omega},
    \end{align}
    where  $\tilde{\omega}_1^{\mathrm{alg}}$ is the numerator of \eqref{omega1formula}, but replacing the $f_{ij}$
            by an {\em algebraic} function $f_{ij}^{\mathrm{alg}}$ defined on a
            Zariski-open subset, with the same degree of homogeneity,  and
            making the same substitution in the definition of $\omega_2^{\mathrm{alg}}$.
\end{lemma}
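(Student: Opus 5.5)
The plan is to verify the identity pointwise on $A^{-1}(O)$ by making the map $A$ completely explicit, exactly as in the proof of \Cref{lem:trilinear_lemma}. Both sides are $18$-forms on $(\bbP^1)^{\tetraE}\x H$. Both transform in the same way under the diagonal $\RO$-action of \eqref{eqn:RO_quotient_15_form}, and under rescaling of the lifts $(x_{ij},y_{ij})$ and of the lifts $\tilde g_i$. So it suffices to compare them in the chosen coordinates after carefully tracking scalars.

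\emph{Step 1: explicit inverse of the orbit map.} Fix a vertex $i$ with outgoing edges $ij,ik,il$. Set $w_{ij}=z_{ij}\tilde g_i$, and similarly $w_{ik}$, $w_{il}$. I will write down, as in \eqref{eqn:h_as_x1x2x3}, an explicit lift $\tilde h_i\in\GL_2$ with $p_0h_i=(w_{ij},w_{ik},w_{il})$. That computation shows $\tilde p_{ij}\tilde h_i=c_{ij}\,w_{ij}$, where $c_{ij}=\pm(w_{ik}\wedge w_{il})$ is a wedge of the other two points; here the choice $p_0=(0,1,\infty)$ of \Cref{sub:phi_D_and_H_def} is used. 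It also gives $\det\tilde h_i=\pm\prod(\text{pairwise wedges of }w)$. Since $w\wedge w'=\det(\tilde g_i)\,(z\wedge z')$, every scalar is a product of $\det\tilde g_i$ and the wedges $z_{ij}\wedge z_{ik}$.

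\emph{Step 2: comparing the integrands.} In $A^*\omega_2^{\mathrm{alg}}$ substitute $Q_{ij}=f^{\mathrm{alg}}_{ij}(c_{ij}w_{ij})\chi_{ij-}^{-1}(\det\tilde h_i)$. Then use $f(\lambda z)=\chi^2(\lambda)\abs{\lambda}^{-1}f(z)$ to pull out $c_{ij}$. The remaining $f$-values are exactly those appearing in the $P_{ij}$ of \eqref{omega1formula}. Collecting the leftover characters, the powers of $\det\tilde g_i$ should cancel against the factors $\chi_{ij-}^{-1}(\tilde g_i)\chi_{ij-}^{-1}(\tilde g_j)$ in $P_{ij}$. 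What remains should be, for each oriented edge $ij$, the character $\chi_{ij-}$ evaluated on $(z_{ij}\wedge z_{ik})(z_{il}\wedge z_{ij})(z_{ik}\wedge z_{il})^{-1}$. The exponent $-\OneHalf$ in $\chi_{-}$ must match the absolute-value bookkeeping; this is where the $\abs{\blank}^{-1}$ from homogeneity and the $\abs{\blank}^{3/2}$ in $\psi_i$ interact.

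\emph{Step 3: the volume forms.} On each $O_i$ the form $\dd^\bbP h_i$ is given by \eqref{eqn:HaarP13}. Its pullback under $(z,g)\mapsto w$ is $\prod\dd w/\prod(\text{wedges})$, and $\dd w_{ij}=\det(\tilde g_i)\,\dd z_{ij}+(\text{terms along }\dd g_i)$. Wedging with $\prod_{ij\in\tetraE}\dd z_{ij}$ and dividing by $\dd g$ for the diagonal quotient should reproduce $\prod\dd z_{ij}\prod\dd g_i$, times $\prod_{ij\in\tetraE}\dd z_{ij}$, divided by the triad wedges. The triad wedge denominators are absorbed into the absolute-value part of the $\chi_{ij-}$ factors. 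I will check this Jacobian claim infinitesimally at the base point, using $H$-equivariance to move there. Finally, $\dd^\bbP=\nu_\bbP\dd g$ on each of the four copies produces $\nu_\bbP^4$ in \eqref{12form}, which is cancelled by the $\nu_\bbP^{-4}$ in $\tilde\Omega$.

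\emph{Main obstacle.} The delicate point is signs. The values $\chi_{ij}(-1)$ do not cancel for odd characters, and the ordering of $k,l$ inside each wedge matters. I expect convention (1), which takes opposite orderings for $\chi_{ij}$ and $\chi_{ji}$, to be exactly what is needed: the sign ambiguities $c_{ij}$ versus $c_{ji}$ from the two endpoint vertices then enter as $\chi_{ij}(\epsilon)\chi_{ji}(\epsilon)=1$. I would first verify this for a single edge, comparing vertices $i$ and $j$ directly, and then assemble.
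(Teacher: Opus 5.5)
Your Steps 1--2 and your sign discussion follow the paper's argument. The paper works on the fibre over $p_0$, where $\tilde h_i=1$. It defines $\lambda_{ij}$ by $\lambda_{ij}z_{ij}\tilde g_i=\tilde p_{ij}$ and reads off $\det\tilde g_i^{-1}=\pm\lambda_{ij}^2\,(z_{ij}\wedge z_{ik})(z_{il}\wedge z_{ij})(z_{ik}\wedge z_{il})^{-1}$. The sign it finds is $\chi_{ij}(\epsilon_{ijkl})$, where $\epsilon_{ijkl}$ is the sign of $(\V1,\V2,\V3,\V4)\mapsto(i,j,k,l)$. Since $\epsilon_{jilk}=\epsilon_{ijkl}$, convention (1) cancels these signs in pairs, exactly as you anticipate. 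Your version with a general $\tilde h_i$ and $c_{ij}=\pm(w_{ik}\wedge w_{il})$ is the same computation transported by equivariance. It does give $\det\tilde h_i/c_{ij}^2=\pm\det(\tilde g_i)\cdot(\text{triad ratio in }z)$, so the powers of $\det\tilde g_i$ cancel as you say.

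Step 3, however, is wrong as written and conflicts with Step 2: the volume forms contribute no wedge factors at all.
\begin{itemize}
\item Because you wedge with $\prod_{e\in\tetraE}\dd z_e$, the $\det(\tilde g_i)\,\dd z_{ij}$ part of $\dd w_{ij}$ is killed. It cannot ``reproduce $\prod\dd z_{ij}$'' a second time.
\item For fixed $z$, the map $g_i\mapsto h_i$ is left translation $g_i\mapsto k_i(z)g_i$, where $k_i(z)$ is defined by $p_0k_i(z)=(z_{ij},z_{ik},z_{il})$. Since $\PGL_2$ is unimodular, its invariant volume form is bi-invariant, so
\[
A^*\Bigl(\prod_i \dd^{\bbP}h_i\Bigr)\wedge\prod_{e}\dd z_e=\pm\prod_i\dd^{\bbP}g_i\wedge\prod_e\dd z_e
\]
holds exactly. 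The $w$-wedges in the denominator of \eqref{eqn:HaarP13} cancel against the numerator rather than surviving.
\item The full factor $\chi_{ij-}(\cdots)$, including its $\abs{\blank}^{-\OneHalf}$ part, already comes out of Step 2 via $\chi^2(c)\abs{c}^{-1}=\chi_-^2(c)$. If Step 3 really produced further triad-wedge denominators ``absorbed into'' $\chi_{ij-}$, you would count them twice and the identity would fail.
\item No division by $\dd g$ belongs here: the lemma is an identity of $18$-forms on $A^{-1}(O)$, before passing to the $\RO$-quotient.
\end{itemize}

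With Step 3 replaced by the invariance statement above, the lemma reduces exactly to the character identity of Step 2. That identity is what the paper's proof verifies: ``in view of \eqref{omega1formula} and \eqref{12form} what we must prove is \ldots''.
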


\subsection{Conclusion of the proof, assuming \Cref{lem:edge_integrand_with_order_convention}}
\label{sub:summary_for_pseries}
From the lemma we readily deduce that
 \begin{align}
    \int_{\RO\backslash A^{-1}(O)} \omega_1
    = \int_{O \simeq H} \omega_2 \cdot \biggl(\int_{\bar{A}^{-1}(p)}
    \tilde{\Omega}{\dd g} \biggr),
  \end{align}
for any choice whatsoever of $f_{ij}$. (Just integrate both sides of the lemma,
but after multiplying by the ratio
$f_{ij}/\abs{f_{ij}^{\mathrm{alg}}}$.)
Taking $f_{ij}$
to be supported in a very small neighbourhood of $p$, and using
\eqref{phiHforms},
we deduce that 
\begin{align} \label{FiberOmega}
    (\phi^H)' =  (\phi^H)
    \times \biggl(\int_{\bar{A}^{-1}(p_0)} \Omega(p_0) \biggr).
\end{align}

If we combine  this with
\Cref{lem:edge_integrand_with_order_convention}, we find that $(\phi^H)'$
equals $\phi^H$ multiplied by the normalization factor:

\begin{align}
    \nu_\bbP^{-4}\int_{\RO\backslash (\bbP^1)^{\tetraE}}\prod_{ij\in\tetraO}
    \chi_{ij-}\left((z_{ij}\wedge z_{ik})(z_{il}\wedge
    z_{ij})(z_{ik}\wedge z_{il})^{-1}\right)
    \frac{\prod_{ij\in\tetraE}\dd
    z_{ij}}{\dd g},
\end{align}
with the ordering convention for \(k\) and \(l\) as stated in
\Cref{lem:edge_integrand_with_order_convention}.
Combining this with the known relationships \eqref{eqn:phiD_and_lambdaD} and
\eqref{eqn:phiH_and_lambdaH} between $\Lambda^H,\Lambda^D$ and $\phi^H, \phi^D$
completes the proof of \Cref{prop:edge_integral}, hence also
\Cref{thm:6j=Hypergeometric}.

\subsection{The proof of \Cref{lem:edge_integrand_with_order_convention}} \label{uglylemmaproof}
Working Zariski locally on $\bbP^1$ we fix
algebraic choices of homogeneous coordinate representatives $z \mapsto \tilde{z}$
and similarly working locally on $\PGL_2$ we choose representatives $g \mapsto \tilde{g}$
in $\GL_2$.
Define functions $\lambda_{ij}$ Zariski-locally on $(\bbP^1)^{\tetraE} \times H$
by requiring
\begin{align} \label{Zlambda}
    \lambda_{ij} z_{ij} \tilde{g}_i =\tilde{p}_{ij},\quad
    \lambda_{ji} z_{ij} \tilde{g}_j = \tilde{p}_{ji}.
\end{align}
 In view of   \eqref{omega1formula} and
\eqref{12form} what we must prove is

    \begin{align}
        \prod_{ij\in\tetraE} \chi_{ij-}^{-1}(\tilde{g}_i\lambda_{ij})
        = \prod_{ij\in\tetraO}
        \chi_{ij-}\bigl((z_{ij}\wedge z_{ik})(z_{il}\wedge z_{ij})(z_{ik}\wedge z_{il})^{-1}\bigr)
    \end{align} 
    Writing \begin{align}
        \tilde{g}_i^{-1}= \begin{bmatrix}
            a_i & b_i\\
            c_i & d_i
        \end{bmatrix}.
    \end{align}
    the condition \eqref{Zlambda} then amounts to  
    \begin{multline}
        \begin{bmatrix}
        & \lambda_{\V1\V2}(x_{\V1\V2},y_{\V1\V2}) &
            \lambda_{\V1\V3}(x_{\V1\V3},y_{\V1\V3}) &
            \lambda_{\V1\V4}(x_{\V1\V4},y_{\V1\V4}) \\
            \lambda_{\V2\V1}(x_{\V1\V2},y_{\V1\V2}) & &
            \lambda_{\V2\V3}(x_{\V2\V3},y_{\V2\V3}) &
            \lambda_{\V2\V4}(x_{\V2\V4},y_{\V2\V4}) \\
            \lambda_{\V3\V1}(x_{\V1\V3},y_{\V1\V3}) &
            \lambda_{\V3\V2}(x_{\V2\V3},y_{\V2\V3}) & &
            \lambda_{\V3\V4}(x_{\V3\V4},y_{\V3\V4}) \\
            \lambda_{\V4\V1}(x_{\V1\V4},y_{\V1\V4}) &
            \lambda_{\V4\V2}(x_{\V2\V4},y_{\V2\V4}) &
            \lambda_{\V4\V3}(x_{\V3\V4},y_{\V3\V4}) & 
        \end{bmatrix}\\
        = \begin{bmatrix}
        & (c_{\V1},d_{\V1}) &
            (a_{\V1}+c_{\V1},b_{\V1}+d_{\V1}) &
            (a_{\V1},b_{\V1}) \\
            (c_{\V2},d_{\V2}) & &
            (a_{\V2},b_{\V2}) &
            (a_{\V2}+c_{\V2},b_{\V2}+d_{\V2}) \\
            (a_{\V3}+c_{\V3},b_{\V3}+d_{\V3}) &
            (a_{\V3},b_{\V3}) & &
            (c_{\V3},d_{\V3}) \\
            (a_{\V4},b_{\V4}) &
            (a_{\V4}+c_{\V4},b_{\V4}+d_{\V4}) &
            (c_{\V4},d_{\V4}) & 
        \end{bmatrix}.
    \end{multline}
    Observe that
    \begin{align}
        \det{\tilde{g}_{\V1}^{-1}}
    &=-\lambda_{\V{12}}\lambda_{\V{13}}\begin{vmatrix}
        x_{\V{12}} & y_{\V{12}}\\
        x_{\V{13}} & y_{\V{13}}
    \end{vmatrix}
    =-\lambda_{\V{13}}\lambda_{\V{14}}\begin{vmatrix}
        x_{\V{13}} & y_{\V{13}}\\
        x_{\V{14}} & y_{\V{14}}
    \end{vmatrix}
    =\lambda_{\V{14}}\lambda_{\V{12}}\begin{vmatrix}
        x_{\V{14}} & y_{\V{14}}\\
        x_{\V{12}} & y_{\V{12}}
    \end{vmatrix}\\
    &=\lambda_{\V{12}}^2\frac{\begin{vmatrix}
            x_{\V{12}} & y_{\V{12}}\\
            x_{\V{13}} & y_{\V{13}}
            \end{vmatrix}\begin{vmatrix}
            x_{\V{14}} & y_{\V{14}}\\
            x_{\V{12}} & y_{\V{12}}
            \end{vmatrix}}{\begin{vmatrix}
            x_{\V{13}} & y_{\V{13}}\\
            x_{\V{14}} & y_{\V{14}}
    \end{vmatrix}},
    \end{align}
    and so we have
    \begin{align}
        \psi_{\V{12}}(\tilde{g}_{\V1})\chi_{\V{12}}^{-2}(\lambda_{\V{12}})\abs{\lambda_{\V{12}}}
        =\psi_{\V{12}}^{-1}\left(\begin{bmatrix}
                x_{\V{12}} & y_{\V{12}}\\
                x_{\V{13}} & y_{\V{13}}
                \end{bmatrix}\begin{bmatrix}
                x_{\V{14}} & y_{\V{14}}\\
                x_{\V{12}} & y_{\V{12}}
                \end{bmatrix}\begin{bmatrix}
                x_{\V{13}} & y_{\V{13}}\\
                x_{\V{14}} & y_{\V{14}}
        \end{bmatrix}^{-1}\right).
    \end{align}
    Similarly, for any \(i\neq j\), if we let \(k,l\) be the remaining two vertices,
    and let \(\epsilon_{ijkl}\) be the sign of the permutation that sends
    \((\V1,\V2,\V3,\V4)\) to \((i,j,k,l)\), then we have
    \begin{align}
        \psi_{ij}(\tilde{g}_i)\chi_{ij}^{-2}(\lambda_{ij})\abs{\lambda_{ij}}
        =\chi_{ij}(\epsilon_{ijkl})\psi_{ij}^{-1}\left(\begin{bmatrix}
                x_{ij} & y_{ij}\\
                x_{ik} & y_{ik}
                \end{bmatrix}\begin{bmatrix}
                x_{il} & y_{il}\\
                x_{ij} & y_{ij}
                \end{bmatrix}\begin{bmatrix}
                x_{ik} & y_{ik}\\
                x_{il} & y_{il}
        \end{bmatrix}^{-1}\right),
    \end{align}
    where for any \(i>j\) we let \(x_{ij}=x_{ji}\), and so on.
    This expression is independent of the ordering of \(k\) and \(l\), as expected.
    Note that we always have \(\chi_{ij}(\epsilon_{ijkl})^2=1\), and so
    $\chi_{ij}(\epsilon_{ijkl})\chi_{ji}(\epsilon_{jilk})=1.$
 This concludes the proof.

\section{Weyl symmetry: proof of \Cref{thm:W_D6_symmetry}}
\label{sec:proof_of_Weyl_symmetry}
We will prove \Cref{thm:W_D6_symmetry}, namely,
that the tetrahedral symbol for principal series has a $W(\TypeD_6)$-symmetry,
up to an explicit cocycle.

\subsection{A Fourier duality}
We continue with the notation of
\Cref{sec:HypergeometricIntro}
but now specialize to the case $n=2k$, i.e. $X$ is one-half the
dimension of its ambient space. Let $X^{\perp}$
be the orthogonal complement to $X$ inside $F^n$ with respect
to the usual pairing. 
We choose Haar measures \(\dd x\) on \(X\) and \(\dd y\) on \(X^\perp\) so that
\(\dd x\dd y=(\dd\mu)^n\), and \(\dd x\) and \(\dd y\) are Fourier
transforms of each other. Recall from \eqref{CheckChiEquation} that
\(\check{\chi}=\bigl(\chi(-1)\gamma(1,\chi)\chi_1\bigr)^{-1}\) is
the Fourier transform of any quasi-character \(\chi\) with respect to \(\Psi\) and \(\dd\mu\).
\begin{proposition}
    \label[proposition]{prop:Fourier_duality}
    Suppose \(n=2k\), all the characters
    \(\chi_i\) have the form \(\alpha_{i}^{-1}\abs{\blank}^{-\OneHalf}\)
    with the \(\alpha_i\) unitary, and the integrals of
    \(\abs{x}^{-\OneHalf}\defeq\prod_{i=1}^n\abs{x_i}^{-\OneHalf}\) (where
    \(x_i\) are the standard coordinates on \(F^n\)) over both \(\bbP X\) and
    \(\bbP X^\perp\) are convergent. Then we have the
    following \emph{Fourier duality}
    \begin{align}
        \int_{\mathbb{P}X} \ul \chi
        =\prod_{i=1}^n\chi_i(-1)\cdot \int_{\mathbb{P}X^\perp} \ul{\check\chi}.
    \end{align}
    where $\int_{\mathbb{P}X^\perp} \ul{\check\chi}$ is, as in \eqref{IXchidef}, the integral of
    $\prod_i\check{\chi}_i$ over the projectivation of $X^\perp$.
\end{proposition}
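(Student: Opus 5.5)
The plan is to realize both sides as a single pairing $\int_{F^n}\Phi\cdot T$ of a Schwartz function against a tempered distribution, and then apply Parseval on $F^n$. First lift the projective integral to an integral over $X$. Since $\prod_i\chi_i=\abs{\blank}^{-k}$, Schwartz functions and the normalization of \Cref{ProjectiveIntegrationReview} give
\begin{align}
    \int_{\bbP X}\ul\chi=\int_X \Phi\,\ul\chi
\end{align}
for any Schwartz $\Phi$ on $X$ whose radial average $\int_{F^\times}\abs{t}^k\Phi(tx)\,\dd^\times t$ equals $1$. The same holds on $X^\perp$, with the dual characters satisfying $\prod_i\check\chi_i\propto\prod_i(\chi_{i})_1^{-1}=\abs{\blank}^{k-n}=\abs{\blank}^{-k}$ because $n=2k$.

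Next, view $\ul\chi(x)=\prod_i\chi_i(x_i)$ as a distribution on $F^n$. Its $n$-dimensional Fourier transform factors coordinatewise, and by \eqref{CheckChiEquation} it equals $\prod_i\check\chi_i(y_i)$. Restriction to $X$ pairs with the delta measure $\delta_X$, whose Fourier transform is $\delta_{X^\perp}$ by our choice of dual Haar measures $\dd x\,\dd y=(\dd\mu)^n$. So, at least formally,
\begin{align}
    \int_X \ul\chi=\langle \ul\chi,\delta_X\rangle=\langle \FT\ul\chi,\FT^{-1}\delta_X\rangle=\prod_i\chi_i(-1)\cdot\int_{X^\perp}\ul{\check\chi}.
\end{align}
The sign $\prod_i\chi_i(-1)$ comes from $\FT^2 f(x)=f(-x)$, using the adjointness identities of \Cref{sec:Fadjoint}. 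Both sides are homogeneous of degree $-k$, so the divergent radial integral is the same on both sides. To make this rigorous, I will insert an auxiliary homogeneity twist: replace $\chi_i$ by $\chi_i\abs{\blank}^{s_i}$ with $\sum_i s_i=0$, and multiply by a radial cutoff.

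A cleaner alternative is the following. Pair $\ul\chi$ with $\Phi\otimes\Phi'$ in the decomposition $F^n=X\oplus X'$, where $X'$ is a complement identified with the dual of $X^\perp$. Alternatively, apply Parseval to $f=\ul\chi\cdot\Phi_0$ with $\Phi_0(x)=\Phi(\abs{x})$ a Schwartz function on $F^n$, and integrate over $X$. Concretely, I will compute $\int_X\ul\chi\,\Phi$ with $\Phi$ the restriction of a Schwartz function $\Phi_0$ on $F^n$. Poisson/Parseval for the subspace pair $(X,X^\perp)$ gives
\begin{align}
    \int_X g=\int_{X^\perp}\hat g \qquad\text{with } g=\ul\chi\Phi_0 .
\end{align}
Here $\hat g$ is the convolution $\FT\ul\chi*\FT\Phi_0$. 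Choosing $\Phi_0$ approaching a dilation-invariant profile in a controlled way recovers the projective integrals on both sides.

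The main obstacle is analytic. The characters $\chi_i=\alpha_i^{-1}\abs{\blank}^{-1/2}$ sit exactly on the boundary of local integrability, and the projective integrals converge only by the hypothesis on $\abs{x}^{-1/2}$. The transform $\check\chi_i=\alpha_i(-1)\gamma(\tfrac12,\alpha_i^{-1})^{-1}\alpha_i\abs{\blank}^{-1/2}$ is of the same unitary shape, which is what makes the statement symmetric. The $\gamma$ factors have absolute value $1$, so no poles arise. I would prove the identity for $\chi_i=\alpha_i^{-1}\abs{\blank}^{s_i-1/2}$ with $\sum_i s_i=0$ in a region where everything converges absolutely, where Fubini plus Tate's local functional equation \eqref{Tate} justify the Parseval step. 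I would then pass to $s_i=0$ by dominated convergence, using the assumed convergence of $\int\abs{x}^{-1/2}$ over $\bbP X$ and $\bbP X^\perp$ as the dominating bound. Checking that such a convergent region for the twisted integrals exists near $s=0$ on both sides simultaneously is the delicate point. Failing that, I would use the analytic continuation machinery of \Cref{lem:abstract_AC}.
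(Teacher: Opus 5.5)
Your formal mechanism is right: $\FT\ul\chi$ factors as $\prod_i\check\chi_i$, and $\delta_X$ transforms to $\delta_{X^\perp}$. What is missing is the step that makes $\langle\ul\chi,\delta_X\rangle=\langle\FT\ul\chi,\FT^{-1}\delta_X\rangle$ rigorous, and that step carries all the content; the repairs you sketch do not supply it.

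For every twist $\chi_i\abs{\blank}^{s_i}$ with $\sum_i s_i=0$, the function $\ul\chi_s$ is homogeneous of degree exactly $-k=-\dim X$. So $\int_X\ul\chi_s$ diverges logarithmically at both $0$ and $\infty$. If instead $\sum_i s_i=\sigma\neq0$, the divergence moves to one end on $X$ and to the opposite end on $X^\perp$, since the transform has degree $-k-\sigma$. Hence there is no region of parameters in which your Parseval identity equates convergent integrals, and neither dominated convergence nor \Cref{lem:abstract_AC} has anything to start from. The real obstruction is not convergence of the twisted projective integrals near $s=0$; it is that the affine pairing you want to Fourier-transform never converges.

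Multiplying by a cutoff $\Phi_0$ fails for a different reason. The dual side becomes the convolution $\FT\ul\chi*\FT\Phi_0$, whose restriction to $X^\perp$ is no longer $\ul{\check\chi}$ times a radial weight. The subspace Poisson formula is then applied to a non-smooth function. The ``controlled limit'' also needs estimates near the coordinate hyperplanes that you do not give. A minor point: in your first step the normalization should be $\int_{F^\times}\Phi(tx)\,\dd^{\times}t=1$, without $\abs{t}^k$, and this forces $\Phi$ to vanish near $0$.

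The missing idea is to let the Schwartz function enter through the torus action rather than multiplicatively.
\begin{itemize}
\item Take $\chi_i=\alpha_i^{-1}\abs{\blank}^{-\OneHalf}$, $T=(F^\times)^n$, and $T'\subset T$ a complement to the central $F^\times$. For $\Phi$ Schwartz on $F^n$, set $J_X(\alpha,\Phi)=\int_{T'\times X}\alpha(t)\abs{t}^{\OneHalf}\Phi(tx)$.
\item Unfolding the central scaling gives $J_X(\alpha,\Phi)=\int_{\bbP X}\alpha^{-1}(x)\abs{x}^{-\OneHalf}\cdot\int_{F^n}\alpha(v)\abs{v}^{-\OneHalf}\Phi(v)\,\dd v$. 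The hypotheses on $\bbP X$ and $\bbP X^\perp$ are exactly what make $J_X$ and $J_{X^\perp}$ absolutely convergent.
\item Fubini then lets you integrate over $X$ first. There the subspace Poisson formula $\int_X\Phi(tx)\,\dd x=\abs{t}^{-1}\int_{X^\perp}\check\Phi(t^{-1}y)\,\dd y$ is applied only to genuine Schwartz functions.
\item Substituting $t\mapsto t^{-1}$ in $T'$ yields $J_X(\alpha,\Phi)=J_{X^\perp}(\alpha^{-1},\check\Phi)$.
\item Tate's functional equation in each coordinate rewrites $\int\alpha(v)\abs{v}^{-\OneHalf}\Phi(v)$ as $\gamma(1,\alpha\abs{\blank}^{-\OneHalf})^{-1}\int\alpha^{-1}(v)\abs{v}^{-\OneHalf}\check\Phi(v)$.
\item Dividing by a nonzero zeta integral gives the statement. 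The constants match because $\prod_i\alpha_i=1$, which is forced by $\prod_i\chi_i=\abs{\blank}^{-k}$.
\end{itemize}
Structurally, $\FT$ intertwines the action of $t$ with that of $t^{-1}$. So torus-averaging commutes with Fourier transform up to $\alpha\leftrightarrow\alpha^{-1}$, and no distribution ever has to be restricted to a subspace.
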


Let us remark that {\em formally} this is a simple application of Fourier
duality: the equality follows readily from the definition if $\ul \chi$ is
replaced by a Schwartz function on $F^n$. The proof is longer  only because of
issues of convergence.

\begin{proof}
    Let \(\Phi\) be a Schwartz function on \(F^n\). Let \(T\) be the torus
    $(F^{\times})^n$ which we understand to act on $F^n$ in the obvious way. We
    equip $T$ with the Haar measure \(\prod_{i=1}^n\dd\mu(t_i)/\abs{t_i} \) and
    for \(t \in T\) we write
    \begin{align}
        \abs{t} = \prod_{i=1}^n \abs{t_i},\quad \alpha(t) = \prod_{i=1}^n \alpha_i(t_i).
    \end{align}
    We extend them by zero to functions on \(F^n\).

    Let $T'\subset T$ be any complement to the 
    central scaling copy of $F^{\times}$, e.g.
    we can take $T'$ to be the copy of $(F^{\times})^{n-1}$
    which scales the first $n-1$ coordintaes. The Haar measure on \(T'\) is chosen such
    that its product with \(\dd\mu/\abs{\blank}\) on the central copy of
    \(F^\x\) is the Haar measure on \(T\) above.

    We first prove that we have an equality of \emph{absolutely convergent} integrals:
    \begin{align} \label{eqn:Phiclaim}
        J_X(\alpha, \Phi) \defeq \int_{T' \times X} \alpha(t) \abs{t}^{\OneHalf}
        \Phi(tx)
        = \int_{\bbP X} \alpha^{-1}(x) \abs{x}^{-\OneHalf} \cdot \int_{F^n}
        \alpha(v) \abs{v}^{-\OneHalf}\Phi(v),
    \end{align} 
    with \(v=tx\) and the measures specified above.

    We first note that the two integrals on the right are absolutely convergent,
    the first by assumption, and the second because it is bounded
    by a product of one-variable integrals of the general form $\int_{\Omega}
    \abs{a}^{-\OneHalf} \dd a$
    for a compact set $\Omega $. To verify that the integral on the left is  absolutely convergent,
    it is sufficient to verify that, for positive $\Phi$, the iterated integral
    \begin{align}
        \int_{X} \int_{T'}  \abs{t}^{\OneHalf} \Phi(tx) < \infty.
    \end{align}
    We can rewrite this as (iterated integrals)
    \begin{align}
        \int_{\mathbb{P}X } \int_{\lambda \in F^{\times}} \int_{T'}
        \abs{t}^{\OneHalf} \abs{\lambda}^{\frac{n}{2}} \Phi(\lambda tx)
        = \int_{\mathbb{P} X} \int_{T} \abs{t}^{\OneHalf} \Phi(tx) .
    \end{align}
    The inner integral is, by what we already noted, finite 
    as long as $x$ lies on no coordinate hyperplane, and its value
    is equal to a constant multiple of $\abs{x}^{-\OneHalf}$; and by
    assumption $\int_{\bbP X} \abs{x}^{-\OneHalf} <\infty$. So the left hand integral, too, is absolutely convergent. 

    Since the integral $J(\alpha,\Phi)$ is absolutely convergent,
    it can be evaluted in whatever order we please, and
    we may now compute:
    \begin{align}
        J_X(\alpha, \Phi) &= \int_{T'} \alpha(t) \abs{t}^{\OneHalf} \int_X
        \Phi(tx)\dd x\\
        &= \int_{T'} \alpha(t) \abs{t}^{-\OneHalf} \int_{X^{\perp}}
        \check{\Phi}(t^{-1}y)\dd y\\
        &= \int_{T'} \alpha^{-1}(t) \abs{t}^{\OneHalf} \int_{X^{\perp}}
        \check{\Phi}(ty)\dd y\\
        &= J_{X^\perp}(\alpha^{-1}, \check{\Phi}),
    \end{align}
    where, at the third step, we inverted $t$, and at the second step, we used
    Fourier duality on \(F^n\) for the Schwartz function $\Phi(tx)$ and the
    distribution \(\dd x\), and the fact that the Fourier transform of
    \(\Phi(tx)\) is \(\abs{t}^{-1}\check{\Phi}(t^{-1}y)\).

    That is to say, we have proved that
    \begin{align}
        \int_{\bbP X} \alpha^{-1}(x) \abs{x}^{-\OneHalf} \cdot  \int_{F^n}
        \alpha(v) \abs{v}^{-\OneHalf} \Phi(v)
    \end{align}
    is symmetric under replacement of \(X\) by \(X^\perp\), $\Phi$ by
    $\check{\Phi}$ and $\alpha$ by $\alpha^{-1}$. Combined with another Fourier
    duality from \Cref{sec:Fadjoint}:
    \begin{align}
        \int_{F^n}\alpha(v)\abs{v}^{-\OneHalf}\Phi(v)
        =\int_{F^n}\FT(\alpha\abs{\blank}^{-\OneHalf})(-v)\check{\Phi}(v)
        \stackrel{\eqref{CheckChiEquation}}{=}\gamma(1,\alpha\abs{\blank}^{-\OneHalf})^{-1}
        \int_{F^n}\alpha^{-1}(v)\abs{v}^{-\OneHalf}\check{\Phi}(v),
    \end{align}
    we have
    \begin{align}
        \gamma(1,\alpha\abs{\blank}^{-\OneHalf})^{-1}\int_{\bbP X} \alpha^{-1}(x) \abs{x}^{-\OneHalf}
        =\int_{\bbP X^\perp} \alpha(x) \abs{x}^{-\OneHalf},
    \end{align}
    which translates to the desired equality.
\end{proof}

\subsection{The Weyl symmetry}\label{sub:the_weyl_symmetry}

In order to explain the indexing of characters here,
we combine \eqref{eqn:chosen_oriented_tetrahedron} and
\eqref{eqn:the_oriented_tetrahedron_with_dices_in_partI} in the following diagram:
\begin{equation}
    \label{eqn:the_oriented_tetrahedron_with_dices}
    \begin{tikzcd}
        && \V1
        \ar[llddd,"x","\EN{1}"']\ar[dd,"0"',"\EN{3}"] && \\
        && \phantom{x} &&\\
        && \V4  &&\\
        \V2\ar[rrrr,"\infty"',"\EN{6}"]\ar[rru,"y"',"\EN{5}"] && &&
        \V3\ar[llu,"z","\EN{4}"']\ar[lluuu,"w","\EN{2}"']
    \end{tikzcd}
\end{equation}
where we label each edge with both a blackboard bold number and a
coordinate that is consistent
with \Cref{thm:6j=Hypergeometric}, which we proved in
\Cref{sec:edge_formula_for_principal_series}.

We express \(\Pisymbol\) in terms of the hypergeometric integral
\eqref{eqn:edge_integral_hypergeometric}.
Let us first recall some abbreviations that we will use.
We shall use the following type of abbreviation (given by an example;
cf.~\Cref{sub:halfspin}):
\begin{align}
    \chi_{\V{2 \bar{3}\bar{4}}} \defeq \chi_{\V1 \V2} \chi_{\V1 \V3}^{-1}
    \chi_{\V1 \V4}^{-1},\quad
    \gamma_{\V{2\bar{3}\bar{4}}}\defeq
    \gamma\Bigl(\OneHalf,\chi_{\V1\V2}\chi_{\V1\V3}^{-1}\chi_{\V1\V4}^{-1}\Bigr).
\end{align}
We further abridge  the special cases when there are one or two inverted
characters:
\begin{align}
    \chi_i^l \defeq \chi_{j k \bar{l}}= \frac{\chi_{ij}\chi_{ik}}{\chi_{il}}
    \quad\text{ and }\quad
    \tilde{\chi}_{i}^l \defeq \chi_{\bar{j} \bar{k} l} = 
    \frac{\chi_{il} }{\chi_{ij} \chi_{ik}},
\end{align}
and by extension
\begin{align}
    \gamma_i^l\defeq \gamma_{j k \bar{l}} = \gamma\Bigl(\OneHalf,\chi_i^l\Bigr).
\end{align}

The convergence claim of \Cref{prop:edge_integral} (proved in
\Cref{sub:pf_of_edge_integral_convergence}) then allows us to
apply \Cref{prop:Fourier_duality} to
\eqref{eqn:edge_integral_hypergeometric}, and we arrive at
\begin{multline}
    \Pisymbol\sqrt{\gamma\Bigl(\OneHalf,\Sigma^-\Bigr)}
    = \nu_\bbP^{-1}\bigl[\gamma_{\V2}^{\V3}
    \gamma_{\V4}^{\V1}
    \gamma_{\V3}^{\V2}
    \gamma_{\V1}^{\V4}
    \gamma_{\V1}^{\V3}
    \gamma_{\V4}^{\V3}
    \gamma_{\V4}^{\V2}
    \gamma_{\V1}^{\V2}\bigr]^{-1}\\
    \x \int_{[x,y,z,w]\in\bbP^3(F)}
    \tilde{\chi}_{\V2-}^{\V3}(x)
    \tilde{\chi}_{\V4-}^{\V1}(y)
    \tilde{\chi}_{\V3-}^{\V2}(z)
    \tilde{\chi}_{\V1-}^{\V4}(w)
    \tilde{\chi}_{\V1-}^{\V3}(w-x)
    \tilde{\chi}_{\V4-}^{\V3}(x-y)
    \tilde{\chi}_{\V4-}^{\V2}(y-z)
    \tilde{\chi}_{\V1-}^{\V2}(z-w)
    ,
    \label{eqn:Jsymbol_after_Fourier_duality}
\end{multline}
where we used \eqref{CheckChiEquation}
\begin{align}
    \check{\chi}=\bigl(\chi(-1)\gamma(1,\chi)\chi_1\bigr)^{-1},
\end{align}
and that
\begin{align}
    \bigl(\chi_{\V2}^{\V3}
    \chi_{\V4}^{\V1}
    \chi_{\V3}^{\V2}
    \chi_{\V1}^{\V4}\bigr)(-1)
    =\bigl(\chi_{\V1}^{\V3}
    \chi_{\V4}^{\V3}
    \chi_{\V4}^{\V2}
    \chi_{\V1}^{\V2}\bigr)(-1)
    =1.
\end{align}

The new integral in \eqref{eqn:Jsymbol_after_Fourier_duality} may be interpreted
as the integral \eqref{eqn:edge_integral_hypergeometric} with a different set of
\(\chi_{ij}\), the latter obtained by performing 
what we will call an
\emph{inv}-Regge symmetry through the  pair of opposite edges
$\EN{2}$ and \(\EN{5}\).
(for discussion, see \Cref{GroupTheory2};
it relates to the usual Regge symmetry by composition with inversion of each character). 
 The said transformation may formally be written as follows:
\begin{alignat}{2}
    \EN{1}&\mapsto \EN{1}'\defeq\frac{\EN{1}+\IEN{3}+\IEN{4}+\IEN{6}}{2},
          &\qquad
    \EN{2}&\mapsto \EN{2}'\defeq\IEN{2},\\
    \EN{3}&\mapsto \EN{3}'\defeq\frac{\IEN{1}+\EN{3}+\IEN{4}+\IEN{6}}{2},
          &\qquad
    \EN{4}&\mapsto \EN{4}'\defeq\frac{\IEN{1}+\IEN{3}+\EN{4}+\IEN{6}}{2},\\
    \EN{5}&\mapsto \EN{5}'\defeq\IEN{5},
          &\qquad
    \EN{6}&\mapsto \EN{6}'\defeq\frac{\IEN{1}+\IEN{3}+\IEN{4}+\EN{6}}{2},
\end{alignat}
where \(\IEN{1}\) means formally negating \(\EN{1}\) and so on.
Thus, for example, the new character \(\chi_{\EN{1}'}\) associated to \emph{oriented} edge
\(\V{12}\) after the transformation satisfies the relation
\begin{align}
    \chi_{\EN{1}'}^2
    =\chi_{\scriptstyle\EN{1}}\chi_{\EN{3}}^{-1}\chi_{\EN{4}}^{-1}\chi_{\EN{6}}^{-1}
    =\chi_{\V{12}}\chi_{\V{41}}\chi_{\V{43}}\chi_{\V{32}},
\end{align}
and similarly for other edges.
This does not uniquely determine \(\chi_{\EN{1}'}\) and so on:
indeed, the character appearing on the right need not even have a square root.
However, this is not an issue: it is simply a reflection of the fact that the
map \eqref{eqn:Principal_to_type_D} is not surjective; rather, this
transformation does determine the (new) integrand in
\eqref{eqn:edge_integral_hypergeometric}. For example, the first character
\(\chi_{\V2-}^{\V3}\) in the integrand is the same
as \begin{align}
    \chi_{\IEN{1}\EN{5}\IEN{6}-}
    \defeq \chi_{\EN{1}}^{-1}\chi_{\EN{5}}\chi_{\EN{6}}^{-1}\abs{\blank}^{-\OneHalf},
\end{align}
and after the inv-Regge symmetry, we have
\begin{align}
    \chi_{\IEN{1}'\EN{5}'\IEN{6}'-}
    =\chi_{\EN{3}}\chi_{\EN{4}}\chi_{\EN{5}}^{-1}\abs{\blank}^{-\OneHalf}
    =\chi_{\EN{3}\EN{4}\IEN{5}-}
    =\tilde{\chi}_{\V4-}^{\V2}.
\end{align}
Thus, the new edge integral after inv-Regge symmetry is the integral in the expression:
\begin{align}
    \EI(\bbP_F^1,\psi')=\nu_\bbP\int
    \tilde{\chi}_{\V4-}^{\V2}(x-y)
    \tilde{\chi}_{\V4-}^{\V3}(y-z)
    \tilde{\chi}_{\V1-}^{\V3}(z-w)
    \tilde{\chi}_{\V1-}^{\V2}(w-x)
    \tilde{\chi}_{\V3-}^{\V2}(x)
    \tilde{\chi}_{\V4-}^{\V1}(y)
    \tilde{\chi}_{\V2-}^{\V3}(z)
    \tilde{\chi}_{\V1-}^{\V4}(w)
\end{align}

which is easily seen equal to the integral in
\eqref{eqn:Jsymbol_after_Fourier_duality} by exchanging \(x\) with \(z\) (and
using the fact that
\(\tilde{\chi}_{\V4}^{\V2}\tilde{\chi}_{\V4}^{\V3}\tilde{\chi}_{\V1}^{\V3}\tilde{\chi}_{\V1}^{\V2}(-1)=1\)).

Let \(\PisymbolPrime\) be the tetrahedral symbol associated with the
new set of characters after performing the said inv-Regge symmetry, and
\(\gamma(\OneHalf,(\Sigma^-)')\) be the corresponding \(\gamma\)-factor (see
\Cref{sub:the_principal_series_case}). Then we have by \eqref{eqn:edge_integral_formula}
\begin{align}
    \PisymbolPrime\sqrt{\gamma\Bigl(\OneHalf,(\Sigma^-)'\Bigr)}=\nu_\bbP^{-2}\EI(\bbP_F^1,\psi').
\end{align}
Therefore, we showed that
\begin{align}
    \label{eqn:inv_Regge_pre}
    \Pisymbol
    =\PisymbolPrime\x
    \frac{\bigl[\gamma_{\V2}^{\V3} \gamma_{\V4}^{\V1} \gamma_{\V3}^{\V2} \gamma_{\V1}^{\V4}
        \gamma_{\V1}^{\V3} \gamma_{\V4}^{\V3} \gamma_{\V4}^{\V2}
    \gamma_{\V1}^{\V2}
\bigr]^{-1}\sqrt{\gamma(\OneHalf,(\Sigma^-)')}}{\sqrt{\gamma(\OneHalf,\Sigma^-)}}.
\end{align}

\begin{proposition}
    \label[proposition]{prop:inv_Regge_symmetry}
    We have an equality up to sign:
    \begin{align}
        \Pisymbol=\PisymbolPrime.
    \end{align}
\end{proposition}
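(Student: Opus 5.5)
The plan is to show that the scalar factor in \eqref{eqn:inv_Regge_pre} is $\pm1$. The proof then reduces to the combinatorics of how the inv-Regge transformation acts on the $32$ weights $\Sigma$, together with the elementary identity $\gamma(\OneHalf,\chi)\gamma(\OneHalf,\chi^{-1})=\chi(-1)$ from \eqref{GammaSymmetry}.

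\emph{Step 1: rewrite the bracket.} Each factor $\gamma_i^l=\gamma(\OneHalf,\chi_i^l)$ in the bracket has argument with two $+$ signs and one $-$ sign, so $\chi_i^l\in\Sigma^+$. By \eqref{GammaSymmetry},
\begin{align}
    \gamma_i^l=\chi_i^l(-1)\,\gamma\Bigl(\OneHalf,\tilde\chi_i^l\Bigr)^{-1},
\end{align}
and $\tilde\chi_i^l=(\chi_i^l)^{-1}\in\Sigma^-$. Let $E\subset\Sigma^-$ be the multiset of the eight characters
\begin{align}
    \tilde\chi_{\V2}^{\V3},\ \tilde\chi_{\V4}^{\V1},\ \tilde\chi_{\V3}^{\V2},\ \tilde\chi_{\V1}^{\V4},\ \tilde\chi_{\V1}^{\V3},\ \tilde\chi_{\V4}^{\V3},\ \tilde\chi_{\V4}^{\V2},\ \tilde\chi_{\V1}^{\V2}.
\end{align}
These are exactly the characters appearing in the integrand of \eqref{eqn:Jsymbol_after_Fourier_duality}. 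Then the bracket $[\cdots]^{-1}$ equals $\pm\gamma(\OneHalf,E)$.

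\emph{Step 2: compute $(\Sigma^-)'$.} This is the main bookkeeping step. Apply the explicit formulas for $\EN{1}',\dots,\EN{6}'$ to each of the $16$ weights of $\Sigma^-$. At each vertex these are the fully inverted product $\chi_{\bar j\bar k\bar l}$ and the three products $\tilde\chi_i^l$. Equivalently, one can check on the $\TypeD_6$ side which weights the corresponding element $w\in W(\TypeD_6)$ moves across the splitting $\Sigma=\Sigma^+\sqcup\Sigma^-$. I expect the outcome to be:
\begin{itemize}
    \item the eight weights of $\Sigma^-\setminus E$ are permuted among themselves;
    \item each element of $E$ is sent to the inverse of an element of $E$, hence into $\Sigma^+$.
\end{itemize}
The integrand computation already done before the proposition supports this, for instance $\chi_{\IEN{1}'\EN{5}'\IEN{6}'-}=\tilde\chi^{\V2}_{\V4-}$. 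Granting it, $(\Sigma^-)'=(\Sigma^-\setminus E)\sqcup E^{-1}$ as multisets.

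\emph{Step 3: conclude.} Using \eqref{GammaSymmetry} once more,
\begin{align}
    \frac{\gamma\bigl(\OneHalf,(\Sigma^-)'\bigr)}{\gamma\bigl(\OneHalf,\Sigma^-\bigr)}
    =\frac{\gamma(\OneHalf,E^{-1})}{\gamma(\OneHalf,E)}
    =\pm\,\gamma\Bigl(\OneHalf,E\Bigr)^{-2}.
\end{align}
Substituting this and Step 1 into \eqref{eqn:inv_Regge_pre}, the factor becomes $\pm\gamma(\OneHalf,E)\cdot\gamma(\OneHalf,E)^{-1}=\pm1$, which gives $\Pisymbol=\PisymbolPrime$ up to sign.

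The only genuine obstacle is the verification in Step 2. It is a finite check of sixteen signed sums of the six labels, but it must be carried out carefully with the vortex-orientation conventions of \eqref{eqn:the_oriented_tetrahedron_with_dices}. I would organize it vertex by vertex, and use $\FRTE$-symmetry to reduce to checking one representative vertex of each type: the two vertices on which the Regge pair $\EN{2},\EN{5}$ acts in the same way, and the other two.
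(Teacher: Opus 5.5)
Your plan is the paper's proof. The paper also reduces the claim to showing that the factor in \eqref{eqn:inv_Regge_pre} is $\pm1$, writes $\gamma(\OneHalf,(\Sigma^-)')$ out term by term, and cancels it against the bracket using \eqref{GammaSymmetry}. Its explicit list confirms your Step~2 exactly: $(\Sigma^-)'=(\Sigma^-\setminus E)\sqcup E^{-1}$. For example, $\tilde\chi_{\V1}^{\V2},\tilde\chi_{\V1}^{\V3},\tilde\chi_{\V1}^{\V4}\mapsto\chi_{\V1}^{\V4},\chi_{\V3}^{\V2},\chi_{\V1}^{\V2}$, and $\chi_{\V{12}}^{-1}\chi_{\V{13}}^{-1}\chi_{\V{14}}^{-1}\mapsto\chi_{\V{31}}^{-1}\chi_{\V{32}}^{-1}\chi_{\V{34}}$.

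The one step you cannot leave as ``$\pm$'' is the square root in Step~3. Both of your signs equal $s=\prod_{\chi\in E}\chi(-1)$. In Step~3 this sign sits under the square root in \eqref{eqn:inv_Regge_pre}, so your factor is really $\pm s\sqrt{s}$. If $s=-1$ you would get $\pm i$, i.e.\ $\Pisymbol^2=-\PisymbolPrime^2$. That is not the statement: the symbol is only defined up to sign, so the content of the proposition is precisely that the squares agree. You therefore need to check $s=1$. Since $\chi(-1)=\chi^{-1}(-1)$, $s$ is the product of $(\chi_{ij}\chi_{ik}\chi_{il})(-1)$ over the vertices $i$ labelling the eight elements $\tilde\chi_i^l$ of $E$. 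Among these, $\V1$ and $\V4$ each occur three times and $\V2,\V3$ once each. Hence $s=\prod_{ij\in\tetraO}\chi_{ij}(-1)=\prod_{ij\in\tetraE}(\chi_{ij}\chi_{ji})(-1)=1$. This is the identity the paper invokes at the end of its proof; with it added, your argument is complete.
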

\begin{proof}
    It amounts to showing that the fraction on the right-hand side of
    \eqref{eqn:inv_Regge_pre} equals \(\pm 1\).
    Indeed, by definition,
    \begin{align}
        \gamma\Bigl(\OneHalf,\Sigma^-\Bigr)
        &=\gamma_{\V{2\bar{3}\bar{4}}}\gamma_{\V{\bar{2}3\bar{4}}}\gamma_{\V{\bar{2}\bar{3}4}}\gamma_{\V{\bar{2}\bar{3}\bar{4}}}
        \x
        \gamma_{\V{1\bar{3}\bar{4}}}\gamma_{\V{\bar{1}3\bar{4}}}\gamma_{\V{\bar{1}\bar{3}4}}\gamma_{\V{\bar{1}\bar{3}\bar{4}}}\\
        &\x \gamma_{\V{1\bar{2}\bar{4}}}\gamma_{\V{\bar{1}2\bar{4}}}\gamma_{\V{\bar{1}\bar{2}4}}\gamma_{\V{\bar{1}\bar{2}\bar{4}}}
        \x \gamma_{\V{1\bar{2}\bar{3}}}\gamma_{\V{\bar{1}2\bar{3}}}\gamma_{\V{\bar{1}\bar{2}3}}\gamma_{\V{\bar{1}\bar{2}\bar{3}}},
    \end{align}
    whereas after the inv-Regge symmetry we find
    \begin{align}
        \gamma\Bigl(\OneHalf,(\Sigma^-)'\Bigr)
        &=\gamma_{\V{23\bar{4}}}\gamma_{\V{1\bar{2}4}}\gamma_{\V{\bar{2}34}}\gamma_{\V{\bar{1}\bar{2}4}}
        \x
        \gamma_{\V{\bar{1}\bar{2}\bar{3}}}\gamma_{\V{1\bar{2}3}}\gamma_{\V{\bar{1}\bar{3}\bar{4}}}\gamma_{\V{\bar{1}\bar{3}4}}\\
        &\x
        \gamma_{\V{\bar{1}\bar{2}\bar{4}}}\gamma_{\V{2\bar{3}4}}\gamma_{\V{\bar{2}\bar{3}\bar{4}}}\gamma_{\V{1\bar{2}\bar{4}}}
        \x
        \gamma_{\V{12\bar{3}}}\gamma_{\V{1\bar{3}4}}\gamma_{\V{\bar{1}23}}\gamma_{\V{1\bar{3}\bar{4}}}.
    \end{align}
    As a result, we have (up to sign)
    \begin{align}
        \label{eqn:difference_of_16_factors}
        \sqrt{\frac{\gamma(\OneHalf,(\Sigma^-)')}{\gamma(\OneHalf,\Sigma^-)}}
        &=
        \sqrt{
        \frac{\gamma_{\V{23\bar{4}}}}{\gamma_{\V{\bar{2}\bar{3}4}}}
        \frac{\gamma_{\V{2\bar{3}4}}}{\gamma_{\V{\bar{2}3\bar{4}}}}
        \frac{\gamma_{\V{\bar{2}34}}}{\gamma_{\V{2\bar{3}\bar{4}}}}
        \frac{\gamma_{\V{1\bar{3}4}}}{\gamma_{\V{\bar{1}3\bar{4}}}}
        \frac{\gamma_{\V{1\bar{2}4}}}{\gamma_{\V{\bar{1}2\bar{4}}}}
        \frac{\gamma_{\V{\bar{1}23}}}{\gamma_{\V{1\bar{2}\bar{3}}}}
        \frac{\gamma_{\V{1\bar{2}3}}}{\gamma_{\V{\bar{1}2\bar{3}}}}
        \frac{\gamma_{\V{12\bar{3}}}}{\gamma_{\V{\bar{1}\bar{2}3}}}}\\
        &=
        \gamma_{\V{23\bar{4}}}
        \gamma_{\V{2\bar{3}4}}
        \gamma_{\V{\bar{2}34}}
        \gamma_{\V{1\bar{3}4}}
        \gamma_{\V{1\bar{2}4}}
        \gamma_{\V{\bar{1}23}}
        \gamma_{\V{1\bar{2}3}}
        \gamma_{\V{12\bar{3}}}\\
        &=
        \gamma_{\V1}^{\V4}
        \gamma_{\V1}^{\V3}
        \gamma_{\V1}^{\V2} 
        \gamma_{\V2}^{\V3}
        \gamma_{\V3}^{\V2}
        \gamma_{\V4}^{\V1}
        \gamma_{\V4}^{\V2}
        \gamma_{\V4}^{\V3},
    \end{align}
    where we again used the symmetry properties \eqref{GammaSymmetry} of the $\gamma$-factor; and the fact that
    \begin{align}
        \bigl(\chi_{\V2}^{\V3}
            \chi_{\V4}^{\V1}
            \chi_{\V3}^{\V2}
            \chi_{\V1}^{\V4}\chi_{\V1}^{\V3}
            \chi_{\V4}^{\V3}
            \chi_{\V4}^{\V2}
        \chi_{\V1}^{\V2}\bigr)(-1)
        =1.
    \end{align}
    This finishes the proof.
\end{proof}

\subsection{The proof of $W(\TypeD_6)$-symmetry, completed} \label{thmD6proofsection}

\begin{proof}[Proof of \Cref{thm:W_D6_symmetry}]
    Let us begin with the first claim (1), that $\chisymbol^2$ descends to
    $\TypeD_6 \otimes \widehat{F^{\times}}$ and is $W(\TypeD_6)$-invariant.
    Formula  \eqref{eqn:hypergeometric_edge_integral} shows that
    \(\chisymbol^2\) depends only on characters of the form
    \(\chi_{ij}^\pm\chi_{ik}^\pm\chi_{il}^\pm\), which gives the descent.   In
    the language of \Cref{GroupTheory2}, $\chisymbol^2$ is evidently invariant
    both by the group of orientation reversals (since these do not change the
    isomorphism class of the underlying representation) and also by the group of
    tetrahedral symmetries (by the way we defined it).    Finally,
    \Cref{prop:inv_Regge_symmetry} shows that $\chisymbol^2$ is invariant by at
    least one Regge symmetry; together with the prior groups, this generates all
    of $W(\TypeD_6)$. This concludes the proof of the first claim.

    We now pass to claim (2).  By \eqref{eqn:edge_integral_formula} we may take
    \(I(\chi)\) to be the integral \(\EI(\bbP_F^1,\psi)\), multiplied by
    $\nu_{\bbP}^{-2}$. Then \(a(w,\psi)=I(w^{-1}\chi)/I(\chi)\) is automatically
    a cocycle of $W(\TypeD_6)$ valued in functions of $\psi$.  By what we have already proved, namely, that $\chisymbol^2$ is
    $W(\TypeD_6)$-invariant, the validity of
    \eqref{eqn:the_cocycle_with_sign_formula} for {\em some} choice of signs
    $\iota(w, \chi)$ follows; it remains to describe the signs.
  
    Since \(a\) is a cocycle, it suffices to compute \(\iota\) on the generators
    of \(W\). By \Cref{lem:generators_of_WD6}, it suffices to consider \(\FRI\),
    \(\FRTV\) and a single element in \(\FRR\). Some of
    them are easy:
    the \(\tetraV\)-tetrahedral group \(\FRTV\) preserves $\Sigma^+$ and $\Sigma^-$,
    and evidently also \(\EI(\bbP_F^1,\psi)\), so they have
    trivial signs; by \eqref{eqn:Jsymbol_after_Fourier_duality}, the one
    particular inv-Regge symmetry in \Cref{prop:inv_Regge_symmetry} also has
    trivial sign.

    We then compute the sign for \(w\) being the edge
    flipping operations, and it suffices to assume that the edge is \(\V{12}\),
    and that all \(\chi_{ij}\) are unitary. We will need to use the results from
    \Cref{sub:Mellin_transform_of_hypergeometric_over_local_field,sub:Mellin_of_tetrahedral_symbols}
    (whose argument is purely analytic based on the integral formula
    \eqref{eqn:edge_integral_formula} and independent of
    \Cref{thm:W_D6_symmetry}). In short, \eqref{eqn:edge_integral_formula} may
    be written as \(f(1)\) for some smooth function \(f\) on \(F\) whose Mellin
    transform (as a function on \(\widehat{F^\x}\)) is
    \begin{align}
        \lambda\longmapsto
        \bigl[\gamma(\chi_{\V{1}+}^{\V{4}})\gamma(\chi_{\V{2}+}^{\V{3}})
        \gamma(\chi_{\V{4}+}^{\V{1}})\gamma(\chi_{\V{3}+}^{\V{2}})\bigr]^{-1}
        \frac{\gamma(\lambda\otimes A_+)}{\gamma(\lambda\otimes B)},
    \end{align}
    where
    \begin{align}
        A &= \Set*{\chi_{\V{12}}\chi_{\V{31}}\chi_{\V{41}},
            \chi_{\V{21}}\chi_{\V{41}}\chi_{\V{31}},
            \chi_{\V{43}}\chi_{\V{31}}\chi_{\V{23}},
        \chi_{\V{34}}\chi_{\V{31}}\chi_{\V{23}}}\\
        B &= \Set*{\chi_{\V{31}}^2,
            \chi_{\V{42}}\chi_{\V{41}}\chi_{\V{31}}\chi_{\V{23}},
        \chi_{\V{41}}\chi_{\V{31}}\chi_{\V{24}}\chi_{\V{23}}, 1}.
    \end{align}
    It is clear that exchanging \(\V{12}\) with \(\V{21}\) only changes the
    constant factor
    \(\bigl[\gamma(\chi_{\V{1}+}^{\V{4}})\gamma(\chi_{\V{2}+}^{\V{3}})\bigr]^{-1}\)
    in the Mellin transform into
    \(\bigl[\gamma\bigl((\chi_{\V{1}-}^{\V{3}})^{-1}\bigr)\gamma\bigl((\chi_{\V{2}-}^{\V{4}})^{-1}\bigr)\bigr]^{-1}\).
    Therefore, we have
    \begin{align}
        \label{eqn:flipping_cocycle}
        \frac{I(w^{-1}\chi)}{I(\chi)}
        &=\frac{\gamma(\chi_{\V{1}+}^{\V{4}})\gamma(\chi_{\V{2}+}^{\V{3}})}
        {\gamma\bigl((\chi_{\V{1}-}^{\V{3}})^{-1}\bigr)\gamma\bigl((\chi_{\V{2}-}^{\V{4}})^{-1}\bigr)}\\
        &=\chi_{\V{1}}^{\V{3}}(-1)\chi_{\V{2}}^{\V{4}}(-1)\gamma(\chi_{\V{1}+}^{\V{4}})\gamma(\chi_{\V{2}+}^{\V{3}})\gamma(\chi_{\V{1}+}^{\V{3}})\gamma(\chi_{\V{2}+}^{\V{4}})\\
        &=\bigl(\chi_{\V{13}}\chi_{\V{14}}\chi_{\V{23}}\chi_{\V{24}}\bigr)(-1)\gamma(\chi_{\V{1}+}^{\V{4}})\gamma(\chi_{\V{2}+}^{\V{3}})\gamma(\chi_{\V{1}+}^{\V{3}})\gamma(\chi_{\V{2}+}^{\V{4}}).
    \end{align}
    This shows that the sign for flipping the edge \(\V{12}\) is
    \(\bigl(\chi_{\V{13}}\chi_{\V{14}}\chi_{\V{23}}\chi_{\V{24}}\bigr)(-1)\).

    Let \(s_{ij}\) be the element flipping edge \(ij\). We show here that
    \begin{align}
        \label{eqn:sign_of_triple_flips}
        \iota(s_{ij}s_{ik}s_{il},\chi)=\iota(s_{ij}s_{jk}s_{ki},\chi)
        =\chi_{ij}\chi_{ik}\chi_{il}(-1)=\chi_{jkl}(-1),
    \end{align}
    where \(\Set{i,j,k,l}=\tetraV\).
    Then \eqref{eqn:flipping_cocycle} implies that
    \begin{align}
        \frac{I(s_{ik}^{-1}s_{ij}^{-1}\chi)}{I(s_{ij}^{-1}\chi)}
        &=\bigl(\chi_{ij}^{-1}\chi_{il}\chi_{kj}\chi_{kl}\bigr)(-1)
        \gamma\Bigl(\OneHalf,\frac{\chi_{ik}\chi_{il}}{\chi_{ij}^{-1}}\Bigr)
        \gamma\Bigl(\OneHalf,\frac{\chi_{ik}\chi_{ij}^{-1}}{\chi_{il}}\Bigr)
        \gamma\Bigl(\OneHalf,\frac{\chi_{ki}\chi_{kl}}{\chi_{kj}}\Bigr)
        \gamma\Bigl(\OneHalf,\frac{\chi_{ki}\chi_{kj}}{\chi_{kl}}\Bigr)\\
        &=\bigl(\chi_{ij}\chi_{il}\chi_{kj}\chi_{kl}\bigr)(-1)
        \gamma(\chi_{jkl+})
        \gamma(\chi_{\bar{j}k\bar{l}+})
        \gamma(\chi_{i\bar{j}l+})
        \gamma(\chi_{ij\bar{l}+})
    \end{align}
    And one can compute other quotients
    \(I(s_{il}^{-1}s_{ik}^{-1}s_{ij}^{-1}\chi)/I(s_{ik}^{-1}s_{ij}^{-1}\chi)\)
    and so on. On the other hand, the weights in \(\Sigma^+\cap
    s_{ij}s_{ik}s_{il}(\Sigma^-)\) are precisely
    \begin{align}
        \chi_{i\bar{j}k},\chi_{ij\bar{k}},\quad
        \chi_{i\bar{k}l},\chi_{ik\bar{l}},\quad
        \chi_{i\bar{j}l},\chi_{ij\bar{l}},\quad
        \chi_{jkl}, \chi_{\bar{j}kl}, \chi_{j\bar{k}l}, \chi_{jk\bar{l}}.
    \end{align}
    The claim \eqref{eqn:sign_of_triple_flips} is then an easy (albeit a bit
    tedious) exercise.

    The longest element \(w_0\), which flips
    all edges simultaneously, can be written as the product of
    \(s_{ij}s_{ik}s_{il}\) and \(s_{jk}s_{kl}s_{lj}\). Repeating the same
    argument for these two elements, we see that \(\iota(w_0,\chi)=1\).

    Lastly, the inv-Regge symmetry in \Cref{prop:inv_Regge_symmetry} is the
    element \(r_{\EN{2}\EN{5}}w_0=w_0r_{\EN{2}\EN{5}}\). Using the fact that
    \(\iota(w_0,\chi)=1\) and \(\Sigma^+\cap w_0(\Sigma^-)=\Sigma^+\), we then can show that
    \(\iota(r_{\EN{2}\EN{5}},\chi)=1\). Conjugating using \(\FRTV\), or
    more precisely by cyclically permuting \(\Set{\V1,\V2,\V3}\), the
    same also holds for \(r_{\EN{1}\EN{4}}\) and \(r_{\EN{3}\EN{6}}\). This
    finishes the proof.
\end{proof}

\section{Computations for unramified principal series}\label{sec:computations_for_unramified_principal_series}

In this section, we prove \Cref{thm:main_duality_theorem} by direct
computations, partially assisted by a computer. 
Using \Cref{prop:VertexIntegral}, the statement to be proved is as follows:
\begin{align}
    \label{eqn:vertex_final_equality_in_proof}
    (1-q^{-2})^5\VI(\RO/K,\varphi)L(1,\mathrm{ad})=\Tr(q^{-\OneHalf}\Frob,\bbC[P]).
\end{align}

Our proof breaks into several steps:

\begin{enumerate}
    \item We first compute $\VI(\RO/K,\varphi)$ using the Bruhat--Tits tree of
        \(\RO\cong\PGL_2\) as a sum of terms, each of which is a
        product of several geometric series; see \Cref{VIXcomp}.
    \item On the dual side, the representation \(\bbC[P]\) of \(\Spin_{12}\)
        decomposes in a very nice way by using the Cartan map. Then the trace of
        \(q^{-\OneHalf}\Frob\) can be computed using Weyl character formula, see
        \Cref{WeylFormula}.
\end{enumerate}

At this point, we should be able to prove     \eqref{eqn:vertex_final_equality_in_proof} by
direct comparison, since both sides are algebraic expressions. However, the
length of the expressions seems too large for our computer to handle
efficiently, and so we opt for a more indirect approach using analysis. Regard
both sides of  \eqref{eqn:vertex_final_equality_in_proof} as functions of on the
six-dimensional torus $\TypeD_6 \otimes \bbT$ of possible $\sigma$. We shall
then verify that:

\begin{enumerate}
    \setcounter{enumi}{2}
    \item  Both  sides of  \eqref{eqn:vertex_final_equality_in_proof} have at most simple poles,
        and these simple poles can occur only along
        the locus where some eigenvalue of $\sigma$ acting on $\Sigma$
        equals $q^{\OneHalf}$. See   \Cref{prop:poles_of_VI} and  \Cref{WeylFormula}.

    \item These poles have  the same residues.  See \Cref{sub:comparison_using_residues} for 
        the (computer-assisted) computation. 
\end{enumerate}  

Therefore, 
the difference between the two sides defines a regular
function on $\TypeD_6 \otimes \bbT \simeq \Gm^6$. 
To conclude it is constant, we use the following observations:

\begin{enumerate}
    \setcounter{enumi}{4}
    \item  If we restrict either the left-hand side or the right-hand side
        to a generic one-parameter subgroup of the torus  $\TypeD_6 \otimes \bbT$
        they remain bounded at infinity.
        (See \Cref{BoundedLHS} and   \Cref{BoundedRHS}). 
\end{enumerate}
Here ``generic'' means that the coordinates of the one-parameter subgroup is permitted
to avoid a finite set of hyperplanes. One readily verifies that a regular function on $\Gm^n$
that remains bounded along a generic one-parameter subgroup is constant. 
So the difference between the left-hand and right-hand sides
must be constant. 
We can conclude by showing that the desired equality holds at a single value of $\sigma$.

Let us set up notation. In the statement of \Cref{thm:main_duality_theorem},
we have
$\chi\in\PrincipalSeries_0$
with image $\Frob\in \TypeD_6 \otimes \bbT$; we denote by 
\(x_{ij}=\chi_{ij}(\varpi)\)  the value of \(\chi_{ij}\) at the uniformizer,
so that the various $x_{ij}$ for $i<j$ provide coordinates on $ \OddFunctions \otimes \bbT$.
Let $\Frob \in \TypeD_6 \otimes \bbT$ be as in the statement of the theorem,
whereas the image of $\sigma$ under the six standard coordinate functionals are  
\begin{align}
    x_{\V{12}}x_{\V{34}},x_{\V{12}}x_{\V{34}}^{-1},\quad
    x_{\V{13}}x_{\V{24}},x_{\V{13}}x_{\V{24}}^{-1},\quad
    x_{\V{14}}x_{\V{23}},x_{\V{14}}x_{\V{23}}^{-1},
\end{align}
as well as their inverses. 

\subsection{The computation based on Bruhat--Tits tree}
\label{sub:the_computation_based_on_bruhat_tits_tree}

Recall that the quotient set \(\RO/K\) can be naturally identified with the
set of vertices of a tree, namely the Bruhat--Tits tree of \(\RO\). The
\(K\)-orbits of those vertices, in other words, the double quotient
\(K\backslash \RO/K\), can be identified with \(\bbN\):
the identity coset $K$ is sent to \(0\), and
in general we send a double coset to its distance from the orbit $K$.

With the notations in \Cref{sub:spherical_vertex_formula}, we note that the
spherical vector \(v_{ij}\) can be represented by the 
eigenfunction of the adjacency matrix of the tree, with
eigenvalue \(q^{-\OneHalf}(x_{ij}+x_{ij}^{-1})\), where
\(x_{ij}=\chi_{ij}(\varpi)\) is the value of \(\chi_{ij}\) at the uniformizer.
Recall that for any \(x\in\bbC\), the
corresponding Hecke eigenspace is generated by a function on \(\bbN\cong
K\backslash \RO/K\):
\begin{align}
    f_x(n)=\frac{q x-x^{-1}}{(1+q)(x-x^{-1})}(q^{-\OneHalf}x)^n+\frac{x-q
    x^{-1}}{(1+q)(x-x^{-1})}(q^{-\OneHalf}x^{-1})^n,
\end{align}
where we, for now, naively normalize \(f_x\) so that \(f_x(0)=1\). Let
\(f_{ij}=f_{x_{ij}}\).

The space \(X^{\tetraV}\) is simply the moduli space of \(4\) labeled vertices on the
Bruhat--Tits tree, and so \(\RO\backslash X^{\tetraV}\) may be identified
with the subspace where the vertex \(\V1\) sits at the root (that is, the
trivial coset \(K\)). The function \(\varphi_{ij}\) is simply the value of
\(f_{ij}\) evaluated at the distance between vertices \(i\) and \(j\). Note
that the opposite orientation of the edge \(ij\) (in other words, using
\(f_{ji}\) instead of \(f_{ij}\)) does not change \(\varphi_{ij}\),
because the formula above is invariant under $x \leftrightarrow x^{-1}$. For
definiteness we shall
choose the orientations so that \(i<j\), which is consistent
with other parts of this paper.

\subsubsection{}
The generic
configuration patterns of \(4\) vertices on a tree (with \(\V1\) at the root)
are, as follows, put into three (not disjoint) classes \(\bA\), \(\bB\), \(\bC\):
\newsavebox{\tetraII}
\newsavebox{\tetraIII}
\newsavebox{\tetraIV}
\sbox{\tetraII}{
    \begin{tikzcd}[ampersand replacement = \&, column sep=small, row sep=small]
        \& \V1 \ar[d, "a"] \& \& \bA\\
        \& \bullet  \ar[ld, "b"']\ar[rd, "c"] \& \&\\
        \V2 \&   \& \bullet \ar[ld, "d"']\ar[rd, "e"] \&\\
         \& \V3 \&  \& \V4
    \end{tikzcd}
}
\sbox{\tetraIII}{
    \begin{tikzcd}[ampersand replacement = \&, column sep=small, row sep=small]
        \& \V1 \ar[d, "a"] \& \& \bB\\
        \& \bullet  \ar[ld, "b"']\ar[rd, "c"] \& \&\\
        \V3 \&   \& \bullet \ar[ld, "d"']\ar[rd, "e"] \&\\
         \& \V2 \&  \& \V4
    \end{tikzcd}
}
\sbox{\tetraIV}{
    \begin{tikzcd}[ampersand replacement = \&, column sep=small, row sep=small]
        \& \V1 \ar[d, "a"] \& \& \bC\\
        \& \bullet  \ar[ld, "b"']\ar[rd, "c"] \& \&\\
        \V4 \&   \& \bullet \ar[ld, "d"']\ar[rd, "e"] \&\\
         \& \V2 \&  \& \V3
    \end{tikzcd}
}
\begin{equation}
    \label{eqn:three_tree_patterns}
    \begin{tikzcd}
        \usebox{\tetraII}& \usebox{\tetraIII}& \usebox{\tetraIV}
    \end{tikzcd}
\end{equation}
where any ``line'' above signifies a connecting path rather
than just an edge of the tree, and the letters \(a\) through \(e\) are the
lengths of the paths. Any of these numbers can be \(0\); in particular when
\(c=0\), the three classes coincide, and in this case we will use extra care.

Given any pattern as in \eqref{eqn:three_tree_patterns}, say in class \(\bA\),
with fixed \(a,\ldots,e\), then its contribution towards the
vertex integral \(\VI(X,\varphi)\) is given by
\begin{align}
    f_{\V{12}}(a+b)f_{\V{13}}(a+c+d)f_{\V{14}}(a+c+e)f_{\V{23}}(b+c+d)
    f_{\V{24}}(b+c+e)f_{\V{34}}(d+e)
\end{align}
multiplied by the volume \(\boldsymbol{\mu}\) of the subset of \(\RO\backslash
X^\tetraV\) that gives this pattern. Note that we put \(\V1\) at the root, which
is the same as identifying \(\RO\backslash X^\tetraV\) with the double quotient
\(K\backslash X^3\), where \(X^3\) is indexed by \(\V2, \V3, \V4\), and \(K\)
acts diagonally.

Since the volume of \(K\) is normalized to be \(1\), and since \(K\) acts
transitively on the set of all configurations in class \(\bA\) with the same
pattern (that is, same numbers \(a,\ldots,e\)),
the volume \(\boldsymbol{\mu}\) is just the
number of elements in this set of configurations.
It is then easy to see that \(\boldsymbol{\mu}\) is of the order
\(q^{a+b+c+d+e}\), but with an additional factor \(C_{abcde}\) depending on
whether any of \(a,\ldots,e\) is \(0\) or not.
For example, if none of the numbers are \(0\), then there are
\((1+q^{-1})q^{a+b}\) different choices to place \(\V2\), and then
\((1-q^{-1})q^{c+d}\) choices to place \(\V3\), and finally \((1-q^{-1})q^e\)
choices to place \(\V4\). In this case, \(C_{abcde}=(1+q^{-1})(1-q^{-1})^2\).
The values of \(C_{abcde}\) are listed in
\Cref{tab:volume_constant_6j_tree}.

The classes \(\bB\) and \(\bC\) are treated similarly, but with the complication
that when \(c=0\), they duplicate cases already considered for \(\bA\). To
eliminate such duplications, we let \(Z_c=1/3\) if \(c=0\) and \(1\)
otherwise, and multiply everything in all three classes by \(Z_c\).

\subsubsection{} \label{VIXcomp}
Thus, \(\VI(X,\varphi)\) is equal to the series
\begin{multline}
    \label{eqn:tree_summation}
    \sum_{a,b,c,d,e}\Bigl[f_{\V{12}}(a+b)f_{\V{13}}(a+c+d)f_{\V{14}}(a+c+e)f_{\V{23}}(b+c+d)f_{\V{24}}(b+c+e)f_{\V{34}}(d+e)\\
        +f_{\V{13}}(a+b)f_{\V{12}}(a+c+d)f_{\V{14}}(a+c+e)f_{\V{23}}(b+c+d)f_{\V{34}}(b+c+e)f_{\V{24}}(d+e)\\
        +f_{\V{14}}(a+b)f_{\V{13}}(a+c+d)f_{\V{12}}(a+c+e)f_{\V{34}}(b+c+d)f_{\V{24}}(b+c+e)f_{\V{23}}(d+e)
    \Bigr]
    q^{a+b+c+d+e}C_{abcde}Z_c,
\end{multline}
where \(a,\ldots,e\) range in \(\bbN\).
\begin{table}
    \begin{center}
        \begin{tabular}[c]{|c|c|c|}
            \hline
            \# & Zero conditions & \(C_{abcde}\)\\
            \hline
            1 & \(a=b=c=d=e=0\) & 1\\
            \hline
            2 & \(abcde\neq 0\) & \((1+q^{-1})(1-q^{-1})^2\)\\
            \hline
            3 & \(c=0\), \(abde\neq 0\) & \((1+q^{-1})(1-q^{-1})(1-2q^{-1})\)\\
            \hline
            4 & \(ab=0\), \(de\neq 0\), \(a+b+c\neq 0\) & \((1+q^{-1})(1-q^{-1})\)\\
            \hline
            5 & \(ab\neq 0\), \(de=0\), \(c+d+e\neq 0\) & \((1+q^{-1})(1-q^{-1})\)\\
            \hline
            6 & otherwise & \((1+q^{-1})\)\\
            \hline
        \end{tabular}
    \end{center}
    \caption{Values of \(C_{abcde}\)}\label{tab:volume_constant_6j_tree}
\end{table}

\subsubsection{Boundedness of the vertex integral} \label{BoundedLHS}

We now show that
 \begin{align}
     (1-q^{-2})^5\VI(\RO/K,\varphi)L(1,\mathrm{ad}) 
\end{align}
remains bounded when $\sigma$ varies through a {\em generic}
one-parameter torus.

Explicitly, let \(x_{ij}=t^{n_{ij}}\) for \(n_{ij}\in\bbZ\);
we will prove the boundedness as $t \rightarrow 0$ so long
as all the $n_{ij}$ are nonzero.
The factor $L(1, \mathrm{ad})$ is, up to constants,
a product of terms $(1-q^{-1} t^{\pm 2n_{ij}})^{-1}$.
Such factors approach $1$ as $t \rightarrow 0$ if the exponent
is positive, $1-q^{-1}$ if it is zero, and zero if it is negative. 
Therefore, it suffices to prove the boundedness for  $ \VI(\RO/K,\varphi)$.

The constants \(C_{abcde}\) and \(Z_{c}\) are independent of
\(x_{ij}\) and for \(abcde\neq 0\) they stay the same constants respectively. It
suffices to look at the summation
\begin{align}
    \sum_{a,b,c,d,e}&f_{\V{12}}(a+b)f_{\V{13}}(a+c+d)f_{\V{14}}(a+c+e)\\
                    &\x f_{\V{23}}(b+c+d)f_{\V{23}}(b+c+e)f_{\V{34}}(d+e)q^{a+b+c+d+e},
\end{align}
because the other two summands are similar. Each \(f_{ij}(n)\) is the sum of two
terms:
\begin{align}
    \frac{qx_{ij}-x_{ij}^{-1}}{(1+q)(x_{ij}-x_{ij}^{-1})}(q^{-\OneHalf}x_{ij})^n,\quad
    \frac{x_{ij}-q x_{ij}^{-1}}{(1+q)(x_{ij}-x_{ij}^{-1})}(q^{-\OneHalf}x_{ij}^{-1})^n,
\end{align}
where the coefficients in front of \((q^{-\OneHalf}x_{ij})^n\) and
\((q^{-\OneHalf}x_{ij}^{-1})^n\)
are uniformly bounded when \(x_{ij}\) goes to \(0\) or \(\infty\).
Expanding the products of \(f_{ij}\)s in the summation, we see that we only need
to bound the series
\begin{align}
    \sum_{a,b,c,d,e}
    &(q^{-\OneHalf}t^{n_{\V{12}}})^{a+b}
    (q^{-\OneHalf}t^{n_{\V{13}}})^{a+c+d}
    (q^{-\OneHalf}t^{n_{\V{14}}})^{a+c+e}\\
    &\x (q^{-\OneHalf}t^{n_{\V{23}}})^{b+c+d}
    (q^{-\OneHalf}t^{n_{\V{24}}})^{b+c+e}
    (q^{-\OneHalf}t^{n_{\V{34}}})^{d+e}
    q^{a+b+c+d+e}\\
    =
    \sum_{a,b,c,d,e}
    &\bigl(q^{-\frac{a}{2}}t^{a(n_{\V{12}}+n_{\V{13}}+n_{\V{14}})}\bigr)
    \bigl(q^{-\frac{b}{2}}t^{b(n_{\V{12}}+n_{\V{23}}+n_{\V{24}})}\bigr)
    \bigl(q^{-c}t^{c(n_{\V{13}}+n_{\V{14}}+n_{\V{23}}+n_{\V{24}})}\bigr)\\
    &\x \bigl(q^{-\frac{d}{2}}t^{d(n_{\V{13}}+n_{\V{23}}+n_{\V{34}})}\bigr)
    \bigl(q^{-\frac{e}{2}}t^{e(n_{\V{14}}+n_{\V{24}}+n_{\V{34}})}\bigr)
    \\
    ={}&
    \frac{1}{1-q^{-\OneHalf}t^{n_{\V{12}}+n_{\V{13}}+n_{\V{14}}}}\x
    \frac{1}{1-q^{-\OneHalf}t^{n_{\V{12}}+n_{\V{23}}+n_{\V{24}}}}\x
    \frac{1}{1-q^{-1}t^{n_{\V{13}}+n_{\V{14}}+n_{\V{23}}+n_{\V{24}}}}\\
                    &\x\frac{1}{1-q^{-\OneHalf}t^{n_{\V{13}}+n_{\V{23}}+n_{\V{34}}}}
    \x\frac{1}{1-q^{-\OneHalf}t^{n_{\V{14}}+n_{\V{24}}+n_{\V{34}}}}.
    \label{eqn:tree_based_6j_bound_estimate}
\end{align}
Clearly when \(t\to 0\) or \(t\to\infty\), the above series is a
product of \(1\), \(0\), \((1-q^{-\OneHalf})^{-1}\) or \((1-q^{-1})^{-1}\),
hence bounded.

\subsubsection{Poles of the vertex integral} \label{Poles}
\begin{proposition}
    \label[proposition]{prop:poles_of_VI}
    The expression \(\VI(\RO/K,\varphi)L(1,\mathrm{ad})\)
     viewed as a rational function in
    variables \(x_{ij}\) and \(q^{-\OneHalf}\), has poles at hypersurfaces
    \begin{align}
        1-q^{-\OneHalf}x_{ij}^\pm x_{ik}^\pm x_{il}^\pm,
    \end{align}
    where \(\tetraV=\Set{i,j,k,l}\), and nowhere else,
    that is to say, only poles at points where   $\sigma$ has an eigenvalue
    $q^{\OneHalf}$ in the half-spin representation\(\Veven=S\).
\end{proposition}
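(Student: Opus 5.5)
We prove the proposition by making the tree sum \eqref{eqn:tree_summation} explicit. Expand each \(f_{ij}(n)\) as the sum of its two geometric terms, with coefficients \(c^\pm_{ij}=\frac{qx_{ij}^{\pm1}-x_{ij}^{\mp1}}{(1+q)(x_{ij}-x_{ij}^{-1})}\) (up to sign) and ratios \(q^{-\OneHalf}x_{ij}^{\pm1}\). Each of the three pattern classes \(\bA,\bB,\bC\) then becomes a sum over \(2^6\) sign choices of a five-fold sum over \(a,\dots,e\). The constant \(C_{abcde}Z_c\) depends only on which of \(a,\dots,e\) vanish, so we split each sum into the finitely many strata of \Cref{tab:volume_constant_6j_tree}. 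On each stratum every free variable ranges over \(\bbN_{\ge1}\), giving a rational function that is a product of factors \(\frac{r}{1-r}\). As in \eqref{eqn:tree_based_6j_bound_estimate}, the ratios are
\begin{align}
    r_a=q^{-\OneHalf}x_{\V{12}}^{\pm}x_{\V{13}}^{\pm}x_{\V{14}}^{\pm},\quad
    r_b=q^{-\OneHalf}x_{\V{21}}^{\pm}x_{\V{23}}^{\pm}x_{\V{24}}^{\pm},\quad
    r_d,\ r_e \text{ (vertices }\V3,\V4),\quad
    r_c=q^{-1}x_{\V{13}}^{\pm}x_{\V{14}}^{\pm}x_{\V{23}}^{\pm}x_{\V{24}}^{\pm}.
\end{align}
The ratios \(r_a,r_b,r_d,r_e\) are exactly \(q^{-\OneHalf}\) times vertex triple products; for the other classes the roles of the vertices permute. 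This exhibits \(\VI\) as a rational function whose only possible poles are: (i) the vertex hypersurfaces \(1-q^{-\OneHalf}x_{ij}^\pm x_{ik}^\pm x_{il}^\pm\); (ii) the ``\(c\)-hypersurfaces'' \(1-q^{-1}x_{ik}^{\pm}x_{il}^{\pm}x_{jk}^{\pm}x_{jl}^{\pm}\); and (iii) the loci \(x_{ij}=\pm1\) coming from the denominators of \(c^\pm_{ij}\).

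Next we remove the spurious poles. The poles at \(x_{ij}=\pm1\) are fake because \(f_x(n)\) is a polynomial in \(x+x^{-1}\) (a Hecke eigenfunction value), so \(\VI\) is in fact symmetric and regular there; equivalently, the residues of the \(c^+\) and \(c^-\) terms cancel. Multiplying by \(L(1,\mathrm{ad})\) adds candidate poles at \(1-q^{-1}x_{ij}^{\pm2}\) and at the constant \(1-q^{-1}\). To show these are cancelled, we use that \(c^\pm_{ij}\) contains the factor \(qx_{ij}^{\pm1}-x_{ij}^{\mp1}\), i.e.\ \(1-q^{-1}x_{ij}^{\mp2}\) up to units, which appears in the numerator of every term. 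Since \(L(1,\mathrm{ad})\) has only one such factor per sign per edge, and every term of the expansion carries, for each edge, exactly one of \(c^+_{ij}\) or \(c^-_{ij}\), this bookkeeping must be done after summing over the sign choices. Concretely, we plan to check that \(\VI\) vanishes on \(x_{ij}^2=q\). This holds because there \(f_{x}\) degenerates to the single geometric term \((q^{-\OneHalf}x^{-1})^n\), which is the one making the spherical pairing degenerate.

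The main obstacle is type (ii): showing that the \(c\)-hypersurfaces \(q^{-1}x_{ik}x_{il}x_{jk}x_{jl}=1\) (all sign variants) are not poles after summing the three classes \(\bA,\bB,\bC\) with the \(Z_c\) weights and the \(c=0\) corrections. We would first try a residue computation. Fixing a \(c\)-hypersurface associated to the splitting \(\{\V1\V2\}|\{\V3\V4\}\), only class \(\bA\) contributes a pole there, and its residue is proportional to \(\sum_{a,b,d,e}(\cdots)\). We want to show this vanishes by recognising it as the statement that the partial sum over the ``inner path'' reconstructs a product \(f(\cdot)f(\cdot)\) that is a Hecke eigenfunction. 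That is, we would use the identity \(\sum_{c}q^{c}\,\cdot\)(eigenfunction along path) = finite expression, the tree analogue of the associativity of spherical functions, \(\int_K \varphi(xky)\,dk=\varphi(x)\varphi(y)\). If a clean conceptual cancellation is not found, we fall back on the same computer-assisted residue check used in \Cref{sub:comparison_using_residues}: restrict to each \(c\)-hypersurface and verify symbolically that the residue is zero. Finally, the nonvanishing of the residues along type (i) confirms that these poles genuinely occur, and their simplicity follows because each ratio appears linearly in a single geometric factor on each stratum.
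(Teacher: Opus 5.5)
Your plan follows the paper's proof. You expand each $f_{ij}$ into its two geometric terms and read off the candidate polar divisors from the ratios: the vertex triples and the four-edge ratios $q^{-1}x_{ik}^{\pm}x_{il}^{\pm}x_{jk}^{\pm}x_{jl}^{\pm}$. You then show that the residues vanish along the spurious ones, namely the four-edge hypersurfaces and the $L(1,\mathrm{ad})$ hypersurfaces $1\pm q^{-1/2}x_{ij}^{\pm1}=0$. The paper does this last step by direct computer computation, which is also your fallback for the four-edge case; your ``associativity'' idea there is only a heuristic.

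Your treatment of the denominators $x_{ij}-x_{ij}^{-1}$ of the coefficients, which the paper passes over, is correct. Each $f_x(n)$ is a polynomial in $x+x^{-1}$, and the tree sum converges locally uniformly near the unit torus, so nothing can be polar along $x_{ij}=\pm1$. Also, $q$ is a fixed prime power, so $L(1)^6$ is a nonzero constant and there is no ``constant pole'' to cancel.

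The step that does not hold up is the $L(1,\mathrm{ad})$ one. At $x_{ij}^2=q$ it is $c^-_{ij}$ (the coefficient containing $x-qx^{-1}$) that vanishes. So there $f_{x_{ij}}(n)=c^+_{ij}(q^{-1/2}x_{ij})^n=(\pm1)^n$, the spherical function of the trivial or unramified quadratic character, not $(q^{-1/2}x_{ij}^{-1})^n$.

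At such points the tree series diverges. For example, for $ij=\V{12}$ in class $\bA$, the ratios $r_a,r_b$ become $\pm x_{\V{13}}^{\pm}x_{\V{14}}^{\pm}$ and $\pm x_{\V{23}}^{\pm}x_{\V{24}}^{\pm}$, which have modulus one when the other characters are unitary. So ``$\VI=0$ on $x_{ij}^2=q$'' is a statement about the rational continuation. It amounts to a nontrivial cancellation among all the terms carrying $c^+_{ij}$, none of which vanishes individually.

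Saying that the spherical pairing ``degenerates'' does not establish this cancellation. You need either an actual argument for it or the same residue check the paper performs along $1\pm q^{-1/2}x_{ij}^{\pm1}=0$. With that substitution your proof coincides with the paper's.
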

\begin{proof}
    Since the summation is a sum of products of
    geometric series, we know it can only have simple poles at the hypersurfaces
    defined by the denominators in \eqref{eqn:tree_based_6j_bound_estimate}, such as
    \(1-q^{-\OneHalf}x_{\V{12}}x_{\V{13}}x_{\V{14}}\), etc.
    We need to show that the residue vanishes along:
    \begin{itemize}
        \item zeroes of the factors involving \(4\) different \(x_{ij}\)s, such
            as \(1-q^{-1}x_{\V{13}}x_{\V{14}}x_{\V{23}}x_{\V{24}} =0 \),  as
            well as
        \item  zeroes of factors coming from $L(1, \mathrm{ad})$, i.e. \(1\pm
            q^{-\OneHalf}x_{ij}^{\pm} =0\).
    \end{itemize}
    These are checked by routine computer computation (see remark below for some discussion of why this is easier than just checking the original result directly). 
\end{proof}

\begin{remark}
    \label[remark]{rmk:residues_are_easier_for_computer}
    Although the computation cost is highly dependent on the algorithm and
    implementation, it is reasonable to expect that the residues are
    significantly easier to handle computationally, because the ``length'' of
    the symbolic expression, vaguely speaking, is at least expected to be
    between \(1/16\) to \(1/24\) of that of the full tetrahedral symbol: for
    example, there are \(2^4=16\) cases in \(1-q^{-1}x_{\V{13}}^\pm
    x_{\V{14}}^\pm x_{\V{23}}^\pm x_{\V{24}}^\pm \), and \(2^2\x 6=24\) cases in
    \(1\pm q^{-\OneHalf}x_{ij}^{\pm}\). On the other hand, the expression is
    mainly products of two-term polynomials, therefore the time (and memory)
    cost of the computation can potentially grow exponentially with respect to
    the length.
\end{remark}

\subsection{The dual side}\label{sub:the_dual_side}

On the dual side, we consider \(\Spin_{12}\), and one of its two
half-spin representations (the one relevant to us is the last fundamental
representation of highest weight \(\Wt_6\)).   We will now describe 
in more detail the 
  the Lagrangian \(P\) in the half-spin representation \(\Veven\)
defined by the pure spinors.

\subsubsection{}
Indeed, equip \(\bbC^{12}\) with the anti-diagonal bilinear form
\begin{align} \label{split2}
    Q(x,y)=\sum_{i=1}^{12}x_iy_{12-i+1},
\end{align}
so that it decomposes
into the direct sum of two maximal isotropic spaces
\begin{align}
    \bbC^{12}=V_6\oplus V_6^*,
\end{align}
where \(V_6\) is the first six coordinates.
The exterior algebra \(\wedge^\bullet V_6\) is a module of the Clifford
algebra \(\Clif(\bbC^{12},Q)\) (the quotient of the tensor algebra of
\(\bbC^{12}\) by the relation \(x\otimes y+y\otimes x=Q(x,y)\)). The action of
\(\Clif(\bbC^{12},Q)\) is as follows: vectors in \(V_6\) act by wedging, and
vectors in \(V_6^*\) act by contracting. 
It is not hard to see (by fixing bases in \(V_6\) and \(V_6^*\)) that this
induces an isomorphism of associative algebras
\begin{align}
    \Clif(\bbC^{12},Q)\simeq \End(\wedge^\bullet V_6).
\end{align}
Since up to isomorphism \(\wedge^\bullet V_6\) is the unique simple module of
\(\End(\wedge^\bullet V_6)\), we see that as an abstract
\(\Clif(\bbC^{12},Q)\)-module this construction is independent of 
choice of the splitting \eqref{split2}
up to isomorphism. Consequently,  the automorphisms of $(\bbC^{12},Q)$
act projectively upon it, and this actually lifts to a genuine
action of $\Spin_{12}$ --- this is 
the $64$-dimensional spin representation.
The spin representation decomposes into two  $32$-dimensional half-spin
representations $\Veven$ and $\Vodd$, 
that is, the subspace of even- and odd- degree elements;
in the labeling of Bourbaki  \cite[Plate IV]{Bo02} these are respectively
the representations of highest weight $\Wt_6$ and $\Wt_5$.

\subsubsection{}
\label{ssub:pure_spinor_cone}

The distinguished element
\begin{align}
    1\in\bbC=\wedge^0 V_6
\end{align}
which we will denote by \(\bFv_0\) for better visibility, 
is annihilated by the  subspace \(V_6^*\) under the
Clifford action, and one readily verifies that 
this characterizes it up to scalars.  From the uniqueness claim above, it follows that 
{\em any} Lagrangian (i.e., maximal $Q$-isotropic) subspace of $\bbC^{12}$
annihilates a one-dimensional subspace in  \(\wedge^\bullet V_6\)
under the Clifford action. 
A \notion{pure spinor} is any vector 
in \(\wedge^\bullet V_6\)  belonging to such a line;
thus $\bFv_0$ is a pure spinor. 

Clearly, pure spinors form a cone $\Spinor$.
Let $\Spinor^{\times}$ be the
open subset of nonzero pure spinors. Then the quotient of $\Spinor^{\times}$
by scaling, i.e. the associated projective subvariety of the projectivization
of \(\wedge^\bullet V_6\), is evidently identified
with the space of {\em all} Lagrangian subspaces of $\bbC^{12}$,
that is to say,  the Lagrangian Grassmannian $\LGr(\bbC^{12})$ of $\bbC^{12}$.
This Lagrangian Grassmanian splits into  two orbits under $\SO_{12}$, i.e.
\begin{align}
    \LGr(\bbC^{12}) = \LGr^+ \coprod \LGr^{-}.
\end{align}
In fact, two isotropic subspaces $A, B$ belong to the same orbit
if and only if the dimension of $A \cap B$ is even.

One gets, therefore, a corresponding decomposition
\begin{align}
    \Spinor = P_+ \cup P_-
\end{align}
of the cone of spinors into sub-cones that intersect precisely at the origin.
In fact, one readily verifies from the description above that
$P_+$ is precisely the subcone of pure spinors in $\Veven \oplus 0\simeq \Veven$ 
and \(P_-\) is the cone of pure spinors 
in \(0\oplus \Vodd \simeq \Vodd \). Indeed, using the $\Spin_{12}$-action,
it is enough to verify this for a single point of $P_+$ and a single point of $P_-$,
which one does by explicit computation.

Since we mostly care about the cone \(P_+\), we henceforth denote it simply by
\(P\).

\begin{lemma} \label[lemma]{SpinorsHighestWeight}
    The vector space  $\bbC[P]_n$ of degree $n$ homogeneous functions on $P$
    is identified with the irreducible representation of $\Spin_{12}$
    of highest weight $n \Wt_6$.
\end{lemma}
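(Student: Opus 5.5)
The plan is to realize $P$ as the affine cone over the homogeneous projective variety $\LGr^+ \simeq \Spin_{12}/P_6$ in its projective embedding inside $\mathbb{P}(\Veven)$, and then apply Borel--Weil. The point $\bFv_0 = 1 \in \wedge^0 V_6$ is a highest weight vector for $\Veven$ with respect to the Borel subgroup stabilizing the standard isotropic flag in $V_6$. Its weight is $\Wt_6$, since $\Veven$ is the representation of highest weight $\Wt_6$. By the orbit description in \Cref{ssub:pure_spinor_cone}, $P^{\times}=P-\Set{0}$ is the single $\Spin_{12}\times\Gm$-orbit of $\bFv_0$. Therefore $P=\overline{\Spin_{12}\cdot \bbC\bFv_0}$ is the closure of the orbit of the highest weight line, i.e.\ the cone of highest weight vectors, and the stabilizer of $[\bFv_0]$ is the maximal parabolic $P_6$ associated to $\Wt_6$.

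With this in hand, I would invoke the classical theorem (Kostant, or Borel--Weil together with projective normality of $G/P$) that the cone $\overline{G\cdot v_\lambda}$ of highest weight vectors in $V(\lambda)$ has coordinate ring
\begin{align}
    \bbC[\overline{G\cdot v_\lambda}]\simeq \bigoplus_{n\ge0} H^0(G/P_\lambda,\mathcal{L}_\lambda^{\otimes n})\simeq \bigoplus_{n\ge 0} V(n\lambda)^*.
\end{align}
The argument runs in three steps. (i) Restriction of degree $n$ polynomials on $\Veven$ to $P$ lands in $H^0(G/P_6,\mathcal{O}(n))$, which by Borel--Weil is $V(n\Wt_6)^*$; hence $\bbC[P]_n$ embeds into $V(n\Wt_6)^*$ as long as $P$ is normal. (ii) The map $\mathrm{Sym}^n(\Veven^*)\to \bbC[P]_n$ is surjective by definition, and its image is nonzero, since evaluation at $\bFv_0$ of $(\bFv_0^*)^n$ equals $1$. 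Because $V(n\Wt_6)^*$ is irreducible, the composite is onto. (iii) Therefore $\bbC[P]_n\simeq V(n\Wt_6)^*$. The dual is harmless: in $\TypeD_6$ with $n=6$ even, $-w_0$ acts trivially, so all representations are self-dual and we get $V(n\Wt_6)$.

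The main obstacle is the injectivity in step (i). It needs that $P$ is normal, equivalently that the restriction $\bbC[P]_n\to H^0(G/P_6,\mathcal{O}(n))$ is injective. Injectivity itself is clear, since a function on $P$ vanishing on $P^\times$ vanishes. Surjectivity onto $H^0$ is the projective normality of the embedding $G/P_6\subset\mathbb{P}(V(\Wt_6))$, which is a standard consequence of Kempf vanishing or Frobenius splitting. Alternatively, one can avoid normality entirely. The $U$-invariants of $\bbC[P]$ are spanned by the lowest weight functionals: since the $B$-orbit of $[\bFv_0]$ is dense in $\LGr^+$, any $U$-invariant function on $P$ is determined by its restriction to $\bbC\bFv_0\times$ (open $B^-$-orbit). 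This forces each $\bbC[P]_n$ to have a unique highest weight line, of weight $n\Wt_6$ up to duality. That gives irreducibility directly, and is the route I would try first if citing projective normality is deemed insufficient.
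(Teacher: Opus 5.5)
Your steps (i)--(iii) are essentially the paper's argument. You restrict $\bbC[P]_n$ to $P^\times$, which is a punctured line bundle over $\LGr^+$, and land in an irreducible Borel--Weil space of sections; since $\bbC[P]_n\neq 0$, the two coincide. The paper fixes the weight by observing $\bbC[P]_1\simeq\Veven$, whereas you read it off from the extremal vector $\bFv_0$.

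No normality or projective normality is needed. Injectivity of restriction to $P^\times$ is automatic, and the surjectivity you worry about is exactly what your irreducibility argument in (ii) already proves. So the caveat ``as long as $P$ is normal'' in (i) and the final paragraph on the ``main obstacle'' can simply be dropped.
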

\begin{proof}
    Let $P^{\times}$ be the nonzero elements of $P$. 
    As we have seen above, $P^{\times}$
    is the total space of a line bundle \(L\) over the flag variety
    $\LGr^+$ minus the zero section.\footnote{The square has a nice description: \(L^{\otimes(-2)}\) is
    the pullback of the determinant bundle over \(\LGr^+\).}
    By the Borel--Weil theorem,  sections of line bundles on flag varieties
    are highest weight representations, with highest weight determined
    by the isotropy representation. This shows that $\bbC[P]_n$
    is a subrepresentation of the highest weight representation of weight $n \nu$
    for {\em some} $\nu$, hence must be the said representation itself by
    irreducibility. To compute $\nu$ it is easiest to note
    that    $\C[P]_1$ is by definition a quotient of the irreducible
    representation $\Veven$, and so is in fact $\Veven$; thus $\nu$ is the
    highest weight of $\Veven$.
\end{proof}

\subsubsection{Poles of the trace on the spinor cone} \label{WeylFormula}

\Cref{SpinorsHighestWeight} permits us to compute the character of $\bbC[P]$ by means of the Weyl character formula:
 \begin{align}
    \Tr(q^{-\OneHalf}\Frob,\bbC[P])
    &=\frac{\Frob^\rho}{\prod_{\Rt>0}(\Frob^\Rt-1)}\sum_{w\in
        W}\frac{(-1)^{\ell(w)}\Frob^{w(\rho)}}{1-q^{-\OneHalf}\Frob^{w(\Wt_6)}}.
\end{align}

Note that this function is \(W\)-invariant, and so none of the term \(\Frob^\Rt-1\)
contributes to a pole (because you can conjugate any given \(\Rt\)
away). Therefore, the poles are only given by \(1-q^{-\OneHalf}\Frob^{w(\Wt_6)}\).
We see that the hypersurfaces supporting those poles coincide with the
ones on the automorphic side. Moreover, just as in 
      \Cref{rmk:residues_are_easier_for_computer}, it is easy to see
    that the residues are significantly simpler expressions: there are \(32\)
    weights in \(S\), so the symbolic length of a residue is only \(1/32\) of
    the full trace.

\subsubsection{} \label{BoundedRHS}
We compute the behavior of the trace when \(\Frob=t^\mu\) for some coweight
\(\mu\). It suffices to assume that \(\mu\) is dominant by
\(W\)-invariance,  and since  it is enough to consider
\emph{generic} directions, we assume that $\mu$ is \emph{strictly} dominant.
Looking at each summand in the trace, and we want to show that for any \(w\in
W\), the limit of
\begin{align}
    \label{eqn:term_to_estimate_in_Galois_6j}
    \left[\frac{1}{\prod_{\Rt>0}(t^{\Pair{\Rt}{\mu}}-1)}\right]
    \left[\frac{t^{\Pair{\rho+w(\rho)}{\mu}}}{1-q^{-\OneHalf}t^{\Pair{w(\Wt_6)}{\mu}}}\right]
    =
    \left[\frac{1}{\prod_{\Rt>0}(1-t^{-\Pair{\Rt}{\mu}})}\right]
    \left[\frac{t^{\Pair{-\rho+w(\rho)}{\mu}}}{1-q^{-\OneHalf}t^{\Pair{w(\Wt_6)}{\mu}}}\right]
\end{align}
when \(t\to 0\) or \(\infty\) is bounded.

We first consider the case \(t\to 0\).
In this case, we use the left-hand side of \eqref{eqn:term_to_estimate_in_Galois_6j}.
Since \(\mu\) is strictly dominant, then we have
\((t^{\Pair{\Rt}{\mu}}-1)\to -1\) for any positive root \(\Rt\), and
\begin{align}
    t^{\Pair{\rho+w(\rho)}{\mu}}\longto 0\text{ or }1,
\end{align}
because \(\rho+w(\rho)\) is a sum of positive roots. Lastly, the denominator
\(1-q^{-\OneHalf}t^{\Pair{w(\Wt_6)}{\mu}}\) goes to either \(1\) or
$1-q^{-\OneHalf}$ or  \(\infty\), and
so \(\Tr(q^{-\OneHalf}\Frob,\bbC[P])\) stays uniformly bounded when \(t\to 0\). The
case \(t\to\infty\) is similarly proved by using the right-hand side of
\eqref{eqn:term_to_estimate_in_Galois_6j}.

\subsection{Comparison using residues}\label{sub:comparison_using_residues}
We have now located the poles of both sides of
\eqref{eqn:vertex_final_equality_in_proof}. They are, as we have seen,
located on the locus where an eigenvalue of $\sigma$ on $S$ coincides with
$q^{\OneHalf}$.
The corresponding residues, on either side, can be computed using 
a computer as well --- this computation is substantially smaller
than computing the full expressions --- and it then
turns out these residues can be explicitly factorized. 

For example, we record the residue of
both sides of \eqref{eqn:vertex_final_equality_in_proof}
when \(1-q^{-\OneHalf}\Frob^{-\Wt_6}=1-q^{-\OneHalf}(x_{\V{12}}x_{\V{13}}x_{\V{14}})^{-1}=0\):
\begin{multline}
    \label{eqn:residues_example}
    -L(1,\mathrm{ad})x_{\V{23}}^4 x_{\V{24}}^4 x_{\V{34}}^4\Bigl[(x_{\V{12}}^2-1)  (x_{\V{13}}^2-1)
         (x_{\V{14}}^2-1)  (x_{\V{12}} x_{\V{13}} x_{\V{24}}-x_{\V{34}})\\
        (x_{\V{12}} x_{\V{13}} x_{\V{34}}-x_{\V{24}}) (x_{\V{12}} x_{\V{13}}-x_{\V{24}} x_{\V{34}})
        (x_{\V{12}} x_{\V{13}} x_{\V{24}} x_{\V{34}}-1) (x_{\V{12}} x_{\V{14}} x_{\V{23}}-x_{\V{34}})\\
        (x_{\V{12}} x_{\V{14}} x_{\V{34}}-x_{\V{23}}) (x_{\V{12}} x_{\V{14}}-x_{\V{23}} x_{\V{34}})
        (x_{\V{12}} x_{\V{14}} x_{\V{23}} x_{\V{34}}-1) (x_{\V{13}} x_{\V{14}} x_{\V{23}}-x_{\V{24}})\\
        (x_{\V{13}} x_{\V{14}} x_{\V{24}}-x_{\V{23}}) (x_{\V{13}} x_{\V{14}}-x_{\V{23}} x_{\V{24}})
    (x_{\V{13}} x_{\V{14}} x_{\V{23}} x_{\V{24}}-1)\Bigr]^{-1},
\end{multline}
in which we really meant to replace, for example, \(x_{\V{12}}\) by
\(q^{-\OneHalf}(x_{\V{13}}x_{\V{14}})^{-1}\) (so that \(q^{-\OneHalf}\) is not
treated as a variable but a constant); however, replacing \(q^{-\OneHalf}\)
by \(x_{\V{12}}x_{\V{13}}x_{\V{14}}\) makes the expression look more symmetric.

This completes the proof, according to the general plan outlined at the start of the section.

\section{Hypergeometric evaluations of the tetrahedral symbol} \label{HypergeometricProofs}

Our goal here is to express the tetrahedral symbol in terms of generalized
hypergeometric functions, proving
both \Cref{PROPAB} and the formulas given in  
\Cref{sub:expression_in_4F3_statement}.

\subsection{Classical hypergeometric functions and Mellin--Barnes integrals}
For positive integers \(k<l\) and parameters
\(\ul{a}=(a_1,\ldots,a_l)\in\bbC^l\) and \(\ul{b}=(b_1,\ldots,b_k)\in\bbC^k\),
the generalized hypergeometric function of parameters \(\ul{a}\) and \(\ul{b}\)
is the analytic continuation of the series
\begin{align}
    \pFq{l}{k}(\ul{a},\ul{b};x)
    =\pFq{l}{k}\!\left(\!\begin{array}{c}
        a_1,\ldots,a_l\\
        b_1,\ldots,b_k
    \end{array}\!\Big|\; x\,\right)
    \defeq \sum_{n=0}^\infty\frac{\prod_{j=1}^l(a_j)_n}
    {\prod_{j=1}^k(b_j)_n}\frac{x^n}{n!},
\end{align}
where \((a)_n\) is the \emph{rising} factorial:
\begin{align}
    (a)_0\defeq 1,\quad (a)_n\defeq a(a+1)\cdots (a+n-1).
\end{align}

Classical \(6j\) symbols (i.e., compact \(\RO\) and \(F=\bbR\)) enjoy various
interpretations at the value at the singular point \(x=1\)
of \(\pFq{4}{3}(\ul{a},\ul{b};x)\) for certain \(\ul{a}\) and
\(\ul{b}\); our goal is to  describe a similar result for the tetrahedral
symbols for $F=\bbR$.

Using Mellin transform and its inverse, one can rewrite \(\pFq{l}{k}\) into an
\emph{Mellin--Barnes} type integral (with some assumptions on the parameters):
\begin{align}
    \frac{\Gamma(a_1)\cdots\Gamma(a_l)}
    {\Gamma(b_1)\cdots\Gamma(b_k)}\pFq{l}{k}(\ul{a},\ul{b};x)=
    \frac{1}{2\pi
    i}\int_{c-i\infty}^{c+i\infty}\frac{\Gamma(a_1+s)\cdots\Gamma(a_l+s)}
    {\Gamma(b_1+s)\cdots\Gamma(b_k+s)}\Gamma(-s)(-x)^s\dd s,
\end{align}
where the vertical line \((c-i\infty,c+i\infty)\) separates the poles of all
\(\Gamma(a_j+s)\) from those of \(\Gamma(-s)\).\footnote{We will not use this general
fact here; in the case relevant to us, namely \(\pFq{4}{3}\), details are
contained in \Cref{sub:relation_with_classical_4F3}.}  
One can readily generalize the notion of Mellin--Barnes integrals to any local field because
\(\Gamma\)-functions are essentially the local \(L\)-factors over \(\bbR\)
(cf. \Cref{GammaReview}). Our procedure will, in fact, 
be to first derive a Mellin--Barnes  representation (in this generalized sense) 
of the tetrahedral symbol in the principal series case,
and then derive the various desired consequences from it.

\subsection{Review on Mellin transforms}
We review the properties of Mellin transforms over an arbitrary local field.

\begin{lemma}
    \label[lemma]{lem:basic_Mellin}
    Suppose \(a,b>0\) and \(a+b<1\), then we have equality
    \begin{align}
        \int_{F} \abs{x}^{a-1} \abs{1-x}^{b-1} \dd x= \frac{\gamma(a+b)}{\gamma(a)\gamma(b)},
    \end{align}
    where the left-hand side is absolutely convergent.
\end{lemma}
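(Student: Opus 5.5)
The plan is to obtain the identity from Tate's local functional equation, reading the left-hand side as a multiplicative convolution evaluated at a point and then using the characterization \eqref{Tate2gamma} of $\gamma$ as the Fourier transform of a quasi-character.

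First, absolute convergence. Near $x=0$ the integrand is $\abs{x}^{a-1}$ with $a>0$. Near $x=1$ it is $\abs{1-x}^{b-1}$ with $b>0$. At infinity it behaves like $\abs{x}^{a+b-2}$, and $a+b-2<-1$ because $a+b<1$. Each of these is locally integrable for Haar measure on $F$, including in the complex case where $\abs{\blank}$ is the squared modulus and $\dd x$ is the 2-dimensional measure. So the integral is absolutely convergent.

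Next, the Fourier computation. By \eqref{Tate4gamma}, $\FT(\abs{\blank}^{a-1})=\gamma(a)^{-1}\abs{\blank}^{-a}$ in the distributional sense, and similarly for $b$. Write $u=\abs{\blank}^{a-1}$ and $v=\abs{\blank}^{b-1}$. Then the left-hand side equals the additive convolution $(u*v)(1)$, since $\int u(x)v(1-x)\dd x$ is exactly that convolution at $1$.

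By homogeneity, $u*v$ is a multiple $c\abs{\blank}^{a+b-1}$. To find $c$, take Fourier transforms. Convolution becomes product, so
\begin{align}
    \FT(u*v)=\gamma(a)^{-1}\gamma(b)^{-1}\abs{\blank}^{-a-b}.
\end{align}
On the other hand, \eqref{Tate4gamma} gives $\FT(\abs{\blank}^{a+b-1})=\gamma(a+b)^{-1}\abs{\blank}^{-a-b}$. Comparing the two, $c=\gamma(a+b)/(\gamma(a)\gamma(b))$, which is the claimed value.

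The main obstacle is justifying that the convolution theorem holds for these non-Schwartz functions. I would do this by pairing with a test function rather than applying the convolution theorem directly. Take $\Phi$ Schwartz with Fourier transform $\check\Phi$ supported away from $0$ (or, in the nonarchimedean case, a small Gaussian-type function). Compute $\int(u*v)\,\check\Phi$ by Fubini, which is justified by the absolute convergence established above. Then move the Fourier transforms onto $\Phi$ one variable at a time, using \eqref{Tate} in the range $0<\Re(s)<1$, where Tate's integrals converge absolutely.

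An alternative route avoids convolutions altogether. Insert $\abs{x}^{a-1}=\gamma(a)\int\abs{t}^{-a}\Psi(tx)\dd t$ as a regularized identity, swap the order of integration, and recognize the remaining integrals as $\gamma$-factors via \eqref{Tate3gamma}. I would try the test-function pairing first, because its convergence bookkeeping is cleaner.
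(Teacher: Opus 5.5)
Your proposal is correct and is essentially the paper's argument: the paper multiplies the left side by $\gamma(a+b)^{-1}=\int_F\abs{y}^{a+b-1}\Psi(y)\dd y$ and rescales $x\mapsto x/y$, which is your homogeneity step. It thereby recognizes $\FT(\abs{\blank}^{a-1}*\abs{\blank}^{b-1})(1)$ and splits this via \eqref{Tate4gamma} into $\gamma(a)^{-1}\gamma(b)^{-1}$. Your test-function pairing justifies the interchange of integrals more carefully than the paper's direct appeal to Fubini, since $\int_F\abs{y}^{a+b-1}\Psi(y)\dd y$ is not absolutely convergent at infinity.
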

\begin{proof}
    Since \(a+b-2< -1\) (resp.~\(a-1> -1\), resp.~\(b-1> -1\)), the integral is
    absolutely convergent near \(\infty\) (resp.~\(0\), \(1\)). Thus the whole
    integral is absolutely convergent. For the equality, we note that
    \begin{align}
        \int_{F} \abs{x}^{a-1} \abs{1-x}^{b-1} \dd x\cdot\gamma(a+b)^{-1}
        &\stackrel{\eqref{Tate4gamma}}{=} \int_{F} \abs{x}^{a-1} \abs{1-x}^{b-1} \dd x \int_F\abs{y}^{a+b-1}\Psi(y)\dd y\\
        &= \int_{F} \abs*{\frac{x}{y}}^{a-1} \abs*{1-\frac{x}{y}}^{b-1}
        \dd\Bigl(\frac{x}{y}\Bigr) \int_F\abs{y}^{a+b-1}\Psi(y)\dd y\\
        &= \int_{F} \abs{x}^{a-1} \abs{y-x}^{b-1} \dd x \int_F\Psi(y)\dd y,
    \end{align}
    which is the Fourier transform of the convolution of \(\abs{x}^{a-1}\) and
    \(\abs{x}^{b-1}\). Here we used the fact that \(0<a+b<1\) implies that
    the integral form of \(\gamma(a+b)^{-1}\) is also absolutely convergent,
    hence we are free to use Fubini theorem to manipulate the integrals above.
    Applying \eqref{Tate4gamma} again we arrive at
    \begin{align}
        \int_{F} \abs{x}^{a-1} \abs{1-x}^{b-1} \dd x\cdot
        \gamma(a+b)^{-1}=\gamma(a)^{-1}\gamma(b)^{-1},
    \end{align}
    and this finishes the proof.
\end{proof}

\begin{definition}
    \label[definition]{def:Mellin_transform}
    For a function \(f\) on \(F^\x\), we define its \notion{Mellin transform} to
    be the  integral
    \begin{align}
        \cM_f(\mu)=\int_{F^\x}f(x)\mu(x)\dd^\x x,
    \end{align}
    assuming the integral is absolutely convergent for all (unitary) characters
    \(\mu\) of \(F^\x\). 
    We will allow ourselves to speak of the integral for quasi-characters
    by means of analytic continuation, when applicable.
    When \(F=\bbR\), we denote \(\cM_f^+(s)\defeq
    \cM_f(\abs{\blank}^s)\) and \(\cM_f^-(s)\defeq \cM_f(\sgn\abs{\blank}^s)\).
    In the reverse direction, given a function $\hat{f}(\chi)$ on the character
    group $\widehat{F^{\times}}$, we define its inverse Mellin transform as
    \begin{align}
        \cM^{-1}_{\hat{f}}(x) = \int_{\widehat{F^{\times}}} \hat{f}(\chi) \chi(x) \dd\chi,
    \end{align}
    where the measure $\dd\chi$ is dual to $\dd^{\x}x$, which amounts to asking
    that this is indeed inverse to the Mellin transform.
\end{definition}
 
\begin{lemma} \label[lemma]{MellinL1L1}
    Suppose that $f$ is an $L^1$-function on $F^{\times}$ with the property that
    $\cM_f(\mu)$ defines an $L^1$-function on the character group of
    $F^{\times}$. Then the inverse Mellin transform of $\cM_f$ defines a
    continuous function that agrees with $f$ almost everywhere.
\end{lemma}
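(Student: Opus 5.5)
This is the Mellin inversion theorem on $F^\times$, and I would obtain it by transporting the $L^1$ Fourier inversion theorem for locally compact abelian groups. The group $F^\times$ is locally compact abelian with Haar measure $\dd^\times x$. Its Pontryagin dual is the group $\widehat{F^\times}$ of unitary characters. The Mellin transform of \Cref{def:Mellin_transform} is exactly the Fourier transform $\hat f(\mu)=\int f(x)\mu(x)\,\dd^\times x$, up to the harmless convention $\mu\leftrightarrow\mu^{-1}$. The measure $\dd\chi$ has been chosen dual to $\dd^\times x$, so that the Plancherel/inversion normalisation holds.

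The key steps, in order, are as follows.

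First, I would check that $\cM_f$ is a bounded continuous function on $\widehat{F^\times}$. Since $f\in L^1$, this follows from dominated convergence: characters of $F^\times$ vary continuously in the compact-open topology.

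Second, I would check that the inverse transform $g(x)=\int \cM_f(\chi)\chi(x)\,\dd\chi$ is well defined and continuous. This holds because $\cM_f\in L^1(\widehat{F^\times})$ by hypothesis and $\abs{\chi(x)}=1$; continuity in $x$ again follows by dominated convergence.

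Third, I would invoke the standard inversion theorem for LCA groups (e.g. Rudin, \emph{Fourier Analysis on Groups}, 1.5.1). It says that if $f\in L^1(\Gamma)$ and $\hat f\in L^1(\hat\Gamma)$, then $f$ agrees almost everywhere with the continuous function $\int\hat f(\chi)\chi(x)\,\dd\chi$, provided the dual measure is normalised correctly. The sign convention is fixed by substituting $\chi\mapsto\chi^{-1}$, which preserves $\dd\chi$.

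The only point needing care, and the main obstacle, is matching normalisations: one must confirm that ``$\dd\chi$ dual to $\dd^\times x$'' is precisely the normalisation for which the inversion theorem holds with constant $1$. This is what the paper adopted by definition, so the lemma is a direct application.

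If one prefers a concrete check, I would split $F^\times\simeq \cO^\times\times\varpi^{\bbZ}$ in the nonarchimedean case, or $\{\pm1\}\times\bbR_{>0}$ (resp.\ $S^1\times\bbR_{>0}$) in the archimedean case. The inversion then reduces to Fourier series on a compact group combined with Fourier inversion on $\bbZ$ or on $\bbR$ via $\log$. Under this splitting the dual measure is the product of the counting measure on the discrete factor and the measure $\dd t/2\pi$ (suitably scaled) on the continuous factor.
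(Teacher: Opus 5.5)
Your argument is correct in substance, but it takes a different route from the paper. You make one appeal to the abstract inversion theorem on the locally compact abelian group $F^\times$, with $\widehat{F^\times}$ its Pontryagin dual and $\dd\chi$ the dual Haar measure, so every local field is treated at once.

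The paper instead argues case by case. For $F=\bbR$ (resp.\ $\bbC$) it uses $F^\times\simeq\bbR_{>0}\times\{\pm1\}$ (resp.\ $\bbR_{>0}\times\bbT$) and a logarithmic change of variables. This reduces the claim to the classical Euclidean statement (Stein--Weiss, Corollary~1.21), combined with Fourier analysis on $\{\pm1\}$ or $\bbT$. For nonarchimedean $F$, it shows the inverse transform $f_1$ is continuous. It then checks that $f$ and $f_1$ have equal integrals over every compact open subset of $F^\times$, which determines an $L^1$ function up to null sets.

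Your route is shorter and uniform, but it invokes the full duality theory (existence of the dual normalisation, the uniqueness theorem). The paper's route needs only Euclidean Fourier inversion and an elementary locally-constant argument, and it makes the dual measure concrete. Your fallback sketch matches the paper in the archimedean case. In the nonarchimedean case you would use Fourier series on $\cO^\times$ and inversion on $\varpi^{\bbZ}$ instead of the compact-open pairing test.

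One claim in your third step needs correcting. \Cref{def:Mellin_transform} uses the unconjugated character in both directions. Let $\hat f(\chi)=\int f(x)\overline{\chi(x)}\,\dd^\times x$ denote the LCA Fourier transform. Then $\cM_f(\chi)=\hat f(\chi^{-1})$, and therefore
\[
\int\cM_f(\chi)\chi(x)\,\dd\chi=\int\hat f(\chi)\chi(x^{-1})\,\dd\chi=f(x^{-1})
\]
almost everywhere. The substitution $\chi\mapsto\chi^{-1}$ only moves this inversion around; it cannot remove it. So the lemma, and your proof of it, hold only once the inverse transform is read with $\chi^{-1}(x)$. That is the classical $x^{-s}$ convention the paper itself uses in its ${}_4F_3$ computation. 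This is a typo-level issue in the definition, and it does not affect the paper's application, which only evaluates at $x=1$.
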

\begin{proof}
    This is a standard fact of harmonic analysis. In the \(F=\bbR\) case, using
    the fact that \(\bbR^\x=\bbR_{>0}\x \Set{\pm 1}\) and change of variables,
    the lemma is a standard precise form of Fourier inversion; see for example
    \cite[Corollary~1.21]{StWe71}. The \(F=\bbC\)
    case can be similarly derived from the fact that \(\bbC^\x=\bbR_{>0}\x
    \bbT\) (\(\bbT\) denoting the unit circle).
    The nonarchimedean case is relatively easy to deduce because
    continuous functions are locally constant. It can be proved by first
    verifying that the inverse Mellin transform of $\cM_f$ defines a continuous
    function, call it $f_1$, and then verifying that the pairings of $f$ or
    \(f_1\) with the characteristic function of any compact open subset of
    \(F^\x\) coincide. We leave the details to the reader.
\end{proof}

For a nonempty open interval $J \subset \mathbb{R}$, and let $L^1_{J}$ be the
functions on $F^{\x}$ with the property that $\int_{F^{\x}} \abs{f(x)}\cdot
\abs{x}^{\sigma} \dd^{\x}{x} < \infty$ whenever $\sigma \in J$. 
In practice, we
will be interested in the interval $J=(0,\OneHalf)$.
Many statements can be reduced to the case when $J$ contains zero, simply by
multiplying $f$ by a suitable power of $\abs{x}$.

\begin{lemma}
    For $f \in L^1_J$, the Mellin transform $\cM_f(\mu_s)$ is absolutely
    convergent for a character $\mu$ and any $s$ whose real part belongs to
    $J$.
\end{lemma}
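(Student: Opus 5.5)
The statement reduces to a pointwise domination estimate. Write an arbitrary character as $\mu_s = \mu\abs{\blank}^s$, with $\mu$ a character (i.e.\ unitary) and $\sigma = \Re(s) \in J$. Since $\abs{\mu(x)} = 1$ for all $x \in F^\x$, we have
\begin{align}
    \abs{f(x)\mu_s(x)} = \abs{f(x)}\cdot\abs{x}^{\sigma}.
\end{align}
The integrand defining $\cM_f(\mu_s)$ in \Cref{def:Mellin_transform} is therefore dominated in absolute value by $\abs{f(x)}\abs{x}^{\sigma}$. By the definition of $L^1_J$, this function is integrable against $\dd^\x x$ whenever $\sigma \in J$, so the Mellin integral converges absolutely.

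The only point needing care is the normalization of the absolute value. For $F = \bbC$ the paper uses $\abs{z} = z\bar z$, and one must check that $\abs{\abs{x}^s} = \abs{x}^{\Re s}$ still holds. It does, because $\abs{x}$ is a positive real number, so $\abs{x}^s = e^{s\log\abs{x}}$ has modulus $e^{\Re(s)\log\abs{x}}$.

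With that checked, I do not expect any genuine obstacle. As a remark, the same domination, uniform for $\Re(s)$ in compact subintervals of $J$, combined with Morera's theorem (or differentiation under the integral sign), would also give holomorphy in $s$ on the strip $\Re(s) \in J$. This is not required for the statement but will likely be used afterwards.
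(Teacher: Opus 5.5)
Your argument is correct: since $\mu$ is unitary, $\abs{f(x)\mu_s(x)} = \abs{f(x)}\abs{x}^{\Re(s)}$, and this is $\dd^\x x$-integrable by the very definition of $L^1_J$. The paper gives no written proof and treats the lemma as immediate from that definition, which is exactly the domination argument you give.
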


 When we write $\cM_f$ for
$f\in L^1_J$, we will always regard it as a function on the set of
quasi-characters specified by this Lemma. Next, $L^1_J$ behaves well with
respect to convolution:

\begin{lemma}
    \label[lemma]{lem:basic_properties_of_Mellin}
    If $f_1, f_2 \in L^1_{J}$, the \emph{multiplicative} convolution
    \begin{align}
        f(y)\defeq\int_{F^\x} f_1(x)f_2(yx^{-1})\dd^\x x
    \end{align}
    is absolutely convergent for  almost all $y$, and also defines an element of $L^1_{J}$.
     Moreover, the Mellin transforms are multiplicative: we have
     \begin{align}
         \cM_f = \cM_{f_1} \cM_{f_2},
     \end{align}
    where, as noted above, we regard both sides as functions on quasi-characters
    $\mu_s$
    where the real part of $s$ lies in $J$.
\end{lemma}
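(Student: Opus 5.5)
The statement is the standard fact that multiplicative convolution preserves the weighted $L^1$ class and turns into pointwise multiplication under Mellin transform. I would prove it by Tonelli/Fubini applied to the weighted absolute integrand.

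Fix $\sigma\in J$ and consider the double integral
\begin{align}
    \int_{F^\x}\int_{F^\x}\abs{f_1(x)}\,\abs{f_2(yx^{-1})}\,\abs{y}^{\sigma}\dd^\x x\,\dd^\x y.
\end{align}
Substitute $y=xu$. The multiplicative Haar measure $\dd^\x$ is invariant, and $\abs{y}^\sigma=\abs{x}^\sigma\abs{u}^\sigma$. By Tonelli, the double integral therefore equals
\begin{align}
    \Bigl(\int\abs{f_1(x)}\abs{x}^\sigma\dd^\x x\Bigr)\Bigl(\int\abs{f_2(u)}\abs{u}^\sigma\dd^\x u\Bigr),
\end{align}
which is finite because $f_1,f_2\in L^1_J$.

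From this finiteness three things follow. First, for almost every $y$ the inner integral $\int\abs{f_1(x)f_2(yx^{-1})}\dd^\x x$ is finite, since $\abs{y}^\sigma$ is never zero. This gives absolute convergence of the convolution $f(y)$ almost everywhere. Second, since $\abs{f(y)}$ is bounded by this inner integral, we get $\int\abs{f(y)}\abs{y}^\sigma\dd^\x y<\infty$. Because $\sigma\in J$ was arbitrary, $f\in L^1_J$. The only subtlety is that the null set might depend on $\sigma$. It does not: for any one fixed $\sigma_0$ the inner integral is finite off a null set independent of the weight, so we may take that single null set.

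Third, for the multiplicativity, take a quasi-character $\mu_s=\mu\abs{\blank}^s$ with $\Re s=\sigma\in J$. Then $\abs{\mu_s(y)}=\abs{y}^\sigma$, so the integrand $f_1(x)f_2(yx^{-1})\mu_s(y)$ is absolutely integrable on $F^\x\times F^\x$ by the estimate above. Fubini therefore applies, and the same substitution $y=xu$, together with $\mu_s(xu)=\mu_s(x)\mu_s(u)$, factors the integral as $\cM_{f_1}(\mu_s)\cM_{f_2}(\mu_s)$.

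There is no real obstacle here. The only point requiring care is the measure-theoretic bookkeeping: making sure the almost-everywhere set is chosen uniformly in $\sigma$, which the argument above handles. The absolute convergence of the individual Mellin transforms is exactly the preceding lemma.
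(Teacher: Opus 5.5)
Your proposal is correct: the Tonelli estimate after the substitution $y=xu$, the resulting almost-everywhere convergence and weighted $L^1$ bound, and the Fubini factorization for $\abs{\mu_s}=\abs{\blank}^{\Re s}$ together give the full lemma. The paper omits this proof as straightforward, and your argument is the standard one it has in mind.
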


We omit the straightforward proof.

\begin{lemma}\label[lemma]{alphabetaMellin}
    For two characters \(\alpha\) and \(\beta\) of \(F^\x\), 
    the product $\alpha(x) \beta_{-}(1-x)$ belongs to the space
    $L^1_{(0,\OneHalf)}$ defined in \Cref{lem:basic_properties_of_Mellin}.
    Its Mellin transform is given by 
    \begin{align}
        \cM_{\alpha(x)\beta_-(1-x)}(\mu_s)
        =\frac{\gamma(\alpha\beta_+\mu_s)}{\gamma(\alpha\mu_s)\gamma(\beta_+)},
    \end{align}
    valid for $\Re(s) \in (0,\OneHalf)$.
\end{lemma}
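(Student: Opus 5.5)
Two things are needed: that $\alpha(x)\beta_-(1-x)\in L^1_{(0,\OneHalf)}$, and that its Mellin transform has the stated form. I would first settle convergence, then reduce the evaluation to \Cref{lem:basic_Mellin} through Tate's local functional equation.

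\emph{Convergence.} Since $\alpha,\beta$ are unitary, the absolute value of the integrand against $\abs{x}^{\sigma}\dd^{\x}x$ is $\abs{x}^{\sigma-1}\abs{1-x}^{-\OneHalf}\dd x$, where we use $\dd^\x x=\dd x/\abs{x}$. This is exactly the integrand of \Cref{lem:basic_Mellin} with $a=\sigma$ and $b=\OneHalf$. That lemma's argument gives absolute convergence whenever $a>0$, $b>0$ and $a+b<1$, that is, whenever $0<\sigma<\OneHalf$. So the function lies in $L^1_{(0,\OneHalf)}$, and $\cM(\mu_s)$ converges absolutely for $\Re(s)\in(0,\OneHalf)$.

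\emph{Evaluation.} I would mimic the proof of \Cref{lem:basic_Mellin}, now with characters. Put $\lambda=\alpha\mu_s$. Using \eqref{Tate3gamma} in the form $\gamma(\chi)^{-1}$ written as the integral $\int_F\chi(y)\abs{y}^{-1}\Psi(y)\dd y$ (a variant of \eqref{Tate4gamma}, obtained via \eqref{GammaSymmetry}), multiply $\cM(\mu_s)$ by
\[
\gamma(\alpha\beta_+\mu_s)^{-1}=\int_F (\alpha\beta_+\mu_s)(y)\abs{y}^{-1}\Psi(y)\dd y .
\]
Substitute $x\mapsto x/y$. The product then becomes
\[
\int_F\Psi(y)\int_F \lambda(x)\abs{x}^{-1}\,\beta_-(y-x)\dd x\dd y ,
\]
which is the Fourier transform at $1$ of the additive convolution of $\lambda(x)\abs{x}^{-1}$ with $\beta_-(x)$. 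The Fourier transform of a convolution is the product of Fourier transforms, and each factor is evaluated at $1$ by \eqref{Tate3gamma}:
\begin{itemize}
\item $\FT\bigl(\lambda\abs{\blank}^{-1}\bigr)(1)=\gamma(\lambda)^{-1}=\gamma(\alpha\mu_s)^{-1}$;
\item $\FT(\beta_-)(1)=\gamma(\beta_+)^{-1}$, possibly up to a sign $\beta(-1)$ coming from $\beta_-(y-x)$ versus $\beta_-(x-y)$.
\end{itemize}
Rearranging gives the claimed formula, once the sign conventions are checked to cancel.

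\emph{Main obstacle: justifying Fubini.} The integral defining $\gamma(\cdot)^{-1}$ is only conditionally convergent unless its exponent lies in a suitable range, so the formal manipulation needs care. In \Cref{lem:basic_Mellin} this was handled by the condition $0<a+b<1$. Here the corresponding exponent is $\Re(s)+\OneHalf\in(\OneHalf,1)$, so the same argument should apply at least for real $s$ in the strip. I would verify absolute convergence of the double integral there, and then extend to the whole strip by analytic continuation in $s$, since both sides are holomorphic on $0<\Re(s)<\OneHalf$. If conditional convergence remains a problem at some step, a fallback is to regularize with a Gaussian or a characteristic function of $\cO$, or to argue in the sense of distributions, as in \eqref{Tate2gamma}.
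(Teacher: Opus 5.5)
Your route is the paper's: its proof of this lemma just says that the proof of \Cref{lem:basic_Mellin} still works, because $\alpha\mu_s$ has exponent in $(0,\OneHalf)$ and $\beta_+$ has exponent $\OneHalf$. That is exactly your convergence check (with $a=\Re(s)$, $b=\OneHalf$) followed by your formal computation.

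The signs do cancel, but not for the reason you suggest. By \eqref{Tate3gamma} and \eqref{GammaSymmetry}, the correct inversion is $\gamma(\chi)^{-1}=\chi(-1)\int_F\chi(y)\abs{y}^{-1}\Psi(y)\,\dd y$. Your formula omits the factor $\chi(-1)$, and so does your value of $\FT(\lambda\abs{\blank}^{-1})(1)$. With $\lambda=\alpha\mu_s$, you therefore get $(\lambda\beta)(-1)$ on one side and $\lambda(-1)\beta(-1)$ on the other, and these cancel. The substitution $x\mapsto x/y$ produces exactly $\beta_-(y-x)$, a genuine additive convolution, so no further sign enters.

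The step that cannot work as planned is justifying Fubini by absolute convergence. Before the substitution, the $y$-factor has absolute value $\abs{y}^{\Re(s)-\OneHalf}$, which is not integrable at infinity. After it, the $y$-integral of $\abs{y-x}^{-\OneHalf}$ diverges. In fact $\int_F\chi(y)\abs{y}^{-1}\Psi(y)\,\dd y$ is never absolutely convergent, since $\abs{y}^{c-1}$ needs $c>0$ near $0$ and $c<0$ near $\infty$. So the absolute-convergence claim in the proof of \Cref{lem:basic_Mellin} is not literally right either. Your distributional fallback is therefore the actual proof, and it is short:
\begin{enumerate}
\item Put $g=\lambda\abs{\blank}^{-1}$, $h=\beta_-$, and let $M$ be the Mellin transform in question. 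For $0<\Re(s)<\OneHalf$ the convolution $(g*h)(u)=\int_F g(x)h(u-x)\,\dd x$ converges absolutely. The substitution $x\mapsto ux$ shows $g*h=M\cdot(\lambda\beta_+)\abs{\blank}^{-1}$.
\item Pair both sides with $\FT\Phi_0$ for $\Phi_0\in\CcInf(F^\x)$. The double integral $\int\!\int g(x)h(z)\,\FT\Phi_0(x+z)\,\dd x\,\dd z$ is absolutely convergent: the integral of its absolute value is $C\int_F\abs{\FT\Phi_0(u)}\,\abs{u}^{\Re(s)-\OneHalf}\,\dd u<\infty$.
\item So you may integrate in $z$ first and apply \eqref{Tate2gamma} to $h$ against the Schwartz function $\Phi_0\Psi(x\,\cdot)$. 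Then apply \eqref{Tate2gamma} to $g$ against $\beta_+^{-1}\Phi_0$, which is Schwartz because $\Phi_0$ vanishes near $0$.
\item On the other side, apply \eqref{Tate2gamma} to $(\lambda\beta_+)\abs{\blank}^{-1}$.
\item Both sides become multiples of $\int_F(\lambda\beta_+)^{-1}\Phi_0$. Choosing $\Phi_0$ with this integral nonzero gives $M\,(\lambda\beta)(-1)\gamma(\lambda\beta_+)^{-1}=\lambda(-1)\beta(-1)\gamma(\lambda)^{-1}\gamma(\beta_+)^{-1}$.
\end{enumerate}
This is the lemma, for all $s$ in the strip at once, with no analytic continuation from real $s$ needed.
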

\begin{proof}
    The proof of \Cref{lem:basic_Mellin} still works: the real part of
    the exponent of quasi-character \(\alpha\mu_s\) is between \(0\) and
    \(\OneHalf\), while that of \(\beta_+\) is \(\OneHalf\), and so the sum of
    two exponents has real part between \(0\) and \(1\); this way the relevant
    integrals all converge absolutely.
\end{proof}

\subsection{Hypergeometric functions over local fields}
\label{sub:Mellin_transform_of_hypergeometric_over_local_field}
The edge integral we have described in \eqref{eqn:edge_integral_hypergeometric},
up to scaling by powers of \(\nu_\bbP\),
has the following alternative form by letting \(w=1\) (see
\Cref{sec:edge_formula_for_principal_series} for how the measure is properly
dealt with):
\begin{align}
    I=\int_{F^3} \alpha_{1-}(x)  \alpha_{2-}(1-x) \alpha_{3-}(x-y) \alpha_{4-} (y-z)
    \alpha_{5-}(y) \alpha_{6-}(z)  \alpha_{7-}(z-1),
\end{align}
for certain characters \(\alpha_i: F^{\x} \rightarrow \bbC^\x\), and the measure is
the usual additive Haar measure \(\dd x\dd y\dd z\), which we omit for
simplicity. We will explain how to evaluate this integral in terms of a
generalized hypergeometric function.
Note that we have already proven that $I$ is absolutely convergent, and thereby,
by Fubini's theorem, it can be evaluated as an iterated integral, in any way we please.

Use the shorthand  \(\alpha_{12} = \alpha_1 \alpha_2\), etc., and rewrite the
integral as
\begin{align}
    I=\int_{F^3} \alpha_{13--}(x)  \alpha_{2-}(1-x) \alpha_{3-}(1-y/x)
    \alpha_{4-} (1-z/y)
    \alpha_{45--}(y) \alpha_{67--}(z)  \alpha_{7-}(1-1/z).
\end{align}

We are going to repeatedly apply \Cref{lem:basic_properties_of_Mellin}
with the interval $J$ taken to be $(0,\OneHalf)$.
First of all, we take $f_1(x) = \alpha_{13}(x) \alpha_{2-}(1-x)$
and $f_2(x)=\alpha_{3-}(1-x)$; they both belong to $L^1_J$
by \Cref{alphabetaMellin}, and therefore their multiplicative convolution
\begin{align}
    f_{123}(y)\defeq \int_F \alpha_{13--}(x)\alpha_{2-}(1-x)\alpha_{3-}(1-y/x)\dd x
    \left( = \int_{F} \alpha_{1-}(x) \alpha_{2-}(1-x) \alpha_{3-}(x-y)\dd x \right).
\end{align}
defines also a class in $L^1_J$. 
By  \Cref{lem:basic_properties_of_Mellin}  and  \Cref{alphabetaMellin} we have
\begin{align}
    \cM_{f_{123}}(\mu_s)
    =\frac{\gamma(\alpha_{13}\alpha_{2+}\mu_s)}{\gamma(\alpha_{13}\mu_s)\gamma(\alpha_{2+})}
    \cdot
    \frac{\gamma(\alpha_{3+}\mu_s)}{\gamma(\mu_s)\gamma(\alpha_{3+})}.
\end{align} 
where we assume that the real part of $s$ belongs to $(0,\OneHalf)$; we will continue
to impose this assumption below.

Multiplication by a unitary character of course preserves the property of
belonging to $L^1_J$. Consequently, we can apply
\Cref{lem:basic_properties_of_Mellin} to  analyze the multiplicative convolution
\(f_{12345}(z)\) of \(f_{123}(y) \cdot \alpha_{45}(y)\) and
\(\alpha_{4-}(1-y)\). Then $f_{12345}$ belongs to $L^1_J$ and its Mellin
transform is given by
\begin{align}
    \cM_{f_{12345}}(\mu_s)
    &=\cM_{f_{123}(z)\alpha_{45}(z)}(\mu_s)\cM_{\alpha_{4-}(1-z)}(\mu_s)\\
    &=
    \frac{\gamma(\alpha_{12345+}\mu_s)}{\gamma(\alpha_{1345}\mu_s)\gamma(\alpha_{2+})}
    \cdot
    \frac{\gamma(\alpha_{345+}\mu_s)}{\gamma(\alpha_{45}\mu_s)\gamma(\alpha_{3+})}
    \cdot
    \frac{\gamma(\alpha_{4+}\mu_s)}{\gamma(\mu_s)\gamma(\alpha_{4+})}.
\end{align}

Finally, multiplying \(f_{12345}(z)\) by \(\alpha_{67}(z)\) and then convolving
with \(\alpha_{7-}(1-z)\)  gives, for exactly the same reason as before, a
function \(f_{1234567}(w) \in L^1_J\) such that
\begin{align}
    \cM_{f_{1234567}}(\mu_s)
    &=\cM_{f_{123}(w)\alpha_{4567}(w)}(\mu_s)\cM_{\alpha_{67}(w)\alpha_{4-}(1-w)}(\mu_s)\cM_{\alpha_{7-}(1-w)}\\
    &=
    \frac{\gamma(\alpha_{1234567+}\mu_s)}{\gamma(\alpha_{134567}\mu_s)\gamma(\alpha_{2+})}
    \cdot
    \frac{\gamma(\alpha_{34567+}\mu_s)}{\gamma(\alpha_{4567}\mu_s)\gamma(\alpha_{3+})}
    \cdot
    \frac{\gamma(\alpha_{467+}\mu_s)}{\gamma(\alpha_{67}\mu_s)\gamma(\alpha_{4+})}
    \cdot
    \frac{\gamma(\alpha_{7+}\mu_s)}{\gamma(\mu_s)\gamma(\alpha_{7+})},
\end{align} 
which we rewrite as
\begin{equation}
    \label{eqn:hypergeometric_Mellin_final}
    \bigl[\gamma(\alpha_{2+})\gamma(\alpha_{3+})\gamma(\alpha_{4+})\gamma(\alpha_{7+})\bigr]^{-1}
    \frac{\gamma(\alpha_{1234567+}\mu_s)\gamma(\alpha_{34567+}\mu_s)\gamma(\alpha_{467+}\mu_s)\gamma(\alpha_{7+}\mu_s)}
    {\gamma(\alpha_{134567}\mu_s)\gamma(\alpha_{4567}\mu_s)\gamma(\alpha_{67}\mu_s)\gamma(\mu_s)}.
\end{equation} 

Note that \(f_{1234567}(w)\) is defined by replacing \(\alpha_{7-}(z-1)\) by
\(\alpha_{7-}(z-w)\) in the definition of \(I\). Therefore, the edge integral
\(I\) is the value at \(1\) of a function \(f_{1234567}\) whose Mellin transform
is given by \eqref{eqn:hypergeometric_Mellin_final}. 

Moreover, the Mellin inversion formula is applicable, in the following form:
\begin{align} \label{MellinFormulaI}
    I=f_{1234567}(1)
    =\int\cM_{f_{1234567}}(\mu_s)\dd \mu,
\end{align}
where \(0<s<\OneHalf\) is any fixed number, and the integral is taken over all
characters \(\mu\) (see~\Cref{Characters And Measures} for the measure on $\mu$).
To verify the applicability of the Mellin inversion formula, we verify that $\cM_{f_{1234567}}(\mu_s)$
is absolutely integrable as a function of $\mu$ and invoke
\Cref{MellinL1L1}. 
We will check this absolute integrability in the nonarchimedean case, leaving the archimedean cases to the reader;
a similar check for $F=\mathbb{R}$ is carried out after
\eqref{eqn:Mellin_plus_step_3}.

Suppose, then, that the cardinality of the residue field of $F$ equals $q$. Let
$\cO \subset F$ be the ring of integers. The decomposition $F^{\times} \simeq
\cO^{\times} \times \bbZ$ (after fixing a uniformizer) gives a corresponding
decomposition
\begin{align}
    \widehat{F^{\times}} \simeq \widehat{\cO^{\times}} \times \bbT,
\end{align}
where \(\bbT\) is the unit circle.
As usual, we say that a character $\mu$ of $\cO^{\times}$ has conductor $n$ if
it is trivial on $1+\varpi^n \cO$ but not on any larger subgroup of this form.
The number of such characters equals $q (1-2/q)$ for $n=1$ and $q^n(1-1/q)^2$
for $n > 1$; all that matters is that it is $O(q^n)$. On the other hand, the
formulas of \Cref{Gamma Evaluation} imply that the the absolute value of the
term
\begin{align}
    \frac{\gamma(\alpha_{1234567+}\mu_s)\gamma(\alpha_{34567+}\mu_s)\gamma(\alpha_{467+}\mu_s)\gamma(\alpha_{7+}\mu_s)}
    {\gamma(\alpha_{134567}\mu_s)\gamma(\alpha_{4567}\mu_s)\gamma(\alpha_{67}\mu_s)\gamma(\mu_s)}
\end{align}
from \Cref{eqn:hypergeometric_Mellin_final} equals $q^{-2n}$ when $\mu$ has
conductor $n$, where $n$ is chosen strictly larger than the conductor of any
$\alpha_i$ (this ensures that the conductor of any $\alpha_{\bullet}\mu$
involved in the fraction above equals the conductor of $\mu$). Therefore, the
integral of \Cref{eqn:hypergeometric_edge_integral} is absolutely bounded by a
constant multiple of $\sum_{n \geq 0} q^n \cdot q^{-2n}$ and is absolutely
convergent. This concludes our justification of \eqref{MellinFormulaI}.

\subsection{Relationship with classical \(\pFq{4}{3}\)}
\label{sub:relation_with_classical_4F3}
When \(F=\bbR\), we may use \eqref{eqn:hypergeometric_Mellin_final} to relate
the tetrahedral symbol for the principal series with generalized hypergeometric
functions.

For simplicity, we assume characters \(\alpha_{1234567+}\), etc. in the
numerator of \eqref{eqn:hypergeometric_Mellin_final} and \(\alpha_{134567}\),
etc. in the denominator are of the forms \(\abs{\blank}^{a_j}\) and
\(\abs{\blank}^{b_j}\) (\(j=1,\ldots, 4\)) respectively, where
\begin{align}
    a_j \in \OneHalf + i \mathbb{R},\quad b_j \in i \mathbb{R},
\end{align}
which is the case that is relevant
to evaluating the tetrahedral symbol for unramified characters with $F=\mathbb{R}$.
The general case can be analyzed similarly, where one adds various signs;
the answer itself will look different because the contours of integration  used
later in the argument need to be chosen differently.

The Mellin transform over \(\bbR^\x\) consists of two disjoint components
\(\cM^+\) and \(\cM^-\) (see \Cref{def:Mellin_transform}). For a function \(f\) on
\(\bbR^\x\), write \(f=f^++f^-\), where \(f^+\) is even and \(f^-\) is odd,
then we have
\begin{align}
    \cM^-_{f^+}=\cM^+_{f^-}=0.
\end{align}
Moreover, \(\cM^+_{f^+}\) (resp.~\(\cM^-_{f^-}\)) is twice the classical Mellin
transform of \(f^+|_{(0,\infty)}\) (resp.~\(f^-|_{(0,\infty)}\)).
As a result, the sum
\(\cM^+_f+\cM^-_f\) is twice the classical Mellin transform of the function
\(f|_{(0,\infty)}\).

Therefore, to recover the value
of \(f=f_{1234567}\) at \(1\), we can use the inverse Mellin transform on
the sum of
\begin{align}
    \cM_{f}^+(s)=
    \prod_{j=1}^4\frac{\gamma(a_j+s)}{\gamma(a_j-b_j)\gamma(b_j+s)}
\end{align}
and
\begin{align}
    \cM_{f}^-(s)=
    \prod_{j=1}^4\frac{\gamma^-(a_j+s)}{\gamma(a_j-b_j)\gamma^-(b_j+s)},
\end{align}
where \(\gamma^-(\mu)\) means \(\gamma(\mu\cdot\sgn)\) for any quasi-character
\(\mu\).

We will now use the relations from \eqref{gammaReval}:
\begin{gather} \label{Gamma+Again}
    \gamma^{\pm}(s)^{-1}=\Gamma(s)(\II^{-s}\pm\bar{\II}^{-s})
    = (2\pi)^{-s} \Gamma(s) (i^{-s}\pm i^{s}),\\
    \gamma^{\pm}(s) = \Gamma(1-s)(\II^{s-1} \pm \bar{\II}^{s-1})
    = (2\pi)^{s-1} \Gamma(1-s)(i^{s-1} \pm i^{1-s})
\end{gather}
with $\II \defeq 2 \pi i$ and \(i^s\defeq e^{i\pi s/2}\),
to rewrite \(\cM_f^+\) and \(\cM_f^-\) in terms of \(\Gamma\)-functions and
exponential functions:
\begin{align}
    \label{eqn:Mellin_plus_step_2}
    \cM_f^{\pm} = \underbrace{\prod_{j=1}^4 \frac{ (2
    \pi)^{a_j-b_j-1}}{\gamma(a_j-b_j)}}_{\eqdef A}
    \x
    \prod_{j=1}^4 \Gamma(b_j+s)\Gamma(1-a_j-s)
\underbrace{\prod_{j=1}^{4} (i^{-b_j-s}\pm i^{b_j+s})(i^{a_j+s-1}\pm
i^{1-a_j-s})}_{\eqdef B_{\pm}}
\end{align}
Therefore, to compute \(\cM_f^+(s)+\cM_f^-(s)\), we expand
\begin{align} \label{CkDefinition}
    A (B_+ + B_-) =  \sum_{k=-4}^4 C_k i^{2ks} =
    \sum_{k=-2}^{2} C_{2k}i^{4ks}
\end{align}
in powers of $i^s$, where 
the $C_k$s are various constants that are sums of products of
\((2\pi)^{a_j-b_j-1}\), $i^{\pm a_j},
i^{\pm b_j}$ and $\gamma(a_j-b_j)$; for the last equality,
it is easy to see that \(C_k=0\) for all odd \(k\) because terms from \(B_+\)
and from \(B_-\) cancel.

Now fix $0 < \sigma < \OneHalf$.
Taking  classical inverse Mellin transform, we have
\begin{align}
    \label{eqn:Mellin_plus_step_3}
    f(x)&= 
    \sum_{k=-2}^2\frac{C_{2k}}{4\pi i}\int_{\sigma-i\infty}^{\sigma+i\infty}\prod_{j=1}^4
        \Gamma(b_j+s)\Gamma(1-a_j-s)i^{4ks}x^{-s}\dd s
\end{align}
where we take the straight line contour from  \(-\sigma-i\infty\) to \(-\sigma+i\infty\).
For simplicity, we will restrict to \(x\in\bbR_{>0}\), since we will be
most interested in the value at \(x=1\); in particular, power functions are
well-defined.

In order to justify the application of inverse Mellin transform, observe that the integral is indeed absolutely convergent.
To handle the asymptotics, it is convenient to rewrite, using the relation
$\Gamma(s) \Gamma(1-s)=\frac{\pi}{\sin(\pi s)}$,
the product of $\Gamma$-functions as
\begin{equation}  \label{GammaProduct}
    \prod_{j=1}^4 \frac{\Gamma(1-a_j-s)}{\Gamma(1-b_j-s)} \frac{\pi}{\sin(\pi
    (b_j+s))}
\end{equation}
and use the fact (\cite{TrEr51}) that the ratio $\Gamma(s+a)/\Gamma(s+b)$ is asymptotic to
$s^{a-b}$ so long as we restrict the argument of $s$ to be within
\((-\pi+\delta,\pi-\delta)\) for any fixed \(\delta>0\).\footnote{
That is to say, \(s^{b-a}\Gamma(s+a)/\Gamma(s+b)\) approaches $1$ as $s$
approaches infinity in this region.}  In particular, when \(s\)
is restricted to any vertical line, this same ratio is bounded by
$(1+\abs{t})^{\Re(a)-\Re(b)}$ where \(t\) is the imaginary part of \(s\).
Consequently, the integrand has the asymptotic behavior $\abs{t}^{-2}$, and so is absolutely convergent. 
Changing variables \(s\mapsto -s, k \mapsto -k\), we obtain
\begin{align}
    \label{eqn:Mellin_plus_step_4}
    f(x)&=
    \sum_{k=-2}^2\frac{C_{-2k}}{4\pi i}\int_{-\sigma-i\infty}^{-\sigma+i\infty}\prod_{j=1}^4
    \Gamma(b_j-s)\Gamma(1-a_j+s)i^{4ks}x^{s}\dd s.
\end{align}

The integral
\begin{align}
    \label{eqn:integral_G_function}
    G_{4,4}^{4,4}\!\left(\!\begin{array}{c}
        a_1,\ldots,a_4\\
        b_1,\ldots,b_4
    \end{array}\!\Big|\; x\,\right)
    \defeq\frac{1}{2\pi i}\int_L
    \prod_{j=1}^4 \Gamma(b_j-s)\Gamma(1-a_j+s)x^{s}\dd s
\end{align}
taken along certain admissible paths \(L\), is known as a Meijer \(G\)-function.
To understand its connection with generalized hypergeometric functions, we need
to review some of its properties. The general theory of \(G\)-functions is rich,
but what we need can be easily derived from basic complex analysis.

For our choice of $L$, namely,  the vertical contour with real part $-\sigma$,
  all the poles of \(\Gamma(1-a_j+s)\) appears to the left of \(L\)
and those of \(\Gamma(b_j-s)\) to the right.
We then use residues to evaluate the integral; but we will shift
the contour in different ways according to whether $\abs{x}$ is less than unity
or greater than unity.

We first assume that \(\abs{x}\le 1\). In this case, we shift the integral to
the line where the real part of $x$ equals some large positive real $c$, where
$c$ is chosen (``pole avoidance'') so that its distance from the series of
points $b_j, b_j+1, \dots$ is at least $1/8$ (which is possible because \(j\)
ranges from \(1\) to \(4\)). In order to shift contours in this way,  we
consider the segment of our integral from $-\sigma-iT$ to $-\sigma+iT$ and
connect it to the line segment from $c-iT$ to $c+iT$ by means of horizontal
segments. The same reasoning that is given after \eqref{GammaProduct} shows
that, as we take $T \rightarrow \infty$, the contribution of both horizontal
segments vanish. Moreover, if we then take $c \rightarrow \infty$, the
contribution of the right-hand segment vanishes too; here we use both the fact
that $\abs{x} \leq 1$ and the uniformity of the asymptotic for
$\frac{\Gamma(s+a)}{\Gamma(s+b)}$ in the relevant region. The importance of the
choice of $c$ is to ensure that the term $\sin(\pi (b_j-s))$ is bounded away
from zero.

This shows that when \(\abs{x}\le 1\), the integral
\eqref{eqn:integral_G_function} is equal to the \emph{negative} of the series whose
terms are the residues at all \(b_j+n\), in the sense that one converges
absolutely if and only if the other does. More explicitly, this series is
\begin{align}
    &-\sum_{n=0}^\infty \sum_{h=1}^4\frac{(-1)^n
        \prod_{j=1}^4\Gamma(1-a_j+b_h+n)\prod_{j\neq h}\Gamma(b_j-b_h-n)}{n!}x^{b_h+n}\\
    &\qquad =-\sum_{h=1}^4\prod_{j=1}^4\Gamma(1-a_j+b_h)\prod_{j\neq
    h}\Gamma(b_j-b_h)x^{b_h}
    \x \sum_{n=0}^\infty\frac{\prod_{j=1}^4(1-a_j+b_h)_n}{\prod_{j\neq
    h}(1-b_j+b_h)_n}\frac{x^n}{n!}.
\end{align}
Using the definition of \(\pFq{4}{3}\), we then obtain 
\begin{align}
    G_{4,4}^{4,4}\!\left(\!\begin{array}{c}
        a_1,\ldots,a_4\\
        b_1,\ldots,b_4
    \end{array}\!\Big|\; x\,\right)
    &=-
    \sum_{h=1}^4\prod_{j=1}^4\Gamma(1-a_j+b_h)\prod_{j\neq
    h}\Gamma(b_j-b_h)x^{b_h}\\
    &\x\pFq{4}{3}\!\left(\!\begin{array}{c}
        1+b_h-a_j, j=1,\ldots,4\\
        1+b_h-b_j, j\neq h
    \end{array}\!\Big|\; x\,\right).
\end{align}
The series defining these \(\pFq{4}{3}\) converge absolutely when \(\abs{x}\le
1\), and so the whole equality is valid in the same domain.

Similarly, when \(\abs{x}\ge 1\), we shift the contour to the {\em left},
i.e. choose $c$ to be very negative, now incurring poles when $s=a_h-1-n$ for $n \geq 0$.
The resulting formula is
\begin{align}
    G_{4,4}^{4,4}\!\left(\!\begin{array}{c}
        a_1,\ldots,a_4\\
        b_1,\ldots,b_4
    \end{array}\!\Big|\; x\,\right)
    &=
    \sum_{h=1}^4\prod_{j=1}^4\Gamma(1-a_h+b_j)\prod_{j\neq
    h}\Gamma(a_h-a_j)x^{a_h-1}\\
    &\x\pFq{4}{3}\!\left(\!\begin{array}{c}
        1-a_h+b_j, j=1,\ldots,4\\
        1-a_h+a_j, j\neq h
    \end{array}\!\Big|\; x^{-1}\,\right).
\end{align}
This in particular also allows us to analytically continue the \(G\)-function
for all \(x\) within our chosen domain containing \(1\).

Finally, we go back to \eqref{eqn:Mellin_plus_step_4}.  For the summand where \(k=0\), the integral involved is
exactly the Meijer \(G\)-function discussed above. For any \(k\neq 0\), we may
use the same contour integral argument and the fact that \(i^{4kn}=1\) for any
\(n\in\bbZ\), and see that (we use the \(\abs{x}\le 1\)
formula here for example)
\begin{align}
    &\frac{1}{2\pi i}\int_{-\sigma-i\infty}^{-\sigma+i\infty} \prod_{j=1}^4
    \Gamma(b_j-s)\Gamma(1-a_j+s)i^{4ks}x^{s}\dd s\\
    &\qquad=-\sum_{h=1}^4\prod_{j=1}^4\Gamma(1-a_j+b_h)\prod_{j\neq
    h}\Gamma(b_j-b_h)i^{4kb_h}x^{b_h}
    \x \sum_{n=0}^\infty\frac{\prod_{j=1}^4(1-a_j+b_h)_n}{\prod_{j\neq
    h}(1-b_j+b_h)_n}\frac{x^n}{n!}.
\end{align}
Combining everything together, we have for \(\abs{x}\le 1\):
\begin{align}
    \label{eqn:4F3_formula_1}
    f(x)&=\frac{1}{2}
     \sum_{h=1}^4
    B_h(x)\,\pFq{4}{3}\!\left(\!\begin{array}{c}
        1+b_h-a_j, j=1,\ldots,4\\
        1+b_h-b_j, j\neq h
    \end{array}\!\Big|\; x\,\right),
\end{align}
where
\begin{align}
    B_h(x)
    &=-x^{b_h}\prod_{j=1}^4\Gamma(1-a_j+b_h)\prod_{j\neq h}\Gamma(b_j-b_h)
    \left(\sum_{k=-2}^2C_{-2k}i^{4kb_h}\right).
\end{align}
Recall by \eqref{CkDefinition}, we have
\begin{align}
    \sum_{k=-2}^2C_{-2k}i^{4kb_h}=A(B_+ + B_-)|_{s=-b_h}.
\end{align}
But for \(B_-\), we have
\begin{align}
    B_-|_{s=-b_h} = \prod_{j=1}^{4} (i^{-b_j+b_h} - i^{b_j-b_h})(i^{a_j-b_h-1}-
    i^{1-a_j+b_h})=0
\end{align}
because when \(j=h\) the factor
\begin{align}
    i^{-b_h+b_h} - i^{b_h-b_h}=0.
\end{align}
Therefore, only the term \(AB_+\) survives, and so we can simplify:
\begin{align}
    B_h(x)
    &= -2x^{b_h}  \prod_{j=1}^4 \frac{ (2 \pi)^{a_j-b_j-1}}{\gamma(a_j-b_j)}
    \prod_{j=1}^4\Gamma(1-a_j+b_h)(i^{a_j-b_h-1}+i^{1-a_j+b_h})\\
    &\qquad\qquad\prod_{j\neq h}\Gamma(b_j-b_h)(i^{-b_j+b_h}+i^{b_j-b_h})\\
    &\stackrel{\eqref{gammaReval}}{=} -2x^{b_h} \sideset{}{'}\prod_{j=1}^{4}
    \frac{ \gamma(a_j-b_h)}{\gamma(a_j-b_j) \gamma(b_j-b_h)},
\end{align}
where $\prod'$ denotes that we omit any evaluations of $\gamma$ at polar points; in
the case above, this means $\gamma(b_j-b_h)$ for $j=h$.
As a result, we finally have for \(\abs{x}\le 1\):
\begin{align}
    \label{eqn:4F3_formula_1_final}
    f(x)&=-\sum_{h=1}^4
    x^{b_h} \left( \sideset{}{'}\prod_{j=1}^4 \frac{  \gamma(a_j-b_h)}{\gamma(b_j-b_h)\gamma(a_j-b_j)}\right)
    \,\pFq{4}{3}\!\left(\!\begin{array}{c}
        1+b_h-a_j, j=1,\ldots,4\\
        1+b_h-b_j, j\neq h
    \end{array}\!\Big|\; x\,\right).
\end{align}

Similarly, for \(\abs{x}\ge 1\), we have
\begin{align}
    \label{eqn:4F3_formula_2}
    f(x)&=\frac{1}{2}\sum_{h=1}^4
    A_h(x)\,\pFq{4}{3}\!\left(\!\begin{array}{c}
        1-a_h+b_j, j=1,\ldots,4\\
        1-a_h+a_j, j\neq h
    \end{array}\!\Big|\; x^{-1}\,\right),
\end{align}
where
\begin{align}
    A_h(x)=x^{a_h-1}\prod_{j=1}^4\Gamma(1-a_h+b_j)\prod_{j\neq
    h}\Gamma(a_h-a_j)\left(\sum_{k=-2}^2C_{-2k}i^{4k(a_h-1)}\right),
\end{align}
and we may simplify it as
\begin{align}
    \label{eqn:4F3_formula_2_final}
    f(x)&=\sum_{h=1}^4 x^{a_h-1}\left(\sideset{}{'}\prod_{j=1}^4
    \frac{\gamma(a_h-b_j)}{\gamma(a_h-a_j)\gamma(a_j-b_j)}\right)
    \,\pFq{4}{3}\!\left(\!\begin{array}{c}
        1-a_h+b_j, j=1,\ldots,4\\
        1-a_h+a_j, j\neq h
    \end{array}\!\Big|\; x^{-1}\,\right).
\end{align}

\subsection{The tetrahedral symbol as a convolution of $\gamma$-factors, for general $F$}
\label{TetrahedralAsGammaConvolution}

Return for a moment to the case of general $F$. We shall prove the formula
\Cref{PROPAB}, which we recall here: 
\begin{align}
    \label{eqn:hypergeometric_formula_A_and_B2} 
    \Pisymbol=\nu_\bbP^{-1}
    \int_{\mu}
    \frac{\gamma(\frac{1}{2}+\epsilon, A\otimes\mu)
    \gamma(1-\epsilon, B^{-1}\otimes\mu^{-1})}
    {\sqrt{\gamma(\OneHalf, A\otimes B^{-1})}}\dd\mu
\end{align}
for four-element sets of characters $A, B$.

We shall apply the results of the former subsection with the following \(\alpha_i\)
\begin{align}
    \alpha_1 = \chi_{\V{12}}\chi_{\V{14}}\chi_{\V{13}}^{-1},\quad
    \alpha_2 = \chi_{\V{12}}\chi_{\V{13}}\chi_{\V{14}}^{-1},\quad
    \alpha_3 = \chi_{\V{21}}\chi_{\V{24}}\chi_{\V{23}}^{-1},\quad
    \alpha_4 = \chi_{\V{42}}\chi_{\V{43}}\chi_{\V{41}}^{-1},\\
    \alpha_5 = \chi_{\V{42}}\chi_{\V{41}}\chi_{\V{43}}^{-1},\quad
    \alpha_6 = \chi_{\V{41}}\chi_{\V{43}}\chi_{\V{42}}^{-1},\quad
    \alpha_7 = \chi_{\V{34}}\chi_{\V{31}}\chi_{\V{32}}^{-1}.
\end{align}

We compute the following combinations:
\begin{alignat}{4}
    \alpha_{1234567} &= \chi_{\V{12}}\chi_{\V{31}}\chi_{\V{41}},\quad &
    \alpha_{34567} &= \chi_{\V{21}}\chi_{\V{41}}\chi_{\V{31}},\quad &
    \alpha_{467} &= \chi_{\V{43}}\chi_{\V{31}}\chi_{\V{23}},\quad &
    \alpha_7 &= \chi_{\V{34}}\chi_{\V{31}}\chi_{\V{23}},\\
    \alpha_{134567} &= \chi_{\V{31}}^2,\quad &
    \alpha_{4567} &= \chi_{\V{42}}\chi_{\V{41}}\chi_{\V{31}}\chi_{\V{23}},\quad &
    \alpha_{67} &= \chi_{\V{41}}\chi_{\V{31}}\chi_{\V{24}}\chi_{\V{23}}.\quad &
\end{alignat}
We now rewrite \eqref{eqn:hypergeometric_Mellin_final}
in the form
\begin{align} \label{Mellin Redone}
    \bigl[\gamma(\alpha_{2+})\gamma(\alpha_{3+})\gamma(\alpha_{4+})\gamma(\alpha_{7+})\bigr]^{-1}
    \frac{\gamma(\frac{1}{2} + s, A'\otimes\mu)}{\gamma(s,B'\otimes\mu)},
\end{align}
where  we are to integrate over characters $\mu$ and a fixed real $s$ between
$0$ and $\OneHalf$; and
\begin{align}
    A'= \{\alpha_{1234567}, \alpha_{34567}, \alpha_{467}, \alpha_7\}
    &=\chi_{\V{21}}\chi_{\V{41}}\chi_{\V{31}}\otimes \Set{ \chi_{\V{12}}^2, 1,
    \chi_{\V{43}}\chi_{\V{23}}\chi_{\V{12}}\chi_{\V{14}}, \chi_{\V{34}}\chi_{\V{23}}\chi_{\V{12}}\chi_{\V{14}} },\\
    B' = \Set*{\alpha_{134567}, \alpha_{4567}, \alpha_{67}, 1}
    &= \chi_{\V{31}}^2 \otimes \Set{1, \chi_{\V{42}}\chi_{\V{41}}\chi_{\V{13}}\chi_{\V{23}},
    \chi_{\V{41}}\chi_{\V{13}}\chi_{\V{24}}\chi_{\V{23}}, \chi_{\V{13}}^2}.
\end{align}

Now, \eqref{Mellin Redone}, when integrated over $\mu$,
is invariant under a common translation of $A', B'$. Doing such 
translating $A'$ and $B'$ both by $\chi_{\V{13}}$, we arrive at:
\begin{align}
    A \defeq \chi_{\V{13}}\otimes A'
    &= \bigl(\chi_{\V{41}}\otimes\Set{ \chi_{\V{12}} , \chi_{\V{21}}}\bigr)
    \cup \bigl(\chi_{\V{23}}\otimes\Set{\chi_{\V{43}}, \chi_{\V{34}}}\bigr),\\
    B \defeq \chi_{\V{13}}\otimes B'
    &= \Set{\chi_{\V{31}}, \chi_{\V{13}}}\cup
    \bigl(\chi_{\V{41}} \chi_{\V{23}}\otimes\Set{\chi_{\V{42}},\chi_{\V{24}}}\bigr).
\end{align}
Using the fact that \(\gamma(\mu_+)\gamma(\mu_-^{-1})=\mu(-1)\), we can replace
the inverse of $\gamma(s, B\otimes\mu)$ by $\gamma(1-s, B^{-1}\otimes\mu^{-1})$.
Consequently we rewrite the edge integral in
\eqref{eqn:edge_integral_hypergeometric} as the inverse Mellin transform of
\begin{align}
    \mu \longmapsto \frac{\gamma(\frac{1}{2}+s, A\otimes\mu)
    \gamma(1-s,  B^{-1}\otimes\mu^{-1})}
    {\sqrt{\gamma(\OneHalf, \Sigma^*)}},
\end{align}
evaluated at \(1\),
where \(\Sigma^*\) is modified from \(\Sigma^-\) by including the eigenspaces of
eigenvalues \(x_{\V{12}}x_{\V{13}}x_{\V{14}}^{-1}\),
\(x_{\V{21}}x_{\V{23}}^{-1}x_{\V{24}}\), \(x_{\V{41}}^{-1}x_{\V{42}}x_{\V{43}}\),
\(x_{\V{31}}x_{\V{32}}^{-1}x_{\V{34}}\) (cf.~\Cref{sub:halfspin})
instead of their inverses; these arise from the \(\gamma(\alpha_{2+}), \ldots,
\gamma(\alpha_{7+})\) factors. 
 To conclude we note that \(A\otimes B^{-1}\) is exactly equal to \(\Sigma^*\) (the underlined
ones below are those in \(\Sigma^*\) but not \(\Sigma^-\)):
\begin{align}
    \label{eqn:S_ast_entries}
    \begin{array}{c|cccc}
         & \alpha_{1234567} & \alpha_{34567} & \alpha_{467} & \alpha_7\\
         \hline& \\[\dimexpr-\normalbaselineskip+5pt]
        \alpha_{134567}^{-1} 
         &\ul{\chi_{\V{12}}\chi_{\V{13}}\chi_{\V{14}}^{-1}}
         &\chi_{\V{12}}^{-1}\chi_{\V{13}}\chi_{\V{14}}^{-1}
         &\chi_{\V{31}}^{-1}\chi_{\V{32}}^{-1}\chi_{\V{34}}^{-1}
         &\chi_{\V{31}}^{-1}\chi_{\V{32}}^{-1}\chi_{\V{34}}\\[5pt]
        \alpha_{4567}^{-1}
         &\chi_{\V{21}}^{-1}\chi_{\V{23}}^{-1}\chi_{\V{24}}
         &\ul{\chi_{\V{21}}\chi_{\V{23}}^{-1}\chi_{\V{24}}}
         &\chi_{\V{41}}^{-1}\chi_{\V{42}}^{-1}\chi_{\V{43}}
         &\chi_{\V{41}}^{-1}\chi_{\V{42}}^{-1}\chi_{\V{43}}^{-1}\\[5pt]
        \alpha_{67}^{-1}
         &\chi_{\V{21}}^{-1}\chi_{\V{23}}^{-1}\chi_{\V{24}}^{-1}
         &\chi_{\V{21}}\chi_{\V{23}}^{-1}\chi_{\V{24}}^{-1}
         &\ul{\chi_{\V{41}}^{-1}\chi_{\V{42}}\chi_{\V{43}}}
         &\chi_{\V{41}}^{-1}\chi_{\V{42}}\chi_{\V{43}}^{-1}\\[5pt]
        1
         &\chi_{\V{12}}\chi_{\V{13}}^{-1}\chi_{\V{14}}^{-1}
         &\chi_{\V{12}}^{-1}\chi_{\V{13}}^{-1}\chi_{\V{14}}^{-1}
         &\chi_{\V{31}}\chi_{\V{32}}^{-1}\chi_{\V{34}}^{-1}
         &\ul{\chi_{\V{31}}\chi_{\V{32}}^{-1}\chi_{\V{34}}}\\
    \end{array}
\end{align}

\subsection{The tetrahedral symbol for the case $F=\mathbb{R}$}
\label{sub:Mellin_of_tetrahedral_symbols}

The \(F=\bbR\) case is the most interesting when combined with our discussion in
\Cref{sub:relation_with_classical_4F3}. We assume for simplicity that
\(\chi_{ij}\) (\(ij\in\tetraO\)) is of the form \(\abs{\blank}^{J_{ij}}\) for
\(J_{ij}\in i\bbR\) such that \(J_{ij}+J_{ji}=0\). Using
\eqref{eqn:chosen_oriented_tetrahedron}, we also write
\(J_{\EN{1}}=J_{\V{12}}, J_{\EN{2}}=J_{\V{31}}\), and so on.
Then with \(a_i,b_i\) (\(i=1,\ldots,4\)) as in
\Cref{sub:relation_with_classical_4F3}, we have
\begin{alignat}{4}
    a_1 &= J_{\EN{1}\EN{2}\IEN{3}}+\OneHalf,\quad &
    a_2 &= J_{\IEN{1}\EN{2}\IEN{3}}+\OneHalf,\quad &
    a_3 &= J_{\EN{2}\IEN{4}\EN{6}}+\OneHalf,\quad &
    a_4 &= J_{\EN{2}\EN{4}\EN{6}}+\OneHalf,\\
    b_1 &= J_{\EN{2}\EN{2}}, \quad &
    b_2 &= J_{\EN{2}\IEN{3}\IEN{5}\EN{6}},\quad &
    b_3 &= J_{\EN{2}\IEN{3}\EN{5}\EN{6}},\quad &
    b_4 &= 0,
\end{alignat}
where \(J_{\EN{1}\EN{2}\IEN{3}}\) means \(J_{\EN{1}}-J_{\IEN{2}}-J_{\EN{3}}\),
and so on.
Plugging in the formula \eqref{eqn:4F3_formula_1_final} and simplifying using
the properties of \(\gamma\) and \(L\)-factors from \Cref{GammaReview}, we have
\begin{align}
    \Pisymbol\sqrt{L\Bigl(\OneHalf,\Sigma\Bigr)}
    &=-\frac{L(\OneHalf,\Sigma_1^*)}
        {\gamma(J_{\IEN{2}\IEN{3}\IEN{5}\EN{6}})\gamma(J_{\IEN{2}\IEN{3}\EN{5}\EN{6}})\gamma(J_{\IEN{2}\IEN{2}})}
        \pFq{4}{3}\!\left(\!
            \begin{array}{c}
                J_{\IEN{1}\EN{2}\EN{3}}+\OneHalf,
                J_{\EN{1}\EN{2}\EN{3}}+\OneHalf,
                J_{\EN{2}\EN{4}\IEN{6}}+\OneHalf,
                J_{\EN{2}\IEN{4}\IEN{6}}+\OneHalf
                \\
                J_{\EN{2}\EN{3}\EN{5}\IEN{6}}+1,
                J_{\EN{2}\EN{3}\IEN{5}\IEN{6}}+1,
                J_{\EN{2}\EN{2}}+1
            \end{array}\!\Big|\; 1\,
        \right)\\
    &\mathbin{\phantom{=}}-\frac{L(\OneHalf,\Sigma_2^*)}
        {\gamma(J_{\EN{2}\EN{3}\EN{5}\IEN{6})}\gamma(J_{\EN{5}\EN{5}})\gamma(J_{\IEN{2}\EN{3}\EN{5}\IEN{6}})}
        \pFq{4}{3}\!\left(\!
            \begin{array}{c}
                J_{\IEN{1}\IEN{5}\EN{6}}+\OneHalf,
                J_{\EN{1}\IEN{5}\EN{6}}+\OneHalf,
                J_{\IEN{3}\EN{4}\IEN{5}}+\OneHalf,
                J_{\IEN{3}\IEN{4}\IEN{5}}+\OneHalf
                \\
                J_{\IEN{2}\IEN{3}\IEN{5}\EN{6}}+1,
                J_{\IEN{5}\IEN{5}}+1,
                J_{\EN{2}\IEN{3}\IEN{5}\EN{6}}+1
            \end{array}\!\Big|\; 1\,
        \right)\\
    &\mathbin{\phantom{=}}-\frac{L(\OneHalf,\Sigma_3^*)}
        {\gamma(J_{\EN{2}\EN{3}\IEN{5}\IEN{6}})\gamma(J_{\IEN{5}\IEN{5}})\gamma(J_{\IEN{2}\EN{3}\IEN{5}\IEN{6}})}
    \pFq{4}{3}\!\left(\!
            \begin{array}{c}
                J_{\IEN{1}\EN{5}\EN{6}}+\OneHalf,
                J_{\EN{1}\EN{5}\EN{6}}+\OneHalf,
                J_{\IEN{3}\EN{4}\EN{5}}+\OneHalf,
                J_{\IEN{3}\IEN{4}\EN{5}}+\OneHalf
                \\
                J_{\IEN{2}\IEN{3}\EN{5}\EN{6}}+1,
                J_{\EN{5}\EN{5}}+1,
                J_{\EN{2}\IEN{3}\EN{5}\EN{6}}+1
            \end{array}\!\Big|\; 1\,
        \right)\\
    &\mathbin{\phantom{=}}-\frac{L(\OneHalf,\Sigma_4^*)}
        {\gamma(J_{\EN{2}\EN{2}})\gamma(J_{\EN{2}\IEN{3}\IEN{5}\EN{6}})\gamma(J_{\EN{2}\IEN{3}\EN{5}\EN{6}})}
    \pFq{4}{3}\!\left(\!
            \begin{array}{c}
                J_{\IEN{1}\IEN{2}\EN{3}}+\OneHalf,
                J_{\EN{1}\IEN{2}\EN{3}}+\OneHalf,
                J_{\IEN{2}\EN{4}\IEN{6}}+\OneHalf,
                J_{\IEN{2}\IEN{4}\IEN{6}}+\OneHalf
                \\
                J_{\IEN{2}\IEN{2}}+1,
                J_{\IEN{2}\EN{3}\EN{5}\IEN{6}}+1,
                J_{\IEN{2}\EN{3}\IEN{5}\IEN{6}}+1
            \end{array}\!\Big|\; 1\,
        \right),
\end{align}
where \(\Sigma_i^*\) is obtained from \eqref{eqn:S_ast_entries} by inverting the
\(i\)-th \emph{row}. Note also that the first row of parameters in each
\(\pFq{4}{3}\) corresponds precisely to the respective row in
\eqref{eqn:S_ast_entries} up to a universal inversion and half twist.

Similarly, plugging in \eqref{eqn:4F3_formula_2_final}, we also have
\begin{align}
    \Pisymbol\sqrt{L\Bigl(\OneHalf,\Sigma\Bigr)}
    &=+\frac{L(\OneHalf,\Sigma_{1'}^*)}
        {\gamma(J_{\EN{1}\EN{1}})\gamma(J_{\EN{1}\IEN{3}\EN{4}\EN{6}})\gamma(J_{\EN{1}\IEN{3}\IEN{4}\EN{6}})}
        \pFq{4}{3}\!\left(\!
            \begin{array}{c}
                J_{\IEN{1}\EN{2}\EN{3}}+\OneHalf,
                J_{\IEN{1}\IEN{5}\EN{6}}+\OneHalf,
                J_{\IEN{1}\EN{5}\EN{6}}+\OneHalf,
                J_{\IEN{1}\IEN{2}\EN{3}}+\OneHalf
                \\
                J_{\IEN{1}\IEN{1}}+1,
                J_{\IEN{1}\EN{3}\IEN{4}\IEN{6}}+1,
                J_{\IEN{1}\EN{3}\EN{4}\IEN{6}}+1
            \end{array}\!\Big|\; 1\,
        \right)\\
    &\mathbin{\phantom{=}}+\frac{L(\OneHalf,\Sigma_{2'}^*)}
        {\gamma(J_{\IEN{1}\IEN{1}}\gamma(J_{\IEN{1}\IEN{3}\EN{4}\EN{6}})\gamma(J_{\IEN{1}\IEN{3}\IEN{4}\EN{6}})}
        \pFq{4}{3}\!\left(\!
            \begin{array}{c}
                J_{\EN{1}\EN{2}\EN{3}}+\OneHalf,
                J_{\EN{1}\IEN{5}\EN{6}}+\OneHalf,
                J_{\EN{1}\EN{5}\EN{6}}+\OneHalf,
                J_{\EN{1}\IEN{2}\EN{3}}+\OneHalf
                \\
                J_{\EN{1}\EN{1}}+1,
                J_{\EN{1}\EN{3}\IEN{4}\IEN{6}}+1,
                J_{\EN{1}\EN{3}\EN{4}\IEN{6}}+1
            \end{array}\!\Big|\; 1\,
        \right)\\
    &\mathbin{\phantom{=}}+\frac{L(\OneHalf,\Sigma_{3'}^*)}
        {\gamma(J_{\IEN{1}\EN{3}\IEN{4}\EN{6}})\gamma(J_{\EN{1}\EN{3}\IEN{4}\EN{6}})\gamma(J_{\IEN{4}\IEN{4}})}
    \pFq{4}{3}\!\left(\!
            \begin{array}{c}
                J_{\EN{2}\EN{4}\IEN{6}}+\OneHalf,
                J_{\IEN{3}\EN{4}\IEN{5}}+\OneHalf,
                J_{\IEN{3}\EN{4}\EN{5}}+\OneHalf,
                J_{\IEN{2}\EN{4}\IEN{6}}+\OneHalf
                \\
                J_{\EN{1}\IEN{3}\EN{4}\IEN{6}}+1,
                J_{\IEN{1}\IEN{3}\EN{4}\IEN{6}}+1,
                J_{\EN{4}\EN{4}}+1
            \end{array}\!\Big|\; 1\,
        \right)\\
    &\mathbin{\phantom{=}}+\frac{L(\OneHalf,\Sigma_{4'}^*)}
        {\gamma(J_{\IEN{1}\EN{3}\EN{4}\EN{6}})\gamma(J_{\EN{1}\EN{3}\EN{4}\EN{6}})\gamma(J_{\EN{4}\EN{4}})}
    \pFq{4}{3}\!\left(\!
            \begin{array}{c}
                J_{\EN{2}\IEN{4}\IEN{6}}+\OneHalf,
                J_{\IEN{3}\IEN{4}\IEN{5}}+\OneHalf,
                J_{\IEN{3}\IEN{4}\EN{5}}+\OneHalf,
                J_{\IEN{2}\IEN{4}\IEN{6}}+\OneHalf
                \\
                J_{\EN{1}\IEN{3}\IEN{4}\IEN{6}}+1,
                J_{\IEN{1}\IEN{3}\IEN{4}\IEN{6}}+1,
                J_{\IEN{4}\IEN{4}}+1
            \end{array}\!\Big|\; 1\,
        \right),
\end{align}
where \(\Sigma_{i'}^*\) is obtained from \eqref{eqn:S_ast_entries} by inverting the
\(i\)-th \emph{column}, and the first row of parameters in each
\(\pFq{4}{3}\) corresponds to the respective column in
\eqref{eqn:S_ast_entries} up to a universal inversion and half twist.

Finally, it is not hard to deduce from definition that all the eight
\(\pFq{4}{3}\) functions on the right-hand sides above converge absolutely at
\(1\) for any generic \(J_{\EN{1}},\ldots,J_{\EN{6}}\in\bbC\) (see
\cite[\S~2.2]{Sl66}), and so by
analytic continuation, the two equalities above continue to hold without
assuming these \(J\)s being purely imaginary.

\section{Some proof sketches for \Cref{sec:Further} }
In this section, we provide sketches of proofs of some statements in
\Cref{sec:Further}. We are confident that they turn into real proofs once proper
technical care is given, but nevertheless we have not done it.  Also, we 
do in fact wonder if any human being will read this.

\subsection{A sketch of the proof of \Cref{Orthogonality}} \label{OrthoProof}

The following proof is rigorous --- but can also be written more easily,
as in the traditional theory of the tetrahedral symbol ---
when $\RO$ compact. 
However, we have chosen to write the proof in a way should be  valid in the general case.
We say ``should be'' because there are details we have not tried to fill in: we expect them
to be routine but rather tedious and notationally cumbersome to handle.  
Let us pre-emptively confess to these  sins:

\begin{itemize}
    \item We shall not distinguish between the space of smooth vectors in an
        $\RO$-representation and its Hilbert completion; similarly we will not
        clearly distinguish between maps that are defined on the smooth part and
        on the Hilbert completion.
    \item We will be freely using the theory of unitary decomposition. In
        particular, this theory handles various set- and measure-theoretic
        issues that we will not even allude to. 
        
        As a typical example, let
        $f(\pi)$ be a rule that assigns to  each class $[\pi]$ in the unitary
        dual of $\RO$ an element of ``the'' corresponding unitary representation
        $\pi$. In this situation is a reasonable way to talk about such $f$s, and there is a
        reasonable way to talk about them being measurable, and taking the inner
        products of two such; but all this we will omit. 
    \item Many of the functions below are defined in the sense of measure
        theory, i.e., off zero measure sets; again, we will ignore this entirely
        in our language.
    \item We shall not justify absolute convergence of expressions below. This
        is the only sin that we think is not venial, because it requires
        estimates that we did not carry out.
\end{itemize}

Suppose $\pi, \sigma$ are tempered representations, upon which we fix
self-duality pairings. For any tempered representation $\tau\in\cP_0$, on which
we we also fix a self-duality pairing, we may normalize an invariant functional
$\Lambda$ on $\pi \otimes \sigma \otimes \tau$ according to
\Cref{eqn:def_of_Lambda_H}. We dualize it to obtain a map $\pi \otimes \sigma
\rightarrow \tau$. (The first sin!) 

Recall  (\Cref{PrasadLemma}) that there are real structures $\pi_{\bbR}, \sigma_{\bbR}, \tau_{\bbR}$
on which the duality structures are inner products. We then define an inner
product on $\pi, \sigma, \tau$ by complex-linear extension.

\begin{lemma} \label[lemma]{PlancherelTriple}
    The induced map
    \begin{align}
        \pi  \otimes \sigma  \longto \int \tau  \dd\tau
    \end{align}
    extends to an isometry of  Hilbert spaces upon completing the
    left-hand side. On the right, the measure is Plancherel measure; and we integrate over the subset of \(\cP_0\) with
    the property that there is a nontrivial
    invariant functional on $\pi \otimes \sigma \otimes \tau$.
\end{lemma}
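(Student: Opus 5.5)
The plan is to deduce the isometry from the Plancherel theorem for $\RO$, applied to a matrix coefficient of $\pi\otimes\sigma$, combined with the defining normalization \eqref{eqn:def_of_Lambda_H} of $\Lambda$. For $\tau\in\cP_0$ write $T_\tau\colon \pi\otimes\sigma\to\tau$ for the dual of the normalized functional $\Lambda_\tau$ on $\pi\otimes\sigma\otimes\tau$. Concretely, with an orthonormal basis $e_i$ of $\tau_{\bbR}$, we set $T_\tau(v)=\sum_i \Lambda_\tau(v\otimes e_i)e_i$. It suffices to show the following for $v,w\in\pi\otimes\sigma$:
\begin{equation}
    \IPair{v}{w} = \int_{\cP_0}\IPair{T_\tau v}{T_\tau w}\dd\tau .
\end{equation}
Here $\IPair{-}{-}$ denotes the complex-bilinear extension of the real inner products. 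The Hermitian statement then follows by applying this to $v$ and $\bar w$ (complex conjugation relative to the real structures). Since both sides are $\RO$-equivariant, this also gives that $T=\int T_\tau$ is isometric onto its image.

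The key steps are as follows. First, consider the function $\Phi(g)=\IPair{gv}{w}$ on $\RO$, which is a product of matrix coefficients of $\pi$ and $\sigma$. Its value at $e$ is the left-hand side. Assuming enough decay to apply \eqref{Plancherel} (this is one of the confessed sins; for tempered $\pi,\sigma$ the product lies in $L^{2+\epsilon}$ and one smooths), we get
\begin{equation}
    \Phi(e)=\int_{\cP_0}\Tr_\tau(\Phi^\vee)\dd\tau ,
\end{equation}
with the appropriate convention for $\Phi^\vee$. Second, compute $\Tr_\tau$. Using an orthonormal basis and \Cref{lem:tracepair}-style bookkeeping, $\Tr_\tau(\Phi^\vee)$ equals
\begin{equation}
    \sum_{i,j}\int_{\RO}\IPair{g(v\otimes e_i)}{w\otimes e_j}\,\delta_{ij}\,\dd g
    = \int_{\RO}\IPair{g(v\otimes\delta_\tau)}{w\otimes\delta_\tau}\dd g ,
\end{equation}
where $\delta_\tau\in\tau\otimes\tau$ represents the pairing. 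That is, it is the integral over the diagonal $\RO$ of a matrix coefficient of $\pi\otimes\sigma\otimes\tau$ contracted in the $\tau$ slots. By \eqref{eqn:def_of_Lambda_H}, applied termwise with $H=\RO$ diagonal, this equals $\sum_i\Lambda_\tau(v\otimes e_i)\Lambda_\tau(w\otimes e_i)=\IPair{T_\tau v}{T_\tau w}$. Third, the integrand vanishes unless a nonzero invariant functional exists, by the cited result of \cite{SV17}. This restricts the integral to the stated subset.

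The main obstacle is the second step: justifying the exchange of the trace sum with the $\RO$-integral and the use of \eqref{eqn:def_of_Lambda_H} on non-smooth "vectors" like $\delta_\tau$. The first thing to try is to work with finite truncations $\sum_{i\le N}e_i\otimes e_i$ and $K$-finite vectors. Absolute convergence of $\int_{\RO}\abs{\Phi(g)}\,\abs{\langle g e_i,e_j\rangle}$ would come from the Cowling--Haagerup--Howe bound \cite{CowlingHaagerupHowe} on tempered matrix coefficients. In the nonarchimedean case, restricting to $K$-fixed-type vectors makes the sums finite, which should make this step routine there.
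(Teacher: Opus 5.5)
Your proposal is correct and follows the paper's own argument. You pass to the real forms, apply the Plancherel formula \eqref{Plancherel} to the product of matrix coefficients $g\mapsto\IPair{gv}{w}$, and use the normalization \eqref{eqn:def_of_Lambda_H} termwise on $v\otimes e_i$ to identify $\Tr_\tau$ of that function with $\IPair{T_\tau v}{T_\tau w}$; this is exactly what the paper's identity \eqref{PConv} packages. The analytic points you flag are also left unjustified in the paper. The relevant input is that a product of two tempered $K$-finite matrix coefficients lies in Harish-Chandra's Schwartz space, where the Plancherel formula applies; the $L^{2+\epsilon}$ bound you cite is not what makes it applicable.
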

\begin{proof}
    We prove the corresponding statement for real Hilbert spaces, from which the
    claim follows by complex-linear extension. This amounts to the following
    assertion for $v, v'\in\pi_{\R}$ and $w, w'\in\sigma_{\R}$:
    \begin{align} \label{PConv}
        \IPair{v}{v'}\IPair{w}{w'}
        = \int \dd\tau
        \int_{h\in\RO}
        \sum_{e \in \mathcal{B}_{\tau}}
        \Pair{hv}{v'}\Pair{hw}{w'}\Pair{h e}{e}\dd h.
    \end{align}
    where $\mathcal{B}_{\tau}$ is an orthonormal basis for $\tau_{\R}$, which is a
    consequence of the Plancherel formula \eqref{Plancherel} applied to the
    function $h \mapsto \IPair{hv}{v'}\IPair{hw}{w'}$.
\end{proof}

We now put ourselves in the situation of \Cref{orthogonality}; having fixed
self-duality pairings on all the $\pi_{ij}$ we fix inner products as
above. \emph{In what follows, $\pi_{\V{12}}, \pi_{\V{23}}, \pi_{\V{34}}$ should
be regarded as fixed,} but the remaining $\pi$s will be ``varying'' --- that is,
we will not explicitly include in the notation dependence on $\pi_{\V{12}},
\pi_{\V{23}}, \pi_{\V{34}}$.

Applying the lemma (to various choices of $\pi,\sigma$),  
we find isometries:
\begin{align} \label{IsometryA}
    \pi_{\V{12}} \otimes \pi_{\V{23}}
    \simeq \int_{\pi_{\V{24}}}\pi_{\V{24}}
    \implies 
    \pi_{\V{12}} \otimes \pi_{\V{23}}  \otimes \pi_{\V{34}}
    \simeq \int_{\pi_{\V{24}}}\pi_{\V{24}}\otimes \pi_{\V{34}}
    \simeq\int_{\pi_{\V{14}},\pi_{\V{24}}}\pi_{\V{14}}.
\end{align}

Here, $\simeq$ means an isometry of Hilbert spaces. The range of integration on
the right hand side consists of those $\pi_{\V{14}},\pi_{\V{24}}$ for which the
representation both $\pi_{\V{21}}\otimes\pi_{\V{23}}$ and $\pi_{\V{14}}
\otimes \pi_{\V{34}}$ admit nonzero invariant maps to $\pi_{\V{24}}$. For a
given $\pi_{\V{14}}$ let us call this set of $\pi_{\V{24}}$ by the name
$A(\pi_{\V{14}})$ (it depends on the other $\pi$s too, but we are regarding them
as fixed).

We get a similar decomposition with $\V{24}$ replaced by $\V{13}$:
\begin{align} \label{IsometryB}
    \pi_{\V{23}}  \otimes \pi_{\V{34}}
    \simeq \int_{\pi_{\V{13}}}   \pi_{\V{13}}
    \implies
    \pi_{\V{12}} \otimes \pi_{\V{23}}\otimes \pi_{\V{34}}
    \simeq\int_{\pi_{\V{12}}}   \pi_{\V{12}} \otimes \pi_{\V{13}} \simeq \int_{\pi_{\V{14}},\pi_{\V{13}}} \pi_{\V{14}}.
\end{align}
where the range of integration consists of pairs $\pi_{\V{14}},\pi_{\V{13}}$
with the property that both $\pi_{\V{23}} \otimes \pi_{\V{34}}$ and
$\pi_{\V{14}} \otimes \pi_{\V{12}}$ admit maps to $\pi_{\V{13}}$. For a given
$\pi_{\V{14}}$ let us call this set of $\pi_{\V{13}}$ by the name $
B(\pi_{\V{14}})$ (it depends on the other $\pi$s too, but we are regarding them
as fixed). 

Therefore, there is an isometry
\begin{equation}\label{GaGL}
    \int_{ \pi_{\V{14}}, \pi_{\V{24}}}  \pi_{\V{14}}
    \simeq\int_{\pi_{\V{14}},\pi_{\V{13}}}\pi_{\V{14}},
\end{equation}
where the ranges of integration are as specified above. Being equivariant for
the group action this map necessarily has a rather special form: for each
$\pi_{\V{14}}$ we must have an isometry
\begin{align}
    L^2(A(\pi_{\V{14}})) \simeq L^2(B(\pi_{\V{14}})),
\end{align}
which is necessarily given by a scalar-valued kernel function
$K^{\pi_{\V{14}}}(\pi_{\V{24}},\pi_{\V{13}})$
on the product $A(\pi_{\V{14}}) \times B(\pi_{\V{14}})$; explicitly, 
the isometry of \Cref{GaGL} necessarily has the form
\begin{equation}
    \label{KernelCharacterization}
    f(\pi_{\V{24}}, \pi_{\V{14}})
    \longmapsto
    \int_{\pi_{\V{24}}}
    K^{\pi_{\V{14}}}(\pi_{\V{24}},\pi_{\V{13}}) f(\pi_{\V{24}}, \pi_{\V{14}}),
\end{equation}
where the function $f$  is vector-valued: it takes inputs $\pi_{\V{14}}$ and $
\pi_{\V{24}}$ and returns a vector in the space of $\pi_{\V{14}}$; on the
right, the input parameters are \(\pi_{\V{14}}\) and \(\pi_{\V{13}}\) with the
output a vector in \(\pi_{\V{14}}\).

In what follows, we write for typical vectors
\begin{align}
    x \in \pi_{\V{12}}, y \in \pi_{\V{23}}, z \in \pi_{\V{34}}, w \in
    \pi_{\V{41}}.
\end{align}

We will denote the image of $x \otimes y \otimes z$ on the right hand side of \eqref{IsometryA}
by $E_{\pi_{\V{24}}}^{\pi_{\V{14}}}(x \otimes y \otimes z)$;
it is an element of $\pi_{\V{14}}$ that depends also on the choice of $\pi_{\V{24}}$.
Similarly we denote its image on the right hand side of \eqref{IsometryB}
by $E_{\pi_{\V{13}}}^{\pi_{\V{14}}}(x \otimes y \otimes z)$. Thus
the $E$s are linear maps
\begin{align}
    E_{\pi_{\V{24}}}^{\pi_{\V{14}}}, E_{\pi_{\V{13}}}^{\pi_{\V{14}}}\colon
    \pi_{\V{12}} \otimes \pi_{\V{23}} \otimes \pi_{\V{34}} \longrightarrow
    \pi_{\V{14}}
\end{align}
Let $x,y,z$ vary through orthonormal bases of $K$-finite vectors for the
respective representations.  Then $x \otimes y \otimes z$ varies through an
orthonormal basis for $\pi_{\V{12}} \otimes \pi_{\V{23}} \otimes \pi_{\V{34}}$.
Therefore,  $E^{\pi_{\V{14}}}_{\pi_{\V{13}}}(x \otimes y \otimes z)$ and
$E^{\pi_{\V{14}}}_{\pi_{\V{24}}}(x \otimes y \otimes z)$, considered as
$\pi_{\V{14}}$-valued functions of $(\pi_{\V{14}}, \pi_{\V{13}})$ or
$(\pi_{\V{14}}, \pi_{\V{24}})$, form orthonormal bases for the two Hilbert
spaces appearing on corresponding sides of \eqref{GaGL}. The unitary
transformation sending one basis to the other sends a function $f(\pi_{\V{24}},
\pi_{\V{14}})$, taking values in $\pi_{\V{14}}$, to
\begin{align}
    f'\colon (\pi_{\V{13}}, \pi_{\V{14}})
    \longmapsto \sum_{x,y,z} E_{\pi_{\V{13}}}^{\pi_{\V{14}}}
    (x \otimes y \otimes z)
    \int_{\pi_{\V{24}}, \pi_{\V{14}}'}
    \Pair
    {f(\pi_{\V{24}}, \pi_{\V{14}}')}{E_{\pi_{\V{24}}}^{\pi_{\V{14}}'} (x \otimes y \otimes z)}.
\end{align}
Let  $f(\pi_{\V{24}},\pi_{\V{14}}), f'(\pi_{\V{13}},\pi_{\V{14}})$
be arbitrary, but ``real-valued,'' i.e. valued in a fixed real form $\pi_{\V{14}, \bbR}$;
this permits us to ignore  complex conjugates in inner products.
Compare the above equation with \eqref{KernelCharacterization}
 to conclude 
\begin{multline}
    \int_{\pi_{\V{14}},\pi_{\V{24}}} K^{\pi_{\V{14}}}(\pi_{\V{24}}, \pi_{\V{13}})
    \Pair{f(\pi_{\V{24}},\pi_{\V{14}})}{f'(\pi_{\V{13}},\pi_{\V{14}})}
   \\  =\int_{\pi_{\V{24}}, \pi_{\V{14}},\pi_{\V{14}}'}
    \sum_{x,y,z}\Pair{f(\pi_{\V{24}},\pi_{\V{14}}')
    \otimes f'(\pi_{\V{13}},\pi_{\V{14}})}
    {E_{\pi_{\V{24}}}^{\pi_{\V{14}}'}(x \otimes y \otimes z)\otimes
    E_{\pi_{\V{13}}}^{\pi_{\V{14}}}(x \otimes y \otimes z)}.
\end{multline}
Since \(f\) is arbitrary, the same equality still holds without integrating over
\(\pi_{\V{24}}\); so 

\begin{multline}
    \label{eqn:kernalK_and_f_fprime}
    \int_{\pi_{\V{14}}} K^{\pi_{\V{14}}}(\pi_{\V{24}}, \pi_{\V{13}})
    \Pair{f(\pi_{\V{24}},\pi_{\V{14}})}{f'(\pi_{\V{13}},\pi_{\V{14}})}
   \\  =\int_{\pi_{\V{14}}, \pi_{\V{14}}'}
    \sum_{x,y,z}\Pair{f(\pi_{\V{24}},\pi_{\V{14}}')
    \otimes f'(\pi_{\V{13}},\pi_{\V{14}})}
    {E_{\pi_{\V{24}}}^{\pi_{\V{14}}'}(x \otimes y \otimes z)\otimes
    E_{\pi_{\V{13}}}^{\pi_{\V{14}}}(x \otimes y \otimes z)}.
\end{multline}

We now take several steps to simplify notation:
\begin{itemize}
    \item[-]Previously we fixed $\pi_{\V{12}}, \pi_{\V{23}}, \pi_{\V{34}}$. We now
        additionally fix $\pi_{\V{13}}$ and $\pi_{\V{24}}$, so that we are now regarding
        \emph{all the $\pi_{ij}$ for $\Set{ij} \neq \Set{\V{14}}$ as fixed}, and will
        not include in the notation dependence on these representations.
    \item[-]Having fixed these, we put
        \begin{align}
            K^{\pi_{\V{14}}} \defeq K^{\pi_{\V{14}}}(\pi_{\V{24}},\pi_{\V{13}}).
        \end{align}
        Again, here, we have a measure-theoretic issue, since $K$ is defined only off a set of measure zero;
        in actuality, all the statements hold almost everywhere, and are to be extended by continuity. 

    \item[-] We 
        abridge $\pi_{\V{14}}$ to $\pi$ and $\pi_{\V{14}}'$ to $\pi'$.
    \item[-] We switch the letters $f,f'$ for nicer typography aligning with
        \(\pi,\pi'\) respectively.
\end{itemize}

Having done all of this we can rewrite
\eqref{eqn:kernalK_and_f_fprime} as:
\begin{align}
    \label{eqn:kernalK_and_f_fprime2}
    K^{\pi} \cdot \int_{\pi}  
    \Pair{f(\pi)}{f'(\pi)}
    =\int_{\pi, \pi'}
    \sum_{x,y,z}\Pair{f'(\pi') \otimes f(\pi)}
    {E_{\pi_{\V{24}}}^{\pi'}(x \otimes y \otimes z)\otimes
    E_{\pi_{\V{13}}}^{\pi}(x \otimes y \otimes z)}.
\end{align}

Let us reinterpret the right hand side as follows.
Consider  the \(G\)-representation
\begin{align} \label{PiGstardef}
    \Pi_G^* = (\pi \boxtimes \pi')\boxtimes
    \displaybigboxtimes_{ij\in\tetraE-\Set{\V{14}}}\pi_{ij} \boxtimes \pi_{ij},
\end{align}
i.e., similar to \(\Pi_G\) as defined in
\Cref{Tetrahedraldatum}
but now taking
$\pi_{\V{14}}=\pi$ and $\pi_{\V{41}}=\pi'$, that is, 
  two previously isomorphic copies of
  $\pi_{\V{14}}$ are taken to be distinct representations.
We can define as before $\Lambda^{H*}$ in the dual
of $\Pi_G^*$  by tensoring
the invariant trilinear functionals on triples of representations indexed
by edges sharing a common vertex.
Note that both $\Pi_G^*$ and $\Lambda^{H*}$ depend on both $\pi$ and $\pi'$,
but to simplify the notation we will not explicitly denote this.

Let $\Pi_{G,\mathrm{big}}^*$ be the same expression \eqref{PiGstardef} integrated over $\pi, \pi'$;
this defines a Hilbert representation of $G$.
Define $\tilde{\Lambda}^H$ in the dual of $\Pi_{G,\mathrm{big}}^*$ by
similarly integrating $\Lambda^{H*}$, where, in both cases, all integrals are taken
with respect to Plancherel measure in both $\pi$ and $\pi'$:
\begin{align}
    \Pi_{G,\mathrm{big}}^* = \int_{\pi,\pi'} \Pi_G^* ,\quad
    \tilde{\Lambda}^H = \int_{\pi,\pi'} \Lambda^{H*} .
\end{align}

Consider the expression $\sum_x x \otimes x \in \pi_{\V{12}} \otimes \pi_{\V{12}}$
and its analogues for $\pi_{\V{23}}$ and so on; let $\Delta$ be obtained by
tensoring together all these expressions for all $ij\in\tetraE-\Set{\V{14}}$.
Finally, let $Y \defeq \int f' \otimes \int f \in
\int_{\pi,\pi'}\pi' \otimes \pi$. Then we may rewrite
\eqref{eqn:kernalK_and_f_fprime2} as a pairing inside
$\Pi^*_{G,\mathrm{big}}$:
\begin{equation} \label{PiBigFormula}
    \Pair{Y \otimes \Delta} {\tilde{\Lambda}^H}_{\Pi^*_{G,\mathrm{big}}}.
\end{equation}

To proceed we must observe an alternative way of writing the tetrahedral symbol.

\begin{lemma} \label[lemma]{alternateformula3}
    Notation as above, we can rewrite \eqref{PiBigFormula} 
    \begin{equation}\label{alternateformula2}
        ( Y \otimes \Delta,\tilde{\Lambda}^H)_{\Pi^*_{G,\mathrm{big}}}
        = \int_{\pi}  \Pair{f(\pi)}{f'(\pi)}  \Pisymbol 
    \end{equation}
    where $\Pisymbol$ takes as arguments the fixed $\pi_{ij}$ for $ij \neq \V{14}$,
    and $\pi_{\V{14}}=\pi$.
\end{lemma}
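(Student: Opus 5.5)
The plan is to disintegrate the pairing \eqref{PiBigFormula} over the Plancherel variables $\pi,\pi'$ and recognize, fibrewise, the definition \eqref{cPi2} of $\Pisymbol$.

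First, write $Y\otimes\Delta$ as an integral over $(\pi,\pi')$ of the fibre vectors $f'(\pi')\otimes f(\pi)\otimes\Delta\in\Pi_G^*$, and $\tilde\Lambda^H$ as the integral of $\Lambda^{H*}$. The pairing in $\Pi^*_{G,\mathrm{big}}$ then becomes
\[\int_{\pi,\pi'}\Lambda^{H*}\bigl(f'(\pi')\otimes f(\pi)\otimes\Delta\bigr).\]

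Second, I localize to the diagonal $\pi=\pi'$. Contracting the $\Delta$-factors, i.e.\ summing over the orthonormal bases $x,y,z,\ldots$, is exactly the $D$-averaging over the edges $ij\neq\V{14}$. By the argument of \Cref{PlancherelTriple}, that is the Plancherel identity \eqref{PConv} applied to the two triples at the vertices $\V1$ and $\V4$, this partial averaging yields a kernel supported on $\pi'\simeq\pi$, with a delta function relative to Plancherel measure. More concretely: averaging the $H$-functional over the residual diagonal copy of $\RO$ that links $\pi$ and $\pi'$ pairs them by $\int_{\RO}\IPair{h u}{u'}\,\dd h$. After integrating against Plancherel measure in $\pi'$, this collapses to $\IPair{f(\pi)}{f'(\pi)}$ times a scalar function of $\pi$.

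Third, I identify that scalar. Once $\pi'=\pi$, the averaged functional is precisely $(\Lambda^H)'$ of \eqref{eqn:LambdaHprimedef} evaluated against $\Lambda^H$ on the $\pi_{\V{14}}$-slot. By \eqref{cPi2} it equals $\Pisymbol$, with all normalizations coming from \eqref{eqn:def_of_Lambda_H}. This gives \eqref{alternateformula2}.

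The main obstacle is the second step. One must justify that the contraction of $\Delta$ against $\Lambda^{H*}$ produces exactly the Plancherel delta on $\pi=\pi'$ with constant $1$. I would handle this by first testing against $K$-finite vectors and using \eqref{Plancherel} together with absolute convergence of the matrix-coefficient integrals from \Cref{Hprimedefconvergence}. In the compact case it reduces to Schur orthogonality, which I would check first to fix constants.
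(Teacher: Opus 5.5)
\paragraph{A genuine gap: your Step~3 is asserted, not derived.} The overall shape is right: a Plancherel delta on $\pi=\pi'$, with coefficient $\Pisymbol$. But your fibrewise object is $\Lambda^{H*}$ contracted against $\Delta$, i.e.\ an $H$-invariant functional evaluated on a formal $D$-type vector. By contrast, $\Pisymbol$ is defined through $(\Lambda^H)'$, which is the $(D\cap H)\backslash H$-average of $\Lambda^D$ compared with $\Lambda^H$ on a vector. Nothing in your Steps 1--2 produces an integral of $\Lambda^D$ over $(D\cap H)\backslash H$. So the claim that ``the averaged functional is precisely $(\Lambda^H)'$ evaluated against $\Lambda^H$'' has no support, and pairing two functionals means nothing by itself.

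Two further problems make ``once $\pi'=\pi$'' unusable as written:
\begin{itemize}
\item At $\pi=\pi'$ the fibrewise quantity is the coefficient of a delta distribution, so it cannot be evaluated pointwise.
\item For compact $\RO$ your step does work, since it is just $\Lambda^H(\delta)=\IPair{V}{\delta}=\Pisymbol$ with $\delta$ a genuine vector. In general the contraction vector is not normalizable, and that is exactly what fails.
\end{itemize}
The mechanism you give for Step~2 is also not correct. Contracting against $\Delta$ is not $D$-averaging: averaging over $\RO^5$ diverges, and even for compact $\RO$ the two differ by dimension factors. And ``averaging the $H$-functional over the diagonal $\RO$'' is vacuous, because that functional is already invariant.

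\paragraph{The missing idea: un-averaging.} Choose a family $v_H(\pi,\pi')\in\Pi_G^*$ with $\Lambda^{H*}(v_H)=1$, and put $\mathbf{v}_H=\int v_H(\pi,\pi')$. Then \eqref{eqn:def_of_Lambda_H}, applied fibrewise, unfolds the left-hand side as
\[
(Y\otimes\Delta,\tilde{\Lambda}^H)=\int_{H}(Y\otimes\Delta,h\mathbf{v}_H)\,\dd h .
\]
Now split $H$ into $D\cap H\simeq\RO$ and $(D\cap H)\backslash H$. The inner average over the diagonal $\RO$ acts on the vector $Y\otimes\Delta$, not on a functional. Schur orthogonality (the Plancherel formula) gives
\[
\int_{D\cap H} g\cdot(Y\otimes\Delta)\,\dd g=\int_\pi \Pair{f(\pi)}{f'(\pi)}\,\Lambda^D .
\]
This is where the diagonal restriction and the Plancherel delta legitimately enter. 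The outer integral then becomes
\[
\int_\pi \Pair{f(\pi)}{f'(\pi)}\int_{(D\cap H)\backslash H}\Lambda^D\bigl(hv_H(\pi,\pi)\bigr)\,\dd h=\int_\pi\Pair{f(\pi)}{f'(\pi)}\,(\Lambda^H)'\bigl(v_H(\pi,\pi)\bigr),
\]
which equals $\int_\pi\Pair{f(\pi)}{f'(\pi)}\Pisymbol$ by \eqref{cPi2} and $\Lambda^H(v_H)=1$. Without inserting $v_H$ and the resulting $H$-integral, there is no route from the contraction to \eqref{eqn:LambdaHprimedef}.
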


Given this, it is easy to finish the proof: starting at
\eqref{eqn:kernalK_and_f_fprime2} we find
\begin{align}
    \int_{\pi} K^{\pi} 
    \Pair{f(\pi)}{f'(\pi)} = \eqref{PiBigFormula} = \int_{\pi}
    \Pisymbol  \Pair{f(\pi)}{f'(\pi)},
\end{align}
and, this being valid for all choices of $f,f'$,  we
see that $\Pisymbol$ coincides with $K^{\pi}$, which is the kernel of an isometric
isomorphism of $L^2(A(\pi))$ and $L^2(B(\pi))$, as desired. 

\begin{proof}[Proof of \Cref{alternateformula3}]
    First let us prove an easier-to-grasp version. Suppose that we can find a
    functional $\lambda^D$ on $\Pi_G$ such that
    \begin{equation}
        \label{unaverage}
        \int_{\delta \in D \cap H} \delta \cdot \lambda^D=\Lambda^D,
    \end{equation}
    where we can think of $\lambda^D$ as an un-averaging of $\Lambda^D$. Then we
    have, simply, 
    \begin{equation}\label{alternateformula}
        \Pisymbol = (\Lambda^H,\lambda^D),
    \end{equation}
    where the right hand side  means $\lim_{i} \Lambda^H(v_i)$ when $\lambda^D =
    \lim_i (v_i, -)$.
    To prove \eqref{alternateformula} we choose $v_H \in \Pi_G$ with the
    property that $\Lambda^H(v_H) =1$ and successively rewrite $\Pisymbol$ via
    \begin{align}
        \Pisymbol=  (\Lambda^H)'(v_H)
            &= \int_{h \in   D \cap H\backslash H}  \Lambda^D(h v_H)
            = \int_{h \in H} \lambda^D(h v_H) \\
            &= \int_{H} \lim_{i} (v_i, h v_H)
            = \lim_{i}  \int_{H} (v_i, h v_H)
            = \lim_{i} \Lambda^H(v_i).
    \end{align}

    Write $u(\pi) = \langle f(\pi), f'(\pi)\rangle$,
    so that the desired \eqref{alternateformula2}
    can be written as
    $\int_{\pi} u(\pi) \Pisymbol = ( \tilde{\Lambda}^H, Y \otimes \Delta)  $.
    Note that this is a version of  \eqref{alternateformula}
    with $\Pi_G$ replaced by $\Pi_{G, \mathrm{big}}^*$ and
    with $Y \otimes \Delta$ playing the role of $\lambda^D$, and 
    we  are going to use similar reasoning to prove it. 
    Note, first of all, the following analogue of \eqref{unaverage}:
    \begin{equation} \label{unaverage2}
        \int_{\delta \in D \cap H} \delta \cdot (Y \otimes \Delta)
        = \int_{\pi} u(\pi) \Lambda^D.
    \end{equation}
    where  we regard both sides as functionals on $\Pi^*_{G,\mathrm{big}}$;
    to make sense of the right hand side one first restricts to the locus
    where $\pi=\pi'$  so that the functional $\Lambda^D$ of \Cref{Tetrahedraldatum} makes sense;
    this construction, which involves restriction to a measure zero set
    inside the unitary dual,
    only really makes sense on a certain dense
    subset of vectors of $\Pi^*_{G, \mathrm{big}}$.
    The equation \eqref{unaverage2} is a consequence of the Plancherel
    formula, which gives the analogue of the ``Schur orthogonality relations'' in
    the current context. \footnote{  The point is that the  averaged vector $\int_{h \in \RO} h
    \cdot Y$ represents, on $\int_{\pi, \pi'} \pi \otimes \pi'$, the functional that
    corresponds to restricting to the diagonal, contracting, and integrating against
    $\Pair{f}{f'}$ times the Plancherel measure. }

    To prove \eqref{alternateformula2} from here, we  repeat
    the reasoning of \eqref{unaverage} but replacing $v_H$ now by a
    $(\pi,\pi')$-dependent family of vectors  $v_H(\pi,\pi') \in \Pi_G^*$, chosen
    to have the property that $\Lambda^{H*}(v_H)=1$ for all $\pi,\pi'$; write
    $\mathbf{v}_H$ for the integrated vector $\int v_H(\pi, \pi') \in
    \Pi^*_{G,\mathrm{big}}$. The result is
    \begin{align}
        \int_{h \in H} (Y \otimes \Delta, h \mathbf{v}_H)_{\Pi^*_{G,\mathrm{big}}}
        \stackrel{\eqref{unaverage2}}{=}
        \int_{D \cap H \backslash H} \int_{\pi} u(\pi) (\Lambda^D, h v_H(\pi, \pi))
        = \int_{\pi} u(\pi) \Pisymbol.
    \end{align}
    This finishes the proof of \eqref{alternateformula2} and so also the Proposition.
\end{proof}

\subsection{Relationship between geometric representation theory
and \Cref{thm:main_duality_theorem}, a sketch}
\label{section:geometric_to_numerical}

We follow notation as in \Cref{sheaves}, and will write as usual
$F=\mathbb{F}_q((t)) \supset \cO=\mathbb{F}_q[[t]]$ for the
analogues of $\FRf,\FRo$ over $\mathbb{F}_q$. We write for this argument
\begin{align}
    \alpha = (1-q^{-2}) = L(2)^{-1}.
\end{align}

Let us consider, as in \eqref{JSymbolDiagram}, integration over $(H \cap D)
\backslash H$ as defining an averaging intertwiner $\mathrm{Av}$ from $X=D
\backslash G $ to $Y=H \backslash G$. We will use the normalized version
\begin{align}
    I_{\mathrm{aut}}= \alpha^3 \mathrm{Av},
\end{align}
which comes from the fact that it is geometrically natural to use the
``point-counting'' measure where $\PGL_2(\cO)$ has mass $1-q^{-2}$,
coming from comparing the point count over $\mathbb{F}_q$ 
to an affine space of the same dimension. We regard $I_{\mathrm{aut}}$
an interwiner from functions on $X_{F}/G_{\cO}$ to
$Y_{F}/G_{\cO}$. We will give  these spaces
$X_{F}/G_{\cO}$ and $Y_{F}/G_{\cO}$
also the point-counting measures; for example the measure of $X_{\cO}/G_{\cO}$ is
then equal to $\alpha^{-6}$, whereas the measure of $Y_{\cO}/G_{\cO}$ is equal to
$\alpha^{-4}$.

Let $\check{D}_0$ be a maximal compact subgroup of the dual group of $\check{D}$,
which we regard as embedded in $\check{G}$
in the diagonal fashion. 
For any $\sigma \in \check{D}_0$
let $\varphi^X_{\sigma}$ be the corresponding  spherical function
on  $X_{F}/G_{\cO}$,  normalized so that its value at the identity equals $1$
--- that is to say, the unique  function on $X_F/G_{\cO}$
that transforms according to the unramified representation of $G_F$
parameterized by $\sigma$, and has value $1$ at the identity.  
 Similarly, let $\varphi^Y_\sigma$ be the  spherical function on
$Y_{F}/G_{\cO}$ with the same parameter $\sigma$,
normalized so that its value at the identity coset equals, instead,
$\alpha^{-8}\sqrt{\frac{L(\frac{1}{2}, \Sigma)}{L(1, \check{\mathfrak{g}})}}$, as in
\eqref{edge equation 2}.
By definition,
\begin{align}
    I_{\mathrm{aut}} \varphi^X_{\sigma} =  \alpha^3 \Pisymbol \varphi^Y_{\sigma},
\end{align} 
where $\Pi=\Pi(\sigma)$ is the unramified representation with parameter $\sigma$.
By the Plancherel formula,  we get\footnote{
{To compute the normalization factor of $\alpha^{12}$ that appears here,
perhaps the simplest way is to evaluate both sides at $1$, and use the 
equality
$\int_{\sigma} \sqrt{L(1,\check{\mathfrak{g}})} = \bigl( \int_{\tau \in \mathrm{SU_2}} 
\det(1-q^{-1} \mathrm{Ad}(\tau))^{-1} \bigr)^6$; the final integral
is readily evaluated to $(1-q^{-2})^{-1}$.}}
\begin{equation} 
    \label{second}
    1_{X_\cO} = 
    \int_{\sigma \in \check{D}_0}
    \Pair{1_{X_{\cO}}}{ \varphi_{\sigma}^X}_{X_{F}/G_{\cO}} \varphi_{\sigma}^X  \cdot
    \alpha^{12} \sqrt{L(1, \check{\mathfrak{g}})}
\end{equation}

Applying $I_{\mathrm{aut}}$ and pairing we find
\begin{align}
    \Pair{I_{\mathrm{aut}}  1_{X_\cO}}{1_{Y_\cO}}
    &=
    \int_{\check{D}_0} \alpha^3 \Pisymbol  \underbrace{
    \langle
1_{X_\cO}, \varphi_{\sigma}^X \rangle_{X_F/G_\cO}  \langle \varphi_{\sigma}^Y,1_{Y_\cO} \rangle_{Y_F/G_\cO}}_{\alpha^{-10} \varphi^X_{\sigma}(1)\varphi^Y_{\sigma}(1)} \alpha^{12}
    \sqrt{L(1, \check{\mathfrak{g}})}  \\
    &=\alpha^{-3} \int_{\check{D}_0} \Pisymbol \sqrt{L\Bigl(\frac{1}{2}, \Sigma\Bigr)}.
\end{align}
where the exponent $-3$ arises from $3-10-8+12$. There is a similar formula with
a Hecke operator inserted.

Let us compare this with what we get from the conjecture enunciated around
\eqref{push-pull-spectral}. Computing $\Hom$-spaces\footnote{Now, if we apply
    $I_{\mathrm{spec}}$ to the structure sheaf on $\check{M}/\check{G}$,
    the result is simply the structure sheaf of $L/\check{G}$, but considered
    as a sheaf on $\check{N}/\check{G}$. Therefore 
    \begin{align}
        \Hom(\mathcal{O}_{\check{N}}, I_{\mathrm{spec}} \mathcal{O}_{\check{M}})
        \simeq k[\check{L}]^{\check{G}} 
        \simeq k[P]^{\check{D}}
    \end{align}
    where we used \eqref{L As Induction}. Pass this equivalence to the
    automorphic side. The structure sheaves above correspond to constant sheaves
    on $X_\cO$ and $Y_\cO$, and so the above formula computes
    $\Hom(1_{X_\cO}, I_{\mathrm{aut}} 1_{Y_\cO})$.
}
and then passing to Frobenius trace, we find
\begin{equation}  \label{first}
    \langle  I_{\mathrm{aut}}  1_{X_\cO},1_{Y_\cO} \rangle
    = \Tr(q^{-\OneHalf} \sigma, k[P]^{\check{D}})
    = \int_{\sigma \in \check{D}_0}
    \mbox{character of $q^{-\OneHalf} \sigma$ on $k[P]$}.
\end{equation}
Here  $\check{D}_0$ is a maximal compact subgroup of $\check{D}$,  and the
$q^{-\OneHalf}$ arises from interpreting the effect of shearing. Again,
there is a corresponding formula with a Hecke operator inserted.
Comparing \eqref{second} and \eqref{first},
and moreover the versions with Hecke modifications, one  
can identify the integrands, and not merely the integrals;
so we find 
\begin{align}
    \Pisymbol \cdot \sqrt{L\Bigl(\frac{1}{2}, \Sigma\Bigr)}
    = \alpha^3 \cdot \mbox{character of $k[P]$ at $q^{-\OneHalf} \sigma$}
\end{align}
which agrees with \Cref{thm:main_duality_theorem}.
\appendix

\bibliographystyle{alpha}
\bibliography{main}

\end{document}